\renewcommand{\todo}[1]{}
\newcommand*{\da@rightarrow}{\mathchar"0\hexnumber@\symAMSa 4B }
\newcommand*{\da@leftarrow}{\mathchar"0\hexnumber@\symAMSa 4C }
\newcommand*{\xdashrightarrow}[2][]{
  \mathrel{
    \mathpalette{\da@xarrow{#1}{#2}{}\da@rightarrow{\,}{}}{}
  }
}
\newcommand{\xdashleftarrow}[2][]{
  \mathrel{
    \mathpalette{\da@xarrow{#1}{#2}\da@leftarrow{}{}{\,}}{}%
  }
}
\newcommand*{\da@xarrow}[7]{
  % #1: below
  % #2: above
  % #3: arrow left
  % #4: arrow right
  % #5: space left 
  % #6: space right
  % #7: math style 
  \sbox0{$\ifx#7\scriptstyle\scriptscriptstyle\else\scriptstyle\fi#5#1#6\m@th$}
  \sbox2{$\ifx#7\scriptstyle\scriptscriptstyle\else\scriptstyle\fi#5#2#6\m@th$}
  \sbox4{$#7\dabar@\m@th$}
  \dimen@=\wd0 
  \ifdim\wd2 >\dimen@
    \dimen@=\wd2 %   
  \fi
  \count@=2 %
  \def\da@bars{\dabar@\dabar@}%
  \@whiledim\count@\wd4<\dimen@\do{%
    \advance\count@\@ne
    \expandafter\def\expandafter\da@bars\expandafter{%
      \da@bars
      \dabar@ 
    }%
  }%  
  \mathrel{#3}%
  \mathrel{%   
    \mathop{\da@bars}\limits
    \ifx\\#1\\%
    \else
      _{\copy0}%
    \fi
    \ifx\\#2\\%
    \else
      ^{\copy2}%
    \fi
  }%   
  \mathrel{#4}%
}
\def\theenumi{(\alph{enumi})}
\def\p@enumii{\theenumi} 
\long\def\forget#1{}
\long\def\exclude#1{} 
\newcommand{\N}{\mathbb{N}}
\newcommand{\Z}{\mathbb{Z}}
\newcommand{\C}{\mathbb{C}}
\newcommand{\Q}{\mathbb{Q}}
\newcommand{\F}{\mathbb{F}}
\newcommand{\A}{\mathbb{A}}
\newcommand{\mcA}{\mathcal{A}}
\newcommand{\mcE}{\mathcal{E}}
\newcommand{\mcF}{\mathcal{F}}
\newcommand{\mcG}{\mathcal{G}}
\newcommand{\mcL}{\mathcal{L}}
\newcommand{\mcM}{\mathcal{M}}
\newcommand{\mcZ}{\mathcal{Z}}
\newcommand{\Fl}{\mathcal{F}\!\ell}
\newcommand{\G}{\mathbb{G}}
\newcommand{\Oo}{\mathcal{O}}
\newcommand{\alg}{{\rm alg}}
\newcommand{\et}{{\rm \acute{e}t\/}}
\newcommand{\Defo}{\operatorname{Defo}}
\newcommand{\End}{\operatorname{End}}
\newcommand{\Image}{\operatorname{im}}
\newcommand{\Int}{\operatorname{int}}
\newcommand{\Id}{\operatorname{Id}}
\newcommand{\id}{\operatorname{id}}
\newcommand{\Frob}{\operatorname{Frob}}
\newcommand{\Nilp}{{\mathcal{N}\!{\it ilp}}}
\newcommand{\QIsog}{\operatorname{QIsog}}
\newcommand{\Rep}{\operatorname{Rep}}
\newcommand{\Var}{\operatorname{V}}
\newcommand{\pr}{\operatorname{pr}}
\newcommand{\red}{{\rm red}}
\DeclareMathOperator{\widehattimes}{\mathchoice
            {\widehat{\raisebox{0ex}[0ex]{$\displaystyle\times$}}}
            {\widehat{\raisebox{0ex}[0ex]{$\textstyle\times$}}}
            {\widehat{\raisebox{0ex}[0ex]{$\scriptstyle\times$}}}
            {\widehat{\raisebox{0ex}[0ex]{$\scriptscriptstyle\times$}}}}
\newcommand{\fppf}{{\it fppf\/}}
\newcommand{\fpqc}{{\it fpqc\/}}
\newcommand{\Aut}{\operatorname{Aut}}
\newcommand{\Spec}{\operatorname{Spec}}
\newcommand{\Spf}{\operatorname{Spf}}
\newcommand{\Frac}{\operatorname{Frac}}
\newcommand{\Hom}{\operatorname{Hom}}
\newcommand{\Isom}{\operatorname{Isom}}
\newcommand{\Res}{\operatorname{Res}}
\newcommand{\ext}{\mathrm{ext}}
\newcommand{\Gal}{\operatorname{Gal}}
\newcommand{\sep}{{\operatorname{sep}}}
\newcommand{\GL}{\operatorname{GL}}
\newcommand{\SL}{\operatorname{SL}}
\newcommand{\PGL}{\operatorname{PGL}}
\newcommand{\Bun}{\operatorname{Bun}}
\newcommand{\CBun}{\mathcal{CB}un}
\newcommand{\Hecke}{\operatorname{Hecke}}
\newcommand{\PHecke}{{{}'\!\operatorname{Hecke}}}
\newcommand{\Gr}{\operatorname{Gr}}
\newcommand{\PGr}{{{}'\!\operatorname{Gr}}}
\newcommand{\PmcZ}{{{}'\!\mcZ}}
\newcommand{\Sht}{\operatorname{Sht}}
\newcommand{\LocSht}{\operatorname{LocSht}}
\renewcommand{\Nilp}{\mathcal{N}ilp}
\newcommand{\RZ}{\mathcal{R}\mcZ}
\newcommand{\dbl}{{\mathchoice{\mbox{\rm [\hspace{-0.15em}[}}
                              {\mbox{\rm [\hspace{-0.15em}[}}
                              {\mbox{\scriptsize\rm [\hspace{-0.15em}[}}
                              {\mbox{\tiny\rm [\hspace{-0.15em}[}}}}
\newcommand{\dbr}{{\mathchoice{\mbox{\rm ]\hspace{-0.15em}]}}
                              {\mbox{\rm ]\hspace{-0.15em}]}}
                              {\mbox{\scriptsize\rm ]\hspace{-0.15em}]}}
                              {\mbox{\tiny\rm ]\hspace{-0.15em}]}}}}
\newcommand{\dpl}{{\mathchoice{\mbox{\rm (\hspace{-0.15em}(}}
                              {\mbox{\rm (\hspace{-0.15em}(}}
                              {\mbox{\scriptsize\rm (\hspace{-0.15em}(}}
                              {\mbox{\tiny\rm (\hspace{-0.15em}(}}}}
\newcommand{\dpr}{{\mathchoice{\mbox{\rm )\hspace{-0.15em})}}
                              {\mbox{\rm )\hspace{-0.15em})}}
                              {\mbox{\scriptsize\rm )\hspace{-0.15em})}}
                              {\mbox{\tiny\rm )\hspace{-0.15em})}}}}
\newcommand{\invlim}[1][]{\ifthenelse{\equal{#1}{}}% falls Argument leer
{\displaystyle \lim_{\longleftarrow}}%                         verwende niedrige Version
{\displaystyle \lim_{\underset{#1}{\longleftarrow}}}%  sonst:  verwende Argument
}
\newcommand{\dirlim}[1][]{\ifthenelse{\equal{#1}{}}% falls Argument leer
{\displaystyle \lim_{\longrightarrow}}%                        verwende niedrige Version
{\displaystyle \lim_{\underset{#1}{\longrightarrow}}}% sonst:  verwende Argument
}
\def\longto{\longrightarrow}
\def\into{\hookrightarrow}
\def\onto{\rsurj}
\def\isoto{\stackrel{}{\mbox{\hspace{1mm}\raisebox{+1.4mm}{$\scriptstyle\sim$}\hspace{-4.2mm}$\longrightarrow$}}}
\def\onto{\mbox{$\kern2pt\to\kern-8pt\to\kern2pt$}}
\def\longinto{\lhook\joinrel\longrightarrow}
\renewcommand{\setminus}{\smallsetminus}
\renewcommand{\mod}{\operatorname{mod}}
\numberwithin{equation}{subsection}
\newtheorem{thm}[equation]{Theorem}
\newtheorem{lem}[equation]{Lemma}
\newtheorem{cor}[equation]{Corollary}
\numberwithin{equation}{subsection}
\newtheorem{prop}[equation]{Proposition}
\theoremstyle{definition}
\numberwithin{equation}{subsection}
\newtheorem{remark}[equation]{Remark}
\newtheorem{numberedparagraph}[equation]{}
\newtheorem{defn}[equation]{Definition}
\newtheorem{construction}[equation]{Construction}
\newtheorem{example}[equation]{Example}
\title{Uniformizing the moduli stacks of global $G$-shtukas II}
\author{Urs Hartl}
\address{Uni Muenster, Germany}
\email{urs.hartl@uni-muenster.de}
\author{Yujie Xu}
\address{Columbia University, New York City, USA}
\email{yujiexu@mit.edu}
\newcommand{\gpSch}{\mathcal{G}}
\newcommand{\gengpSch}{G}
\newcommand{\FcnFld}{Q} 
\newcommand{\diagphi}{\varphi^\lhd}
\newcommand{\FramingObject}{{\underline{\mathbb{E}}}}
\newcommand{\FramingObjectComp}{{\mathbb{E}}}
\newcommand{\LocalFramingObject}{{\mathbb{L}}}
\newcommand{\ulLocalFramingObject}{{\underline{\LocalFramingObject}}}
\newcommand{\BaseFldInSectUnif}{{\overline\F_\infty}}
\newcommand{\OReflZMuBeta}{{\mathcal{O}_{\mu,\beta}}}
\newcommand{\BreveOReflZMuBeta}{{\Breve{\mathcal{O}}_{\mu,\beta}}}
\newcommand{\KappaReflZMuBeta}{{\kappa_{\mu,\beta}}}
\begin{document}

\maketitle

\begin{abstract}
We show that the moduli spaces of bounded global $\gpSch$-Shtukas with pairwise colliding legs 
admit $p$-adic uniformization isomorphisms by Rapoport-Zink spaces. Here $\gpSch$ is a smooth affine group scheme with connected fibers and reductive generic fiber, i.e.~we do not assume it to be parahoric, or even hyperspecial.
Moreover, we deduce the Langlands-Rapoport Conjecture over function fields in the case of colliding legs using our uniformization theorem. 
\end{abstract}

\color{black}

\section{Introduction}
\subsection{Main Results}

Shimura varieties play an important role in arithmetic geometry. 
Their structure, especially their reduction at bad primes, has been intensively studied. One way to investigate their reduction is through $p$-adic uniformization (see for example \cite{RZ}). In this paper, 
we consider the function field analogues of Shimura varieties and their $p$-adic uniformization. The first such analogues are the moduli spaces of Drinfeld modules~\cite{Drinfeld-elliptic-modules}. In order to prove Langlands reciprocity \cite{Drinfeld-GL2,Drinfeld-moduli-Fsheaves} for $\GL_2$ over the function field $\FcnFld$ of a smooth projective curve $X$ over a finite field $\F_q$, Drinfeld~\cite{Drinfeld-moduli-Fsheaves} defined \emph{global $\gpSch$-shtukas} (which he called ``$F$-sheaves'') and constructed their moduli spaces. These were later generalized by Varshavsky~\cite{Varshavsky04} and V.~Lafforgue~\cite{Lafforgue12} to the case of arbitrary constant split reductive groups $\gpSch$, and by Ng\^o and Ng\^o Dac~\cite{NgoNgo,NgoDac13} to the case of certain non-constant groups $\gpSch$. For general flat affine group schemes $\gpSch$ of finite type over $X$, the moduli spaces of bounded global $\gpSch$-shtukas with $n$ legs were constructed by Arasteh Rad and the first author \cite{AH_Unif} as separated Deligne-Mumford stacks locally of finite type over $\F_q$.

More precisely, an (\emph{iterated}) \emph{global $\gpSch$-shtuka $\underline{\mcE}=(\underline x,\mcE^{(i)},\varphi^{(i)}\colon i=0,\ldots,n-1)$ with $n$ legs} over a base scheme $S$ consists of a tuple $\underline x=(x_1,\ldots,x_n)\in X^n(S)$ called \emph{legs}, and $\gpSch$-bundles $\mcE^{(i)}$ over $X_S:=X\times_{\F_q} S$ together with isomorphisms (called \emph{modifications}) $\varphi^{(i-1)}\colon \mcE^{(i-1)}|_{X_S\smallsetminus\Gamma_{x_i}}\isoto \mcE^{(i)}|_{X_S\smallsetminus\Gamma_{x_i}}$ of $\gpSch$-bundles outside the graph $\Gamma_{x_i}$ of $x_i$, with $\mcE^{(n)}:={}^{\tau\!}\mcE^{(0)}$, where $\tau:=\id_X \times \Frob_{q,S}$ (see Definition~\ref{Def_Sht} for the general setup). 
\iffalse
We visualize $\underline{\mcE}$ as
\begin{equation}\label{EqDef_Sht}
\xymatrix @C+1pc {
\underline{\mcE}:=\Bigl(\underline x,\, \mcE^{(0)} \ar@{-->}[r]^-{\varphi^{(0)}}_-{x_1} & \mcE^{(1)} \ar@{-->}[r]^-{\varphi^{(1)}}_-{x_2} & \ldots \ar@{-->}[r]^-{\varphi^{(n-2)}}_-{x_1} & \mcE^{(n-1)} \ar@{-->}[r]^-{\varphi^{(n-1)}}_-{x_n} & {}^{\tau\!}\mcE^{(0)}\Bigr).
}
\end{equation}
\fi 
Drinfeld considered the case where $\gpSch:=\GL_r$ and $n=2$ (see Example~\ref{ExDrinfeld}). In addition to \cite{AH_Unif}, various other special cases have been treated in literature: Hilbert-Blumenthal Shtukas by Stuhler~\cite{Stuhler} for $\gpSch=\Res_{X'|X}\SL_r$ where $X'$ is a smooth curve, finite flat over $X$ on which $\infty$ splits completely (``totally real case''); $\mathscr{D}$-elliptic sheaves by Laumon, Rapoport and Stuhler~\cite{Laumon-Rapoport-Stuhler} where $\gpSch$ is the unit group of (a maximal order in) a central division algebra over $\FcnFld$ (see Example~\ref{ExLRS}); generalizations of $\mathscr{D}$-elliptic sheaves by L.~Lafforgue~\cite{Lafforgue-Ramanujan}, Lau~\cite{Lau07}, Ng\^o~\cite{Ngo06} and Spiess~\cite{Spiess10}; global $\gpSch$-shtukas for $\gpSch=\PGL_2$ in~\cite{YunZhang,YunZhang2}, and for unitary groups $\gpSch$ in~\cite{FYZ,FYZ2}. In all these cases, the modifications $\varphi^{(i)}$ were suitably bounded.

In the current article, we study uniformization of the moduli stacks of $\gpSch$-shtukas with \emph{colliding legs}. We generalize the notion of boundedness from \cite{AH_Unif,Bieker}. We define \emph{bounds} $\mcZ$ in Section~\ref{subsec:Bounds} as closed subschemes of the Beilinson-Drinfeld Grassmannian $\Gr_{\gpSch,X^n,I_\bullet}$ which are defined over their \emph{reflex scheme}. The latter generalizes the notion of a reflex field, which is familiar from the theory of Shimura varieties. Here we allow more general reflex schemes than in \cite{AH_Unif,Bieker}. In particular, the modifications $\varphi^{(i)}$ can be bounded by cocharacters of $\gpSch$. But we also allow the bound to be defined in the fiber of $\Gr_{\gpSch,X^n,I_\bullet}$ at a closed point of $X$ when the leg is constant at that point. This is needed to recover and generalize the bounds used in \cite{Stuhler,Laumon-Rapoport-Stuhler}. We define the stack $\Sht_{\gpSch,X^n,I_\bullet}^\mcZ$ of global $\gpSch$-shtukas with $n$ legs bounded by $\mcZ$. In Theorems~\ref{Thm_ShtBounded} and \ref{ThmLocMod}, we prove the following result (in more detail and generality).

\begin{thm}\label{Thm_ShtBounded-intro}
The stack $\Sht_{\gpSch,X^n,I_\bullet}^\mcZ$ is a Deligne-Mumford stack locally of finite type and separated over $X^n$. The bound $\mcZ$ is a local model for $\Sht_{\gpSch,D,X^n,I_\bullet}^{\mcZ}$, i.e.~both are isomorphic locally for the \'etale topology. 
\end{thm}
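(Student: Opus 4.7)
The plan is to realize $\Sht_{\gpSch,X^n,I_\bullet}^\mcZ$ as the Frobenius-fixed locus inside the bounded iterated Hecke stack, to deduce the geometric properties from those of $\Bun_\gpSch$ and the Beilinson-Drinfeld Grassmannian, and then to obtain the local model isomorphism by étale-locally trivializing $\mcE^{(0)}$ near the legs.

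\textbf{Algebraicity via the Hecke stack.} First I would introduce the bounded iterated Hecke stack $\Hecke_{\gpSch,X^n,I_\bullet}^\mcZ$, classifying tuples $(\underline x,\mcE^{(i)},\varphi^{(i)})$ of bounded modifications without the Frobenius identification $\mcE^{(n)}={}^\tau\mcE^{(0)}$. The projection
\[
\Hecke_{\gpSch,X^n,I_\bullet}^\mcZ \;\longrightarrow\; \Bun_\gpSch \times X^n, \qquad (\underline x,\mcE^{(i)},\varphi^{(i)}) \;\longmapsto\; (\mcE^{(0)},\underline x),
\]
is, after Beauville-Laszlo gluing and the choice of a trivialization of $\mcE^{(0)}$ near the legs, fppf-locally represented by the bound $\mcZ \subset \Gr_{\gpSch,X^n,I_\bullet}$. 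Since $\Bun_\gpSch$ is algebraic (Heinloth) and $\mcZ$ is a closed subscheme of an ind-scheme of ind-finite type over $X^n$, the stack $\Hecke_{\gpSch,X^n,I_\bullet}^\mcZ$ is algebraic and locally of finite type over $X^n$.

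\textbf{Deligne--Mumford property and separatedness.} Next I would identify
\[
\Sht_{\gpSch,X^n,I_\bullet}^\mcZ \;=\; \Hecke_{\gpSch,X^n,I_\bullet}^\mcZ \times_{\Bun_\gpSch\times\Bun_\gpSch} \Bun_\gpSch,
\]
where $\Hecke$ maps via $(\mcE^{(0)},\mcE^{(n)})$ and the right-hand $\Bun_\gpSch$ maps by $\mcE \mapsto (\mcE,{}^\tau\mcE)$. Automorphisms of a shtuka then correspond to $\tau$-fixed elements of $\Aut(\mcE^{(0)})$, which by a Lang-type argument form an étale group scheme; this yields the Deligne-Mumford property. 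For separatedness over $X^n$, I would apply the valuative criterion: given two shtukas over a DVR, their sheaf of isomorphisms is the $\tau$-fixed subscheme of the scheme of isomorphisms between the underlying bundles, and such Frobenius-fixed loci in affine group schemes are finite étale, so a generic isomorphism extends uniquely.

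\textbf{Local model via trivialization near the legs.} For the local model assertion I would proceed as in the analogous statement for the Beilinson-Drinfeld Grassmannian in \cite{AH_Unif}. Given an $S$-point of $\Sht_{\gpSch,D,X^n,I_\bullet}^\mcZ$, pass to an étale cover of $S$ over which $\mcE^{(0)}$ admits a trivialization on the formal neighborhood of $\sum_i \Gamma_{x_i}\cup (D\times S)$ inside $X_S$; such trivializations exist étale-locally because $\gpSch$ is smooth with connected fibers. Beauville-Laszlo then converts the chain of modifications $\varphi^{(i)}$ in this trivialization into an $S$-point of $\Gr_{\gpSch,X^n,I_\bullet}$ bounded by $\mcZ$, producing a morphism to $\mcZ$. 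The level structure along $D$ rigidifies the trivialization so that this morphism becomes an étale-local isomorphism. The main obstacle will be verifying that the trivialization can be chosen compatibly with the Frobenius identification $\mcE^{(n)}={}^\tau\mcE^{(0)}$, so that the resulting map genuinely lands in $\mcZ$; this is the one place where the shtuka structure enters the local-model comparison, and it is handled by a descent argument relative to the action of the positive-loop group $L^+\gpSch$ on both sides.
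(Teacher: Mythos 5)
Your first two steps follow the paper's own route: $\Sht_{\gpSch,D,X^n,I_\bullet}^\mcZ$ is obtained from the bounded Hecke stack $\Hecke_{\gpSch,D,X^n,I_\bullet}^\mcZ$ (a closed substack of $\Hecke_{\gpSch,D,X^n,I_\bullet}\times_{X^n}\widetilde{X}_\mcZ$, hence Artin and locally of finite type, cf.\ Theorem~\ref{Thm_HeckeBounded}) by the fiber product along $(\mcE^{(0)},\mcE^{(k)})$ and $\id\times\Frob_q$ as in \eqref{Eq_DiagSht}, and the Deligne--Mumford property and separatedness are inherited from the unbounded stack, which the paper simply quotes from \cite[Theorem~3.15]{AH_Unif} (going back to \cite{Varshavsky04,Lafforgue12}). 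Your Lang-type sketch of unramifiedness of the diagonal and the valuative-criterion argument are an outline of exactly that cited proof and are acceptable, with the caveat that before taking $\tau$-fixed points you must know that the relevant $\underline\Isom$-sheaf of bundles-with-modifications is representable, affine and of finite type over the base (this is where projectivity of $X$ enters), and that finiteness of the fixed locus over a general base is precisely the properness one is trying to prove, so the argument has to be organized as in loc.\ cit.\ rather than deduced from the field case.

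The local model part, however, has a genuine gap. Trivializing $\mcE^{(k)}={}^{\tau\!}\mcE^{(0)}$ along $\widehat\Gamma_{\underline x}$ over an \'etale cover and applying Beauville--Laszlo only produces the roof $\Sht_{\gpSch,D,X^n,I_\bullet}^\mcZ\xleftarrow{\;\pi_1\;}\widetilde{\Sht}_{\gpSch,D,X^n,I_\bullet}^\mcZ\xrightarrow{\;\pi_2\;}Z$ of \eqref{EqLocalModel}; the theorem is that for an \'etale-local section $s$ of the $\mcL^+_{X^n}\gpSch$-torsor $\pi_1$ the composite $\pi_2\circ s$ is \emph{\'etale}, and nothing in your argument addresses why. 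The reason you offer --- that the $D$-level structure ``rigidifies the trivialization so that this morphism becomes an \'etale-local isomorphism'' --- is not correct: $D$ is disjoint from the legs, the statement holds with $D=\varnothing$, and rigidity along $D$ has no bearing on the infinitesimal behaviour of the map to $Z$. The actual key mechanism, which is the entire content of the paper's proof of Theorem~\ref{ThmLocMod} (following Varshavsky, Lafforgue and Arasteh Rad--Habibi), is that in the defining diagram the two morphisms $\Bun_{\gpSch,D}\to\Bun_{\gpSch,D}\times_{\F_q}\Bun_{\gpSch,D}$, namely $\id\times\Frob_q$ and $\id\times\gpSch$ (constant trivial bundle), have the \emph{same differential} $(\id,0)$ because the differential of Frobenius vanishes; hence the shtuka fiber product and the Grassmannian fiber product have the same deformation theory relative to $\Bun_{\gpSch,D}$, $\pi_2$ is formally smooth, and $\pi_2\circ s$ is \'etale. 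Relatedly, the ``main obstacle'' you single out (choosing the trivialization compatibly with $\mcE^{(n)}={}^{\tau\!}\mcE^{(0)}$) is not where the difficulty lies: boundedness is tested after trivializing $L^+_{\underline x}({}^{\tau\!}\mcE^{(0)})$ and is independent of that choice by the $\mcL^+_{X^n}\gpSch$-invariance of $Z$ (Definition~\ref{Def_ShtBounded}); what your sketch instead needs, and omits, is the vanishing-differential argument above together with the fact that \'etale-local sections of $\pi_1$ exist because the action on $Z$ factors through the smooth finite-dimensional truncation $\mcL^{+,(c_i)_i}_{X^n}\gpSch$ (Lemma~\ref{LemmaLoopActionOnGr}).
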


Our main result in this article is the uniformization of these stacks. In Section~\ref{subsect:history}, we give a summary of the history of uniformization for Shimura varieties and moduli stacks of shtukas. For the latter, uniformization was proven in full generality for smooth affine group schemes $\gpSch$ over $X$ with connected fibers and reductive generic fiber by Arasteh Rad and the first author \cite{AH_Unif} under the assumption that the legs $x_i$ stay \emph{disjoint}. For \emph{colliding legs}, uniformization of shtuka stacks is only known in the very special case of $\gpSch=\GL_r$ or $\Res_{X'|X}\SL_r$ as above, with two legs, one fixed at a closed point $\infty\in X$ with basic bound and the other leg moving into $\infty$ bounded by the cocharacter $\mu=(d,0,\ldots,0)$ for a positive integer $d$ (see~\cite{Drinfeld-elliptic-modules, Drinfeld-commutative-subrings, Stuhler, Blum-Stuhler, HartlAbSh}). In all of the above articles, the bound at $\infty$ was imposed by using chains of $\gpSch$-bundles. Our results recover the case of two legs, one fixed at $\infty$ with minimal basic bound and the other leg moving into $\infty$ (see $\S$\ref{subsec:Chains}), but we generalize to arbitrary smooth, affine group schemes $\gpSch$ over $X$ with connected fibers and reductive generic fiber $\gengpSch$ and arbitrary cocharacter $\mu$ as bound. 

Our \emph{first innovation} is to replace the chains from \cite{Drinfeld-elliptic-modules, Drinfeld-commutative-subrings, Stuhler, Blum-Stuhler, HartlAbSh} by a suitably chosen bound at $\infty$. 
Let $\FcnFld_\infty$ denote the completion of $\FcnFld$ at $\infty$, and let $\Breve{\FcnFld}_\infty$ be the completion of the maximal unramified extension of $\FcnFld_\infty$. Let $\gpSch_\infty:=\gpSch\times_X \Spec\Oo_\infty$ be the base change to the valuation ring $\Oo_\infty$ of $\FcnFld_\infty$. Take $\beta\in \gengpSch(\Breve{\FcnFld}_\infty)$ such that $\beta \gpSch_\infty\beta^{-1}=\gpSch_\infty$. Thus in a sense, $\beta$ is as small as possible (see \S\,\ref{Def_beta}). We then consider the stack $\Sht_{\gpSch,X\times\infty}$ of global $\gpSch$-shtukas with two legs, of which one is allowed to vary over all of $X$ and the other is fixed at $\infty$. For a conjugacy class of cocharacters $\mu\colon\mathbb{G}_m \to \gengpSch$ and a compact open subgroup $H\subset\gengpSch(\mathbb{A}^\infty)$, we define the stack $\Sht_{\gpSch,H,\widehat{\infty}\times\infty}^{\mcZ(\mu,\beta)}$ of such global $\gpSch$-shtukas with $H$-level structure, whose varying leg $x\colon S\to X$ factors through $\widehat{\infty}:=\Spf\Oo_\infty$ and whose modification $\varphi^{(0)}$ (respectively $\varphi^{(1)}$) at $x$ (respectively $\infty$) is bounded by $\mu$ (respectively by $\beta$) (see Definitions~\ref{DefZmubeta} and \ref{Def_ShtBounded}).

Our \emph{second innovation} is to construct the $\beta^{-1}$-twisted global-local functor $L^+_{\infty,\mcM}$ in Definition~\ref{DefGlobLocM}, which relates such global $\gpSch$-shtukas to local $\mcM$-shtukas (see \S\ref{subsec-LocSht-and-RZ}), where $\mcM$ is the inner form of $\gpSch_\infty$ given by $\beta^{-1}$. 
The cocharacter $\mu$ gives rise to a cocharacter $\mu$ of $\mcM$ by which we can bound local $\mcM$-shtukas (see Definition~\ref{DefLocShtBounded} for more details). Let $\Oo_\mu$ be the ring of integers in the reflex field extension of $\FcnFld_\infty$ of $\mu$ and let $\Breve{\Oo}_\mu$ be the completion of the maximal unramified extension of $\Oo_\mu$. For any global $\gpSch$-shtuka $\underline{\mcE}\in \Sht_{\gpSch,H,\widehat{\infty}\times\infty}^{\mcZ(\mu,\beta)}(S)$, the local $\mcM$-shtuka $L^+_{\infty,\mcM}(\underline{\mcE})$ is bounded by $\mu$ (Corollary~\ref{CorGlobalLocalFunctorWithChains}). Via this twisted global-local functor, we obtain a Serre-Tate Theorem for Shtukas. 
\begin{thm}\label{Serre-Tate-thm-intro}
There is an equivalence between deformations of $\underline{\mcE}$ and deformations of $L^+_{\infty,\mcM}(\underline{\mcE})$.    
\end{thm}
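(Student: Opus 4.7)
The plan is to follow the classical Serre--Tate template. I would reduce the statement to the infinitesimal case: given a closed immersion $i\colon S_0 \hookrightarrow S$ defined by a locally nilpotent sheaf of ideals $\mcJ \subset \Oo_S$, it suffices to show that pullback along $i$ induces an equivalence between the groupoid of deformations of $\underline{\mcE}$ to $S$ and the groupoid of deformations of the local $\mcM$-shtuka $L^+_{\infty,\mcM}(\underline{\mcE})$ to $S$. Both legs of $\underline{\mcE}$ lie in the formal neighborhood of $\infty$, so I would decompose the global data into a piece living away from $\infty$ and a piece on the formal completion at $\infty$, and treat the two pieces separately.

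Away from $\widehat{\infty}_S$, both modifications $\varphi^{(0)}$ and $\varphi^{(1)}$ are isomorphisms, so the composition $\varphi^{(1)} \circ \varphi^{(0)}$ turns $\mcE^{(0)}$ restricted to $(X\smallsetminus\{\infty\})\times S$ into a $\gpSch$-bundle equipped with a $\tau$-structure. A Drinfeld-style rigidity argument shows that the pair ($\gpSch$-bundle together with Frobenius structure) is rigid under nilpotent thickenings: any lift of its restriction to $S_0$ exists and is unique up to unique isomorphism. This eliminates all "global" deformation data and concentrates the infinitesimal deformations at $\infty$. On the formal disc $\widehat{\infty}_S$, the formal completion of $\underline{\mcE}$ together with its modifications is, by Definition~\ref{DefGlobLocM}, tautologically the local $\mcM$-shtuka $L^+_{\infty,\mcM}(\underline{\mcE})$. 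The hypothesis $\beta\gpSch_\infty\beta^{-1}=\gpSch_\infty$ ensures that the $\beta^{-1}$-inner twist descends to integral $\gpSch_\infty$-torsors, so that deformations of the local $\mcM$-shtuka correspond bijectively to deformations of the formal completion of $\underline{\mcE}$ at $\infty$, with the appropriate $\mu$- and $\beta$-boundedness preserved (Corollary~\ref{CorGlobalLocalFunctorWithChains}).

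The two pieces are then glued via a Beauville--Laszlo type descent statement for $\gpSch$-bundles on $X_S$, splitting $X_S$ into the complement of $\widehat{\infty}_S$ and the formal completion, glued along the punctured formal disc; compatibility of the modifications is verified on this punctured disc, where both pieces agree with the pullback of $\underline{\mcE}_{S_0}$. Full faithfulness follows from the same rigidity and gluing, since a morphism of global shtukas is determined by its restrictions to the two pieces.

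The main obstacle I expect is the bookkeeping for the $\beta$-twist: one must verify that the inner form $\mcM$ correctly sees the $\tau$-conjugation data on the punctured formal disc, and that the bound $\mcZ(\mu,\beta)$ on the global side corresponds, after the twist, to the $\mu$-bound on the local $\mcM$-shtuka under deformations. The Frobenius rigidity away from $\infty$ is by now standard, and Beauville--Laszlo descent for smooth affine $\gpSch$ with connected fibers is available from the setup of \cite{AH_Unif}; it is the interaction of $\beta$ with Frobenius on the integral formal disc, together with matching the two boundedness conditions, where the real work lies.
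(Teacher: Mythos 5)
Your proposal is correct and is essentially the paper's own argument: the paper likewise reduces to a thickening with $\mathcal{I}^q=(0)$, uses that $\tau$ then factors through the closed subscheme so that the data away from $\infty$ deform canonically (packaged there as the Frobenius-pullback shtuka $[-1]\bigl(i^*\underline{\bar{\mcE}}\bigr)$ together with the quasi-isogeny built from $\bar{\varphi}'\circ\bar{\varphi}$ and $\Pi$), and then recovers the global deformation from the given deformation of $L^+_{\infty,\mcM_{\beta^{-1}}}(\underline{\mcE})$ by rigidity of quasi-isogenies (Proposition~\ref{PropRigidityLocal}) combined with the Beauville--Laszlo-type gluing of Proposition~\ref{PropQIsogLocalGlobal}\ref{PropQIsogLocalGlobal_B}, which is exactly where your $\beta$-twist bookkeeping and the reconstruction of $\mcE'$ from the composite modification are carried out. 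The only difference is organizational: the paper writes down an explicit inverse functor via the chain shift and quasi-isogeny rigidity rather than literally splitting the deformation groupoid into an away-from-$\infty$ piece and a formal piece at $\infty$, but the underlying mechanism — Frobenius rigidity over nilpotent thickenings plus gluing in the twisted local data — coincides with yours.
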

Theorem \ref{Serre-Tate-thm-intro} allows us to prove the following Theorem \ref{Uniformization1Intro}. More precisely, we fix a global framing object $\gpSch$-shtuka $\FramingObject\in \Sht_{\gpSch,\widehat{\infty}\times\infty}^{\mcZ(\mu,\beta)}(\overline{\F}_\infty)$ over $\overline{\F}_\infty$. Let $\ulLocalFramingObject$ be its associated local $\mcM$-shtuka under the $\beta$-twisted global-to-local functor (see Definition \ref{DefGlobLocM}). Let $\RZ_{\mcM,\ulLocalFramingObject}^{\leq \mu}$ be the \emph{Rapoport-Zink space} of $\ulLocalFramingObject$ bounded by $\mu$. 
\iffalse
\begin{align*}
    \RZ_{\mcM,\ulLocalFramingObject}^{\leq \mu}(S):=\Bigl\{\,(\underline{\mcL},\hat{\delta}) & \text{ where $\underline{\mcL}$ is local $\mcM$-shtuka over $S$ bounded by $\mu$}, \nonumber 
    \\
    & \text{and }\hat{\delta}:\underline{\mcL}\to \ulLocalFramingObject_S \text{ is a quasi-isogeny}\,\Bigr\}. 
\end{align*}
\fi 
It was shown by Arasteh Rad and the first author in \cite[Theorem~4.18]{AH_Local} that the Rapoport-Zink space is representable by a formal scheme locally formally of finite type over $\Spf\Breve{\Oo}_\mu$. Finally, let $I_\FramingObject(\FcnFld)$ be the quasi-isogeny group of $\FramingObject$ (see Remark~\ref{RemIsogGlobalSht}). With this setup, we generalize the uniformization theorem of \cite{Drinfeld-elliptic-modules, Drinfeld-commutative-subrings, Stuhler, Blum-Stuhler, HartlAbSh} to the following. 

\begin{thm} \label{Uniformization1Intro}
(a) There is a canonical morphism 
\begin{equation}\label{EqUnifIntro}
\Theta_{\FramingObject}\colon  I_{\FramingObject}(\FcnFld) \big{\backslash}\bigl(\RZ_{\mcM,\ulLocalFramingObject}^{\leq \mu}\times \gengpSch(\mathbb{A}^\infty)/H\bigr) \;\longto\; \Sht_{\gpSch,H,\widehat{\infty}\times\infty}^{\mcZ(\mu,\beta)}\widehattimes \Spf\Breve{\Oo}_\mu
\end{equation}
of formal algebraic Deligne-Mumford stacks over $\Spf\Breve{\Oo}_\mu$, which is an ind-proper and formally \'etale monomorphism.

(b) Let $\mathcal{X}:=\mathcal{X}_\FramingObject$ be the image of $\Theta_{\FramingObject}$ as in Lemma~\ref{LemmaImageOfTheta}\ref{LemmaImageOfTheta_B}. It is the isogeny class of $\FramingObject$. Let $\Sht_{\gpSch,H,\widehat{\infty}\times\infty}^{\mcZ(\mu,\beta)}{}{}_{/\mathcal{X}}$ be the formal completion of $\Sht_{\gpSch,H,\widehat{\infty}\times\infty}^{\mcZ(\mu,\beta)}\widehattimes \Spf\Breve{\Oo}_\mu$ along the set $\mathcal{X}_\FramingObject$. Then $\Theta_{\FramingObject}$ induces an isomorphism of locally noetherian, adic formal algebraic Deligne-Mumford stacks locally formally of finite type over $\Spf\Breve{\Oo}_\mu$
$$
\Theta_{\FramingObject,\mathcal{X}}\colon  I_{\FramingObject}(\FcnFld) \big{\backslash}\bigl(\RZ_{\mcM,\ulLocalFramingObject}^{\leq \mu}\times \gengpSch(\mathbb{A}^\infty)/H\bigr)\;\isoto\; \Sht_{\gpSch,H,\widehat{\infty}\times\infty}^{\mcZ(\mu,\beta)}{}_{/\mathcal{X}}\,.
$$ 
\end{thm}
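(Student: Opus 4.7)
The plan is to construct $\Theta_{\FramingObject}$ by a modification procedure and then leverage the Serre-Tate theorem for shtukas to control its infinitesimal behavior. Given an $S$-point $(\underline{\mcL},\hat{\delta})$ of $\RZ_{\mcM,\ulLocalFramingObject}^{\leq \mu}$ together with a class $gH\in \gengpSch(\mathbb{A}^\infty)/H$, first pull back $\FramingObject$ to $S$ and use $\hat{\delta}\colon \underline{\mcL}\to \ulLocalFramingObject_S$ as gluing data at $\infty$ (in the spirit of Beauville--Laszlo, but for $\gpSch$-bundles with shtuka structure) to produce a global $\gpSch$-shtuka whose associated local $\mcM$-shtuka at $\infty$ is $\underline{\mcL}$; at finite places, alter the $H$-level structure by $g$. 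The bound $\mcZ(\mu,\beta)$ splits under the $\beta^{-1}$-twisted global-local functor $L^+_{\infty,\mcM}$ into the local bound $\mu$ on the RZ side and the framing bound $\beta$ coming from $\FramingObject$, so the output lies in $\Sht_{\gpSch,H,\widehat{\infty}\times\infty}^{\mcZ(\mu,\beta)}$. Invariance under the diagonal $I_{\FramingObject}(\FcnFld)$-action follows because a quasi-isogeny $j$ of $\FramingObject$ simultaneously conjugates $\hat{\delta}$ and translates $gH$, and $j$ itself provides the required isomorphism between the two resulting global shtukas.

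The key technical input is formal étaleness. By Theorem~\ref{Serre-Tate-thm-intro}, for any nilpotent thickening $S\hookrightarrow \widetilde{S}$ and any global shtuka on $S$ in the target, deformations to $\widetilde{S}$ are equivalent to deformations of the associated local $\mcM$-shtuka bounded by $\mu$; but these are exactly the $\widetilde{S}$-valued lifts parametrized by $\RZ_{\mcM,\ulLocalFramingObject}^{\leq \mu}$, since the quasi-isogeny $\hat{\delta}$ is rigid on nilpotent thickenings and the level structure is étale-locally constant. This bijection of deformation theories yields formal étaleness of $\Theta_{\FramingObject}$. The monomorphism property follows because any two preimages of the same global shtuka give rise, via their framings to $\FramingObject$, to a quasi-isogeny of $\FramingObject$ defined over $\FcnFld$ intertwining the two local RZ-data and level classes, i.e., an element of $I_{\FramingObject}(\FcnFld)$. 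Ind-properness is obtained by writing the source as a countable union of quasi-compact pieces (indexed modulo the locally finite $I_{\FramingObject}(\FcnFld)$-action) and checking the valuative criterion: the limit of a quasi-isogeny along a trait extends by the corresponding ind-properness of $\RZ_{\mcM,\ulLocalFramingObject}^{\leq \mu}$ from \cite[Theorem~4.18]{AH_Local}.

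For part (b), by construction the image $\mathcal{X}_{\FramingObject}$ consists precisely of those $\overline{\F}_\infty$-valued global shtukas that admit a quasi-isogeny to $\FramingObject$, i.e.\ the isogeny class, since quasi-isogenies on the global side correspond under $L^+_{\infty,\mcM}$ to quasi-isogenies on the local side parametrized by the RZ space, combined with adelic translations at finite places. Combining this surjectivity onto $\mathcal{X}_{\FramingObject}$ with the formally étale monomorphism property established in part (a), the induced morphism $\Theta_{\FramingObject,\mathcal{X}}$ onto the formal completion along $\mathcal{X}_{\FramingObject}$ is a bijection on underlying points and induces isomorphisms on complete local rings, hence an isomorphism of locally noetherian adic formal Deligne-Mumford stacks. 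The main obstacle I expect is verifying the compatibility of bounds under the gluing construction: one must check that the product bound $\mcZ(\mu,\beta)$ is indeed the correct bound whose two factors split cleanly into $\mu$ on $\underline{\mcL}$ and $\beta$ on the fixed leg at $\infty$ of $\FramingObject$, which requires careful bookkeeping with the twisted form $\mcM$ of $\gpSch_\infty$ and the reflex scheme of $\mathcal{Z}(\mu,\beta)$.
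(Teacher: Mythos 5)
Your part (a) follows the paper's route: the morphism is the one from \S\ref{PointDefTheta} (pull back $\FramingObject$ along $\hat{\delta}$ via Proposition~\ref{PropQIsogLocalGlobal}\ref{PropQIsogLocalGlobal_B} and twist the level structure by $\check{\mathcal{V}}_\delta^{-1}$), the monomorphism property is proved exactly as in Lemma~\ref{LemmaThetaismono} by producing an element of $I_{\FramingObject}(\FcnFld)$ (you should invoke Proposition~\ref{Prop7.1} to know the resulting self-quasi-isogeny of $\FramingObject_S$ is constant, i.e.\ comes from $\overline{\F}_\infty$), formal \'etaleness rests on rigidity of quasi-isogenies as in Lemma~\ref{LemmaThetaEtaleProper} (your Serre--Tate phrasing is the same mechanism), and ind-properness comes from ind-properness of $\RZ_{\mcM,\ulLocalFramingObject}^{\leq\mu}$ -- note that \cite[Theorem~4.18]{AH_Local} only gives representability, separatedness and local formal finite type; the ind-properness used here comes from $\RZ_{\mcM,\ulLocalFramingObject}^{\leq\mu}$ being ind-closed in the ind-proper $\Fl_{\mcM,\infty}$ (Remark~\ref{RemRZIndProper}). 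The boundedness ``bookkeeping'' you flag as an obstacle is genuinely needed and is carried out in the paper through the chain interpretation (Proposition~\ref{PropBoundWithChains}, Corollaries~\ref{CorShtWithChains} and \ref{CorGlobalLocalFunctorWithChains}); it is not something that can be waved through, but you correctly identified it.

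The genuine gap is in part (b). You assert that a surjective, formally \'etale monomorphism of locally noetherian adic formal algebraic Deligne--Mumford stacks is ``a bijection on underlying points and induces isomorphisms on complete local rings, hence an isomorphism.'' This inference is not available as stated: formal \'etaleness of $\Theta_{\FramingObject,\mathcal{X}}$ only controls infinitesimal liftings, and to convert it into \'etaleness of honest morphisms of algebraic stacks one must first know that the morphism is \emph{adic}, i.e.\ that $\Theta_{\FramingObject,\mathcal{X}}^*(\mathcal{I}_{\TargetTheta})=\mathcal{I}_{\SourceTheta}$ for the largest ideals of definition, so that the truncations $\SourceTheta_m\to\TargetTheta_m$ are base changes of $\Theta_{\FramingObject,\mathcal{X}}$ and are \'etale, locally of finite presentation monomorphisms; only then does the \'etale-monomorphism theorem (EGA~IV, 17.9.1) together with surjectivity give an isomorphism levelwise, and passing to the colimit gives the isomorphism of formal stacks. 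Establishing adicness is the technical core of the paper's proof of (b) and requires quasi-compactness of the reduced-level morphism (Lemma~\ref{LemmaThetaRedQC}), which in turn rests on Lemma~\ref{LemmaImageOfTheta}\ref{LemmaImageOfTheta_A}: the images $\widetilde{\Theta}_{\FramingObject}(T_j)$ of representatives of $I_{\FramingObject}(\FcnFld)$-orbits of irreducible components are \emph{closed} (by ind-properness) and intersect each other only in a locally finite way (by the monomorphism property); quasi-compactness then upgrades the truncated monomorphisms to proper monomorphisms, hence closed immersions, hence isomorphisms on the reduced level, from which adicness follows (Lemma~\ref{LemmaThetaisadic}). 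None of this is addressed in your proposal, and without it the step ``surjective $+$ mono $+$ formally \'etale $\Rightarrow$ isomorphism of formal completions'' is unsupported; in particular your claim about complete local rings presupposes exactly the adicness you have not proved.
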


Our theorem recovers all known uniformization results for moduli spaces of shtukas with colliding legs. For the moduli space of $\mathscr{D}$-elliptic sheaves, we hereby prove the result expected by Laumon, Rapoport and Stuhler~\cite[(14.19)]{Laumon-Rapoport-Stuhler}. We prove Theorem~\ref{Uniformization1Intro} slightly more generally in Theorems~\ref{Uniformization1} and \ref{Uniformization2} and discuss the compatibility of the morphisms with the actions through Hecke correspondences and other actions. Combining our techniques with the ones from \cite{AH_Unif}, one can even extend our result to the case of several disjoint pairs of two colliding legs, such that in each pair one leg varies and the other leg is fixed at a place $\infty_i$ and bounded by some element $\beta_i$. Then the uniformizing space will be a product of Rapoport-Zink spaces for inner forms of $\gpSch$; see Remark~\ref{RemManyPairsOfLegs}.

As an application of our Theorem \ref{Uniformization1Intro}, we prove the function field analogue of the Langlands-Rapoport conjecture for moduli spaces of global $\gpSch$-Shtukas with colliding legs. The case of disjoint legs was solved by Arasteh Rad and the first author in \cite{AH_LRConj}. 
The Langlands-Rapoport conjecture was first proposed for Shimura varieties in \cite{Langlands-Jugendtraum}, which describes some of the ideas flowing from Kronecker's \textit{Jugendtraum}. The conjecture, following the works of Ihara \cite{Ihara1,Ihara2-Vol1, Ihara2-Vol2}, is an essential part of the Langlands program \cite{Langlands2, Langlands3, Langlands4} to express the zeta function of a Shimura variety as a product of automorphic $L$-functions. This conjecture was made more precise by Kottwitz \cite{Kottwitz-lambda-adic} and then further refined by Langlands-Rapoport \cite{Langlands-Rapoport-gerbes} using the formalism of motives. In the Shimura varieties setting, there has been progress towards this conjecture in the case of abelian type Shimura varieties at hyperspecial level at $p$ \cite{Kisin-mod-p-points, Kisin-Shin-Zhu}. 

In the function field setting, we consider the $\FcnFld$-linear semi-simple Tannakian category $\mathcal{M}ot_X^{\infty
}$ of ``$X$-motives'' away from $\infty$ (see Definition \ref{defn-category-Xmotives}), which generalizes Anderson's $t$-motives \cite{Anderson-tmotives}. It is equipped with a fiber functor $\underline{\omega}$ to vector spaces over $\FcnFld\otimes_{\F_q}\overline{\F}_q$, whose Tannakian fundamental group $\mathfrak{P}:=\mathrm{Aut}^{\otimes}(\underline{\omega}|\mathcal{M}ot_X^{\infty
})$ is the \emph{motivic groupoid}. One can alternatively view $\mathfrak{P}$ as a motivic Galois gerbe via $1\to \mathfrak{P}^{\Delta}\to\mathfrak{P}\to \Gal(\FcnFld\otimes_{\F_q}{\overline{\F}_q}/\FcnFld)\to 1$. Given any global $\gpSch$-Shtuka $\underline{\mcE}\in\Sht_{\gpSch,\varnothing,\widehat{\infty}\times\infty}(\overline{\F}_\infty)$, one can associate a corresponding ``$G$-motive'', which is a tensor functor $h_{\underline{\mcE}}$ from $\Rep_\FcnFld G$ to the category $\mathcal{M}ot_X^{\infty
}$ of ``$X$-motives''; equivalently, $h_{\underline{\mcE}}$ gives a homomorphism (of Galois gerbes) from $\mathfrak{P}$ to the the neutral Galois gerbe $\mathfrak{G}_\gengpSch:=\gengpSch(\Breve{\FcnFld})\rtimes \Gal(\Breve{\FcnFld}/\FcnFld)$ of $\gengpSch$. To each such homomorphism $h$, one can attach a set $X^{\infty}(h)$, which corresponds to ``prime-to-$\infty$'' quasi-isogenies (i.e.~they are isomorphisms at $\infty$), and a set $X_{\infty}(h)$, which corresponds to ``at-$\infty$'' quasi-isogenies (i.e.~they are isomorphisms away from $\infty$). Let $I_h$ be the ``isogeny group'' of $h$ (see Remark \ref{RemIsogGlobalSht}). The Langlands-Rapoport conjecture gives a precise description of the action of $I_h(\FcnFld)$ on $X_{\infty}(h)\times X^{\infty}(h)$. 

\begin{thm}
    The $\overline{\F}_{\infty}$-points of the Shtuka space $\Sht_{\gpSch,H,\widehat{\infty}\times\infty}^{\mathcal{Z}(\mu,\beta)}$ has the form predicted by the Langlands-Rapoport conjecture, i.e. \begin{equation}
    \Sht_{\gpSch,H,\widehat{\infty}\times\infty}^{\mathcal{Z}(\mu,\beta)}(\overline{\F}_{\infty})=\coprod\limits_h I_h(\FcnFld)\backslash X_{\infty}(h)\times X^{\infty}(h)/H,
    \end{equation}
    compatible with Hecke correspondences, Frobenius, and the action of the center. 
\end{thm}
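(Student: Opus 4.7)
The plan is to deduce this Langlands-Rapoport-type description from the uniformization theorem (Theorem~\ref{Uniformization1Intro}) together with the Tannakian formalism of $X$-motives introduced in Definition~\ref{defn-category-Xmotives}. First, I would decompose the set of $\overline{\F}_\infty$-points of $\Sht_{\gpSch,H,\widehat{\infty}\times\infty}^{\mcZ(\mu,\beta)}$ into a disjoint union of \emph{isogeny classes}, because quasi-isogeny is an equivalence relation on global $\gpSch$-shtukas, and any $\overline{\F}_\infty$-point sits in a unique such class. For each isogeny class, choose a framing object $\FramingObject$; by Theorem~\ref{Uniformization1Intro}(b) the formal completion along this class is uniformized by $I_{\FramingObject}(\FcnFld)\backslash(\RZ_{\mcM,\ulLocalFramingObject}^{\leq\mu}\times \gengpSch(\A^\infty)/H)$, and passing to $\overline{\F}_\infty$-points yields
\[
\Sht_{\gpSch,H,\widehat{\infty}\times\infty}^{\mcZ(\mu,\beta)}(\overline{\F}_\infty)\;=\;\coprod_{[\FramingObject]} I_{\FramingObject}(\FcnFld)\big{\backslash}\bigl(\RZ_{\mcM,\ulLocalFramingObject}^{\leq\mu}(\overline{\F}_\infty)\times \gengpSch(\A^\infty)/H\bigr).
\]

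Next, I would identify each summand with a Langlands-Rapoport datum. Attach to $\FramingObject$ its associated $\gengpSch$-motive $h_{\FramingObject}\colon \Rep_\FcnFld \gengpSch\to \mathcal{M}ot_X^{\infty}$, equivalently a homomorphism of Galois gerbes $h=h_{\FramingObject}\colon \mathfrak{P}\to \mathfrak{G}_\gengpSch$. Two framing objects give equivalent $h$ precisely when they are quasi-isogenous, so the outer coproduct is indexed by equivalence classes of admissible $h$. Under this identification, $I_{\FramingObject}(\FcnFld)$ matches $I_h(\FcnFld)$ by construction (Remark~\ref{RemIsogGlobalSht}). For the ``prime-to-$\infty$'' piece, the tame fiber functor on $\Rep_\FcnFld \gengpSch$ applied to $h$ produces a $\gengpSch$-torsor over $\A^\infty$, whose trivializations (modulo the $H$-level structure) naturally give $X^\infty(h)/H\cong \gengpSch(\A^\infty)/H$. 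For the ``at-$\infty$'' piece, the Rapoport–Zink space $\RZ_{\mcM,\ulLocalFramingObject}^{\leq\mu}$ parametrizes local $\mcM$-shtukas bounded by $\mu$ together with a quasi-isogeny to $\ulLocalFramingObject$; on $\overline{\F}_\infty$-points these correspond exactly to $X_\infty(h)$, since the $\beta^{-1}$-twisted global-local functor $L^+_{\infty,\mcM}$ (Definition~\ref{DefGlobLocM}) translates at-$\infty$ quasi-isogenies of $\gpSch$-shtukas into quasi-isogenies of local $\mcM$-shtukas bounded by $\mu$.

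Assembling, the uniformization isomorphism becomes
\[
\Sht_{\gpSch,H,\widehat{\infty}\times\infty}^{\mcZ(\mu,\beta)}(\overline{\F}_\infty)\;=\;\coprod_{h} I_h(\FcnFld)\big{\backslash} X_\infty(h)\times X^\infty(h)/H,
\]
which is the desired form. The compatibilities then follow from the corresponding compatibilities of $\Theta_{\FramingObject}$: Hecke correspondences act on the $X^\infty(h)$-factor through right multiplication by $\gengpSch(\A^\infty)$ (which commutes with the global-local functor and with the uniformization morphism, cf.~the discussion after Theorem~\ref{Uniformization1Intro}); the geometric Frobenius on $\overline{\F}_\infty$-points corresponds to the natural Frobenius on $\RZ_{\mcM,\ulLocalFramingObject}^{\leq\mu}$, which on $X_\infty(h)$ is induced by the $\tau$-structure; and the central action is built into the Tannakian formalism through the functoriality of $h$.

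The main obstacle is the bijection between isogeny classes of framing global $\gpSch$-shtukas $\FramingObject$ over $\overline{\F}_\infty$ and equivalence classes of admissible Galois gerbe homomorphisms $h\colon\mathfrak{P}\to\mathfrak{G}_\gengpSch$. This is essentially a Tannakian reconstruction statement: one must show that the $\gengpSch$-motive $h_{\FramingObject}$ determines $\FramingObject$ up to quasi-isogeny, and conversely that every admissible $h$ (i.e.~one compatible with the bound $\mcZ(\mu,\beta)$ at $\infty$ and unramified away from the legs) arises from some $\FramingObject$. The forward direction uses that $\Rep_\FcnFld \gengpSch$ generates a faithful Tannakian subcategory and that the at-$\infty$ and away-from-$\infty$ realizations together determine a global $\gpSch$-shtuka up to quasi-isogeny (Honda–Tate type input); the reverse direction requires an existence/algebraization argument producing $\FramingObject$ from the given local data, which is where the hard work lies and where the results of \cite{AH_LRConj} for disjoint legs need to be upgraded using our uniformization to handle the colliding case.
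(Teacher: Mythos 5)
Your argument follows essentially the same route as the paper: decompose $\Sht_{\gpSch,H,\widehat{\infty}\times\infty}^{\mcZ(\mu,\beta)}(\overline{\F}_\infty)$ into isogeny classes, apply the uniformization theorem (Theorem~\ref{Uniformization2}, which on underlying reduced stacks gives exactly $I_{\FramingObject}(\FcnFld)\backslash X_{\mcM}^{\leq\mu}(b)\times\Isom^{\otimes}(\omega,\check{\mathcal{V}}_{\FramingObject})/H\isoto\mathcal{X}_{\FramingObject}$), translate each class into Langlands--Rapoport data via the $\gengpSch$-motive $h_{\FramingObject}$, and inherit the compatibilities with Hecke correspondences, Frobenius and the center from those of $\Theta_{\FramingObject}$. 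The one point where you diverge is your closing paragraph: the ``main obstacle'' you raise --- a bijection between isogeny classes of framing objects and equivalence classes of \emph{admissible} gerbe homomorphisms, in particular the existence of a shtuka realizing every admissible $h$ --- is neither needed for the statement as formulated nor proved in the paper. There the coproduct runs over the homomorphisms $h=h_{\underline{\mcE}}$ attached to the isogeny classes that actually occur in the shtuka space, and the only input beyond uniformization is Lemma~\ref{lemma-ADLV-identied-Xphi}, namely the purely definitional identifications $I_{\underline{\mcE}}=I_{h_{\underline{\mcE}}}$, $X_{\infty}(h_{\underline{\mcE}})=X_{\mcM}^{\leq\mu}(b)$ and $X^{\infty}(h_{\underline{\mcE}})=\Isom^{\otimes}(\omega,\check{\mathcal{V}}_{\underline{\mcE}})$ (note also that the last identification, not the choice-dependent one with $\gengpSch(\A^\infty)/H$ via $\gamma_0$, is the cleaner way to phrase your prime-to-$\infty$ step). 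So the Honda--Tate-type existence and reconstruction claims in your last paragraph should be dropped or clearly flagged as pertaining to a stronger form of the conjecture; as written they make your proof look incomplete at a step that the asserted statement does not require.
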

Note that for the level structure at $\infty$, we take the group scheme $\gpSch_{\infty}$ which is only assumed to be smooth, affine, with connected fibers and reductive generic fiber. In particular, our level structure at $\infty$ does not need to be parahoric (or even hyperspecial). 

\subsection{Historical overview of uniformization} \label{subsect:history}
For convenience of the reader, let us summarize the history of uniformization for Shimura varieties and shtuka stacks. 

\smallskip\noindent
{\bfseries A. Uniformization varieties at infinity.} 
The history begins in the 19th century with (1) elliptic modular curves over the complex numbers $\C$, which can be written as quotients of Poincar\'e's upper halfplane by congruence subgroups. It was generalized by Baily, Borel
, and Shimura who showed that (2) certain quotients of Hermitian symmetric domains by discrete arithmetic groups are algebraic varieties and defined over number fields. Deligne~\cite{Deligne-varietes-de-Shimura} systematically developed the theory of these varieties, which today are called \emph{Shimura varieties} and wrote them as a double quotient of a Hermitian symmetric domain times the ad\`ele points of the corresponding reductive group. All the cases (1), (2) of uniformization are for number fields and over $\C$, i.e.~``at infinity''. Function fields first came into play with (3) Drinfeld modular varieties \cite{Drinfeld-elliptic-modules} which parameterize Drinfeld $A$-modules of rank $r$ where $A=\Gamma(X\smallsetminus\{\infty\},\Oo_X)$ for a fixed closed point $\infty\in X$. Drinfeld $A$-modules have one leg $x\colon S\to\Spec A\subset X$. Let $\C_\infty$ be the completion of an algebraic closure of $\FcnFld_\infty$. Then Drinfeld showed that the points of the Drinfeld modular varieties with values in $S=\Spec\C_\infty$ are the quotient of his $(r-1)$-dimensional upper halfspace $\Omega^r_{\FcnFld_\infty}$ by a congruence subgroup. Deligne~\cite{Deligne-Husemoller} explained that this can be rewritten as a double coset $\GL_r(\FcnFld)\backslash\Omega^r_{\FcnFld_\infty}\times \GL_r(\mathbb{A}^\infty)/H$, where $\mathbb{A}^\infty$ are the ad\`eles of $\FcnFld$ outside $\infty$ and $H\subset\GL_r(\mathbb{A}^\infty)$ is a compact open subgroup. Also the uniformization of the Drinfeld modular varieties is ``at infinity'', because $\infty$ is the place forbidden for the leg $x\colon S\to\Spec A = X\smallsetminus\{\infty\}$, which still moves ``close'' to $\infty$ on $S=\Spec\C_\infty$.

\smallskip\noindent
{\bfseries B. Uniformization away from infinity.} 
At the same time, uniformization at a place $p$ different from infinity arose in the work (4) of \v{C}erednik~\cite{Cerednik76}, who proved that certain Shimura curves of EL-type have $p$-adic uniformization by Deligne's formal model $\widehat{\Omega}^2_{\Q_p}$ of Drinfeld's upper halfplane $\Omega^2_{\Q_p}$. Drinfeld~\cite{Drinfeld-padic-covering} explained that for any $r$ the formal model $\widehat{\Omega}^r_{\Q_p}$ is a Rapoport-Zink space for an inner form of $\GL_r$, i.e.~a moduli space for $p$-divisible groups with extra structure that are isogenous to a fixed supersingular $p$-divisible group. See Boutot--Carayol~\cite{Boutot-Carayol} and Genestier~\cite{Genestier-Asterisque} for a detailed account. This was vastly generalized (5) by Rapoport and Zink~\cite{RZ} to (partial) $p$-adic uniformization of integral models of higher dimensional Shimura varieties by more general moduli spaces for $p$-divisible groups. These integral models have a morphism to $\Spec\Z_p$, which can be called the ``leg'' of the data parameterized by the integral model. This leg stays disjoint from $\infty$, which is a kind of ``fixed second leg'' for all Shimura varieties, see Remark~\ref{RemSecondLegShiVar}. In contrast, for the uniformizations (1), (2), (3) mentioned in the previous paragraph the varying leg moved towards $\infty$. 

\smallskip\noindent
{\bfseries C. Uniformization of moduli spaces of shtukas.} 
Generalizing the uniformization (3) of the Drinfeld modular varieties, Stuhler~\cite{Stuhler} proved (6) uniformization at $\infty$ of his moduli spaces of Hilbert-Blumenthal shtukas. In \cite{Blum-Stuhler}, Blum and Stuhler reinterpreted and reproved the uniformization (3) in terms of (7) Drinfeld's ``elliptic sheaves'' \cite{Drinfeld-commutative-subrings}. The latter was generalized by the first author in \cite{HartlAbSh} where (8) partial uniformization at $\infty$ of moduli stacks of ``abelian $\tau$-sheaves'' was proven. Laumon, Rapoport and Stuhler mention (9) the uniformization at $\infty$ of their moduli spaces of $\mathscr{D}$-elliptic sheaves in \cite[p.~493]{Stuhler} and \cite[(14.19)]{Laumon-Rapoport-Stuhler}, but do not prove it. Uniformization (10) for these spaces at a place $v$ different from $\infty$, i.e. for the two legs $v$ and $\infty$ staying disjoint, was proven by Hausberger~\cite{Hausberger}. When all the legs stay \emph{disjoint}, the uniformization of moduli spaces of $\gpSch$-shtukas with $n$ legs was proven in full generality (11) for smooth affine $\gpSch$ with connected fibers and reductive generic fiber by Arasteh Rad and the first author~\cite{AH_Unif}. In all these cases (7), (8), (9), (10), (11) the uniformizing spaces are Rapoport-Zink spaces for local shtukas as above. For \emph{colliding legs} uniformization of shtuka stacks (3), (6), (7), (8) is only known in the very special case for $\gpSch=\GL_r$ or $\Res_{X'|X}\SL_r$ and two legs, one fixed at $\infty$ with basic bound and the other leg moving into $\infty$. The condition on the legs is analogous to (1), (2), see Remark~\ref{RemSecondLegShiVar}. In (6), (7), (8), (9) the boundedness condition at $\infty$ on the shtukas was imposed by using chains. We give a detailed explanation of this in Section~\ref{subsec:Chains}.

\begin{remark} \label{RemSecondLegShiVar}
We want to explain, why we think that Shimura varieties have two legs, i.e.~a hidden leg at infinity in addition to the obvious leg, which is the structure morphism of the Shimura variety over $\Spec\Z$. For simplicity, we restrict to the case of Shimura varieties of PEL-type parameterizing abelian varieties with extra structures. Consider an abelian variety $\mcA$ over a finite field $\F_q$ of characteristic $p$. The (varying) leg of $\mcA$ is the morphism $\Spec \F_q\to \Spec\Z$ given by the natural homomorphism $\Z\to \Z/(p)\to \F_q$. This leg and the hidden leg at infinity of $\mcA$ can be seen by looking at the absolute values of the $q$-Frobenius endomorphism $\pi\colon \mcA\to\mcA$ of $\mcA$. Since the endomorphism algebra $\End^\circ(\mcA)$ of $\mcA$ is a finite dimensional $\Q$-algebra, $\pi$ is the root of its minimal polynomial $m_\pi\in\Q[T]$. Fix an absolute value $|\,.\,|$ on an algebraic closure $\Q^\alg$ of $\Q$ and a root $\alpha\in\Q^\alg$ of $m_\pi$. If $|\,.\,|$ extends an $\ell$-adic absolute value on $\Q$ for $\ell\ne p$, then $|\alpha|=1$. On the other hand, if $\ell=p$ then $|\alpha|$ can be different from $1$ depending on the slopes of (the $p$-divisible group of) $\mcA$. Responsible for both cases $\ell\ne p$ and $\ell=p$ is the leg at $p$ which implies that at $\ell\ne p$ all slopes are zero. Finally, if $|\,.\,|$ is obtained from the archimedean absolute value on $\C$ by an inclusion $\FcnFld^\alg\hookrightarrow\C$, we have $|\alpha|=q^{1/2} \ne 1$. This hints at the presence of another leg at infinity, where all slopes of the Frobenius endomorphism $\pi$ are equal, that is $\pi$ and $\mcA$ and its $p$-divisible group could be called ``basic''.    
\end{remark}

\bigskip

\noindent{\it Acknowledgments.} The authors would like to thank Eva Viehmann for helpful conversations. U.H.~acknowledges support of the DFG (German Research Foundation) in form of Project-ID 427320536 -- SFB 1442, and Germany's Excellence Strategy EXC 2044--390685587 ``Mathematics M\"unster: Dynamics--Geometry--Structure''. Y.X.~was supported by the National Science Foundation under Award No.~2202677.

\section{Preliminaries}
\subsection{Notations}\label{subsec-notations}
Let $\F_q$ denote a finite field with $q$ elements. Let $X$ denote a smooth, projective, geometrically connected curve over $\F_q$. Let $\FcnFld:=\F_q(X)$ be its function field. 
Let $\FcnFld^\alg$ and $\FcnFld^{\sep}$ denote an algebraic and separable closure of $\FcnFld$, respectively. For an $\F_q$-scheme $S$ and an open or closed subscheme $U\subset X$, denote $U_S:=U\times_{\F_q}S$. For a morphism $x\colon S\to X$ of $\F_q$-schemes, we denote by $\Gamma_x\subset X_S$ the graph of $x$. Closed points of $X$ are also called \emph{places} of $\FcnFld$ or of $X$.

For a closed point $v\in X$, we denote by $\F_v$ its residue field, $\Oo_v:=\widehat{\Oo}_{X,v}$ its complete local ring, and $\FcnFld_v=\Frac(\Oo_v)$ the fraction field of $\Oo_v$. 
Let $\overline{\F}_v$ be a separable closure of $\F_v$. 
Let $\Breve{\Oo}_v$ and $\Breve{\FcnFld}_v$ denote the completions of maximal unramified extensions of $\Oo_v$ and $\FcnFld_v$, respectively.~Upon fixing a uniformizer $z_v$ at $v$, one has canonical identifications $\Oo_v=\F_v\dbl z_v \dbr, \FcnFld_v=\F_v\dpl z_v \dpr, \Breve{\Oo}_v=\overline{\F}_v\dbl z_v \dbr$ and $\Breve{\FcnFld}_v=\overline{\F}_v\dpl z_v \dpr$. 
Let $\Nilp_{\Oo_v}$ (resp.~$\Nilp_{\Breve{\Oo}_v}$) denote the category of all $\Oo_v$-schemes (resp.~$\Breve{\Oo}_v$-schemes) on which $z_v$ is locally nilpotent (in the structure sheaf). 

In some parts of this article we will consider a point $\infty\in X(\F_q)$, which is assumed to be $\F_q$-rational for simplicity. Then $\F_\infty=\F_q$, but we will still use the notation $\F_\infty$ to emphasize that it comes with the morphism $\Spec\F_\infty\isoto\{\infty\}\subset X$.

Let $\gpSch$ be a smooth affine group scheme over $X$ with connected fibers and reductive generic fiber $\gengpSch:=\gpSch\times_X \Spec \FcnFld$. Set $\gpSch_v:=\gpSch\times_X \Spec\Oo_v$ and $\gengpSch_v:=\gpSch\times_X \Spec \FcnFld_v$.  
As usual, for elements $g,h\in\gpSch(S)$ for some $S\to X$ we write $\Int_h\colon g\mapsto h\,g\,h^{-1}$ for the conjugation action (``interior automorphism''). 
For a sheaf $\mathscr{H}$ of groups (in the \fppf-topology) on a scheme $Y$, an \emph{$\mathscr{H}$-bundle} 
(also called a \emph{right} \emph{$\mathscr{H}$-torsor}) on $Y$ is a sheaf $\mcE$ for the \fppf-topology on $Y$, together with a right action of the sheaf $\mathscr{H}$ such that $\mcE$ is isomorphic to $\mathscr{H}$ on an \fppf-covering of $Y$. Here $\mathscr{H}$ is viewed as an $\mathscr{H}$-torsor via right multiplication. 

We denote by $\tau:=\Frob_{q,S}$ the absolute $q$-Frobenius of an $\F_q$-scheme $S$, which is the identity on the topological space, and the $q$-power map on the structure sheaf. For a place $v\in X$ we let $q_v:=\#\F_v=q^{[\F_v:\F_q]}$ and $\hat{\tau}_v:=\tau^{[\F_v:\F_q]}=\Frob_{q_v,S}$. For data defined over $S$ (e.g. $\gpSch$-bundles $\mcE$ on $X_S$), we denote the pullback under $\tau$ by a left superscript $\tau$ (e.g. ${}^{\tau\!}\mcE$). We use a similar notation with $\hat{\tau}_v$ or more generally with $\tau^n$ for $n\in \N$.
For a linear algebraic group $M$ over $\FcnFld_v$, the Frobenius 
\begin{equation}\label{EqTau_G}
\tau_{M}\colon L_vM(S)\to L_vM(S),
\end{equation}
for an $\F_v$-scheme $S$, is defined by sending $g\colon S\to L_vM$ to $\tau_{M}(g):=g\circ\Frob_{q_v,S}$. In particular, this applies to $M=\gengpSch_v$.

\subsection{Loop groups} \label{subsec:LoopGp}

We will recall the definition of the (positive) loop groups $L_\Delta \gpSch$ and $L^+_\Delta \gpSch$ in the general setting for an effective relative Cartier divisor $\Delta\subset X_R$ over $\Spec R$. We then modify the notation in the important special cases discussed in Example~\ref{ExDivisors}. 

Let $S=\Spec R$ be affine and write $X_R:=X_S$. Let $\Delta\subset X_R$ be an effective relative Cartier divisor over $S$, i.e.~$\Delta$ is an effective Cartier divisor on $X_R$ and is finite flat over $S$. In particular, $\Delta$ is an affine scheme. Its ideal sheaf $\mathscr{I}_\Delta\subseteq\Oo_{X_R}$ is invertible. Thus Zariski-locally on $X_R$, the sheaf $\mathscr{I}_\Delta=z_\Delta\cdot \Oo_{X_R}$ is principal, for some $z_\Delta\in \Oo_{X_R}$. In particular, $\Delta=\Spec\Oo_{X_R}/\mathscr{I}_\Delta$ is locally of the form $\Spec \Oo_{X_R}/(z_\Delta)$. 

Let $\widehat{\Delta}$ be the formal completion of $X_R$ at $\Delta$. It is an affine formal scheme of the form $\Spf \widehat{\Oo}_{X_R,\Delta}$, where $\widehat{\Oo}_{X_R,\Delta}:=\underset{n}{\varprojlim}\,\Oo_{X_R}/\mathscr{I}_\Delta^n$. Looking at an open neighborhood where $\mathscr{I}_\Delta=z_\Delta\cdot \Oo_{X_R}$ is principal, we see that Zariski-locally on $\Spec R$, the formal scheme $\widehat{\Delta}$ is of the form $\Spf R\dbl z_\Delta \dbr$. 

\begin{defn}\label{DefLoopGpAtDelta}
Let $\Delta\subset X_R$ be an effective relative Cartier divisor over $S=\Spec R$.
\begin{enumerate}
\item 
The \emph{positive loop group of $\gpSch$ at $\Delta$} is the $\fpqc$-sheaf of groups $L^+_\Delta\gpSch$ over $\Spec R$ whose $R'$-points, for an $R$-algebra $R'$ are given by
\begin{align}
    \begin{split}
        L^+_\Delta\gpSch(R')&:=\gpSch(\widehat{\Oo}_{X_{R'},\Delta'})=\Hom_X(\Spf \widehat{\Oo}_{X_{R'},\Delta'},\gpSch)=\Hom_{\Oo_X}(\Oo_{\gpSch},\widehat{\Oo}_{X_{R'},\Delta'})\\
        &=\underset{n}{\varprojlim} \Hom_{\Oo_X}(\Oo_{\gpSch},\Oo_{X_{R'}}/\mathscr{I}_{\Delta'}^n)=\underset{n}{\varprojlim}\,\gpSch(\Spec\Oo_{X_{R'}}/\mathscr{I}_{\Delta'}^n),
    \end{split}
\end{align}
where $\Delta'\subset X_{R'}$ denotes the pullback of $\Delta$ to $X_{R'}$.
\item 
The \emph{loop group of $\gpSch$ at $\Delta$} is the $\fpqc$-sheaf of groups $L_\Delta\gpSch$ over $\Spec R$ whose $R'$-points, for an $R$-algebra $R'$, are given by 
\begin{equation}\label{Defn-LDeltaG}
L_\Delta\gpSch(R'):=\gpSch(\widehat{\Oo}_{X_{R'},\Delta'}[z_\Delta^{-1}]).
\end{equation}
\end{enumerate}
Zariski-locally on $R$ the group $L_\Delta^+\gpSch(R')$ is of the form $\gpSch(R'\dbl z_\Delta \dbr)$ and the group $L_\Delta\gpSch(R')$ is of the form $\gpSch(R'\dpl z_\Delta \dpr)$ for $R'\dpl z_\Delta \dpr:=R'\dbl z_\Delta \dbr[z_\Delta^{-1}]$. Note that \eqref{Defn-LDeltaG} is independent of the choice of $z_\Delta$, because any other $z_\Delta$ is of the form $\widetilde{z}_\Delta=u\cdot z_\Delta$ for some $u\in \Oo_{X_R}^{\times}\subseteq\widehat{\Oo}_{X_R,\Delta}^\times$. 

Thus, the functor $L_\Delta^+\gpSch$ is representable by an infinite-dimensional affine group scheme over $R$. Moreover, the functor $L_\Delta\gpSch$ is representable by an ind-affine ind-scheme of ind-finite type over $R$ by \cite[Lemma~2.11]{Richarz16}. 
\end{defn}

Let $[\Spec R/L_\Delta^+\gpSch]$ (resp.~$[\Spec R/L_\Delta\gpSch]$) denote the classifying space of $L_\Delta^+\gpSch$-bundles (resp.~$L_\Delta\gpSch$-bundles). It is a stack fibered in groupoids over the category of $R$-schemes $S'$ whose category $[\Spec R/L_\Delta^+\gpSch](S')$ (resp.~$[\Spec R/L_\Delta\gpSch](S')$) consists of all $L_\Delta^+\gpSch$-bundles (resp.~$L_\Delta\gpSch$-bundles) on $S'$. The inclusion of sheaves $L_\Delta^+\gpSch\subset L_\Delta\gpSch$ gives rise to the natural 1-morphism 
\begin{equation}\label{EqLoopTorsorDelta}
L_\Delta\colon[\Spec R/L_\Delta^+\gpSch]\longrightarrow [\Spec R/L_\Delta\gpSch],\quad \mcL \longmapsto L_\Delta\mcL.
\end{equation}

\begin{defn}\label{DefAffineFlagVarAtx}
The (\emph{local}) \emph{affine flag variety} of $\gpSch$ at a divisor $\Delta\subset X_R$ is the \fpqc-sheaf $\Fl_{\gpSch,\Delta}:=L_\Delta \gpSch/L^+_\Delta \gpSch$ on $\Spec R$.
\end{defn}

\begin{lem}\label{LemmaAffineFlagVar}
The affine flag variety $\Fl_{\gpSch,\Delta}$ represents the functor on $R$-schemes that sends an $R$-scheme $S$ to the set of isomorphism classes of pairs $(\mcL,\hat{\delta})$, where $\mcL$ is an $L^+_\Delta \gpSch$-bundle over $S$ and $\hat{\delta}\colon L_\Delta \mcL\isoto (L_\Delta \gpSch)_S$ is an isomorphism of $L_\Delta \gpSch$-bundles over $S$.
\end{lem}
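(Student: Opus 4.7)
The plan is to exhibit mutually inverse natural transformations between $\Fl_{\gpSch,\Delta}$, viewed as the fpqc quotient sheaf $L_\Delta\gpSch/L^+_\Delta\gpSch$ by the right action, and the functor $F$ assigning to an $R$-scheme $S$ the set of isomorphism classes of pairs $(\mcL,\hat\delta)$ as in the statement. The key inputs are that $L^+_\Delta\gpSch$-bundles are fpqc-locally trivial by the definition of a torsor, and that fpqc descent is effective for torsors under the representable affine group scheme $L^+_\Delta\gpSch$.

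For the map $F\to\Fl_{\gpSch,\Delta}$, given $(\mcL,\hat\delta)\in F(S)$ I pick an fpqc cover $\pi\colon S'\to S$ together with a trivialization $\alpha\colon (L^+_\Delta\gpSch)_{S'}\isoto \mcL|_{S'}$. The composite $\hat\delta|_{S'}\circ L_\Delta\alpha$ is an automorphism of the trivial $L_\Delta\gpSch$-torsor on $S'$, hence given by left multiplication by a unique element $g\in L_\Delta\gpSch(S')$. Replacing $\alpha$ by $\alpha\cdot h_0$ with $h_0\in L^+_\Delta\gpSch(S')$ changes $g$ to $g\cdot h_0$, so the class $[g]\in (L_\Delta\gpSch/L^+_\Delta\gpSch)(S')$ is independent of $\alpha$. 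Its two pullbacks to $S'':=S'\times_S S'$ coincide by construction, and the sheaf property of $\Fl_{\gpSch,\Delta}$ descends it to a section on $S$ depending only on the isomorphism class of $(\mcL,\hat\delta)$.

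Conversely, for $\Fl_{\gpSch,\Delta}\to F$, a section of the quotient sheaf on $S$ is represented, after passing to an fpqc cover $S'\to S$, by an element $g\in L_\Delta\gpSch(S')$ for which there exists $h\in L^+_\Delta\gpSch(S'')$ with $p_1^*g=p_2^*g\cdot h$ satisfying the cocycle identity on $S'\times_S S'\times_S S'$. Effective fpqc descent produces an $L^+_\Delta\gpSch$-bundle $\mcL$ on $S$ from the trivial torsor on $S'$ and the descent datum given by $h$. Left multiplication by $g$ on $(L_\Delta\gpSch)_{S'}$ is compatible with the induced descent datum on $L_\Delta\mcL$ precisely because of the relation $p_1^*g=p_2^*g\cdot h$, so it descends to a trivialization $\hat\delta\colon L_\Delta\mcL\isoto (L_\Delta\gpSch)_S$. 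Replacing $g$ by $g\cdot h_0$ with $h_0\in L^+_\Delta\gpSch(S')$ yields an isomorphic pair, so the resulting class in $F(S)$ is well-defined.

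Both constructions are manifestly compatible with pullback along $S$-schemes, and a direct unwinding on the local data shows they are mutually inverse: starting from $(\mcL,\hat\delta)$, trivializing on $S'$, extracting $g$, and then descending the trivial torsor back by the associated cocycle returns $(\mcL,\hat\delta)$ up to a canonical isomorphism, and vice versa. The main step deserving care is the effective fpqc descent for $L^+_\Delta\gpSch$-torsors used to produce $\mcL$; this rests on the fact, already noted after Definition~\ref{DefLoopGpAtDelta}, that $L^+_\Delta\gpSch$ is representable by an affine (infinite-dimensional) group scheme over $R$, so that its torsors are relatively affine and therefore satisfy effective fpqc descent.
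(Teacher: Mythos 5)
Your argument is correct in substance, but it is a genuinely different route from the paper: the paper does not spell out a proof at all, it simply cites \cite[Lemma~2.12]{Richarz16} (and \cite{Pappas-Rapoport-twisted-loop-groups} for the case $\Delta=\{v\}\times_{\F_q}S$), whereas you give the standard self-contained torsor/descent argument identifying the fpqc quotient sheaf $L_\Delta\gpSch/L^+_\Delta\gpSch$ with the functor of pairs $(\mcL,\hat{\delta})$. Your two constructions and the verification that lifts of a section of the quotient over a cover differ by a section of $L^+_\Delta\gpSch$ on the overlap (freeness of the right action plus the sheaf property of $L^+_\Delta\gpSch\subset L_\Delta\gpSch$) are fine, as is the appeal to effectivity of fpqc descent for torsors under the affine group scheme $L^+_\Delta\gpSch$; the remaining identities ($g\mapsto g\cdot h_0$ under change of trivialization, compatibility of $\ell_g$ with the cocycle $h$) are routine convention checks. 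What your approach does not buy you for free, and what the citation to Richarz does supply, is the match with the paper's convention that an $L^+_\Delta\gpSch$-bundle is \emph{fppf}-locally trivial: the flag variety is the \emph{fpqc} sheafification, so the torsor $\mcL$ you produce in the essential-surjectivity step is a priori only fpqc-locally trivial. To land exactly in the functor as the paper defines it, one adds the standard observation that $L^+_\Delta\gpSch$ is a cofiltered limit of the smooth affine finite-type groups $\mcL^{+,(c_i)_i}$-type truncations with smooth surjective transition maps, so that its torsors are in fact \'etale-locally (hence fppf-locally) trivial; alternatively one works with fpqc-torsors throughout. With that one sentence added, your proof is complete and arguably more instructive than the bare citation, at the cost of re-proving a result the paper prefers to import.
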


\begin{proof}
This was proven in \cite[Lemma~2.12]{Richarz16} (or \cite{Pappas-Rapoport-twisted-loop-groups} in the special case where $\Delta= \{v\}\times_{\F_q} S \subset X_S$ for a closed point $v$ in $X$). 
\end{proof}

\begin{example}\label{ExDivisors}
(a) For any $x\in X(R)$, the graph $\Delta:=\Gamma_x\subset X_R$ of $x$ is an effective relative Cartier divisor over $\Spec R$ by \cite[\S\,8.2, Lemma~6]{Neron-models-book}. In this case we write $\widehat{\Gamma}_x:=\widehat{\Delta}$ for the formal completion of $X_R$ along $\Gamma_x$. We also write $L^+_x\gpSch:=L^+_\Delta\gpSch$ and $L_x\gpSch:=L_\Delta\gpSch$ for the (positive) loop group of $\gpSch$ at $x$, and $\Fl_{\gpSch,x}:=\Fl_{\gpSch,\Delta}$ for the affine flag variety, and $L_x$ for the functor from \eqref{EqLoopTorsorDelta}.

\medskip\noindent
(b) As a special case of (a) consider a closed point $v\in X$ and view it as a point $x:=v\in X(\F_v)$ for $R=\F_v$. Let $z_v$ be a uniformizing parameter at $v$. In this case we write $\Fl_{\gpSch,v}:=\Fl_{\gpSch,x}$ for the affine flag variety, $L_v$ for the functor from \eqref{EqLoopTorsorDelta}, and $L^+_v\gpSch:=L^+_x\gpSch$ and $L_v\gpSch:=L_x\gpSch$ for the (positive) loop group of $\gpSch$ at $v$. They are $\fpqc$-sheaves of groups on $\Spec \F_v$. Their $R'$-valued points for an $\F_v$-algebra $R'$ are
\begin{equation*}
L_v^+\gpSch(R')=\gpSch(R'\dbl z_v\dbr)=\gpSch_v(R'\dbl z_v\dbr) \qquad \text{and} \qquad L_v\gpSch(R')=\gpSch(R'\dpl z_v\dpr)=\gpSch_v(R'\dpl z_v\dpr).
\end{equation*}
This definition extends to arbitrary smooth affine group schemes $\mcM$ over $\Oo_v$ instead of $\gpSch_v$. The group $L_v^+\mcM$ is also called the \emph{positive loop group associated with $\mcM$}, and $L_v\mcM$ is called the \emph{loop group associated with $\mcM$}. The latter only depends on the generic fiber $\mcM\times_{\Oo_v} \FcnFld_v$. The fact that $L_v\mcM$ is represented by an ind-scheme was proven earlier in \cite[\S\,1.a]{Pappas-Rapoport-twisted-loop-groups}, or when $\mcM$ is constant in \cite[\S4.5]{Beilinson-Drinfeld}, and \cite{Ngo-Polo}, and \cite{Faltings03}.

\medskip\noindent
(c) More generally than (a) let $n\in\N_{>0}$ and let $\underline x=(x_1,\ldots,x_n)\in X^n(R)$. Then $\Delta:=\Gamma_{\underline x}:=\Gamma_{x_1}+\ldots + \Gamma_{x_n}$ is an effective relative Cartier divisor over $\Spec R$. In this case we write $\widehat{\Gamma}_{\underline x}:=\widehat{\Delta}$ for the formal completion of $X_R$ along $\Gamma_{\underline x}$. We also write $L^+_{\underline x}\gpSch:=L^+_\Delta\gpSch$ and $L_{\underline x}\gpSch:=L_\Delta\gpSch$ for the (positive) loop group of $\gpSch$ at $\underline x$, and $\Fl_{\gpSch,\underline x}:=\Fl_{\gpSch,\Delta}$ for the affine flag variety, and $L_{\underline x}$ for the functor from \eqref{EqLoopTorsorDelta}.
\end{example}

\begin{defn}\label{DefGlobalLoopGp}
Let $n\in\N_{>0}$. The \emph{global positive loop group} is defined as the $\fpqc$-sheaf on $\Spec\F_q$ whose $R$-valued points for an $\F_q$-algebra $R$ are given by
\begin{equation}\label{EqGlobalPosLoopGp}
\mcL^+_{X^n}\gpSch(R):=\{(\underline x,g) \colon \underline x\in X^n(R),g\in L_{\underline x}^+\gpSch(R)\}.
\end{equation}
The \emph{global loop group} is defined as the $\fpqc$-sheaf on $\Spec\F_q$ whose $R$-valued points for an $\F_q$-algebra $R$ are given by
\begin{equation}\label{EqGlobalLoopGp}
\mcL_{X^n}\gpSch(R):=\{(\underline x,g) \colon \underline x\in X^n(R),g\in L_{\underline x}\gpSch(R)\}.
\end{equation}
For an $n$-tuple of non-negative integers $(c_i)_i$ we also consider the \emph{truncated global positive loop group} defined as the $\fpqc$-sheaf on $\Spec\F_q$ whose $R$-valued points for an $\F_q$-algebra $R$ are given by
\begin{equation}\label{EqGlobalTruncLoopGp}
\mcL^{+,(c_i)_i}_{X^n}\gpSch(R):=\{(\underline x,g) \colon \underline x\in X^n(R),g\in \gpSch(\Delta)\},
\end{equation}
for the divisor $\Delta:=\sum_i c_i\cdot \Gamma_{x_i}\subset X_R$ considered as a scheme over $X$.

Clearly, $\mcL^+_{X^n}\gpSch$ is a subsheaf of $\mcL_{X^n}\gpSch$, and $\mcL^{+,(c_i)_i}_{X^n}\gpSch$ is a quotient of $\mcL^+_{X^n}\gpSch$. The projection onto $\underline x$ defines morphisms $\mcL^+_{X^n}\gpSch \to X^n$ and $\mcL_{X^n}\gpSch \to X^n$ and $\mcL^{+,(c_i)_i}_{X^n}\gpSch \to X^n$.
\end{defn}

\begin{lem}\label{LemmaGlobalLoopIsqc}
\begin{enumerate}
\item \label{LemmaGlobalLoopIsqc_A} The global loop group $\mcL_{X^n}\gpSch$ is representable by an ind-group scheme which is ind-affine over $X^n$.
\item \label{LemmaGlobalLoopIsqc_B} The global positive loop group $\mcL^+_{X^n}\gpSch$ is representable by a quasi-compact, reduced, infinite dimensional group scheme, which is affine and flat over $X^n$ with geometrically connected fibers. 
\item \label{LemmaGlobalLoopIsqc_C} The truncated global positive loop group $\mcL^{+,(c_i)_i}_{X^n}\gpSch$ is representable by a smooth, affine group scheme over $X^n$ of relative dimension equal to $(\sum_i c_i)\cdot \dim\gengpSch$ with geometrically connected fibers.
\end{enumerate} 
\end{lem}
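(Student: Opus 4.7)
I would prove the three parts in order (c), (b), (a), working from concrete to abstract.

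For part (c), the key observation is that the functor $\mcL^{+,(c_i)_i}_{X^n}\gpSch$ coincides with the Weil restriction $\Res_{\Delta/X^n}\bigl(\gpSch\times_X \Delta\bigr)$, where $\Delta := \sum_i c_i \Gamma_{x_i}\subset X\times X^n$ is the universal divisor. Since $\Delta\to X^n$ is finite locally free of degree $\sum_i c_i$ and $\gpSch$ is smooth affine, standard results on Weil restriction (see, e.g., \cite[\S\,7.6]{Neron-models-book}) give that $\mcL^{+,(c_i)_i}_{X^n}\gpSch$ is a smooth affine $X^n$-scheme of relative dimension $(\sum_i c_i)\cdot \dim \gengpSch$. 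For the geometric connectedness of fibers, I would inspect the fiber at a geometric point $\underline x\in X^n(\bar k)$: grouping the entries of $\underline x$ by their common value yields a decomposition of $\Delta_{\underline x}$ into a disjoint union of spectra of Artinian local $\bar k$-algebras $A_j$, so the fiber becomes a product $\prod_j \gpSch_{y_j}(A_j)$. Each factor is a successive extension of the connected scheme $\gpSch_{y_j,\bar k}$ by affine-space kernels of the natural reductions $\gpSch(\Oo/\mathfrak{m}^{e+1})\to \gpSch(\Oo/\mathfrak{m}^e)$, so connectedness follows from the hypothesis on the fibers of $\gpSch$.

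For part (b), I would use the identification
$$\mcL^+_{X^n}\gpSch \;=\; \varprojlim_{c\in\N} \mcL^{+,(c,c,\ldots,c)}_{X^n}\gpSch,$$
which follows from $\widehat{\Oo}_{X_R,\Gamma_{\underline x}} = \varprojlim_c \Oo_{X_R}/\mathscr{I}_{\Gamma_{\underline x}}^c$ together with the fact that $\gpSch$ is of finite presentation over $X$. The transition maps are affine and surjective, with layer kernels given by the vector groups appearing in (c), so the inverse limit is representable by an affine $X^n$-scheme, hence quasi-compact. Its structure sheaf is a filtered colimit of the smooth $X^n$-algebras from (c), so reducedness and flatness follow from the closure of these properties under filtered colimits; moreover, the coordinate ring has no non-trivial idempotents beyond those of the individual layers, so each geometric fiber is connected. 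The limit is infinite-dimensional since the truncations have unbounded relative dimension.

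For part (a), I would deduce the family version from the fixed-divisor case \cite[Lemma~2.11]{Richarz16}. Over a Zariski open on which the universal divisor $\Gamma_{\underline x}$ admits a local generator and over which $\gpSch$ is cut out as a closed subgroup of some $\GL_m$, I would use the closed immersion $\GL_m\hookrightarrow \mathrm{Mat}_m\times\mathrm{Mat}_m$, $g\mapsto(g,g^{-1})$, to realize $\mcL_{X^n}\GL_m$ as a closed subfunctor of $\mcL_{X^n}\mathrm{Mat}_m^2$. The ``pole-order $\leq N$'' subfunctor of $\mcL_{X^n}\mathrm{Mat}_m$ is representable by the Weil restriction $\Res_{\Delta_{(N,\ldots,N)}/X^n}\bigl(\mathrm{Mat}_m\otimes\mathscr{I}_{\Delta_{(N,\ldots,N)}}^{-N}\bigr)$, an affine $X^n$-scheme, and $\mcL_{X^n}\mathrm{Mat}_m$ is the filtered colimit of these as $N\to\infty$, hence ind-affine of ind-finite type over $X^n$. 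Intersecting with the closed subfunctor $\mcL_{X^n}\gpSch\subset \mcL_{X^n}\GL_m$ preserves the ind-affine structure, and since the resulting ind-scheme is intrinsic (independent of the embedding), the local descriptions glue. The main obstacle is this global bookkeeping in (a): a uniformizer for $\Gamma_{\underline x}$ exists only Zariski-locally on $X\times X^n$, and a faithful embedding of $\gpSch$ into a general linear group exists only Zariski-locally on $X$, so both the pole-order filtration and the embedding must be phrased via ideal sheaves and patched; a secondary subtlety is the connectedness argument in (c) at colliding-leg loci, which the Weil restriction description reduces to a routine analysis of iterated vector-group extensions.
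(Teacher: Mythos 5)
Your parts (b) and (c) are correct and close in spirit to the paper: for (c) the paper uses exactly the same identification of $\mcL^{+,(c_i)_i}_{X^n}\gpSch$ with the Weil restriction $\Res_{\pr_2}(\pr_1^*\gpSch)$ along the finite flat $\Delta\to X^n$ (citing CGP A.5.2, A.5.9 where you argue connectedness by the Greenberg-type filtration by vector groups, which is fine), and for (b) the paper simply localizes on $X^n$ so that $\Gamma_{\underline x}$ is principal and cites Richarz, whereas you rebuild $\mcL^+_{X^n}\gpSch=\varprojlim_c\mcL^{+,(c,\ldots,c)}_{X^n}\gpSch$ and transfer affineness, flatness, reducedness and connectedness of geometric fibers through the filtered colimit of coordinate rings; those limit arguments are all valid, so (b) is a legitimate, more self-contained alternative to the citation.

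In part (a), however, there is a concrete error: the ``pole-order $\leq N$'' subfunctor of $\mcL_{X^n}\mathrm{Mat}_m$ is \emph{not} the finite Weil restriction $\Res_{\Delta_{(N,\ldots,N)}/X^n}\bigl(\mathrm{Mat}_m\otimes\mathscr{I}^{-N}\bigr)$. Locally, the pole-order $\leq N$ piece has $R$-points $z_{\underline x}^{-N}R\dbl z_{\underline x}\dbr^{m\times m}$, an infinite-dimensional (pro-)affine space, whereas the Weil restriction you name only sees $\mathscr{I}^{-N}\otimes\Oo/\mathscr{I}^{N}$, i.e.\ the finite-dimensional space of principal parts $\bigoplus_{i=1}^{N}R\,z_{\underline x}^{-i}$ (matrix-valued); in particular the colimit over $N$ of your pieces is not $\mcL_{X^n}\mathrm{Mat}_m$, and the resulting claim that the loop group is of ind-finite type over $X^n$ is also false (only ind-affineness is asserted in the lemma, and only that is true — already $\mcL_X\mathbb{G}_a$ fails to be of ind-finite type). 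The repair keeps your architecture: take as $N$-th piece the functor of sections of $\mathscr{I}^{-N}$-valued matrices over the formal completion, i.e.\ $\varprojlim_c\Res_{\Delta_{(c,\ldots,c)}/X^n}\bigl(\mathrm{Mat}_m\otimes\mathscr{I}^{-N}\bigr)$, which is affine over $X^n$ by exactly the limit argument you used in (b); these pieces exhaust $\mcL_{X^n}\mathrm{Mat}_m$ along closed immersions, so it is ind-affine, and then the closed embeddings $\GL_m\hookrightarrow\mathrm{Mat}_m\times\mathrm{Mat}_m$ and (Zariski-locally on $X$) $\gpSch\hookrightarrow\GL_m$ cut out $\mcL_{X^n}\gpSch$ as a closed sub-ind-scheme, after which the gluing remark you make goes through. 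With that correction your route to (a) is the standard representability argument underlying the reference the paper invokes (the paper itself just localizes on $X^n$ and cites Heinloth/Richarz).
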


\begin{proof}
The statement is local on $X^n$. Thus for \ref{LemmaGlobalLoopIsqc_A} and \ref{LemmaGlobalLoopIsqc_B} we can work on an affine open subscheme $U\subset X^n$, and assume that the divisor $\Gamma_{\underline x}\subset X_U$ is principal and the zero locus of an element $z_{\underline x}\in \Oo_{X_U}$. Then $L^+_{\underline x}\gpSch(R)=\gpSch(R\dbl z_{\underline x}\dbr)$ and $L_{\underline x}\gpSch(R)=\gpSch(R\dpl z_{\underline x}\dpr)$. After this reformulation, \ref{LemmaGlobalLoopIsqc_A} was proven by Heinloth~\cite[Proposition~2]{Heinloth} and \ref{LemmaGlobalLoopIsqc_B} was proven by Richarz~\cite[Lemma~2.11]{Richarz16}. Since $\mcL^+_{X^n}\gpSch$ is affine over the quasi-compact $X^n$, also $\mcL^+_{X^n}\gpSch$ is quasi-compact.

\medskip \noindent
\ref{LemmaGlobalLoopIsqc_C} We consider the universal situation over $X^n$ in which the section $x_i\colon X^n\to X$ is the projection onto the $i$-th component. Then the divisor $\Delta:=\sum_i c_i\cdot \Gamma_{x_i}$ is a closed subscheme in $X\times_{\F_q} X^n$. Let $\pr_1\colon\Delta \to X$ and $\pr_2\colon\Delta\to X^n$ be the projections. The morphism $\pr_2$ is finite and flat of degree $\sum_i c_i$. Then the group $\mcL^{+,(c_i)_i}_{X^n}\gpSch$ over $X^n$ is the Weil restriction $\Res_{\pr_2}(\pr_1^*\gpSch)$ of $\pr_1^*\gpSch$ under $\pr_2$. Our assertions now follow from \cite[Propositions~A.5.2 and A.5.9]{CGP}.
\end{proof}

\subsection{The stacks of $\gengpSch$-bundles} \label{subsec:Bun}

\begin{defn}\label{DefD-LevelStr}
Let $\Bun_\gpSch:=\Bun^X_\gpSch$ denote the category fiberd in groupoids over the category of $\F_q$-schemes, which assigns to an $\F_q$-scheme $S$ the category whose objects $\Bun_\gpSch(S)$ are $\gpSch$-bundles over $X_S$ and morphisms are isomorphisms of $\gpSch$-bundles. 

Let $D\subset X$ be a proper closed subscheme. A \emph{$D$-level structure} on a $\gpSch$-bundle $\mcE$ on $X_S$ is a trivialization $\psi\colon \mcE\times_{X_S}{D_S}\isoto \gpSch\times_X D_S$ along $D_S$. Let $\Bun_{\gpSch,D}$ denote the stack classifying $\gpSch$-bundles with $D$-level structures, i.e.~for an $\F_q$-scheme $S$ the objects of the category $\Bun_{\gpSch,D}(S)$ are
\begin{equation}
\Bun_{\gpSch,D}(S):=\left\lbrace (\mcE,\psi)\colon \mcE\in \Bun_\gpSch(S),\, \psi\colon \mcE\times_{X_S}{D_S}\isoto \gpSch\times_X D_S \right\rbrace,
\end{equation}
and the morphisms are those isomorphisms of $\gpSch$-bundles that preserve the $D$-level structure $\psi$.
\end{defn}

The following theorem is well known, see for example \cite[\S\,4.4]{Beh}, \cite[Proposition~1]{Heinloth} and \cite[Theorem~2.5]{AH_Unif}.

\begin{thm}\label{Bun_G}
The stack $\Bun_{\gpSch,D}$ is a smooth Artin stack, locally of finite type over $\F_q$. It admits a covering by connected open substacks (given by bounds on the Harder-Narasinham filtration) of finite type over $\F_q$. 
\end{thm}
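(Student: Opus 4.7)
The plan is first to reduce to the case $D = \varnothing$. The forgetful morphism $\Bun_{\gpSch,D} \to \Bun_\gpSch$ that drops the trivialization $\psi$ is a right torsor under the $\F_q$-group scheme $\underline{\gpSch}_D := \Res_{D/\F_q}(\gpSch\times_X D)$. Since $D \subset X$ is a proper closed subscheme of the curve $X$, it is zero-dimensional and finite over $\F_q$, and since $\gpSch$ is smooth and affine with connected fibers, the Weil restriction $\underline{\gpSch}_D$ is a smooth affine $\F_q$-group scheme of finite type (by \cite[Propositions~A.5.2 and A.5.9]{CGP}, as already used in the proof of Lemma~\ref{LemmaGlobalLoopIsqc}\ref{LemmaGlobalLoopIsqc_C}). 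Hence $\Bun_{\gpSch,D} \to \Bun_\gpSch$ is representable, smooth, and of finite type, and it suffices to establish all three assertions of the theorem for $\Bun_\gpSch$ itself, since being an Artin stack, being smooth, being locally of finite type, and admitting a covering by connected finite-type open substacks all descend under such a morphism.

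For $\Bun_\gpSch$, I would follow the standard strategy of Behrend \cite{Beh} and Heinloth \cite{Heinloth}. The main step is to produce a smooth presentation. One fixes, on each quasi-compact open substack, a sufficiently ample effective divisor $D' \subset X$ such that every bundle parametrized there is trivial on the affine complement $X \smallsetminus D'$ (using a Drinfeld--Simpson-type triviality result, valid because $\gpSch$ has connected fibers and reductive generic fiber). The Beauville--Laszlo glueing theorem then identifies the open substack, after choosing the trivialization, with a quotient of the affine flag variety $\Fl_{\gpSch,D'} = L_{D'}\gpSch/L^+_{D'}\gpSch$ by the action of $\gpSch(X \smallsetminus D')$; ind-representability of $L_{D'}\gpSch$ and representability of $L^+_{D'}\gpSch$ as an inverse limit of smooth affine group schemes (Lemma~\ref{LemmaGlobalLoopIsqc}) produce a smooth presentation, hence the Artin-stack property and local finite type. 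Smoothness itself follows from obstruction theory: deformations of a $\gpSch$-bundle $\mcE$ on $X_S$ are controlled by $H^i(X_S, \ad(\mcE))$ for $i = 1, 2$, and the obstruction group $H^2(X_S, \ad(\mcE))$ vanishes because $X_S/S$ has relative dimension one and $\ad(\mcE)$ is coherent (using smoothness of $\gpSch$); hence $\Bun_\gpSch$ is formally smooth over $\F_q$.

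For the final assertion, I would invoke the Harder--Narasimhan reduction theory for $\gpSch$-bundles: each bundle admits a canonical reduction to a parabolic subgroup of $\gengpSch$ (over the dense open where $\gpSch$ is reductive) together with a well-defined HN polygon. Imposing an upper bound on this polygon cuts out an open substack of $\Bun_\gpSch$, and Grothendieck-style boundedness on the corresponding semistable locus shows this open substack is of finite type. Exhausting by all such bounds gives the desired cover, and passing to connected components of each bounded piece yields the connected open substacks. The \textbf{main obstacle} I anticipate is the HN step for a genuinely non-constant $\gpSch$ whose reductive structure is only generic: one must extend the generic HN reduction consistently across the finitely many places where $\gpSch$ degenerates, and bound the resulting torsion and degeneracy uniformly so that each truncated substack remains of finite type. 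By contrast, the representability and smoothness parts are by now standard in the Beauville--Laszlo / Heinloth framework.
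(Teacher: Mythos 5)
Your reduction to $D=\varnothing$ via the torsor under $\Res_{D/\F_q}(\gpSch\times_X D)$ and your smoothness argument (vanishing of the obstruction group $H^2$ of the adjoint bundle on the curve fibers) are fine. The genuine gap is in what you call the main step: you assume that, after choosing $D'$ sufficiently ample, every $\gpSch$-bundle in a quasi-compact family becomes trivial on the affine complement $(X\smallsetminus D')_S$, invoking a Drinfeld--Simpson-type theorem. That theorem is specific to semisimple group schemes and is false in the generality of this paper, where $\gengpSch$ is only reductive and $\gpSch$ is not assumed parahoric. Already for $\gpSch=\mathbb{G}_m$ or $\GL_r$ and $X$ of genus $\geq 1$, the group $\Pic\bigl((X\smallsetminus D')_{\overline{\F}_q}\bigr)$ is the quotient of $\Pic^0(X_{\overline{\F}_q})$ by the subgroup generated by the finitely many points of $D'$, hence remains infinite for every fixed $D'$; so there are line bundles (and rank-$r$ bundles with such determinants) that are not trivial on the affine complement, and in families the determinant moves continuously in $\Pic^0$. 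Consequently the identification of an open substack with a quotient of $\Fl_{\gpSch,D'}$ by $\gpSch(X\smallsetminus D')$ --- the uniformization theorem --- is simply not available here (it is proved by Heinloth only under semisimplicity/parahoric-type hypotheses), and your construction of a smooth presentation collapses at exactly the groups ($\GL_r$, nontrivial central torus, non-parahoric $\gpSch_v$) that this paper is designed to cover. The sources the paper cites for this statement, \cite[\S\,4.4]{Beh}, \cite[Proposition~1]{Heinloth} and \cite[Theorem~2.5]{AH_Unif}, avoid this entirely: one chooses a faithful representation $\gpSch\hookrightarrow\GL(\mathcal{V})$ into the automorphism group of a vector bundle with quasi-affine quotient $\GL(\mathcal{V})/\gpSch$, observes that $\Bun_\gpSch\to\Bun_{\GL(\mathcal{V})}$ is relatively representable by the (locally of finite type) scheme of sections of the associated quasi-affine fibration, and inherits algebraicity and local finiteness from the classical case of vector bundles; no triviality away from a divisor is needed.

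The second assertion has a related problem that you yourself flag but do not resolve: a Harder--Narasimhan reduction theory for bundles under a group scheme that is reductive only generically, with degenerate (possibly non-parahoric) fibers at finitely many places, is not something you can just quote, and extending the generic canonical reduction across the bad fibers while keeping the truncated loci bounded is precisely the hard point. In the cited references the finite-type exhaustion is obtained either from Behrend's semistability theory (complementary polyhedra) for reductive group schemes over the curve, or, for general smooth affine $\gpSch$, by pushing forward along the faithful representation above and bounding the Harder--Narasimhan polygon of the induced $\GL(\mathcal{V})$-bundle, the connected components of the resulting open substacks giving the desired cover. As written, both halves of your argument therefore rest on inputs (uniformization, respectively HN theory for $\gpSch$ itself) that fail or are unproved in the stated generality, whereas the representation-theoretic route supplies both.
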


\begin{numberedparagraph} 
Generalizing \cite[\S\,5.1]{AH_Local} we define the global-local-functor $L_\Delta$ for $\Bun_\gpSch$. Let $S=\Spec R$ be affine and let $\Delta\subset X_R$ be an effective relative Cartier divisor. Let $[X/\gpSch](X_R\smallsetminus\Delta)$ be the category of $\gpSch$-bundles $\overset{\circ}{\mcE}$ on $X_R\smallsetminus\Delta$ and let $[X/\gpSch](X_R\smallsetminus\Delta)^\ext$ be the full subcategory of $[X/\gpSch](X_R\smallsetminus\Delta)$ consisting of those $\gpSch$-bundles $\overset{\circ}{\mcE}$ over $X_R\smallsetminus\Delta$ that can be extended to some $\gpSch$-bundle $\mcE$ over the whole relative curve $X_R$. The restriction functor 
\begin{equation}
{}_\bullet\,|_{X_R\smallsetminus\Delta}\colon\Bun_\gpSch(R)\longrightarrow [X/\gpSch](X_R\smallsetminus\Delta)^\ext,\quad
\mcE\longmapsto \mcE|_{X_R\smallsetminus\Delta}
\end{equation} 
assigns to a $\gpSch$-bundle $\mcE$ over $X_R$ the $\gpSch$-bundle $\mcE|_{X_R\smallsetminus\Delta}:=\mcE\times_{X_R}(X_R\smallsetminus\Delta)$ over $X_R\smallsetminus\Delta$. 

For $\mcE\in\Bun_\gpSch(R)$, we also consider its formal completion $\mcE|_{\widehat{\Delta}} :=\mcE\times_{X_R} \widehat{\Delta}$ along $\Delta$. By \cite[Proposition~2.4]{AH_Local}, the formal completion $\mcE|_{\widehat{\Delta}}$ corresponds to an $L^+_\Delta\gpSch$-bundle over $\Spec R$ which we denote as $L^+_\Delta(\mcE)$. This gives a functor
\begin{equation}\label{EqL^+_v}
L^+_\Delta\colon \Bun_\gpSch(R)\longrightarrow [\Spec R/L^+_\Delta\gpSch](R)\,,\quad\mcE\mapsto L^+_\Delta(\mcE)\,.
\end{equation}
Moreover, we have a functor
\begin{equation}\label{EqL_v}
L_\Delta\colon [X/\gpSch](X_R\smallsetminus\Delta)^\ext \longrightarrow [\Spec R/L_\Delta\gengpSch](R)\,,\quad\overset{\circ}{\mcE}\mapsto L_\Delta(\overset{\circ}{\mcE}):=L_\Delta L^+_\Delta(\mcE)
\end{equation}
which sends the $\gpSch$-bundle $\overset{\circ}{\mcE}$ over $X_R\smallsetminus\Delta$ (equipped with some extension $\mcE$ over $X_R$) to the $L_\Delta\gengpSch$-bundle $L_\Delta(\overset{\circ}{\mcE})$ associated with $L^+_\Delta(\mcE)$ under the functor $L_\Delta$ from \eqref{EqLoopTorsorDelta}. As in \cite[\S\,5.1]{AH_Local} one can show that $L_\Delta(\overset{\circ}{\mcE})$ is independent of the choice of the extension $\mcE$. The functors from \eqref{EqL^+_v} and \eqref{EqL_v} are called the \emph{global-local-functors at $\Delta$}.

\begin{example}\label{ExGlobLocFunctor}
We continue with Example~\ref{ExDivisors} and introduce the following notation.

\medskip\noindent
(a) For a point $x\in X(R)$ we write $L^+_x$ and $L_x$ for the global-local functors from \eqref{EqL^+_v} and \eqref{EqL_v}.

\medskip\noindent
(b) When $v\in X$ is a closed point we write $L^+_v$ and $L_v$ for the global-local functors from \eqref{EqL^+_v} and \eqref{EqL_v}. Later on we will apply these functors for different groups. For clarity, we will then include the group in the subscript and write $L^+_{v,\gpSch}$ (resp.~$L_{v,\gpSch}$) instead.
\end{example}

Let $\mathrm{Tri}_\gpSch(X_R,\Delta)$ denote the category whose objects are triples $(\overset{\circ}{\mcE},\mcL,\gamma)$, where $\overset{\circ}{\mcE}\in [X/\gpSch](X_R\smallsetminus\Delta)^\ext$, and $\mcL\in [\Spec R/L^+_\Delta\gpSch](R)$, and $\gamma\colon L_\Delta(\overset{\circ}{\mcE})\isoto L_\Delta(\mcL)$ is an isomorphism of $L_\Delta\gengpSch$-bundles on $\Spec R$. We obtain a functor
\begin{align}\label{BunG-to-triples}
    \begin{split}
        \Bun_\gpSch(R)&\longrightarrow\mathrm{Tri}_\gpSch(X_R,\Delta)\\ 
        \mcE &\longmapsto \bigl(\mcE|_{X_R\smallsetminus\Delta},L^+_\Delta(\mcE),\gamma),
    \end{split}
\end{align}
where $\gamma$ is the identity morphism of the $L_\Delta\gengpSch$-bundle $L_\Delta(\mcE|_{X_R\smallsetminus\Delta}):=L_\Delta L^+_\Delta(\mcE)$. The following lemma generalizes \cite[Lemma~5.1]{AH_Local}.
\end{numberedparagraph}

\begin{lem}\label{LemmaBL}
The functor \eqref{BunG-to-triples} is an equivalence of categories $\Bun_\gpSch(R)\cong \mathrm{Tri}_\gpSch(X_R,\Delta)$.
\end{lem}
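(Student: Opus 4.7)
The plan is to prove the equivalence by a Beauville-Laszlo glueing argument, directly generalizing \cite[Lemma~5.1]{AH_Local} from the case where $\Delta$ is a closed point to the case of an arbitrary effective relative Cartier divisor $\Delta\subset X_R$. The assertion is Zariski-local on $X_R$, so I would first reduce to an affine open patch on which $\mathscr{I}_\Delta = z_\Delta\cdot\Oo_{X_R}$ is principal with $z_\Delta$ a non-zero divisor. On such a patch the classical Beauville-Laszlo theorem provides a faithfully flat descent datum: the two maps $\Oo_{X_R}\to \Oo_{X_R}[z_\Delta^{-1}]$ and $\Oo_{X_R}\to \widehat{\Oo}_{X_R,\Delta}$ jointly make $\Spec\Oo_{X_R}[z_\Delta^{-1}]\sqcup \widehat{\Delta}$ a faithfully flat cover of $\Spec\Oo_{X_R}$, and the overlap is $\widehat{\Oo}_{X_R,\Delta}[z_\Delta^{-1}]$. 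This already suffices to glue modules, and since $\gpSch$ is smooth and affine it also suffices to glue $\gpSch$-bundles via fpqc descent.

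For essential surjectivity, given $(\overset{\circ}{\mcE},\mcL,\gamma)\in\mathrm{Tri}_\gpSch(X_R,\Delta)$, the definition of $[X/\gpSch](X_R\smallsetminus\Delta)^\ext$ produces at least one extension $\mcE'$ of $\overset{\circ}{\mcE}$ to $X_R$. Composing $\gamma$ with the canonical identification $L_\Delta(\overset{\circ}{\mcE})=L_\Delta L^+_\Delta(\mcE')$ gives a glueing isomorphism that modifies $\mcE'$ along $\widehat{\Delta}$. Applying Beauville-Laszlo to the pair $(\mcE'|_{X_R\smallsetminus\Delta}=\overset{\circ}{\mcE},\,\mcL)$ with this isomorphism yields a $\gpSch$-bundle $\mcE$ on $X_R$ with $\mcE|_{X_R\smallsetminus\Delta}\cong\overset{\circ}{\mcE}$ and $L^+_\Delta(\mcE)\cong\mcL$, whose image in $\mathrm{Tri}_\gpSch(X_R,\Delta)$ is canonically isomorphic to $(\overset{\circ}{\mcE},\mcL,\gamma)$. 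For fully faithfulness, a morphism between the images of two bundles $\mcE_1,\mcE_2$ is a pair of isomorphisms, one on $X_R\smallsetminus\Delta$ and one of the formal completions along $\widehat{\Delta}$, that agree after applying $L_\Delta$. Since $\gpSch$ is affine, the sheaf $\Isom_\gpSch(\mcE_1,\mcE_2)$ is represented by an $X_R$-affine scheme, so the data of such a pair is the data of compatible sections over $X_R\smallsetminus\Delta$ and $\widehat{\Delta}$ that coincide on the punctured formal neighborhood; these descend uniquely to a section over $X_R$ by the original Beauville-Laszlo theorem.

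The main technical obstacle is verifying Beauville-Laszlo descent for $\gpSch$-bundles in the setting of a relative Cartier divisor rather than a fixed closed point. In practice I would follow the proof of \cite[Lemma~5.1]{AH_Local} line by line, replacing the complete local ring $\Oo_v$ by $\widehat{\Oo}_{X_R,\Delta}$ and the uniformizer $z_v$ by the locally chosen generator $z_\Delta$; all the necessary facts (flatness, $z_\Delta$-adic completeness, exactness of the glueing sequence, and the fact that formal $\gpSch$-bundles on $\widehat{\Delta}$ correspond to $L^+_\Delta\gpSch$-bundles over $\Spec R$ as recorded in \cite[Proposition~2.4]{AH_Local}) carry over unchanged because $z_\Delta$ is a non-zero divisor cutting out a relative effective Cartier divisor flat over $\Spec R$.
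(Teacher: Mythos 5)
Your proposal is correct and follows essentially the same route as the paper, whose proof simply invokes the Beauville--Laszlo glueing lemma as in \cite[Lemma~5.1]{AH_Local}; your write-up just spells out the local reduction to a principal generator $z_\Delta$ and the descent details that the paper leaves implicit.
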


\begin{proof}
This follows from the glueing lemma of Beauville and Lazlo~\cite{Beauville-Laszlo} as in \cite[Lemma~5.1]{AH_Local}.
\end{proof}

\subsection{The Hecke stack}\label{subsec:Hecke-stack}

We recall the definition of the $\Hecke$ stack with $n$ legs from \cite[Definition~1.2]{Lafforgue12}. 
Let $n\in\N_0$, let $I=\{1,\ldots,n\}$, and let $I_{\bullet}=(I_1,\ldots,I_k)$ be an ordered partition of $I$, i.e.~$I=I_1\sqcup\ldots\sqcup I_k$. Let $D\subset X$ be a proper closed subset. 
\begin{defn}\label{DefHecke_nlegs}
The \emph{Hecke stack} $\Hecke_{\gpSch,D,X^n,I_\bullet}$ \emph{with $n$ legs and partition $I_\bullet$} is the stack over $\F_q$, whose $S$-valued points, for an $\F_q$-scheme $S$, are tuples $\bigl(\underline x,\,(\mcE^{(j)},\psi^{(j)})_{j=0\ldots k},\,(\varphi^{(j-1)})_{j=1\ldots k}\,\bigr)$ where\footnote{Here we use superscript for indexing inside the Hecke stack and subscript for indexing inside $\CBun$; see Definition~\ref{DefCBun}. Both super- and sub- scripts will be used in Section~\ref{subsec:Chains}, where we explain that our $\gpSch$-shtukas generalize the $\mathscr{D}$-elliptic sheaves from \cite{Laumon-Rapoport-Stuhler}.}
\begin{itemize}
\item $x_i \in (X\smallsetminus D)(S)$ for $i=1,\ldots, n$ are sections, called \emph{legs}, and $\underline x:=(x_i)_{i=1\ldots n}\in (X\smallsetminus D)^n(S)$
\item $(\mcE^{(j)},\psi^{(j)})$ for $j=0,\ldots,k$ are objects in $\Bun_{\gpSch,D}(S)$, and
\item the \emph{modifications} $\varphi^{(j-1)}\colon \mcE^{(j-1)}|_{{X_S}\smallsetminus\cup_{i\in I_j}\Gamma_{x_i}}\isoto \mcE^{(j)}|_{{X_S}\smallsetminus\cup_{i\in I_j}\Gamma_{x_i}}$ for $j=1,\ldots,k$ are isomorphisms preserving the $D$-level structures, i.e.~$\psi^{(j)}\circ\varphi^{(j-1)}|_{D_S}=\psi^{(j-1)}$.
\end{itemize}
Morphisms $\bigl(\underline x,\,(\mcE^{(j)},\psi^{(j)})_{j=0\ldots k},\,(\varphi^{(j-1)})_{j=1\ldots k}\,\bigr)\to \bigl(\underline x,\,(\widetilde{\mcE}^{(j)},\widetilde{\psi}^{(j)})_{j=0\ldots k},\,(\widetilde{\varphi}^{(j-1)})_{j=1\ldots k}\,\bigr)$ are tuples of isomorphisms $f^{(j)}\colon (\mcE^{(j)},\psi^{(j)})\isoto (\widetilde{\mcE}^{(j)},\widetilde{\psi}^{(j)})$ in $\Bun_{\gpSch,D}(S)$ for all $j$ which are compatible with the $\varphi^{(j-1)}$ and $\widetilde{\varphi}^{(j-1)}$.
We can visualize the above data as
\begin{equation}\label{Hecke-GDXn-visual}
\xymatrix @C+1pc {
(\mcE^{(0)},\psi^{(0)}) \ar@{-->}[r]^-{\varphi^{(0)}}_-{x_i\colon i\in I_1} & (\mcE^{(1)},\psi^{(1)}) \ar@{-->}[r]^-{\varphi^{(1)}}_-{x_i\colon i\in I_2} & \ldots \ar@{-->}[r]^-{\varphi^{(k-1)}}_-{x_i\colon i\in I_k} & (\mcE^{(k)},\psi^{(k)}) \,.
}
\end{equation}

The projection map of \eqref{Hecke-GDXn-visual} onto $(x_i)_{i=1\ldots n}$ defines a morphism
\begin{equation}
\Hecke_{\gpSch,D,X^n,I_\bullet}\to (X\smallsetminus D)^I.
\end{equation} 
When $D=\varnothing$, we will drop it (and the $\psi^{(j)}$) from the notation. For $n=1$, the set $I=\{1\}$ only has the trivial partition $I_1:=I$. Thus we drop $I_\bullet$ from the notation and simply write $\Hecke_{\gpSch,D,X}$.

\end{defn}

\begin{remark}\label{RemHeckeChangeI}
Let $\widetilde{I}_\bullet=(\widetilde{I}_1,\ldots,\widetilde{I}_{\tilde k})$ be a partition of $I$ and $I_\bullet=(I_1,\ldots,I_k)$ a coarsening of $\widetilde{I}_\bullet$ obtained by uniting certain $\widetilde{I}_{\tilde j}$ with neighboring indices. More precisely, we require that there are integers $0=\ell_0< \ell_1< \ldots <\ell_k=\tilde k$ and $I_j=\bigcup_{\ell_{j-1}<\tilde{j} \le\ell_j} \widetilde{I}_{\tilde j}$. Then there is an $X^n$-morphism 
\begin{align}\label{EqHeckeChangeI}
\Hecke_{\gpSch,D,X^n,\widetilde{I}_\bullet} & \longrightarrow \Hecke_{\gpSch,D,X^n,I_\bullet} \\
\bigl(\underline x,\,(\widetilde{\mcE}^{(\tilde j)},\widetilde{\psi}^{(\tilde j)})_{\tilde j=0\ldots \tilde k},\,(\widetilde{\varphi}^{(\tilde j-1)})_{\tilde j=1\ldots \tilde k}\,\bigr) & \longmapsto \bigl(\underline x,\,(\mcE^{(j)},\psi^{(j)})_{j=0\ldots k},\,(\varphi^{(j-1)})_{j=1\ldots k}\,\bigr) \nonumber
\end{align}
given by forgetting the $(\widetilde{\mcE}^{(\tilde{j})},\widetilde{\psi}^{(\tilde{j})})$ for $\tilde{j}\notin\{\ell_0,\ldots,\ell_k\}$, reindexing by $(\mcE^{(j)},\psi^{(j)}) :=(\widetilde{\mcE}^{(\ell_j)},\widetilde{\psi}^{(\ell_j)})$, and composing the corresponding $\widetilde{\varphi}^{(\tilde{j})}$ to $\varphi^{(j)}:=\widetilde{\varphi}^{(-1+\ell_{j+1})}\circ\ldots\circ\widetilde{\varphi}^{(\ell_j)}$.
\end{remark}

\begin{prop}\label{PropHeckeArtin}
$\Hecke_{\gpSch,D,X^n,I_\bullet}$ is an ind-Artin stack locally of ind-finite type over $X$. The morphism $\Hecke_{\gpSch,D,X^n,I_\bullet}\to X^n\times_{\F_q} \Bun_{\gpSch,D}$ sending $\bigl(\underline x,\,(\mcE^{(j)},\psi^{(j)})_{j=0\ldots k},\,(\varphi^{(j-1)})_{j=1\ldots k}\,\bigr)$ to $\bigl(\underline x,(\mcE^{(k)},\psi^{(k)})\bigr)$ is relatively representable by a morphism of ind-schemes which is of ind-finite type and ind-quasi-projective. It is even ind-projective if and only if the group scheme $\gpSch$ is parahoric as in \cite[Appendix, Definition~1]{Pappas-Rapoport-twisted-loop-groups}.
\end{prop}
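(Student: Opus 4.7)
The plan is to reduce the statement to the representability and properness properties of the Beilinson–Drinfeld Grassmannian via Beauville–Laszlo gluing (Lemma~\ref{LemmaBL}), and then iterate over the partition $I_\bullet$.

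First I would reduce to the case $k=1$ by an iterated fiber-product description of $\Hecke_{\gpSch,D,X^n,I_\bullet}$. For $1\le j\le k$, let $H_j$ be the stack obtained by forgetting $(\mcE^{(0)},\ldots,\mcE^{(j-1)})$ together with the modifications $\varphi^{(0)},\ldots,\varphi^{(j-1)}$, so $H_0=\Hecke_{\gpSch,D,X^n,I_\bullet}$ and $H_k=(X\smallsetminus D)^n\times_{\F_q}\Bun_{\gpSch,D}$. The transition map $H_{j-1}\to H_j$ is obtained by base change from the single-step Hecke stack $\Hecke_{\gpSch,D,X^{I_j}}$, whose target records $(\mcE^{(j)},\psi^{(j)})$ and whose source records a modification $\varphi^{(j-1)}$ at $\sum_{i\in I_j}\Gamma_{x_i}$. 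Hence it is enough to prove the representability, quasi-projectivity (resp. projectivity) properties for the single-step forgetful morphism $\Hecke_{\gpSch,D,X^n}\to X^n\times\Bun_{\gpSch,D}$ sending $(\underline x,\mcE^{(0)},\mcE^{(1)},\varphi^{(0)})$ to $(\underline x,\mcE^{(1)})$, and then take fiber products. Since each layer is relatively representable by an ind-scheme of ind-finite type over an Artin stack locally of finite type (Theorem~\ref{Bun_G}), the resulting $\Hecke_{\gpSch,D,X^n,I_\bullet}$ will be an ind-Artin stack locally of ind-finite type.

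For the single-step case, I would fix an $S$-point $(\underline x,\mcE^{(1)},\psi^{(1)})$ and identify the fiber explicitly. Writing $\Delta=\Gamma_{\underline x}\subset X_S$, Lemma~\ref{LemmaBL} applied to $\mcE^{(0)}$ shows that giving $(\mcE^{(0)},\varphi^{(0)})$ with $\varphi^{(0)}\colon\mcE^{(0)}|_{X_S\smallsetminus\Delta}\isoto\mcE^{(1)}|_{X_S\smallsetminus\Delta}$ is the same as giving an $L^+_\Delta\gpSch$-bundle $\mcL$ on $S$ together with an isomorphism $L_\Delta\mcL\isoto L_\Delta L^+_\Delta(\mcE^{(1)})$. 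By Lemma~\ref{LemmaAffineFlagVar}, this data is a section of the $L^+_\Delta\gpSch$-twist of the affine flag variety $\Fl_{\gpSch,\Delta}$, where the twisting is by the $L^+_\Delta\gpSch$-torsor $L^+_\Delta(\mcE^{(1)})$. The $D$-level structure $\psi^{(1)}$ is unaffected because the legs $\underline x$ avoid $D$ and $\varphi^{(0)}|_{D_S}$ is forced to equal $\psi^{(0)}\circ(\psi^{(1)})^{-1}$. Globalizing $\underline x$, these fibers assemble into a twisted form of the global affine flag variety (Beilinson–Drinfeld Grassmannian) over $X^n$, with the twist being étale-locally trivial since $\gpSch$-bundles on $X_S$ are étale-locally trivial on the formal neighborhood $\widehat{\Gamma}_{\underline x}$.

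It then remains to invoke the known geometric properties of the (twisted) global affine flag variety: it is an ind-scheme of ind-finite type and ind-quasi-projective over $X^n$ (this is the extension to the global setting of the results cited in Example~\ref{ExDivisors}(b) and Lemma~\ref{LemmaGlobalLoopIsqc}, going back to Beilinson–Drinfeld, Faltings, Pappas–Rapoport, Heinloth, and Richarz); and it is ind-projective precisely when $\gpSch$ is parahoric, by \cite{Pappas-Rapoport-twisted-loop-groups}. Ind-(quasi-)projectivity is preserved by étale-locally trivial twists, and the tower structure from Step~1 preserves these properties since a composition of ind-quasi-projective (resp. ind-projective) morphisms of ind-schemes is again ind-quasi-projective (resp. ind-projective). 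The main technical obstacle I anticipate is verifying cleanly that the twisting datum $L^+_\Delta(\mcE^{(1)})$ is étale-locally trivial on $S$ uniformly in $\underline x$ varying over $X^n$, and the parahoric "if and only if" criterion, which depends on a careful comparison with the loop-group-theoretic characterization of parahoric group schemes.
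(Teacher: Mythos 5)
Your outline is correct, but note that the paper itself offers no argument here: its ``proof'' is the citation to \cite[Propositions~3.9 and 3.12]{AH_Unif} (with inverted modifications and reversed numbering). What you sketch is essentially a reconstruction of the argument carried out in that reference: factor the forgetful morphism to $X^n\times_{\F_q}\Bun_{\gpSch,D}$ into single-step layers, identify each fiber via Beauville--Laszlo gluing (Lemma~\ref{LemmaBL}, Lemma~\ref{LemmaAffineFlagVar}) with the twist of the Beilinson--Drinfeld Grassmannian by the $L^+_{\underline x}\gpSch$-torsor $L^+_{\underline x}(\mcE^{(j)})$, observe that the $D$-level structures contribute nothing to the fiber because the legs avoid $D$ (your formula should read $\varphi^{(j-1)}|_{D_S}=(\psi^{(j)})^{-1}\circ\psi^{(j-1)}$, i.e.\ $\psi^{(j-1)}$ is determined, a harmless slip), and then import ind-quasi-projectivity, resp.\ the ind-projective-iff-parahoric criterion, from the loop-group literature. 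The two points you flag as delicate are indeed where the cited propositions do real work: (i) ind-quasi-projectivity is not a purely \'etale-local notion on the base, so passing from the untwisted Grassmannian to its $L^+_{\underline x}\gpSch$-twist needs the action to factor through finite-dimensional truncations (as in Lemma~\ref{LemmaLoopActionOnGr}/\ref{LemmaGlobalLoopIsqc}\ref{LemmaGlobalLoopIsqc_C}) together with an equivariant ample bundle, rather than bare \'etale descent; and (ii) the ``only if'' half of the parahoric statement requires the converse result that non-parahoric $\gpSch$ yields a non-ind-proper affine Grassmannian at some place, which is exactly what the Pappas--Rapoport/Richarz results supply. With those two points made explicit, your proof matches the one the paper relies on.
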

\begin{proof}
This was proven in \cite[Propositions~3.9 and 3.12]{AH_Unif}, where instead of the $\varphi^{(j)}$, their inverses are considered (and called $\tau_{k-j}$, while also the numbering of the $\gpSch$-bundles is reversed). 
\end{proof}

\begin{prop} \label{PropHeckeChangeI}
Let $U:=\{(x_i)_i\in X^n \colon x_i\neq x_j\text{ for }i\neq j\}\subseteq X^n$ be the complement of all diagonals. Let $I_\bullet$ and $\widetilde{I}_\bullet$ be partitions of $I$, such that $I_\bullet$ is a coarsening of $\widetilde{I}_\bullet$ as in Remark~\ref{RemHeckeChangeI}. Then over the open set $U\subset X^n$, the morphism $\Hecke_{\gpSch,D,X^n,\widetilde{I}_\bullet}\times_{X^n} U\isoto \Hecke_{\gpSch,D,X^n,I_\bullet}\times_{X^n} U$ from \eqref{EqHeckeChangeI} is an isomorphism.
\end{prop}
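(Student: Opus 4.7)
The plan is to construct an explicit inverse to the forgetful morphism \eqref{EqHeckeChangeI} over the open set $U$, via Zariski gluing along the pairwise disjoint graphs $\Gamma_{x_i}$. Since the statement is local on $X^n$, and since everything is built by factoring modifications of $\gpSch$-bundles along disjoint effective Cartier divisors, the essential point is the following lemma, which I will prove first.

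\textbf{Key gluing lemma.} Let $\mcE, \mcE'$ be $\gpSch$-bundles on $X_S$ and let $D_1,\ldots,D_m \subset X_S$ be pairwise disjoint effective relative Cartier divisors over $S$. Given a modification $\varphi\colon \mcE|_{X_S\smallsetminus \bigcup_s D_s} \isoto \mcE'|_{X_S\smallsetminus \bigcup_s D_s}$, there exists a unique (up to unique isomorphism) sequence of $\gpSch$-bundles $\mcF_0=\mcE,\mcF_1,\ldots,\mcF_m=\mcE'$ on $X_S$ together with modifications $\varphi_s\colon \mcF_{s-1}|_{X_S\smallsetminus D_s} \isoto \mcF_s|_{X_S\smallsetminus D_s}$ such that $\varphi = \varphi_m\circ\cdots\circ\varphi_1$. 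To construct $\mcF_s$, note that since the $D_s$ are pairwise disjoint, $\{X_S\smallsetminus D_1,\ldots,X_S\smallsetminus D_m\}$ refines to a Zariski open cover of $X_S$; define $\mcF_s$ by gluing $\mcE|_{X_S\smallsetminus(D_1\cup\cdots\cup D_s)}$ and $\mcE'|_{X_S\smallsetminus(D_{s+1}\cup\cdots\cup D_m)}$ along their intersection via $\varphi$. Since $\gpSch$ is affine over $X$ and $\gpSch$-bundles satisfy Zariski (indeed \fpqc) descent, this produces a $\gpSch$-bundle, and the tautological restrictions give the desired modifications $\varphi_s$. Uniqueness follows because any candidate $\mcF_s$ is canonically identified with $\mcE$ on $X_S\smallsetminus(D_1\cup\cdots\cup D_s)$ and with $\mcE'$ on $X_S\smallsetminus(D_{s+1}\cup\cdots\cup D_m)$ via the $\varphi_t$'s, and the gluing datum on their overlap is forced by $\varphi_m\circ\cdots\circ\varphi_1=\varphi$.

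\textbf{Construction of the inverse.} Given an object $\bigl(\underline x,\,(\mcE^{(j)},\psi^{(j)})_{j=0\ldots k},\,(\varphi^{(j-1)})_{j=1\ldots k}\bigr)$ of $\Hecke_{\gpSch,D,X^n,I_\bullet}\times_{X^n} U$, apply the key lemma to each modification $\varphi^{(j-1)}$ with the pairwise disjoint divisors $D_{\tilde j}:=\bigcup_{i\in\widetilde{I}_{\tilde j}} \Gamma_{x_i}$ for $\ell_{j-1}<\tilde j\le\ell_j$ (disjointness is where we use that we are over $U$). This produces intermediate $\gpSch$-bundles $\widetilde{\mcE}^{(\tilde j)}$ for $\ell_{j-1}<\tilde j<\ell_j$ and modifications $\widetilde{\varphi}^{(\tilde j-1)}$, with $\widetilde{\mcE}^{(\ell_j)}:=\mcE^{(j)}$. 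The $D$-level structure $\widetilde{\psi}^{(\tilde j)}$ on $\widetilde{\mcE}^{(\tilde j)}$ is defined by restricting the identification $\widetilde{\mcE}^{(\tilde j)}|_{X_S\smallsetminus \bigcup_{\tilde j'>\tilde j,\,\ell_{j-1}<\tilde j'\le\ell_j} D_{\tilde j'}} = \mcE^{(j-1)}|_{X_S\smallsetminus \bigcup_{\tilde j'>\tilde j,\,\ell_{j-1}<\tilde j'\le\ell_j} D_{\tilde j'}}$ to $D_S$ (which is possible since $D$ is disjoint from the $\Gamma_{x_i}$'s), and using $\psi^{(j-1)}$; compatibility of the $\widetilde{\psi}^{(\tilde j)}$ with the $\widetilde{\varphi}^{(\tilde j-1)}$ follows from the corresponding compatibility for $\psi^{(j-1)},\psi^{(j)},\varphi^{(j-1)}$.

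\textbf{Finishing.} By construction, composing the $\widetilde{\varphi}^{(\tilde j-1)}$ recovers $\varphi^{(j-1)}$, so the composition of the inverse with \eqref{EqHeckeChangeI} is the identity on objects. In the other direction, given a preimage $\bigl(\underline x,(\widetilde{\mcE}^{(\tilde j)},\widetilde{\psi}^{(\tilde j)}),(\widetilde{\varphi}^{(\tilde j-1)})\bigr)$, the uniqueness clause in the key lemma identifies it canonically with the object produced by the inverse construction. Functoriality on morphisms follows from the same uniqueness: any isomorphism of the $I_\bullet$-data extends uniquely to an isomorphism of the $\widetilde{I}_\bullet$-data, since each intermediate $\widetilde{\mcE}^{(\tilde j)}$ is built by a canonical gluing. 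The main (and really only) technical point is the key lemma; once that is in place, the equivalence over $U$ is a formal consequence.
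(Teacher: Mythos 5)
Your proof is correct and takes essentially the same route as the paper: over $U$ the graphs (grouped into the divisors $\Delta_{\tilde j}$) are pairwise disjoint, and the inverse of \eqref{EqHeckeChangeI} is built by reconstructing the forgotten intermediate bundles $\widetilde{\mcE}^{(\tilde j)}$ by gluing, with the level structures transported along the $\widetilde{\varphi}$'s using that $D_S$ is disjoint from the legs. The only difference is the gluing device — the paper invokes the Beauville--Laszlo Lemma~\ref{LemmaBL}, gluing $\mcE^{(\tilde j-1)}$ away from $\Delta_{\tilde j}$ with the formal completion $L^+_{\Delta_{\tilde j}}$ of the target bundle, whereas you glue Zariski-locally using that the complements of the two disjoint closed sets cover $X_S$ (both are fine); also note a harmless indexing slip in your level-structure step: away from the not-yet-crossed divisors the intermediate bundle is identified with $\mcE^{(j)}$ rather than $\mcE^{(j-1)}$, but either identification defines the same $\widetilde{\psi}^{(\tilde j)}$ since $\psi^{(j)}\circ\varphi^{(j-1)}|_{D_S}=\psi^{(j-1)}$.
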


\begin{proof}
We define the divisors $\Delta_{\tilde j}:=\sum_{i\in \widetilde{I}_{\tilde j}}\Gamma_{x_i}$, which are pairwise disjoint over $U$. Then the inverse of the morphism \eqref{EqHeckeChangeI} is given by reconstructing the forgotten $\widetilde{\mcE}^{(\tilde j)}$ with $\ell_j<\tilde j<\ell_{j+1}$ from $(\widetilde{\mcE}^{(\ell_j)},\widetilde{\psi}^{(\ell_j)}):=(\mcE^{(j)}, \psi^{(j)})$. This is done via Lemma~\ref{LemmaBL} by succesively glueing $\mcE^{(\tilde j-1)}$ with $L^+_{\Delta_{\tilde j}}(\mcE^{(j+1)})$ via the isomorphism $L_{\Delta_{\tilde j}}(\varphi^{(j)})$ to obtain $\widetilde{\mcE}^{(\tilde j)}$ and $\widetilde{\varphi}^{(\tilde j-1)}$. Finally we set $\widetilde{\psi}^{(\tilde j)}:=\widetilde{\psi}^{(\tilde j-1)}\circ(\widetilde{\varphi}^{(\tilde j-1)})^{-1}|_{D_S}$.
\end{proof}

In the rest of this subsection, we fix a point $\infty\in X(\F_q)$. We are particularly interested in the Hecke stack with two legs, one fixed at $\infty$. For simplicity, we slightly change the notation and use $'$ and $''$ instead of numbers for upper-scripts.

\begin{defn}\label{DefHecke2legs}
Let $I=\{1,2\}$ and let $D\subset X\smallsetminus\{\infty\}$ be a proper closed subset. 

\smallskip\noindent
(a) For the finest partition $I_\bullet=(\{1\},\{2\})$, we define the \emph{Hecke stack with two legs, one fixed at $\infty$} as
\[
\Hecke_{\gpSch,D,X\times\infty} :=\Hecke_{\gpSch,D,X^2,(\{1\},\{2\})}\times_{X^2}(X\times_{\F_q}\Spec\F_\infty).
\]
Its $S$-valued points, for $S$ an $\F_q$-scheme, are tuples 
\[
\bigl(x,\,(\mcE,\psi),(\mcE',\psi'),(\mcE'',\psi''),\,\varphi,\varphi'\,\bigr) \\
\]
where
\begin{itemize}
\item $x \in (X\smallsetminus D)(S)$ is a section, called a \emph{leg}, 
\item $(\mcE,\psi),(\mcE',\psi'),(\mcE'',\psi'')$ are objects in $\Bun_{\gpSch,D}(S)$, and
\item the \emph{modifications} $\varphi\colon \mcE|_{{X_S}\smallsetminus\Gamma_{x}}\isoto\mcE'|_{{X_S}\smallsetminus\Gamma_{x}}$ and $\varphi'\colon \mcE'|_{(X\smallsetminus\{\infty\})_S}\isoto \mcE''|_{(X\smallsetminus\{\infty\})_S}$ are isomorphisms preserving the $D$-level structures, i.e.~$\psi'\circ\varphi|_{D_S}=\psi$ and $\psi''\circ\varphi'|_{D_S}=\psi'$.
\end{itemize}
We can visualize the above data as
\begin{equation}
\xymatrix @C+1pc {
(\mcE,\psi) \ar@{-->}[r]^{\varphi}_{x} & (\mcE',\psi') \ar@{-->}[r]^{\varphi'}_\infty & (\mcE'',\psi'') \,.
}
\end{equation}

\medskip\noindent
(b) We also define the stack for the coarsest partition $I_1:=I=\{1,2\}$
\[
\PHecke_{\gpSch,D,X\times\infty} :=\Hecke_{\gpSch,D,X^2,(\{1,2\})}\times_{X^2}(X\times_{\F_q}\Spec\F_\infty).
\]
classifying data $\bigl(x,\,(\mcE,\psi),(\mcE'',\psi''),\,\varphi\colon(\mcE,\psi)|_{X_S\smallsetminus(\Gamma_x\cup\infty)} \isoto (\mcE'',\psi'')|_{X_S\smallsetminus(\Gamma_x\cup\infty)} \,\bigr)$, visualized as
\begin{equation}
\xymatrix @C+1pc {
(\mcE,\psi) \ar@{-->}[r]^{\varphi}_{x,\infty} & (\mcE'',\psi'') \,.
}
\end{equation}

When $D=\varnothing$, we will drop it and the $\psi,\psi',\psi''$ from the notation. The projection map onto the leg $x$ defines morphisms $\Hecke_{\gpSch,D,X\times\infty}\to \PHecke_{\gpSch,D,X\times\infty}\to X$.
\end{defn}

``Creating'' an unnecessary additional leg at $\infty$ defines a morphism 
\begin{align}\label{EqHeckeCreateInfty}
\Hecke_{\gpSch,D,X} & \longrightarrow \PHecke_{\gpSch,D,X\times\infty} \\
\bigl(x,\,(\mcE,\psi)\xdashrightarrow[x]{\varphi}(\mcE',\psi')\bigr) & \longmapsto \bigl(x,\,(\mcE,\psi)\xdashrightarrow[x,\infty]{\varphi}(\mcE',\psi')\bigr) \nonumber
\end{align}
where we view the isomorphism $\varphi\colon (\mcE,\psi)|_{X_S\smallsetminus\Gamma_x} \isoto (\mcE',\psi')|_{X_S\smallsetminus\Gamma_x}$ outside $\Gamma_x$ as an isomorphism $\varphi\colon (\mcE,\psi)|_{X_S\smallsetminus(\Gamma_x\cup\infty)} \isoto (\mcE',\psi')|_{X_S\smallsetminus(\Gamma_x\cup\infty)}$ outside $\Gamma_x\cup (\infty\times_{\F_q}S)$. This is a morphism over $X\times_{\F_q} \Bun_{\gpSch,D}$ as in Proposition~\ref{PropHeckeArtin}.

\begin{lem}\label{LemmaHeckeCreateInfty}
The morphism \eqref{EqHeckeCreateInfty} is a monomorphism, and hence schematic.
\end{lem}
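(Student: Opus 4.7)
The plan is to verify the equivalent characterization of being a monomorphism: for every test scheme $S$, the induced functor of groupoids $\Hecke_{\gpSch,D,X}(S) \to \PHecke_{\gpSch,D,X\times\infty}(S)$ should be fully faithful. Writing $U := X_S \smallsetminus \Gamma_x$ and $V := X_S \smallsetminus (\Gamma_x \cup \infty_S)$, the map \eqref{EqHeckeCreateInfty} acts identically on the bundle data and on pairs of compatible bundle isomorphisms $(f,f')$; it merely restricts the domain of the modification $\varphi\colon\mcE|_U \isoto \mcE'|_U$ to $V$. Faithfulness is therefore immediate, and the real content lies in establishing fullness.

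For fullness, I would take a pair $(f,f')$ defining an isomorphism on the right between the images of $(x,\mcE,\psi,\mcE',\psi',\varphi)$ and $(x,\widetilde{\mcE},\widetilde{\psi},\widetilde{\mcE}',\widetilde{\psi}',\widetilde{\varphi})$. The compatibility $f' \circ \varphi = \widetilde{\varphi} \circ f$ is then known over $V$, and I need to upgrade it to $U$. The approach is to view both sides as sections over $U$ of the scheme $T := \Isom_\gpSch(\mcE|_U, \widetilde{\mcE}'|_U)$. Since $\gpSch$ is smooth affine, the inner twist $\underline{\Aut}(\mcE|_U)$ is smooth affine over $U$, and $T$ is a torsor under it, hence also smooth affine and in particular separated over $U$. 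The equalizer of the two sections is therefore a closed subscheme $E \subseteq U$ containing $V$.

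The main obstacle, and the only step with any content, is the scheme-theoretic density of $V$ in $U$, which will force $E = U$. The set-theoretic containment $U \smallsetminus V \subseteq \infty_S := \{\infty\}\times_{\F_q}S$ is clear. By Example~\ref{ExDivisors}(a), the graph of the constant section at the $\F_q$-rational point $\infty$ is an effective Cartier divisor on $X_S$, so it is locally cut out by a non-zero-divisor; consequently $\infty_S$ contains no associated point of $X_S$, and a fortiori none of the open $U$. Thus every subsheaf of $\Oo_U$ supported inside $U\smallsetminus V \subseteq \infty_S$ must vanish, and applying this to the ideal defining $E$ gives $E = U$, hence $f' \circ \varphi = \widetilde{\varphi} \circ f$ on all of $U$. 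The schematic conclusion then follows from the standard fact that a monomorphism of (ind-)Artin stacks is representable, in combination with the ind-schematic relative structure furnished by Proposition~\ref{PropHeckeArtin}.
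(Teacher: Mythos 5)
Your proof is correct and follows essentially the same route as the paper: both reduce the monomorphism claim to full faithfulness of the functor on $S$-valued points and then invoke \cite[Corollaire~8.1.3 and Th\'eor\`eme~A.2]{Laumon-Moret-Bailly} for schematicness. The only difference is that you spell out, via separatedness of the $\Isom$-scheme and schematic density of the complement of the Cartier divisor $\infty_S$, why compatibility of $(f,f')$ with the modifications over $X_S\smallsetminus(\Gamma_x\cup\infty_S)$ extends to $X_S\smallsetminus\Gamma_x$ --- a step the paper's proof treats as immediate.
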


\begin{proof}
To prove that it is a monomorphism we must show that it is fully faithful as a functor. Let $(x,(\mcE,\psi),(\mcE',\psi'),\varphi)$ and $(x,(\widetilde{\mcE},\widetilde{\psi}),(\widetilde{\mcE}',\widetilde{\psi}'),\widetilde{\varphi})$ be two objects of $\Hecke_{\gpSch,D,X}(S)$. Then the isomorphism between these two objects in $\Hecke_{\gpSch,D,X}(S)$ and also in $\PHecke_{\gpSch,D,X\times\infty}(S)$ consist of isomorphisms $f\colon (\mcE,\psi)\isoto(\widetilde{\mcE},\widetilde{\psi})$ and $f'\colon (\mcE',\psi')\isoto(\widetilde{\mcE}',\widetilde{\psi}')$ which are compatible with $\varphi$ and $\widetilde{\varphi}$. This proves that \eqref{EqHeckeCreateInfty} is a monomorphism. It is schematic by \cite[Corollaire~8.1.3 and Th\'eor\`eme~A.2]{Laumon-Moret-Bailly}.
\end{proof}

\subsection{The Beilinson-Drinfeld Grassmannian}\label{subsec:Gr}
Let $n\in\N_0$, let $I=\{1,\ldots,n\}$, and let $I_\bullet=(I_1,\ldots,I_k)$ be a partition of $I$.

\begin{defn}\label{DefGr_nlegs}
The \emph{Beilinson-Drinfeld Grassmannian} $\Gr_{\gpSch,X^n,I_\bullet}$ is the $\fpqc$-sheaf of sets on $\F_q$, whose $S$-points, for an $\F_q$-scheme $S$, are tuples 
\[
\bigl(\underline x,\,(\mcE^{(j)})_{j=0\ldots k},\,(\varphi^{(j-1)})_{j=1\ldots k}\,\bigr)\in\Hecke_{\gpSch,\varnothing,X^n,I_\bullet}(S)
\]
as in Definition~\ref{DefHecke_nlegs} together with:
\begin{itemize}
\item a trivialization $\epsilon\colon \mcE^{(k)}\isoto \gpSch\times_X X_S$.
\end{itemize}
For $n=1$, the set $I=\{1\}$ only has the trivial partition $I_1:=I$. Thus we drop $I_\bullet$ from the notation and simply write $\Gr_{\gpSch,X}$. Thus $\Gr_{\gpSch,X^n,I_\bullet}$ is the fiber product
\begin{equation} \label{EqGrFiberProd}
\xymatrix @C+2pc {
\Gr_{\gpSch,X^n,I_\bullet} \ar[r] \ar[d] & \Hecke_{\gpSch,\varnothing,X^n,I_\bullet} \ar[d]^{\mcE^{(k)}} \\
\Spec\F_q \ar[r]^{\gpSch} & \Bun_\gpSch
}
\end{equation}
where the vertical map on the right was defined in Proposition~\ref{PropHeckeArtin} and the horizontal map on the bottom comes from the trivial $\gpSch$-bundle $\gpSch\in\Bun_\gpSch(\F_q)$.
\end{defn}

\begin{prop}\label{PropGrInd-Scheme}
$\Gr_{\gpSch,X^n,I_\bullet}$ is an ind-scheme of ind-finite type, which is ind-quasi-projective over $X^n$. It is even ind-projective if and only if the group scheme $\gpSch$ is parahoric.
\end{prop}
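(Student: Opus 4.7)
The plan is to obtain the proposition as a direct consequence of Proposition~\ref{PropHeckeArtin} via the Cartesian square \eqref{EqGrFiberProd}.

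First, I would recall the fiber product description from Definition~\ref{DefGr_nlegs}: the Beilinson-Drinfeld Grassmannian is the pullback
\[
\Gr_{\gpSch,X^n,I_\bullet} \;=\; \Hecke_{\gpSch,\varnothing,X^n,I_\bullet} \times_{\Bun_\gpSch} \Spec\F_q,
\]
where the morphism $\Hecke_{\gpSch,\varnothing,X^n,I_\bullet}\to\Bun_\gpSch$ is the one that sends the Hecke datum to the last bundle $\mcE^{(k)}$, and $\Spec\F_q\to\Bun_\gpSch$ is induced by the trivial $\gpSch$-bundle. Base-changing by $X^n\to X^n$ in the first coordinate, this is equivalent to the pullback of $\Hecke_{\gpSch,\varnothing,X^n,I_\bullet}\to X^n\times\Bun_\gpSch$ along $\id_{X^n}\times\gpSch\colon X^n\to X^n\times\Bun_\gpSch$.

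Next I would apply Proposition~\ref{PropHeckeArtin}: the morphism $\Hecke_{\gpSch,\varnothing,X^n,I_\bullet}\to X^n\times_{\F_q}\Bun_\gpSch$ is relatively representable by a morphism of ind-schemes, which is of ind-finite type and ind-quasi-projective. Since relative representability, ind-finite type and ind-quasi-projectivity are all stable under base change, we conclude that the projection $\Gr_{\gpSch,X^n,I_\bullet}\to X^n$ is representable by an ind-scheme of ind-finite type, ind-quasi-projective over $X^n$. This handles the first assertion.

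For the equivalence with parahoricity, the \emph{if} direction is immediate: ind-projectivity of the Hecke morphism (supplied by Proposition~\ref{PropHeckeArtin} when $\gpSch$ is parahoric) is preserved by base change, so $\Gr_{\gpSch,X^n,I_\bullet}\to X^n$ inherits ind-projectivity. The \emph{only if} direction is the main point I would need to be careful with, since a priori a non-ind-projective morphism can have ind-projective fibers. The cleanest way out is to note that the morphism $\Hecke_{\gpSch,\varnothing,X^n,I_\bullet}\to X^n\times\Bun_\gpSch$ is, smooth-locally on $\Bun_\gpSch$ (using trivializations of $\gpSch$-bundles after \'etale or \fpqc\ covers), a twisted form of the Grassmannian, so ind-projectivity of $\Gr_{\gpSch,X^n,I_\bullet}\to X^n$ is equivalent to ind-projectivity of $\Hecke_{\gpSch,\varnothing,X^n,I_\bullet}\to X^n\times\Bun_\gpSch$. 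Combined with Proposition~\ref{PropHeckeArtin}, this gives the desired characterization. Alternatively, one can reduce to the single-leg case and invoke the Pappas–Rapoport result \cite{Pappas-Rapoport-twisted-loop-groups} directly for the local affine flag variety $\Fl_{\gpSch,x}$, which ind-embeds into $\Gr_{\gpSch,X}$ as the fiber over a constant leg $x\in X(\F_q)$.
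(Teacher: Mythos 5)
Your proposal is correct and follows essentially the same route as the paper: the paper deduces the general case from Proposition~\ref{PropHeckeArtin} by base change along the Cartesian square \eqref{EqGrFiberProd}, citing Richarz~\cite[Lemma~2.8(ii)]{Richarz16} for the case $n=1$. Your extra care on the ``only if'' direction (smooth-local triviality of the Hecke morphism over $\Bun_\gpSch$, or reduction to one leg and the fiber $\Fl_{\gpSch,v}$ via Corollary~\ref{CorGrGlobLoc} together with Pappas--Rapoport/Richarz) is exactly the content the paper delegates to that citation, so the two arguments coincide in substance.
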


\begin{proof}
This was proven by Richarz~\cite[Lemma~2.8(ii)]{Richarz16} in case $n=1$. In general it follows from Proposition~\ref{PropHeckeArtin} by the base change diagram \eqref{EqGrFiberProd}. 
\end{proof}

We will need the following alternative description from \cite[(0.10)]{Lafforgue12} of the Beilinson-Drinfeld Grassmannian in terms of modifications of $L^+_{\underline x}\gpSch$-bundles. 

\begin{prop}\label{PropGrGlobLoc}
Via the functor $L_{\underline x}$ from Example~\ref{ExDivisors}(c), the Beilinson-Drinfeld Grassmannian $\Gr_{\gpSch,X^n,I_\bullet}$ is isomorphic to the $\fpqc$-sheaf on $\Spec\F_q$, whose $S$-valued points are tuples
\begin{equation}\label{EqPropGrGlobLoc}
\xymatrix @C+2pc {
\Bigl(\underline x,\,\mcL^{(0)} \ar@{-->}[r]^-{\widehat{\varphi}^{(0)}}_-{x_i\colon i\in I_1} & \mcL^{(1)} \ar@{-->}[r]^-{\widehat{\varphi}^{(1)}}_-{x_i\colon i\in I_2} & \ldots \ar@{-->}[r]^-{\widehat{\varphi}^{(k-1)}}_-{x_i\colon i\in I_k} & \mcL^{(k)} \ar[r]^-{\hat{\epsilon}}_-\sim & (L^+_{\underline x}\gpSch)_S\Bigr)},
\end{equation}
where the $\mcL^{(j)}$ are $L^+_{\underline x}\gpSch$-bundles on $S$, the $\widehat{\varphi}^{(j-1)}$ are isomorphism of $L_{\underline x}\gpSch$-bundles that above $\Gamma_{\underline x}\smallsetminus \bigcup_{i\in I_j}\Gamma_{x_i}$ are even isomorphisms of $L^+_{\underline x}\gpSch$-bundles, and $\hat{\epsilon}$ is an isomorphism of $L^+_{\underline x}\gpSch$-bundles.
\end{prop}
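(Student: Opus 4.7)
The plan is to exhibit mutually inverse morphisms between the two $\fpqc$-sheaves by repeatedly applying the Beauville--Laszlo glueing equivalence of Lemma~\ref{LemmaBL} at the relative Cartier divisor $\Delta:=\Gamma_{\underline x}\subset X_S$, together with the global-local functors \eqref{EqL^+_v} and \eqref{EqL_v}.

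For the forward direction, start with a tuple $\bigl(\underline x,(\mcE^{(j)})_{j=0,\ldots,k},(\varphi^{(j-1)})_{j=1,\ldots,k},\epsilon\bigr)\in\Gr_{\gpSch,X^n,I_\bullet}(S)$ and set
\[
\mcL^{(j)}:=L^+_{\underline x}(\mcE^{(j)}),\qquad \widehat\varphi^{(j-1)}:=L_{\underline x}(\varphi^{(j-1)}),\qquad \hat\epsilon:=L^+_{\underline x}(\epsilon).
\]
This is well-defined because $\varphi^{(j-1)}$ is an isomorphism of $\gpSch$-bundles on $X_S\smallsetminus\bigcup_{i\in I_j}\Gamma_{x_i}$, which contains $\Spec\widehat{\Oo}_{X_S,\Gamma_{\underline x}}[z_{\underline x}^{-1}]$ after pullback, so $L_{\underline x}(\varphi^{(j-1)})$ makes sense as an isomorphism of $L_{\underline x}\gpSch$-bundles. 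Moreover, above $\Gamma_{\underline x}\smallsetminus\bigcup_{i\in I_j}\Gamma_{x_i}$ the modification $\varphi^{(j-1)}$ is an isomorphism of $\gpSch$-bundles (not merely a rational modification), so $\widehat\varphi^{(j-1)}$ restricts to an isomorphism of $L^+_{\underline x}\gpSch$-bundles there, as required.

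For the inverse direction, I construct the $\gpSch$-bundles $\mcE^{(j)}$ by descending induction on $j$. I set $\mcE^{(k)}:=\gpSch\times_X X_S$ with the tautological trivialization $\epsilon=\id$; then $L^+_{\underline x}(\mcE^{(k)})$ is canonically the trivial $L^+_{\underline x}\gpSch$-bundle, which $\hat\epsilon$ identifies with $\mcL^{(k)}$. Inductively, assuming $\mcE^{(j)}$ is constructed together with a canonical trivialization of $\mcE^{(j)}|_{X_S\smallsetminus\Gamma_{\underline x}}$ and a canonical identification $L^+_{\underline x}(\mcE^{(j)})\cong\mcL^{(j)}$, I define $\mcE^{(j-1)}$ to be the $\gpSch$-bundle on $X_S$ associated via Lemma~\ref{LemmaBL} to the triple $\bigl(\gpSch\times_X(X_S\smallsetminus\Gamma_{\underline x}),\,\mcL^{(j-1)},\,\gamma^{(j-1)}\bigr)$, where $\gamma^{(j-1)}\colon L_{\underline x}\gpSch\isoto L_{\underline x}(\mcL^{(j-1)})$ is the composition of the tautological trivialization with $(\widehat\varphi^{(j-1)})^{-1}\circ\gamma^{(j)}$. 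To produce the modification $\varphi^{(j-1)}\colon\mcE^{(j-1)}|_{X_S\smallsetminus\bigcup_{i\in I_j}\Gamma_{x_i}}\isoto\mcE^{(j)}|_{X_S\smallsetminus\bigcup_{i\in I_j}\Gamma_{x_i}}$, I apply Beauville--Laszlo a second time, now at the smaller divisor $\bigcup_{i\in I_j}\Gamma_{x_i}\subseteq\Gamma_{\underline x}$: on $X_S\smallsetminus\Gamma_{\underline x}$ I take the identity map between the two trivializations, and on the formal neighborhood $\widehat\Gamma_{\underline x}\smallsetminus\bigcup_{i\in I_j}\Gamma_{x_i}$ I take the map coming from $\widehat\varphi^{(j-1)}$, which is an isomorphism of $L^+$-bundles there by hypothesis; these agree on the overlap by construction of $\gamma^{(j-1)}$.

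The main obstacle is the careful bookkeeping with two different divisors ($\Gamma_{\underline x}$ to build the $\mcE^{(j)}$ themselves, and $\bigcup_{i\in I_j}\Gamma_{x_i}$ to build the modifications $\varphi^{(j-1)}$), together with verifying that the two constructions are mutually inverse. Once the constructions are in place, this last verification is formal: applying $L^+_{\underline x}$ and $L_{\underline x}$ to the output of the inverse functor returns the original local data by the construction of $\gamma^{(j-1)}$, and conversely, reconstructing a tuple $(\mcE^{(j)},\varphi^{(j-1)},\epsilon)$ from its local data $(L^+_{\underline x}(\mcE^{(j)}),L_{\underline x}(\varphi^{(j-1)}),L^+_{\underline x}(\epsilon))$ reproduces the original global tuple up to unique isomorphism because each $\mcE^{(j)}|_{X_S\smallsetminus\Gamma_{\underline x}}$ is already trivialized (via $\epsilon$ and the $\varphi^{(\cdot)}$'s restricted to that open) so that the Beauville--Laszlo triple is determined by the local data. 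Functoriality in $S$ and the sheaf property are inherited from those of $L^+_{\underline x}$, $L_{\underline x}$ and Lemma~\ref{LemmaBL}.
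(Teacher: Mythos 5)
Your proof is correct and takes essentially the same route as the paper: the forward map is the global-local functor ($L^+_{\underline x}$ on bundles and $\epsilon$, $L_{\underline x}$ on modifications), and the inverse is Beauville--Laszlo glueing (Lemma~\ref{LemmaBL}) of the trivial bundle on $X_S\smallsetminus\Gamma_{\underline x}$ with $\mcL^{(j)}$ via the composite $\hat{\epsilon}\circ\widehat{\varphi}^{(k-1)}\circ\ldots\circ\widehat{\varphi}^{(j)}$, which is exactly your inductively defined $\gamma^{(j)}$, with the $\varphi^{(j-1)}$ recovered by glueing the identity on $X_S\smallsetminus\Gamma_{\underline x}$ with $\widehat{\varphi}^{(j-1)}$ near the remaining part of $\Gamma_{\underline x}$. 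Only your phrase about applying Beauville--Laszlo ``at the smaller divisor $\bigcup_{i\in I_j}\Gamma_{x_i}$'' is slightly misstated --- the second glueing happens along the part of $\Gamma_{\underline x}$ contained in the open subscheme $X_S\smallsetminus\bigcup_{i\in I_j}\Gamma_{x_i}$ --- but the construction you then describe is the correct one.
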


\begin{remark}\label{RemGrGlobLoc}
The condition on $\widehat{\varphi}^{(j-1)}$ means the following. Let $\Spec R\subset S$ be an affine open subset. For every $j=1,\ldots,k$ let $\mathscr{I}_j$ be the ideal sheaf of the effective relative Cartier divisor $\sum_{i\in I_j}\Gamma_{x_i}$. Then locally on $X_S$ the sheaf $\mathscr{I}_j$ is generated by an element $z_j\in\Oo_{X_S}$. For any $j$, we can define the partial loop group $L^j_{\underline x}\gpSch$ Zariski locally on $\Spec R$ as the $\fpqc$-sheaf of groups on $\Spec R$ given by $L^j_{\underline x}\gpSch(R'):=\gpSch(\widehat{\Oo}_{X_{R'},\Delta'}[z_j^{-1}])$, where $\Delta'$ denotes the pullback of $\Delta$ to $X_{R'}$. As in Definition~\ref{DefLoopGpAtDelta}, this is independent of the chosen generator $z_j$, and hence glues to an $\fpqc$-sheaf of groups on $S$. The inclusion of sheaves $L^+_{\underline x}\gpSch \subset L^j_{\underline x}\gpSch$ induces a functor $L_j\colon [S / L^+_{\underline x}\gpSch] \to [S / L^j_{\underline x}\gpSch]$. By the condition on $\widehat{\varphi}^{(j-1)}$ we mean that $\widehat{\varphi}^{(j-1)}$ is an isomorphism of the induced $L^j_{\underline x}\gpSch$-bundles $\widehat{\varphi}^{(j-1)}\colon L_j(\mcL^{(j-1)})\isoto L_j(\mcL^{(j)})$.
\end{remark}

\begin{proof}[Proof of Proposition~\ref{PropGrGlobLoc}.]
The functor $L^+_{\underline x}$ sends an object $\bigl(\underline x,\,(\mcE^{(j)})_{j=0\ldots k},\,(\varphi^{(j-1)})_{j=1\ldots k},\,\epsilon\,\bigr)\in\Gr_{\gpSch,X^n,I_\bullet}(S)$ to
\[
\xymatrix @C+2pc {
\Bigl(\underline x,\, L^+_{\underline x}(\mcE^{(0)}) \ar@{-->}[r]^-{L_{\underline x}(\varphi^{(0)})}_-{x_i\colon i\in I_1} & L^+_{\underline x}(\mcE^{(1)}) \ar@{-->}[r]^-{L_{\underline x}(\varphi^{(1)})}_-{x_i\colon i\in I_2} & \ldots \ar@{-->}[r]^-{L_{\underline x}(\varphi^{(k-1)})}_-{x_i\colon i\in I_k} & L_{\underline x}(\mcE^{(k)}) \ar[r]^-{L^+_{\underline x}(\epsilon)}_-\sim & L^+_{\underline x}\gpSch\Bigr)}
\]
That it is an equivalence follows from Lemma~\ref{LemmaBL}, by gluing $\gpSch\times_X (X_S\smallsetminus \Gamma_{\underline x})$ with $\mcL^{(j)}$ via the isomorphism $\hat{\epsilon} \circ \widehat{\varphi}^{(k-1)}\circ \ldots \circ \widehat{\varphi}^{(j)}$ to obtain $\mcE^{(j)}$ and $\varphi^{(j)}$. 
\end{proof}

\begin{defn}\label{DefLoopActionOnGr}
The global loop group $\mcL^+_{X^n}\gpSch$ from Definition~\ref{DefGlobalLoopGp} acts on $\Gr_{\gpSch,X^n,I_\bullet}$ by acting on $\hat{\epsilon}$. More precisely, the action is given by a morphism
\[
\mcL^+_{X^n}\gpSch \times_{X^n} \Gr_{\gpSch,X^n,I_\bullet} \longrightarrow \Gr_{\gpSch,X^n,I_\bullet}
\]
such that $(\underline x,g)\in \mcL^+_{X^n}\gpSch(S)$ sends an object \eqref{EqPropGrGlobLoc} of $\Gr_{\gpSch,X^n,I_\bullet}(S)$ to the same object but with $\hat{\epsilon}$ replaced by $g\cdot\hat{\epsilon}$.
\end{defn}

Let $n=1$ and recall from Definition~\ref{DefAffineFlagVarAtx} and Example~\ref{ExDivisors}(a) the local affine flag variety $\Fl_{\gpSch,x}$. For the global loop groups from Definition~\ref{DefGlobalLoopGp}, the set of $S$-valued points of the $\fppf$-quotient $\mcL_X\gpSch/\mcL^+_X\gpSch$ equals $\{\,(x,\overline{g})\colon x\in X(S),\, \overline{g}\in \Fl_{\gpSch,x}(S)\,\}$. Via the interpretation of $\Fl_{\gpSch,x}$ from Lemma~\ref{LemmaAffineFlagVar}, we consider the morphism
\begin{align}\label{GrGX-map}
\begin{split}
\Gr_{\gpSch,X} \qquad & \longto \qquad \mcL_X\gpSch/\mcL^+_X\gpSch\,,\\ 
\bigl(x,\,\mcE^{(0)}\xdashrightarrow[x]{\varphi^{(0)}}\mcE^{(1)}\xrightarrow[\sim]{\enspace\epsilon\enspace} \gpSch\times_X X_S\bigr) & \longmapsto \bigl(\,x, L^+_x(\mcE^{(0)}),\, L_x(\epsilon\circ\varphi^{(0)})\,\bigr)\,.
\end{split}
\end{align}

\begin{cor}\label{CorGrGlobLoc}
The map in \eqref{GrGX-map} is an isomorphism $\Gr_{\gpSch,X} \cong \mcL_X\gpSch/\mcL^+_X\gpSch$. In particular, for a fixed closed point $v\in X$, we obtain a canonical isomorphism of ind-schemes over $\Spf\Oo_v$
\begin{equation}\label{EqCorGrGlobLoc}
\Gr_{\gpSch,X}\times_X \Spf\Oo_v \isoto \Fl_{\gpSch,v}\widehattimes_{\F_v} \Spf\Oo_v
\end{equation}
where $\Fl_{\gpSch,v}$ is the local affine flag variety from Example~\ref{ExDivisors}(b) and $\widehattimes_{\F_v}$ denotes the fiber product of ind-schemes over $\F_v$.
\end{cor}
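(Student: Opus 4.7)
The plan is to deduce the first assertion directly from the $n=1$ case of Proposition~\ref{PropGrGlobLoc} together with Lemma~\ref{LemmaAffineFlagVar}, and to obtain the base change statement by identifying the local loop groups at the leg $x$ with those at $v$, once the leg is forced into the formal neighborhood of $v$.

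For the first isomorphism, I apply Proposition~\ref{PropGrGlobLoc} with $n=1$; the only partition is the trivial one $I_\bullet=(I)$, so $k=1$. An $S$-point of $\Gr_{\gpSch,X}$ then corresponds, up to unique isomorphism, to a tuple $(x,\,\mcL^{(0)}\xdashrightarrow{\widehat\varphi^{(0)}}\mcL^{(1)}\xrightarrow[\sim]{\hat\epsilon}(L^+_x\gpSch)_S)$. Setting $\hat\delta:=\hat\epsilon\circ\widehat\varphi^{(0)}\colon L_x\mcL^{(0)}\isoto(L_x\gpSch)_S$, the pair $(\mcL^{(0)},\hat\delta)$ is by Lemma~\ref{LemmaAffineFlagVar} precisely an $S$-point of $\Fl_{\gpSch,x}$, since the data of $(\mcL^{(1)},\hat\epsilon)$ is tautologically recoverable (take $\mcL^{(1)}=L^+_x\gpSch$ and $\hat\epsilon=\id$). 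Unwinding Definition~\ref{DefGlobalLoopGp}, an $S$-point of the $\fpqc$-quotient $\mcL_X\gpSch/\mcL^+_X\gpSch$ lying over $x\in X(S)$ is exactly such a pair, and tracing the explicit formula in \eqref{GrGX-map} shows that the resulting identification is induced by the map stated there.

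For the base change to $\Spf\Oo_v$, take $S\in\Nilp_{\Oo_v}$; the fiber product forces the leg $x\colon S\to X$ to be the composition $S\to\Spf\Oo_v\to X$. On any affine open $\Spec R\subseteq S$ the ideal of $\Gamma_x\subset X_R$ is locally generated by $z_v\otimes 1-1\otimes z_v^R$, where $z_v^R\in R$ is the image of the uniformizer $z_v$ under $\Oo_v\to R$ and is nilpotent. Because $z_v^R$ is nilpotent, this ideal and $(z_v\otimes 1)$ define the same adic topology on $\Oo_v\otimes_{\F_v}R$, yielding a canonical identification $\widehat\Oo_{X_R,\Gamma_x}\cong R\dbl z_v\dbr$ in which $z_v\otimes 1$ serves as the formal coordinate. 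Consequently the restrictions of $L^+_x\gpSch$ and $L_x\gpSch$ to $S$ coincide with the base changes of $L^+_v\gpSch$ and $L_v\gpSch$ from $\F_v$ to $S$, and hence $\Fl_{\gpSch,x}|_S\cong\Fl_{\gpSch,v}|_S$ naturally. Combined with the first part, and using that $\Gr_{\gpSch,X}\times_X\Spf\Oo_v$ is by definition the $z_v$-adic formal completion of $\Gr_{\gpSch,X}\times_X\Spec\Oo_v$ along its fiber at $v$, this yields the desired isomorphism \eqref{EqCorGrGlobLoc}.

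The only step requiring a little care is the canonical identification $\widehat\Oo_{X_R,\Gamma_x}\cong R\dbl z_v\dbr$ under the hypothesis that $z_v^R$ is nilpotent; the remainder of the argument is a direct unwinding of definitions together with the cited Proposition~\ref{PropGrGlobLoc} and Lemma~\ref{LemmaAffineFlagVar}.
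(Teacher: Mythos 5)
Your proposal is correct and follows essentially the same route as the paper: the first isomorphism is deduced from Proposition~\ref{PropGrGlobLoc} (with Lemma~\ref{LemmaAffineFlagVar} making the identification explicit), and the base change statement rests on the same key observation that the image $\zeta$ of $z_v$ in $R$ is nilpotent, so the $z_x$-adic and $z_v$-adic completions coincide, giving $\Fl_{\gpSch,x}(R)=\Fl_{\gpSch,v}(R)$. Your phrasing via equality of adic topologies for $S\in\Nilp_{\Oo_v}$ is just a repackaging of the paper's reduction to $\Spec\Oo_v/(z_v^n)$ with $z_x:=z_v-\zeta$.
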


\begin{proof}
The first assertion follows from Proposition~\ref{PropGrGlobLoc}. To prove the second assertion, note that $\Spf\Oo_v= \dirlim\Spec \Oo_v/(z_v^n)$ where $z_v$ is a uniformizing parameter at $v$. Thus it suffices to prove that \eqref{EqCorGrGlobLoc} is an isomorphism modulo $z_v^n$ for every $n$. To this end we must show for every ring $R$ and morphism $x\colon \Spec R\to\Spec \Oo_v/(z_v^n)$ that $\Fl_{\gpSch,x}(R)=\Fl_{\gpSch,v}(R)$. Let $\zeta:=x^*(z_v)\in R$. Then $z_x:=z_v-\zeta$ is a generator of the ideal sheaf $\mathscr{I}_{\Gamma_x}$ defining the graph $\Gamma_x\subset X_R$ of $x$. Since $\zeta^n=0$ in $R$, we have $\widehat{\Gamma}_x = \Spf R\dbl z_x\dbr = \Spf R\dbl z_v\dbr$ and $R\dpl z_x\dpr = R\dpl z_v\dpr$. This proves the equality $\Fl_{\gpSch,x}(R)=\Fl_{\gpSch,v}(R)$.
\end{proof}

\begin{numberedparagraph}
Let $n=1$. We recall the definition of the affine Schubert varieties from \cite{Richarz16,Lafforgue12}. Let $\mu\colon \mathbb{G}_{m,\FcnFld^\sep}\to \gengpSch_{\FcnFld^\sep}$ be a cocharacter of $\gengpSch$, and let $K/\FcnFld$ be a finite separable extension over which $\mu$ is defined. Let $E_{\mu}$ be the reflex field of $\mu$, i.e.~$E_\mu$ is the field of definition of the conjugacy class of $\mu$. It is a finite separable field extension of $\FcnFld$, contained in $K$. Let $\widetilde{X}_K$ and $\widetilde{X}_\mu$ be the normalizations of $X$ in $K$ and $E_\mu$. The field extensions correspond to finite, flat, surjective morphisms $\widetilde{X}_K \to \widetilde{X}_\mu \to X$. We consider the canonical leg $x\colon \Spec K \into \widetilde{X}_K \twoheadrightarrow X$. The completion $\widehat{\Gamma}_x$ of its graph $\Gamma_x$ is of the form $\widehat{\Gamma}_x=\Spf K\dbl z_x\dbr$. If $z\in\FcnFld$ is an element for which $\FcnFld$ is a separable extension of $\F_q(z)$ and $\zeta$ denotes the image of $z$ in $K$, then $\widehat{\Gamma}_x=\Spf K\dbl z_x\dbr=\Spf K\dbl z-\zeta\dbr$ by \cite[Lemma~1.3]{HartlJuschka}. There is a ring homomorphism $\F_q(z)\into K\dbl z-\zeta\dbr$ sending $z$ to $z=\zeta+(z-\zeta)$, which modulo the maximal ideal $(z-\zeta)$ induces the inclusion $\F_q(z)\into K$ sending $z$ to $\zeta$. Since $K$ is finite separable over $\F_q(z)$ and $K\dbl z-\zeta\dbr$ is henselian, there is a unique $\F_q(z)$-homomorphism $K\into K\dbl z-\zeta\dbr$, which modulo the maximal ideal $(z-\zeta)$ induces the identity on $K$. Note that this $\F_q(z)$-homomorphism is \emph{not} the isomorphism $K\isoto K\cdot(z-\zeta)^0\subset K\dbl z-\zeta\dbr$, because it sends $z$ to $z=\zeta+(z-\zeta)\not\in K\cdot(z-\zeta)^0$. This $\F_q(z)$-homomorphism defines the upper row in the following diagram
\[
\xymatrix {
\widehat{\Gamma}_x = \Spf K\dbl z-\zeta\dbr \ar[r] & \Spec (K\otimes_{\F_q} K) \ar[r] \ar[d] & \Spec K \ar[r] \ar[d] & \Spec \FcnFld \ar[r] \ar[d] & X \ar[d] \\
& \Spec K \ar[r] & \Spec \F_q \ar@{=}[r] & \Spec \F_q \ar@{=}[r] & \Spec \F_q
}
\]
and realizes $\widehat{\Gamma}_x = \Spf K\dbl z-\zeta\dbr$ as the formal completion of $\Spec (K\otimes_{\F_q}K)$ along the diagonal embedding of $\Spec K$. Pulling back the group scheme $\gpSch$ from $X$ to $\Spec K$ under the upper row in the diagram, we see that the cocharacter $\mu\colon\G_{m,K}\to \gengpSch_K$ is defined over $\widehat{\Gamma}_x$.

We now define the $K$-valued point $(x,\bar{g})\in \Gr_{\gpSch,X}(K)=(\mcL_X\gpSch/\mcL^+_X\gpSch)(K)$ given as in Corollary~\ref{CorGrGlobLoc} by the canonical leg $x$ as above, and
\[
\bar{g} \;:=\; \mu(z_x)\cdot L^+_x\gpSch(K) \; \in \; \Fl_{\gpSch,x}(K).
\]
It is independent of the choice of $z_x$, because any other $\tilde{z}_x$ differs from $z_x$ by a unit $u\in K\dbl z_x\dbr^\times$, which is mapped under $\mu$ to $\mu(u)\in L^+_x\gpSch(K)$; see \cite[Remark~A.2]{Richarz16}. 
\end{numberedparagraph}

\begin{defn}\label{Def_SchubertVariety} 
The \emph{affine Schubert variety $\Gr_{\gpSch,X}^{\leq \mu}$ of $\mu$ in the Beilinson-Drinfeld Grassmannian} $\Gr_{\gpSch,X}$ is defined as the scheme-theoretic image of the morphism $(\mcL^+_X\gpSch\times_X \Spec K)\cdot (x,\bar{g})\to \Gr_{\gpSch,X}\times_X\widetilde{X}_K$. It is an $\mcL^+_X\gpSch\times_X  \widetilde{X}_K$-invariant, closed subscheme, which is quasi-compact, because $\mcL^+_X\gpSch$ is a quasi-compact scheme by Lemma~\ref{LemmaGlobalLoopIsqc}\ref{LemmaGlobalLoopIsqc_B}. It only depends on the conjugacy class of $\mu$, because if $\tilde{\mu}=\Int_h\circ \mu$ is conjugate to $\mu$ by an element $h\in \gengpSch(K)$, then $(x,h)\in \mcL^+_X\gpSch(K)$. Therefore, the affine Schubert variety descends to a closed subscheme $\Gr_{\gpSch,X}^{\leq \mu}$ of $\Gr_{\gpSch,X}\times_X \widetilde{X}_\mu$.
\end{defn}

\begin{prop}\label{PropGrProduct}
Let $U:=\{(x_i)_i\in X^n \colon x_i\neq x_j\text{ for }i\neq j\}\subseteq X^n$ be the complement of all diagonals. Over the open set $U\subset X^n$, the Beilinson-Drinfeld Grassmannian is a product
\begin{align}\label{product-Grassmannian}
\Gr_{\gpSch,X^n,I_\bullet} \times_{X^n} U & \isoto \bigl(\Gr_{\gpSch,X} \times_{\F_q} \ldots \times_{\F_q} \Gr_{\gpSch,X}\bigr) \times_{X^n} U\\
\bigl(\underline x,\,(\mcE^{(j)}),\,(\varphi^{(j-1)}),\epsilon\,\bigr)& \longmapsto \bigl(x_i,L_{x_i}^+\mcE^{(0)},L_{x_i}\mcE^{(0)}\xrightarrow{L_{x_i}(\epsilon\circ\varphi^{(k-1)}\circ\ldots\circ\varphi^{(0)})}L_{x_i}\gpSch\bigr)_{i=1,\ldots,n}. \nonumber
\end{align}
\end{prop}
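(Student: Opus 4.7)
The strategy is to exploit the fact that over $U$ the components of $\Gamma_{\underline x}$ become pairwise disjoint, so the loop groups $L^+_{\underline x}\gpSch$ and $L_{\underline x}\gpSch$ factor as products indexed by the legs, and the chain of modifications decouples accordingly. Throughout, I will work in the global-local description of Proposition~\ref{PropGrGlobLoc}. As a first reduction, I may replace $I_\bullet$ by the finest partition $\widetilde I_\bullet := (\{1\},\ldots,\{n\})$: Proposition~\ref{PropHeckeChangeI} gives an isomorphism of Hecke stacks over $U$ for any coarsening, and the fiber product description \eqref{EqGrFiberProd} pulls it back to an isomorphism $\Gr_{\gpSch,X^n,\widetilde I_\bullet}\times_{X^n} U \isoto \Gr_{\gpSch,X^n,I_\bullet}\times_{X^n} U$. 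Since the right-hand side of \eqref{product-Grassmannian} is independent of $I_\bullet$, I may assume $k=n$ and $I_j=\{j\}$.

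Next, for any $S\to U$ the graphs $\Gamma_{x_1},\ldots,\Gamma_{x_n}$ are pairwise disjoint effective Cartier divisors in $X_S$, so $\widehat\Gamma_{\underline x}$ is the disjoint union of the $\widehat\Gamma_{x_i}$. Unwinding Definition~\ref{DefLoopGpAtDelta} and using that $\gpSch$-valued points of a disjoint union of formal schemes factor componentwise, one obtains canonical factorizations
\[
L^+_{\underline x}\gpSch|_U \,\cong\, L^+_{x_1}\gpSch|_U \times_U \cdots \times_U L^+_{x_n}\gpSch|_U, \qquad L_{\underline x}\gpSch|_U \,\cong\, L_{x_1}\gpSch|_U \times_U \cdots \times_U L_{x_n}\gpSch|_U,
\]
and, via Remark~\ref{RemGrGlobLoc}, also $L^j_{\underline x}\gpSch|_U \cong L_{x_j}\gpSch|_U \times_U \prod_{i\neq j} L^+_{x_i}\gpSch|_U$ for each $j$. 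Consequently, every $L^+_{\underline x}\gpSch$-bundle decomposes canonically as a tuple $\mcL^{(j)}=(\mcL^{(j)}_i)_i$ of $L^+_{x_i}\gpSch$-bundles, and every modification $\widehat\varphi^{(j-1)}$ decomposes as an isomorphism of $L^+_{x_i}\gpSch$-bundles on each component $i\neq j$ together with an isomorphism $L_{x_j}(\mcL^{(j-1)}_j)\isoto L_{x_j}(\mcL^{(j)}_j)$ of $L_{x_j}\gpSch$-bundles on component $j$.

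With these decompositions in place, the assembly is direct. Given a tuple $(\underline x,(\mcL^{(j)}),(\widehat\varphi^{(j-1)}),\hat\epsilon)$ as in \eqref{EqPropGrGlobLoc}, the chain restricted to the $i$-th component is an equivalence of $L^+_{x_i}\gpSch$-bundles at every step $j\neq i$ and carries a single genuine modification at step $j=i$. Composing with $\hat\epsilon$ produces the single-leg datum $\bigl(\mcL^{(0)}_i,\; L_{x_i}(\hat\epsilon\circ\widehat\varphi^{(n-1)}\circ\cdots\circ\widehat\varphi^{(0)})\bigr)$, which by Lemma~\ref{LemmaAffineFlagVar} and Corollary~\ref{CorGrGlobLoc} is exactly a point of $\Gr_{\gpSch,X}$ over $x_i$; this recovers the formula for the forward map in \eqref{product-Grassmannian}. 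An inverse is built from single-leg data $(x_i,\mcL_i,\hat\delta_i)_{i=1,\ldots,n}$ by setting $\mcL^{(j)}_i := \mcL_i$ for $j<i$ and $\mcL^{(j)}_i := L^+_{x_i}\gpSch$ for $j\geq i$, taking $\widehat\varphi^{(j-1)}$ to be the identity on each component $i\neq j$ and equal to $\hat\delta_j$ on component $j$, and $\hat\epsilon=\id$; this is functorial in $S$ and inverts the forward map up to the canonical isomorphism of chains provided by the product decomposition.

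The only delicate point is the product factorization of loop groups at pairwise disjoint formal divisors established in the second paragraph; this is ultimately formal, following from the $\fpqc$-sheaf definition and the decomposition $\widehat\Gamma_{\underline x}=\bigsqcup_i \widehat\Gamma_{x_i}$, but it must be applied simultaneously to $L^+_{\underline x}\gpSch$, $L_{\underline x}\gpSch$, and the partial loop groups $L^j_{\underline x}\gpSch$ for the decomposition of modifications to make sense. Once this is in place, the rest of the argument is pure book-keeping.
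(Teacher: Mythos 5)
Your proof is correct, but it takes a somewhat different route from the paper's. The paper also uses Proposition~\ref{PropHeckeChangeI} together with the decomposition of the positive loop group at pairwise disjoint legs, but it normalizes the partition in the opposite direction: it passes to the \emph{coarsest} partition $I_1=I$, so that there is a single modification $\varphi^{(0)}$, and then recovers $(\mcE^{(0)},\varphi^{(0)})$ from the trivialized bundle $\mcE^{(1)}=\gpSch\times_X X_S$ by Beauville--Laszlo gluing (Lemma~\ref{LemmaBL}) of $\mcE^{(1)}|_{X_S\smallsetminus\Gamma_{\underline x}}$ with the tuple $L^+_{\underline x}(\mcE^{(0)})=\bigl(L^+_{x_i}(\mcE^{(0)})\bigr)_i$ along $L_{\underline x}(\epsilon\circ\varphi^{(0)})$ --- a three-line argument in which the product structure is immediate because the gluing data split leg by leg. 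You instead pass to the \emph{finest} partition and stay entirely inside the loop-group picture of Proposition~\ref{PropGrGlobLoc}, factoring $L^+_{\underline x}\gpSch$, $L_{\underline x}\gpSch$ and the partial loop groups $L^j_{\underline x}\gpSch$ as products over the legs and then doing the chain bookkeeping by hand, with an explicit inverse; this is longer but makes the single-leg factors and the inverse map completely explicit, and it uses Lemma~\ref{LemmaBL} only implicitly through Proposition~\ref{PropGrGlobLoc}. One small imprecision: a general ordered partition $I_\bullet$ is a coarsening of $(\{1\},\ldots,\{n\})$ in the sense of Remark~\ref{RemHeckeChangeI} only when its blocks are consecutive intervals, so you should refine $I_\bullet$ to singletons ordered block by block (equivalently, relabel the legs); since the composite $\epsilon\circ\varphi^{(k-1)}\circ\ldots\circ\varphi^{(0)}$ appearing in \eqref{product-Grassmannian} does not depend on that order, nothing else in your argument changes.
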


\begin{proof}
Since the graphs $\Gamma_{x_i}$ are pairwise disjoint, we have $L^+_{\underline x}(\mcE^{(0)}) = \bigl(L^+_{x_i}(\mcE^{(0)})\bigr)_i$. Using the isomorphism \eqref{EqHeckeChangeI} from Proposition~\ref{PropHeckeChangeI} we may assume that $I_\bullet$ is the coarsest partition with $I_1=I$. Then the data $\mcE^{(0)}$ and $\varphi^{(0)}\colon \mcE^{(0)}|_{X_S\smallsetminus\Gamma_{\underline x}}\isoto \mcE^{(1)}|_{X_S\smallsetminus\Gamma_{\underline x}}$ can be recovered from $\mcE^{(1)}:= \gpSch\times_X X_S$ by gluing $\mcE^{(1)}|_{X_S\smallsetminus \Gamma_{\underline x}}$ with $L^+_{\underline x}(\mcE^{(0)}) = \bigl(L^+_{x_i}(\mcE^{(0)})\bigr)_i$ via $L_{\underline x}(\epsilon\circ\varphi^{(0)})$ as in Lemma \ref{LemmaBL}.
\end{proof}

This proposition allows us to define bounds on the left-hand side of \eqref{product-Grassmannian} using the bounds on the right-hand side of \eqref{product-Grassmannian} componentwise, see Construction \ref{Def_Bound_individual}.

\begin{defn}\label{DefGr2legs}
As in the introduction we fix a point $\infty\in X(\F_q)$.

\smallskip\noindent
(a) For $I:=\{1,2\}$ with the finest partition $I_\bullet=(\{1\},\{2\})$ we define the \emph{Beilinson-Drinfeld Grassmannian with two legs, one fixed at $\infty$} as 
\[
\Gr_{\gpSch,X\times\infty}:=\Gr_{\gpSch,X^2,(\{1\},\{2\})}\times_{X^2}(X\times_{\F_q}\Spec\F_\infty)
\] 
Its $S$-valued points, for an $\F_q$-scheme $S$, are tuples 
\[
\bigl(x,\mcE,\mcE',\mcE'',\,\varphi,\varphi'\,\bigr)\in\Hecke_{\gpSch,\varnothing,X\times\infty}(S)
\] 
as in Definition~\ref{DefHecke2legs}(a) together with 
\begin{itemize}
\item a trivialization $\epsilon\colon \mcE''\isoto \gpSch\times_X X_S$.
\end{itemize}

\medskip\noindent
(b) For the coarsest partition $I_1:=I=\{1,2\}$ we also define 
\[
\PGr_{\gpSch,X\times\infty} :=\Gr_{\gpSch,X^2,(\{1,2\})}\times_{X^2}(X\times_{\F_q}\Spec\F_\infty).
\]
classifying data $\bigl(x,\,\mcE,\mcE'',\,\varphi\colon \mcE|_{X_S\smallsetminus(\Gamma_x\cup\infty)} \isoto \mcE''|_{X_S\smallsetminus(\Gamma_x\cup\infty)} \,\bigr)$, visualized as
\begin{equation}
\xymatrix @C+1pc {
\mcE \ar@{-->}[r]^{\varphi}_{x,\infty} & \mcE'' \,.
}
\end{equation}
Again, the projection map onto the leg $x$ defines morphisms $\Gr_{\gpSch,X\times\infty}\to \PGr_{\gpSch,X\times\infty}\to X$.
\end{defn}

Recall the isomorphism $\Gr_{\gpSch,X}\times_X \Spec\F_\infty \isoto \Fl_{\gpSch,\infty}$ from \eqref{EqCorGrGlobLoc} in Corollary~\ref{CorGrGlobLoc}.

\begin{prop}\label{PropGrGeneric}
Above $X\smallsetminus\{\infty\}$ the base changes $\Gr_{\gpSch,X\times\infty} \times_X (X\smallsetminus\{\infty\})$ and $\PGr_{\gpSch,X\times\infty} \times_X (X\smallsetminus\{\infty\})$ are isomorphic to the product $\Gr_{\gpSch,X} \times_X (X\smallsetminus\{\infty\})\times_{\F_q} \Fl_{\gpSch,\infty}$.
\end{prop}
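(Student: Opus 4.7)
The plan is to deduce this proposition directly from Proposition~\ref{PropGrProduct} applied with $n=2$, together with Proposition~\ref{PropHeckeChangeI} (adapted to the Grassmannian) to identify the two versions $\Gr_{\gpSch,X\times\infty}$ and $\PGr_{\gpSch,X\times\infty}$ above the open locus where the legs are disjoint, and Corollary~\ref{CorGrGlobLoc} to identify $\Gr_{\gpSch,X}\times_X\Spec\F_\infty$ with $\Fl_{\gpSch,\infty}$. The point is that the crucial hypothesis in Proposition~\ref{PropGrProduct}, namely disjointness of the legs, is automatic here since one leg is fixed at $\infty$ while the other varies over $X\smallsetminus\{\infty\}$.

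First I would observe that the closed subscheme $(X\smallsetminus\{\infty\})\times_{\F_q}\Spec\F_\infty \subset X^2$ is contained in the open complement of the diagonals $U:=\{(x_1,x_2)\in X^2\colon x_1\neq x_2\}$. Consequently, the forgetful morphism from $\Hecke_{\gpSch,\varnothing,X^2,(\{1\},\{2\})}$ to $\Hecke_{\gpSch,\varnothing,X^2,(\{1,2\})}$ described in \eqref{EqHeckeChangeI} is an isomorphism after base change to this locus by Proposition~\ref{PropHeckeChangeI}. Pulling back the fiber product diagram \eqref{EqGrFiberProd}, one obtains the analogous statement at the level of Beilinson-Drinfeld Grassmannians, i.e.\ the natural morphism $\Gr_{\gpSch,X\times\infty}\times_X(X\smallsetminus\{\infty\}) \to \PGr_{\gpSch,X\times\infty}\times_X(X\smallsetminus\{\infty\})$ is an isomorphism. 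Thus it suffices to construct the product decomposition for one of them.

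Next I would apply Proposition~\ref{PropGrProduct} with $n=2$ and $I_\bullet=(\{1\},\{2\})$. Restricting the isomorphism \eqref{product-Grassmannian} to the closed subscheme $(X\smallsetminus\{\infty\})\times_{\F_q}\Spec\F_\infty$ of $U$ yields
\begin{equation*}
\Gr_{\gpSch,X\times\infty}\times_X(X\smallsetminus\{\infty\}) \;\isoto\; \bigl(\Gr_{\gpSch,X}\times_{\F_q}\Gr_{\gpSch,X}\bigr)\times_{X^2}\bigl((X\smallsetminus\{\infty\})\times_{\F_q}\Spec\F_\infty\bigr).
\end{equation*}
The right-hand side rearranges canonically as $\bigl(\Gr_{\gpSch,X}\times_X(X\smallsetminus\{\infty\})\bigr)\times_{\F_q}\bigl(\Gr_{\gpSch,X}\times_X\Spec\F_\infty\bigr)$. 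Finally I invoke Corollary~\ref{CorGrGlobLoc}, which identifies $\Gr_{\gpSch,X}\times_X\Spec\F_\infty$ with $\Fl_{\gpSch,\infty}$, to obtain the desired product description.

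I do not expect any real obstacle here; the result is essentially a bookkeeping exercise combining three already-established statements. The only mild subtlety is making sure that the identification of $\Gr_{\gpSch,X}\times_X\Spec\F_\infty$ with $\Fl_{\gpSch,\infty}$ provided by Corollary~\ref{CorGrGlobLoc} is compatible with the second factor coming out of \eqref{product-Grassmannian}; this is immediate from the description of that map, since on the second factor the morphism in \eqref{product-Grassmannian} records the datum $(L_\infty^+\mcE^{(0)},L_\infty(\epsilon\circ\varphi'\circ\varphi))$, which under the isomorphism of Lemma~\ref{LemmaAffineFlagVar} is precisely a point of $\Fl_{\gpSch,\infty}$.
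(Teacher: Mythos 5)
Your proposal is correct and follows essentially the same route as the paper, which deduces the statement from Proposition~\ref{PropGrProduct} together with Corollary~\ref{CorGrGlobLoc} (the change of partition via Proposition~\ref{PropHeckeChangeI}, which you spell out for $\PGr_{\gpSch,X\times\infty}$, is implicit there since Proposition~\ref{PropGrProduct} already applies to either partition). The only cosmetic difference is that the paper writes the second factor as $(L_\infty^+\mcE',L_\infty(\epsilon\circ\varphi'))$ rather than your $(L_\infty^+\mcE^{(0)},L_\infty(\epsilon\circ\varphi'\circ\varphi))$, which define the same point of $\Fl_{\gpSch,\infty}$ since $\varphi$ is an isomorphism near $\infty$ on this locus.
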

\begin{proof}
This follows from Proposition~\ref{PropGrProduct} and Corollary~\ref{CorGrGlobLoc}. As in that proposition the isomorphism is given by 
\begin{align*}
\begin{split}
\Gr_{\gpSch,X\times\infty}\times_X(X\smallsetminus \{\infty\})&\xlongrightarrow{\sim}\Gr_{\gpSch,X}\times_X (X\smallsetminus\{\infty\}) \times_{\F_q}\Fl_{\gpSch,\infty}\\
(x,\mcE\underset{x}{\xrightarrow{\varphi}}\mcE'\underset{\infty}{\xrightarrow{\varphi'}}\mcE''\underset{\sim}{\xrightarrow{\epsilon}}\gpSch\times_X X_S)&\longmapsto \bigl(x,L_x^+\mcE, L_x\mcE\xrightarrow{L_x(\epsilon\circ\varphi'\circ\varphi)}L_x\gpSch\bigr), \bigl(L_\infty^+\mcE', L_\infty\mcE'\xrightarrow{L_\infty(\epsilon\circ\varphi')}L_\infty\gpSch \bigr).
\end{split}
\end{align*}
\end{proof}

As in \eqref{EqHeckeCreateInfty}, ``creating'' an unnecessary additional leg at $\infty$ defines a morphism 
\begin{align}\label{EqGrCreateInfty}
\Gr_{\gpSch,X} & \longrightarrow \PGr_{\gpSch,X\times\infty} \\
\bigl(x,\,\mcE\xdashrightarrow[x]{\varphi}\mcE' \xrightarrow[\sim]{\enspace\epsilon\enspace} \gpSch\times_X X_S\bigr) & \longmapsto \bigl(x,\,\mcE\xdashrightarrow[x,\infty]{\varphi}\mcE'':=\mcE' \xrightarrow[\sim]{\enspace\epsilon\enspace} \gpSch\times_X X_S\bigr). \nonumber
\end{align}

\begin{lem}\label{LemmaGrCreateInfty}
The morphism \eqref{EqGrCreateInfty} is an ind-proper monomorphism of ind-schemes.
\end{lem}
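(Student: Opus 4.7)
The plan is to prove both assertions by reducing to the analogous statement for Hecke stacks (Lemma~\ref{LemmaHeckeCreateInfty}) and exploiting the separatedness of the affine flag variety $\Fl_{\gpSch,\infty}$.

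First, I would deduce that \eqref{EqGrCreateInfty} is a monomorphism by base change. By the defining Cartesian square \eqref{EqGrFiberProd} applied to both the fine partition $(\{1\},\{2\})$ and the coarse partition $(\{1,2\})$ of $\{1,2\}$, we have
\begin{equation*}
\Gr_{\gpSch,X}=\Hecke_{\gpSch,\varnothing,X}\times_{\Bun_\gpSch}\Spec\F_q
\quad\text{and}\quad
\PGr_{\gpSch,X\times\infty}=\PHecke_{\gpSch,\varnothing,X\times\infty}\times_{\Bun_\gpSch}\Spec\F_q,
\end{equation*}
both taken along the trivial bundle $\gpSch\colon\Spec\F_q\to\Bun_\gpSch$. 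The morphism \eqref{EqGrCreateInfty} is precisely the pullback of \eqref{EqHeckeCreateInfty} along this map, so the monomorphism property established in Lemma~\ref{LemmaHeckeCreateInfty} descends to our map.

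For ind-properness I would apply the valuative criterion. Fix a discrete valuation ring $R$ with fraction field $K$ and residue field $k$, and compatible morphisms $\Spec K\to\Gr_{\gpSch,X}$ and $\Spec R\to\PGr_{\gpSch,X\times\infty}$; uniqueness of any lift is already furnished by the monomorphism, so I only need existence. Write the $R$-datum as $(x_R,\mcE_R,\mcE''_R,\varphi_R,\epsilon_R)$ and the $K$-datum as $(x_K,\mcE_K,\mcE'_K,\psi_K,\delta_K)$, and set $\infty_R:=\{\infty\}\times_{\F_q}\Spec R$ and $\infty_K:=\infty_R\times_R K$. Setting $\mcE'_R:=\mcE''_R$ with trivialization $\epsilon_R$, producing the lift amounts to extending $\varphi_R$, originally defined on $X_R\setminus(\Gamma_{x_R}\cup\infty_R)$, to an isomorphism on $X_R\setminus\Gamma_{x_R}$ compatibly with $\psi_K$. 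Gluing $\varphi_R$ with $\psi_K$ on their common domain already produces an isomorphism on $U:=(X_R\setminus(\Gamma_{x_R}\cup\infty_R))\cup(X_K\setminus\Gamma_{x_K})\subset X_R\setminus\Gamma_{x_R}$, whose complement is contained in the single closed point $(\infty,k)$.

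I would then split on $\zeta:=x_R^*(z_\infty)\in R$. If $\zeta\in\mathfrak{m}_R$, then $x_R$ specializes to $\infty$, which forces $(\infty,k)\in\Gamma_{x_R}$, so $U=X_R\setminus\Gamma_{x_R}$ and we are done. In the remaining case $\zeta\in R^\times$, the graph $\Gamma_{x_R}$ is disjoint from $\infty_R$, and one must extend across $(\infty,k)$. Applying Beauville-Laszlo (Lemma~\ref{LemmaBL}) to the divisor $\infty_R$, the restriction $\mcE_R|_{\widehat\infty_R}$ is an $L_\infty^+\gpSch$-torsor on $\Spec R$, and $\epsilon_R\circ\varphi_R$ on the punctured formal neighborhood trivializes the associated loop torsor; via Lemma~\ref{LemmaAffineFlagVar} this data gives a morphism $f_R\colon\Spec R\to\Fl_{\gpSch,\infty}$, and the desired extension exists exactly when $f_R$ factors through the identity element $e\in\Fl_{\gpSch,\infty}(\F_q)$. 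The generic fiber $f_K$ already does so, because the $K$-datum lies in $\Gr_{\gpSch,X}$ and hence $\psi_K$ trivializes $\mcE_K$ on the full formal neighborhood of $\infty_K$. Since $\Fl_{\gpSch,\infty}$ is ind-quasi-projective by Proposition~\ref{PropGrInd-Scheme} and therefore ind-separated, and $\{e\}\hookrightarrow\Fl_{\gpSch,\infty}$ is a closed immersion (the inclusion of a closed point into a separated ind-scheme), the reducedness of $\Spec R$ together with density of its generic point forces $f_R$ to factor through $\{e\}$, supplying the extension. The main obstacle is precisely this last case: the essential move is packaging the obstruction as a single $R$-point of the separated ind-scheme $\Fl_{\gpSch,\infty}$, after which ind-separatedness of $\Fl_{\gpSch,\infty}$ concludes the argument.
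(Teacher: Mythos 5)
Your argument is correct in substance, but it takes a genuinely different route from the paper's. The paper gets the monomorphism from Lemma~\ref{LemmaHeckeCreateInfty} (essentially as you do, via the Cartesian square \eqref{EqGrFiberProd}) and then proves ind-properness \fpqc-locally on $X$: over $X\smallsetminus\{\infty\}$ it invokes the product decomposition of Proposition~\ref{PropGrGeneric} and notes that the image is the ind-closed subscheme with $\Fl_{\gpSch,\infty}$-component $1\cdot L^+_\infty\gpSch$, while over $\Spec\Oo_\infty$ it deduces ind-properness of the morphism from ind-properness of the source $\Gr_{\gpSch,X}\times_X\Spec\Oo_\infty$ (Richarz), which is exactly where the parahoric hypothesis on $\gpSch_\infty$ enters. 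You instead run the valuative criterion on the whole map, splitting according to whether the graph of the moving leg meets $\infty_R$: if it does, the only point where the modification is undefined lies on $\Gamma_{x_R}$, so the generic lift and the $R$-datum glue directly; if not, the obstruction is an $R$-point of $\Fl_{\gpSch,\infty}$ that is generically the base point $e$, hence constant by continuity, reducedness of $\Spec R$ and closedness of the rational point $e$. What this buys is an argument that never uses ind-properness of the source, so the parahoric assumption at $\infty$ plays no visible role; what it costs is the bookkeeping the paper's route avoids, which you should make explicit: ind-properness means that each quasi-compact closed member $Z_i$ of $\Gr_{\gpSch,X}$ maps properly to a member of $\PGr_{\gpSch,X\times\infty}$, so you need to observe that the restricted map is separated and of finite type (it is a monomorphism between ind-schemes of ind-finite type with quasi-compact members), and that the lift you construct at the ind-level lands back in $Z_i$ because $Z_i$ is closed and $\Spec R$ is integral with generic point mapping into $Z_i$; only then does the DVR-valuative criterion give properness. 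Two minor points of hygiene: the dichotomy is better phrased as $\Gamma_{x_R}\cap\infty_R\neq\varnothing$ versus $=\varnothing$ rather than via $\zeta=x_R^*(z_\infty)$, since $z_\infty$ only pulls back once the leg factors through a neighbourhood of $\infty$ (when the graph misses $\infty_R$ the element $\zeta$ need not be defined, but your second case then applies verbatim); and in the disjoint case it is $\epsilon_K\circ\psi_K$, not $\psi_K$, that trivializes the loop torsor at $\infty$ --- or you can simply quote Proposition~\ref{PropGrGeneric} instead of re-deriving the product decomposition from Lemmas~\ref{LemmaBL} and \ref{LemmaAffineFlagVar}.
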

\begin{proof}
We already saw in Lemma~\ref{LemmaHeckeCreateInfty} that it is a monomorphism. To prove that it is ind-proper, we may work on the $\fpqc$-covering $(X\smallsetminus\{\infty\})\coprod \Spec\Oo_\infty$ of $X$. Above $X\smallsetminus\{\infty\}$ the map is an ind-closed immersion, because there 
\[
\PGr_{\gpSch,X\times\infty}\times_X (X\smallsetminus\{\infty\})\cong \Gr_{\gpSch,X}\times_X (X\smallsetminus\{\infty\}) \times_{\F_q} \Fl_{\gpSch,\infty}
\]
by Proposition~\ref{PropGrGeneric}. The image of the morphism~\eqref{EqGrCreateInfty} equals $\Gr_{\gpSch,X}\times_X (X\smallsetminus\{\infty\}) \times_{\F_q} 1\cdot L^+_\infty \gpSch$, which is an ind-closed ind-subscheme. Over $\Oo_\infty$ the ind-scheme $\Gr_{\gpSch,X}\times_X \Spec\Oo_\infty$ is ind-proper by \cite[\S\,2.5]{Richarz16}, because we assumed that $\mcG_\infty$ is parahoric. Therefore also the morphism~\eqref{EqGrCreateInfty} is ind-proper over $\Oo_\infty$. This proves the lemma.
\end{proof}

\subsection{Bounds}\label{subsec:Bounds}

\newcommand{\BoundsGroundField}{F}

We recall the definition of bounds from Bieker~\cite[\S\,2]{Bieker} and generalize it slightly (to case \ref{DefThreeTypes_C} in Definition~\ref{DefThreeTypes} below). We need three cases in which we want to define bounds. 

\begin{defn}\label{DefThreeTypes}
For the \emph{ground field} $\BoundsGroundField=\FcnFld=\F_q(X)$ or $\BoundsGroundField=\FcnFld_v$ or $\BoundsGroundField=\F_v$ at a place $v\in X$, we fix an algebraic closure $\BoundsGroundField^{\sep}$ of the ground field $\BoundsGroundField$ and consider finite separable field extensions $\BoundsGroundField\subset K$ in $\BoundsGroundField^{\sep}$. For $K$ we define $\widetilde{X}_K$ as follows:
\begin{enumerate}
\item \label{DefThreeTypes_A}
\emph{global type:} For $\BoundsGroundField=\FcnFld=\F_q(X)$ let $\widetilde{X}_K$ be the normalization of $X$ in $K$. It is a smooth projective curve over $\F_q$. Then we obtain a finite morphism $\widetilde{X}_K\to X$.
\item \label{DefThreeTypes_B}
\emph{local type:} For $\BoundsGroundField=\FcnFld_v$ at a place $v\in X$ let $\widetilde{X}_K:=\Spec\Oo_K$, where $\Oo_K$ is the valuation ring of $K$. Then we obtain a finite morphism $\widetilde{X}_K\to \Spec\Oo_v$.
\item \label{DefThreeTypes_C}
\emph{finite type:} For $\BoundsGroundField=\F_v$ at a place $v\in X$ let $\widetilde{X}_K:=\Spec K$. Then we obtain a finite morphism $\widetilde{X}_K\to \Spec\F_v$.
\end{enumerate}
\end{defn}

\begin{remark}
Since we are mainly interested in bounds defined by cocharacters of the group $\gengpSch$ and bounds of the finite type \ref{DefThreeTypes_C}, we only consider separable field extensions $K$ of $\BoundsGroundField$ in this article.
\end{remark}

We slightly generalize the definition of bounds from Bieker~\cite[Definition~2.8]{Bieker}. 

\begin{defn}\label{DefBound}
\begin{enumerate}
\item \label{DefBound_A}
For $i=1,\ldots, n$ we fix one of the three types in Definition~\ref{DefThreeTypes} and let $K_i,K'_i$ be two fields of that type (for the same place $v$ in the local and finite case). We let $\widetilde{X}_{K_i}$ for $K_i$ and $\widetilde{X}_{K'_i}$ for $K'_i$ as in Definition~\ref{DefThreeTypes}. We write $\prod_i \widetilde{X}_{K_i} := \widetilde{X}_{K_1}\times_{\F_q}\ldots \times_{\F_q} \widetilde{X}_{K_n}$ and likewise $\prod_i \widetilde{X}_{K'_i}$. For any two quasi-compact closed subschemes $Z\subseteq \Gr_{\gpSch,X^n,I_\bullet}\times_{X^n} \prod_i \widetilde{X}_{K_i}$ and $Z'\subseteq \Gr_{\gpSch,X^n,I_\bullet}\times_{X^n} \prod_i \widetilde{X}_{K'_i}$, we say they are \emph{equivalent} if for each $i$ there is a finite extension $K''_i$ of both $K_i,K'_i$ such that
\[
Z\times_{\prod_i \widetilde{X}_{K_i}} \prod_i \widetilde{X}_{K''_i}=Z'\times_{\prod_i \widetilde{X}_{K'_i}}\prod_i \widetilde{X}_{K''_i}
\]
inside $\Gr_{\gpSch,X^n,I_\bullet}\times_{X^n} \prod_i \widetilde{X}_{K''_i}$. 
\item \label{DefBound_B}
Let $\mcZ$ be an equivalence class as in \ref{DefBound_A} and let $Z_{(K_i)_i}\subseteq \Gr_{\gpSch,X^n,I_\bullet}\times_{X^n} \prod_i \widetilde{X}_{K_i}$ be a quasi-compact closed subscheme for some tuple $(K_i)_i$ representating $\mcZ$. For every $i$ we let $\BoundsGroundField_i$ be the ground field of the corresponding type as in Definition~\ref{DefThreeTypes}, and $\widetilde{K}_i$ be the Galois closure of $K_i/\BoundsGroundField_i$. We define 
\[
\Aut_{\mcZ}({\textstyle\prod_i}\widetilde{K}_i):=\{(g_i)_i \in \prod_i \Gal(\widetilde{K}_i/\BoundsGroundField_i):(g_i)_i^*\mcZ=\mcZ\}. 
\]
The quotient
\[
\widetilde{X}_{\mcZ} := \bigl(\prod_i \widetilde{X}_{\widetilde{K}_i}\bigr) \big/ \Aut_{\mcZ}({\textstyle\prod_i}\widetilde{K}_i)
\]
is called the \emph{reflex scheme} of $\mcZ$. Since $\prod_i \Gal(\widetilde{K}_i/K_i)\subset \Aut_{\mcZ}({\textstyle\prod_i}\widetilde{K}_i)$, the reflex scheme is equiped with a finite and faithfully flat morphism $\prod_i \widetilde{X}_{K_i} \to \widetilde{X}_{\mcZ}$. 
\item \label{DefBound_C}
A \emph{bound in $\Gr_{\gpSch,X^n,I_\bullet}$} is an equivalence class $\mcZ$ as in \ref{DefBound_A}, such that all its representatives $Z_{(K_i)_i}\subseteq \Gr_{\gpSch,X^n,I_\bullet}\times_{X^n} \prod_i \widetilde{X}_{K_i}$ are stable under the left $\mcL^+_{X^n}\gpSch\times_{X^n} \prod_i \widetilde{X}_{K_i}$-action on $\Gr_{\gpSch,X^n,I_\bullet}\times_{X^n} \prod_i \widetilde{X}_{K_i}$ from Definition~\ref{DefLoopActionOnGr}, and such that $\mcZ$ has a representative over its reflex scheme $\widetilde{X}_{\mcZ}$.
\end{enumerate}
\end{defn}

\begin{remark}
The quotient $\widetilde{X}_{\mcZ}$ can be complicated and does not have to be a product. However, when $\mcZ$ is the product $\prod_i\mcZ^{(i)}$ as in Construction~\ref{Def_Bound_individual}, the reflex scheme $\widetilde{X}_{\mcZ}$ will be the product of the reflex schemes $\widetilde{X}_{\mcZ^{(i)}}$ of the individual factors $\mcZ^{(i)}$.
\end{remark}

\begin{lem}\label{LemmaLoopActionOnGr}
For every bound $\mcZ$ there is a tuple of non-negative integers $(c_i)_i$ such that the action of $\mcL^+_{X^n}\gpSch$ on all the representatives $Z$ of $\mcZ$ factors through the finite group scheme $\mcL^{+,(c_i)_i}_{X^n}\gpSch$ from \eqref{EqGlobalTruncLoopGp}.
\end{lem}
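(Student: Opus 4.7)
The plan is to reduce the claim to a single fixed representative $Z_{0}=Z_{(K_{i})_{i}}$ of $\mcZ$, and then invoke the standard fact (EGA IV$_{3}$, Th\'eor\`eme~8.8.2) that a morphism from a cofiltered affine limit of schemes into a finitely presented scheme factors through some finite stage.

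The first step is the reduction. Given any two representatives $Z_{0}=Z_{(K_{i})_{i}}$ and $Z'=Z_{(K'_{i})_{i}}$ of $\mcZ$, I would pick for each $i$ a common refinement $K''_{i}$ as in Definition~\ref{DefBound}\ref{DefBound_A}, so that the pullbacks of $Z_{0}$ and $Z'$ to $\prod_{i}\widetilde{X}_{K''_{i}}$ coincide. Since the $\mcL^{+}_{X^{n}}\gpSch$-action is defined over $X^{n}$ and commutes with base change, and since $\prod_{i}\widetilde{X}_{K''_{i}}\to\prod_{i}\widetilde{X}_{K'_{i}}$ is finite and faithfully flat, the action on $Z'$ factors through $\mcL^{+,(c_{i})_{i}}_{X^{n}}\gpSch$ if and only if it does on the common pullback. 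So a single tuple $(c_{i})_{i}$ that works for $Z_{0}$ works uniformly for every representative.

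Next I would unwind the definition \eqref{EqGlobalPosLoopGp} to identify $\mcL^{+}_{X^{n}}\gpSch$ as a cofiltered limit. Using that $\gpSch(\widehat{\Oo}_{X_{R},\Gamma_{\underline{x}}})=\varprojlim_{(c_{i})_{i}}\gpSch(\Oo_{X_{R}}/\mathcal{I}_{\sum_{i}c_{i}\Gamma_{x_{i}}})$, one obtains $\mcL^{+}_{X^{n}}\gpSch=\varprojlim_{(c_{i})_{i}}\mcL^{+,(c_{i})_{i}}_{X^{n}}\gpSch$ as affine $X^{n}$-group schemes along affine transition morphisms. By Lemma~\ref{LemmaGlobalLoopIsqc}\ref{LemmaGlobalLoopIsqc_C} each $\mcL^{+,(c_{i})_{i}}_{X^{n}}\gpSch$ is smooth of finite relative dimension over the noetherian scheme $X^{n}$, hence of finite presentation. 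Base-changing by $Z_{0}$ then presents $\mcL^{+}_{X^{n}}\gpSch\times_{X^{n}}Z_{0}$ as a cofiltered affine limit of $\mcL^{+,(c_{i})_{i}}_{X^{n}}\gpSch\times_{X^{n}}Z_{0}$.

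Finally I would invoke the limit theorem. By Proposition~\ref{PropGrInd-Scheme} the ind-scheme $\Gr_{\gpSch,X^{n},I_{\bullet}}$ is of ind-finite type over $X^{n}$ and $\prod_{i}\widetilde{X}_{K_{i}}$ is finite over $X^{n}$, so the quasi-compact closed subscheme $Z_{0}$ is of finite type and hence finitely presented over the noetherian base $X^{n}$. The action morphism $a\colon \mcL^{+}_{X^{n}}\gpSch\times_{X^{n}}Z_{0}\to Z_{0}$ is then a morphism of $X^{n}$-schemes from a cofiltered affine limit to a finitely presented scheme, so by the limit theorem it factors as $a=\bar{a}\circ(\mathrm{trunc}\times \id_{Z_{0}})$ for some $(c_{i})_{i}$ and some $\bar{a}\colon \mcL^{+,(c_{i})_{i}}_{X^{n}}\gpSch\times_{X^{n}}Z_{0}\to Z_{0}$. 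The unit and associativity axioms for $\bar{a}$ are equalities of morphisms whose pullbacks along the truncation equal the axioms for $a$, so by the uniqueness part of the limit theorem they already hold after possibly enlarging $(c_{i})_{i}$ finitely. The main obstacle is essentially bookkeeping: one must verify carefully that $Z_{0}$ is finitely presented over $X^{n}$ and that the pro-structure of $\mcL^{+}_{X^{n}}\gpSch$ is preserved under base change by $Z_{0}$, both of which follow from noetherianness of $X^{n}$ and the finite-presentation of the truncated loop groups.
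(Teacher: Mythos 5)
The paper states Lemma~\ref{LemmaLoopActionOnGr} without any proof, so there is nothing of the authors' to compare against; I can only judge your argument on its own terms. Your architecture is a legitimate and standard route: reduce to a single representative by passing to a common refinement and descending along the finite faithfully flat refinement maps, write $\mcL^+_{X^n}\gpSch=\varprojlim_{(c_i)_i}\mcL^{+,(c_i)_i}_{X^n}\gpSch$ as a cofiltered limit with affine transitions, factor the action morphism through a finite level by the limit theorem, and recover the group-action axioms either from the injectivity half of the limit statement or (more simply) from the fact that the truncation map is an epimorphism of fppf sheaves by smoothness of $\gpSch$.

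There is, however, one step that is false as written: $\prod_i\widetilde{X}_{K_i}$ is \emph{not} finite over $X^n$ in general. For a field $K_i$ of local type \ref{DefThreeTypes_B} one has $\widetilde{X}_{K_i}=\Spec\Oo_{K_i}$, which is finite over $\Spec\Oo_v$, but $\Spec\Oo_v\to X$ is the completion at $v$ and is not even of finite type. Hence $Z_0$ need not be of finite type, let alone finitely presented, over $X^n$ (already a single section of $\Gr_{\gpSch,X^n,I_\bullet}\times_{X^n}\prod_i\widetilde{X}_{K_i}$ fails this), and the limit theorem cannot be invoked with base $X^n$ as you do. The repair is to apply it relative to $S:=\prod_i\widetilde{X}_{K_i}$ (equivalently the reflex scheme): $\Gr_{\gpSch,X^n,I_\bullet}\times_{X^n}S$ is of ind-finite type over $S$, so the quasi-compact closed subscheme $Z_0$ is of finite type over $S$, and whenever $S$ is noetherian this gives finite presentation and your argument runs verbatim with $T_\lambda=\mcL^{+,(c_i)_i}_{X^n}\gpSch\times_{X^n}Z_0$. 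Note that $S$ is noetherian as soon as at most one factor is of local type, which covers every bound used in this paper (e.g.\ $\mcZ(\mu,\beta)$); if several local-type factors are allowed, $\Oo_{K_1}\otimes_{\F_q}\Oo_{K_2}$ need not be noetherian, and then you should either verify finite presentation of $Z_0$ directly or bypass the limit theorem by choosing a faithful representation $\gpSch\into\GL(\mathcal{V})$ and checking explicitly that the congruence kernel $\ker\bigl(\mcL^+_{X^n}\gpSch\to\mcL^{+,(c_i)_i}_{X^n}\gpSch\bigr)$ fixes every lattice pinched between the $\mathscr{I}_\Delta^{N}$- and $\mathscr{I}_\Delta^{-N}$-twists of the standard lattice once the $c_i$ are at least $2N$ — the classical argument, which needs no finite-presentation hypotheses. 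Finally, the limit statement you actually need is the characterization of locally finitely presented morphisms via $\varinjlim_\lambda\Hom_S(T_\lambda,Z_0)\isoto\Hom_S(\varprojlim_\lambda T_\lambda,Z_0)$ for qcqs $T_\lambda$ with affine transition maps (EGA~IV, 8.14.2), rather than 8.8.2.
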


We generalize the definition of bounded Hecke data from \cite{Arasteh-Habibi} and \cite[Definition~3.3]{Bieker} as follows. 

\begin{defn} \label{Def_HeckeBounded}
Let $\mcZ$ be a bound in $\Gr_{\gpSch,X^n,I_\bullet}$ as in Definition~\ref{DefBound} with reflex scheme $\widetilde{X}_{\mcZ}$. Let 
\[
\underline{\mcE}=\Bigl(\underline x,(\mcE^{(0)},\psi^{(0)}) \xdashrightarrow{\varphi^{(0)}} (\mcE^{(1)},\psi^{(1)}) \xdashrightarrow{\varphi^{(1)}} \ldots \xdashrightarrow{\varphi^{(k-1)}} (\mcE^{(k)},\psi^{(k)})\Bigr)
\]
in $(\Hecke_{\gpSch,D,X^n,I_\bullet}\times_{X^n} \widetilde{X}_\mcZ)(S)$ be a Hecke datum over an $\widetilde{X}_\mcZ$-scheme $S$. 
By definition of $L^+_{\underline x}\gpSch$-bundles, there is an \'etale covering $S'\to S$ and a trivialization $\epsilon\colon L^+_{\underline x}(\mcE^{(k)})_{S'} \isoto (L^+_{\underline x}\gpSch)_{S'}$. As in Proposition \ref{PropGrGlobLoc}, the tuple
\[
\Bigl(L^+_{\underline x}\mcE^{(0)} \xdashrightarrow{L_{\underline x}\varphi^{(0)}} 
\ldots \xdashrightarrow{L_{\underline x}\varphi^{(k-1)}} L^+_{\underline x}\mcE^{(k)} \xrightarrow[\sim]{\enspace\epsilon\enspace} L^+_{\underline x}\gpSch\Bigr)\in (\Gr_{\gpSch,X^n,I_\bullet}\times_{X^n} \widetilde{X}_\mcZ)(S')
\]
defines a morphism $S'\to \Gr_{\gpSch,X^n,I_\bullet}\times_{X^n} \widetilde{X}_\mcZ$. We say that $\underline{\mcE}$ is \emph{bounded by $\mcZ$} if this morphism factors through $Z\subset \Gr_{\gpSch,X^n,I_\bullet}\times_{X^n} \widetilde{X}_\mcZ$ for the representative $Z$ of $\mcZ$ that is defined over the reflex scheme $\widetilde{X}_{\mcZ}$. By the invariance of $Z$ under the left multiplication by $L^+_{\underline x}\gpSch$, the definition is independent of the choice of $\epsilon$ and $S'\to S$.

We denote the stack of Hecke data bounded by $\mcZ$ by $\Hecke_{\gpSch,D,X^n,I_\bullet}^\mcZ$. Then diagram~\eqref{EqGrFiberProd} induces an isomorphism $\Hecke_{\gpSch,D,X^m,I_\bullet}^\mcZ \times_{\Bun_{\gpSch,D}} \Spec\F_q\cong Z\times_X (X\smallsetminus D)$ for the representative $Z$ of $\mcZ$ over $\widetilde{X}_\mcZ$.
\end{defn}

\begin{remark}\label{Rem_HeckeBounded}
For $n=1$, let $\mcZ$ be a bound in $\Gr_{\gpSch,X}$ with reflex scheme $\widetilde{X}_\mcZ$. Let $S$ be a scheme over $\widetilde{X}_\mcZ$ and let $\varphi\colon \mcE|_{X_S\smallsetminus\Gamma_x} \isoto \mcE'|_{X_S\smallsetminus\Gamma_x}$ be an isomorphism of $\gpSch$-bundles on $X_S$ outside the graph of a leg $x\in X(S)$. The definition allows to say that \emph{$\varphi$ is bounded by $\mcZ$}, by viewing $(x,\mcE,\mcE',\varphi)\in\Hecke_{\gpSch,\varnothing,X}$.
\end{remark}

\begin{thm}\label{Thm_HeckeBounded}
The stack $\Hecke_{\gpSch,D,X^n,I_\bullet}^\mcZ$ is a closed substack of $\Hecke_{\gpSch,D,X^n,I_\bullet}\times_{X^n} \widetilde{X}_\mcZ$. Moreover, it is an Artin-stack locally of finite type over $(X\smallsetminus D)^n$.
\end{thm}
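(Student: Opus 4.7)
My plan is to first establish the closed substack property by étale descent, and then to deduce the Artin and locally-of-finite-type structure from Proposition~\ref{PropHeckeArtin} combined with the quasi-compactness of $Z$ built into Definition~\ref{DefBound}\ref{DefBound_C}.

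First, I would show that the bounded locus is a closed substack of $\Hecke_{\gpSch,D,X^n,I_\bullet} \times_{X^n} \widetilde{X}_\mcZ$. For a Hecke datum $\underline{\mcE}$ over an $\widetilde{X}_\mcZ$-scheme $S$, pick an étale cover $\pi\colon S' \to S$ equipped with a trivialization $\epsilon\colon L^+_{\underline x}(\mcE^{(k)})_{S'} \isoto (L^+_{\underline x}\gpSch)_{S'}$ as in Definition~\ref{Def_HeckeBounded}, and form the classifying morphism $\alpha_\epsilon\colon S' \to \Gr_{\gpSch,X^n,I_\bullet} \times_{X^n} \widetilde{X}_\mcZ$. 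Since $Z$ is a closed subscheme, the preimage $T_\epsilon := \alpha_\epsilon^{-1}(Z) \subset S'$ is closed. Any two trivializations $\epsilon_1, \epsilon_2$ differ by a section of $L^+_{\underline x}\gpSch$, under whose action $Z$ is stable by Definition~\ref{DefBound}\ref{DefBound_C} and Definition~\ref{DefLoopActionOnGr}; hence $T_{\epsilon_1} = T_{\epsilon_2}$. Applying this observation on $S' \times_S S'$, the two pullbacks of $T_\epsilon$ along the projections $S' \times_S S' \rightrightarrows S'$ coincide, and effective descent for closed immersions along étale (in fact \fpqc) surjections produces a closed subscheme $T \subset S$ representing the bounded locus functorially in $S$.

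Next, the closed substack structure, combined with the fact from Proposition~\ref{PropHeckeArtin} that $\Hecke_{\gpSch,D,X^n,I_\bullet}$ is an ind-Artin stack locally of ind-finite type over $X^n$, already yields that $\Hecke^\mcZ$ is an Artin stack. For local finiteness of type, I would use the quasi-compactness of $Z$. Indeed, the isomorphism $\Hecke^\mcZ \times_{\Bun_{\gpSch,D}} \Spec\F_q \cong Z \times_X (X\smallsetminus D)$ from Definition~\ref{Def_HeckeBounded} shows that the fiber of the relatively representable morphism $\Hecke^\mcZ \to (X\smallsetminus D)^n \times_{\F_q} \Bun_{\gpSch,D}$ over the trivial bundle is a quasi-compact closed subscheme of an ind-quasi-projective ind-scheme over $X^n$, hence of finite type. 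Since the morphism to $\Bun_{\gpSch,D}$ is representable, any base change to a smooth atlas of $\Bun_{\gpSch,D}$ yields a scheme of finite type over $(X \smallsetminus D)^n \times_{X^n} \widetilde{X}_\mcZ$. Combined with Theorem~\ref{Bun_G}, which asserts that $\Bun_{\gpSch,D}$ is a smooth Artin stack locally of finite type over $\F_q$, and the fact that $\widetilde{X}_\mcZ \to X^n$ is finite, one concludes that $\Hecke^\mcZ$ is an Artin stack locally of finite type over $(X\smallsetminus D)^n$.

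The principal obstacle is the descent step. The condition of being bounded by $\mcZ$ is only defined after passing to an étale trivialization, and the subtle point is that the resulting closed subscheme $T_\epsilon$ is independent of the chosen trivialization and cover. This is precisely what the $\mcL^+_{X^n}\gpSch$-invariance in Definition~\ref{DefBound}\ref{DefBound_C} is designed to guarantee; Lemma~\ref{LemmaLoopActionOnGr} further reduces this invariance to one under the finite-dimensional group scheme $\mcL^{+,(c_i)_i}_{X^n}\gpSch$, which makes the verification of the cocycle condition on $S' \times_S S'$ transparent.
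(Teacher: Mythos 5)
Your proposal is correct and follows essentially the same route as the paper: the paper also reduces the closed-immersion claim to the étale cover $S'\to S$ carrying the trivialization $\epsilon$, where the bounded locus is the base change of the closed immersion $Z\hookrightarrow \Gr_{\gpSch,X^n,I_\bullet}\times_{X^n}\widetilde{X}_\mcZ$; your descent discussion via the $\mcL^+_{X^n}\gpSch$-invariance of $Z$ is just a spelled-out version of the paper's one-line appeal to checking closed immersions étale-locally. Your second paragraph (deducing Artin and locally of finite type from quasi-compactness of $Z$ and Proposition~\ref{PropHeckeArtin}) supplies detail the paper leaves implicit and is sound, provided you note that the identification of the fiber with $Z$ over the trivial bundle extends to arbitrary atlas points by the same étale-local trivialization of $L^+_{\underline x}(\mcE^{(k)})$.
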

\begin{proof}
For any test scheme $S$ over $\widetilde{X}_\mcZ$ and Hecke datum $\underline{\mcE}\in \Hecke_{\gpSch,D,X^n,I_\bullet}(S)$, we must show that in the cartesian diagram 
\[
\xymatrix @C+2pc {
\widetilde{S}\ar[r] \ar[d] & \Hecke_{\gpSch,D,X^n,I_\bullet}^\mcZ \ar[d] \\
S\ar[r]^-{\underline{\mcE}} & \Hecke_{\gpSch,D,X^n,I_\bullet}\times_{X^n} \widetilde{X}_\mcZ
}
\]
the map $\widetilde{S}\to S$ is a closed immersion of schemes. This can be tested after base change to an \'etale covering $S'\to S$, which we may choose as in Definition~\ref{Def_HeckeBounded}. Then the base change morphism $\widetilde{S}\times_S S' \to S'$ arises as the base change of $Z\into \Gr_{\gpSch,X^n,I_\bullet}\times_{X^n} \widetilde{X}_\mcZ$ which is a closed immersion.
\end{proof}

Next we give some examples of bounds. 

\begin{defn}[Bound in $\Gr_{\gpSch,X}$ given by $\mu$] \label{Def_BoundBy_mu}
Let $n=1$ and let $\mu\colon \mathbb{G}_{m,\FcnFld^\sep}\to \gengpSch_{\FcnFld^\sep}$ be a cocharacter of $\gengpSch$. Let $E_{\mu}$ be the reflex field of $\mu$, and $\widetilde{X}_\mu$ the normalization of $X$ in $E_\mu$ as in Definition~\ref{DefThreeTypes}. The affine Schubert variety $Z^{\leq \mu}:=\Gr_{\gpSch,X}^{\leq \mu}$ of $\mu$ from Definition~\ref{Def_SchubertVariety} defines a bound $\mcZ^{\leq\mu}$, which has the representative $Z^{\leq \mu}$ over the reflex scheme $\widetilde{X}_\mu$. We say that a Hecke datum $\underline{\mcE}\in (\Hecke_{\gpSch,D,X}\times_{X} \widetilde{X}_\mu)(S)$ is \emph{bounded by $\mu$} if it is bounded by $\mcZ^{\leq\mu}$. 

Since we will need it later, we introduce the following notation. We choose a point $\widetilde{\infty}$ of $\widetilde{X}_\mu$ that lies above the point $\infty\in X(\F_q)$ from the introduction. We write $\Oo_\mu$ for the complete local ring of $\widetilde{X}_\mu$ at $\widetilde{\infty}$ and $\kappa_\mu$ for its residue field. We let $\Breve{\Oo}_\mu$ be the completion of the maximal unramified ring extension of $\Oo_\mu$. Then $\Oo_\infty\subset \Oo_\mu$ and $\Breve{\Oo}_\infty\subset \Breve{\Oo}_\mu$ and $\F_\infty\subset \kappa_\mu$ are finite extensions.
\end{defn}

\begin{numberedparagraph}\label{Def_beta}
Let $\infty\in X(\F_q)$ and let $\beta\in L_\infty\gpSch(\overline{\F}_q)=\gengpSch(\Breve{\FcnFld}_\infty)$ with $\beta\cdot L^+_\infty\gpSch\cdot\beta^{-1}=L^+_\infty\gpSch$. Then there is a smallest finite field extension $\F_\beta$ of $\F_q$ with $\beta\in L_\infty\gpSch(\F_\beta)$. For the Frobenius $\tau_{\gengpSch_\infty}$ from \eqref{EqTau_G} we have $\tau_{\gengpSch_\infty}^j(\beta) \cdot L^+_\infty\gpSch = L^+_\infty\gpSch \cdot \tau_{\gengpSch_\infty}^j(\beta)$ for any $j\in\Z$, because $\gpSch_\infty$ is defined over $\F_q$. This equality implies that $\beta$ is small in the sense, that its orbit in $\Fl_{\gpSch,\infty}(\F_\beta)$ under the left action of $L^+_\infty\gpSch$ is the single point $\beta\cdot L^+_\infty\gpSch(\F_\beta)$; see Definition~\ref{Def_Zbeta}. 

That $\beta$ is small can also be interpreted in the following way. Assume that $\gpSch_\infty$ is an Iwahori group scheme corresponding to a $\tau$-stable alcove $\mathfrak{a}$ in the extended Bruhat-Tits building $\mathcal{B}(\gengpSch_\infty,\Breve{\FcnFld}_\infty)$ of $\gengpSch_\infty$ over $\Breve{\FcnFld}_\infty$. The condition $\beta\cdot L^+_\infty\gpSch\cdot\beta^{-1}=L^+_\infty\gpSch$ says that $\mathfrak{a}$ is a fixed point of $\beta$ under the action of $\gengpSch(\Breve{\FcnFld}_\infty)=L_\infty\gpSch(\overline{\F}_q)$ on $\mathcal{B}(\gengpSch_\infty,\Breve{\FcnFld}_\infty)$. Assume further, that $\beta$ also stabilizes an appartment containing $\mathfrak{a}$. If $A$ is the maximal split torus of $\gengpSch(\Breve{\FcnFld}_\infty)$ corresponding to that appartment, then $\beta$ normalizes $A$. In this case, $\beta$ induces an element in the \emph{Iwahori-Weyl group} $\widetilde{W}_\infty:=\widetilde{W}(\gengpSch_\infty,A,\Breve{\FcnFld}_\infty)$ of $A$ over $\Breve{\FcnFld}_\infty$; see Richarz~\cite{RicharzIWGp}. Then the condition $\beta\cdot L^+_\infty\gpSch\cdot\beta^{-1}=L^+_\infty\gpSch$ means that $\beta$ has \emph{length zero} in $\widetilde{W}_\infty$.
\end{numberedparagraph}

\begin{defn}[Bound in $\Gr_{\gpSch,X}\times_X \Spec\F_\beta$ given by $\beta$] \label{Def_Zbeta}
Let $n=1$. Let $\beta\in L_\infty\gpSch(\F_\beta)$ as in \S\,\ref{Def_beta}. We define the bound 
\begin{align}
\mcZ(\beta) & \; := \; (L^+_\infty\gpSch)_{\F_\beta} \cdot\beta\cdot (L^+_\infty\gpSch)_{\F_\beta}/(L^+_\infty\gpSch)_{\F_\beta} \;=\; \beta\cdot (L^+_\infty\gpSch)_{\F_\beta}/(L^+_\infty\gpSch)_{\F_\beta} \\
& \; \subset \; \Fl_{\gpSch,\infty}\times_{\F_q} \Spec\F_\beta \; = \; (L_\infty\gpSch/L^+_\infty\gpSch)_{\F_\beta}. \nonumber
\end{align}
Its reflex scheme $\widetilde{X}_{\mcZ(\beta)}$ is $\Spec\F_\beta$. This is a field of type \ref{DefThreeTypes_C} in Definition~\ref{DefThreeTypes}.
\end{defn}

\begin{construction}[Bounds in $\Gr_{\gpSch,X^n,I_\bullet}$ as products of bounds in $\Gr_{\gpSch,X}$] \label{Def_Bound_individual}
Let $(\mcZ^{(i)})_{i=1\ldots n}$ be an $n$-tuple of bounds in $\Gr_{\gpSch,X}$ with reflex schemes $\widetilde{X}_{\mcZ^{(i)}}$, and let $Z^{(i)}$ be the representative of $\mcZ^{(i)}$ over $\widetilde{X}_{\mcZ^{(i)}}$ for all $i=1,\ldots,n$. We allow each $\widetilde{X}_{\mcZ^{(i)}}$ to be of any of the three types in Definition~\ref{DefThreeTypes}. Let $U\subseteq X^n$ be the complement of all diagonals as in Proposition~\ref{PropGrProduct}. If $\bigl(\prod_i \widetilde{X}_{\mcZ^{(i)}}\bigr) \times_{X^n} U$ is non-empty, we define $Z$ to be the scheme-theoretic image of the composite morphism
\[
\xymatrix 
{
\Bigl(\prod\limits_{i=1}^n Z^{(i)}\Bigr) \times_{X^n} U \ar@{^{ (}->}[dr] \ar@{^{ (}->}[r] & \Bigl(\bigl(\Gr_{\gpSch,X}\times_X \widetilde{X}_{\mcZ^{(1)}}\bigr) \times_{\F_q} \ldots \times_{\F_q} \bigl(\Gr_{\gpSch,X}\times_X \widetilde{X}_{\mcZ^{(n)}}\bigr)\Bigr) \times_{X^n} U \ar@{^{ (}->}[d] \\
& \Gr_{\gpSch,X^n,I_\bullet}\times_{X^n} \prod_i\widetilde{X}_{\mcZ^{(i)}},
}
\]
where the right vertical map is given in Proposition \ref{PropGrProduct}. This $Z$ defines a bound $\mcZ$ in $\Gr_{\gpSch,X^n,I_\bullet}$, which we denote $\prod_i\mcZ^{(i)}$ and call the \emph{product bound} of the $\mcZ^{(i)}$. Its reflex scheme is $ \prod_i\widetilde{X}_{\mcZ^{(i)}}$, because the group $\Aut_{\mcZ}({\textstyle\prod_i}\widetilde{K}_i)$ from Definition~\ref{DefBound}\ref{DefBound_B} equals $\prod_i \Gal(\widetilde{K}_i/E_i)$ for the ``reflex fields'' $E_i$ for which $\widetilde{X}_{\mcZ^{(i)}}=\widetilde{X}_{E_i}$.

Note that $\bigl(\prod_i \widetilde{X}_{\mcZ^{(i)}}\bigr) \times_{X^n} U$ is non-empty unless two different $\widetilde{X}_{\mcZ^{(i)}}$ are of type~\ref{DefThreeTypes_C} in Definition~\ref{DefThreeTypes} for the same place $v\in X$.
\end{construction}

\begin{defn} \label{DefZmubeta}
Let $\beta\in L_\infty\gpSch(\F_\beta)$ as in \S\,\ref{Def_beta} and let $\mu\in X_*(T)$. We define the bounds $\mcZ(\mu,\beta)$ in $\Gr_{\gpSch,X\times\infty}$ and $\PmcZ(\mu,\beta)$ in $\PGr_{\gpSch,X\times\infty}$ as the product bound $\mcZ^{\leq\mu}\times \mcZ(\beta)$ from Construction~\ref{Def_Bound_individual}, such that the bound at the moving leg $x$ is $\mcZ^{\leq\mu}$ and the bound at the fixed leg $\infty$ equals $\mcZ(\beta)$ from Definition~\ref{Def_Zbeta}. The reflex scheme of these bounds is the product $\widetilde{X}_{\mu,\beta}:=\widetilde{X}_\mu\times_{\F_q} \Spec\F_\beta$.

Since we will need it later, we introduce the following notation. We choose a point $\widetilde{\infty}$ of $\widetilde{X}_{\mu,\beta}$ that lies above the point $\infty$. We write $\OReflZMuBeta$ for the complete local ring of $\widetilde{X}_{\mu,\beta}$ at $\widetilde{\infty}$ and $\KappaReflZMuBeta$ for its residue field. We let $\BreveOReflZMuBeta$ be the completion of the maximal unramified ring extension of $\OReflZMuBeta$. It is equal to the ring $\Breve{\Oo}_\mu$ from Definition~\ref{Def_BoundBy_mu} if the two points on $\widetilde{X}_\mu$ and $\widetilde{X}_{\mu,\beta}$ above $\infty\in X$ are chosen compatibly. Then $\Oo_\infty\subset \OReflZMuBeta$ and $\Breve{\Oo}_\infty\subset \BreveOReflZMuBeta$ and $\F_\infty\subset \KappaReflZMuBeta$ are finite extensions.
\end{defn}

\begin{prop}\label{PropDimZmuBeta}
The relative dimension of $\mcZ(\mu,\beta)$ over $\widetilde{X}_{\mu,\beta}$ equals $\langle \mu,2\check{\rho}\rangle$, where $2\check{\rho}$ is the sum of all positive coroots of $\gengpSch_{\FcnFld^\alg}$ with respect to some Borel subgroup for which $\mu$ is dominant.
\end{prop}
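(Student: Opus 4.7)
The plan is to exploit the product structure $\mcZ(\mu,\beta)=\mcZ^{\leq\mu}\times\mcZ(\beta)$ from Definition~\ref{DefZmubeta} and Construction~\ref{Def_Bound_individual}, reducing the dimension computation to the two individual factors. First I would restrict to the open dense complement of the single diagonal point above $(\infty,\infty)\in X\times_{\F_q}\Spec\F_\infty$, namely $U:=X\smallsetminus\{\infty\}$. By Proposition~\ref{PropGrProduct} applied with $n=2$, on $U$ the Beilinson--Drinfeld Grassmannian decomposes as a product, and by the definition of the product bound as the scheme-theoretic image of such a product, the restriction $\mcZ(\mu,\beta)|_U$ is identified with $\bigl(Z^{\leq\mu}|_{X\smallsetminus\{\infty\}}\bigr)\times_{\F_q}\mcZ(\beta)$ over $\widetilde{X}_{\mu,\beta}|_U$. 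Since all of $\mcZ(\mu,\beta)$ is the scheme-theoretic closure of this product, its generic relative dimension over $\widetilde{X}_{\mu,\beta}$ is inherited from the product.

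Next I would dispose of the factor at $\infty$: by Definition~\ref{Def_Zbeta} together with the normalization condition $\beta\cdot L^+_\infty\gpSch\cdot\beta^{-1}=L^+_\infty\gpSch$ from \S\,\ref{Def_beta}, the orbit $\mcZ(\beta)=\beta\cdot L^+_\infty\gpSch/L^+_\infty\gpSch$ reduces to the single $\F_\beta$-rational point $[\beta]\in\Fl_{\gpSch,\infty}$, and hence has relative dimension zero over $\Spec\F_\beta$.

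For the factor at the moving leg, I would invoke the classical dimension formula for affine Schubert varieties, namely that the relative dimension of $\Gr_{\gpSch,X}^{\leq\mu}$ over $\widetilde{X}_\mu$ equals $\langle\mu,2\check\rho\rangle$. Via the local--global comparison of Corollary~\ref{CorGrGlobLoc}, this is equivalent to the well-known dimension formula for the Schubert cell $L^+_v\gpSch\cdot\mu(z_v)\cdot L^+_v\gpSch/L^+_v\gpSch$ in the local affine Grassmannian, which follows from the Bruhat/Iwasawa decomposition; this is established for smooth affine group schemes with connected fibers and reductive generic fiber in the Richarz reference already used in the paper. Adding the two contributions gives $\langle\mu,2\check\rho\rangle+0=\langle\mu,2\check\rho\rangle$.

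I expect the only mild obstacle to be verifying that the passage to the scheme-theoretic closure preserves relative dimension. This reduces to the flatness and equidimensionality of $Z^{\leq\mu}$ over $\widetilde{X}_\mu$, a standard property of affine Schubert varieties, so that the product with the $0$-dimensional $\mcZ(\beta)$ remains flat of the same relative dimension over $\widetilde{X}_{\mu,\beta}|_U$. A small compatibility check is that the number $\langle\mu,2\check\rho\rangle$ is invariant under the Galois action on the conjugacy class of $\mu$, ensuring that the formula descends correctly from $\FcnFld^{\sep}$ to the reflex field underlying $\widetilde{X}_\mu$.
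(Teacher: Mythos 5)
Your argument is correct in substance, but it is organized quite differently from the paper: the paper gives no argument at all and simply cites N\^go--Polo (Lemma~2.2 and the remarks thereafter), where the dimension of such Schubert-type subschemes of the Beilinson--Drinfeld Grassmannian, including the degeneration over the point of collision, is established. What your route buys is a self-contained reduction: the product description of $\mcZ(\mu,\beta)$ over the locus where the moving leg avoids $\infty$, the observation that $\mcZ(\beta)$ is the single point $\beta\cdot L^+_\infty\gpSch/L^+_\infty\gpSch$ (so contributes $0$), and the classical formula $\dim\Gr^{\leq\mu}=\langle\mu,2\check\rho\rangle$ at the moving leg, where the relevant group over the formal disc is reductive, so one is in the classical (hyperspecial) situation and no parahoric subtleties enter. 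The one step you should tighten is the passage to the closure: flatness of $Z^{\leq\mu}$ over the open locus does not by itself control the fiber of $Z(\mu,\beta)$ over the point $\widetilde{\infty}$ lying above $\infty$. The clean argument is that $Z(\mu,\beta)$ is by construction the scheme-theoretic closure of its restriction to the open locus, hence has no associated points over $\widetilde{\infty}$, hence is torsion-free and therefore flat over the smooth curve $\widetilde{X}_{\mu,\beta}$; moreover it is irreducible, being the closure of the irreducible orbit-times-point over the open locus (the positive loop group has geometrically connected fibers by Lemma~\ref{LemmaGlobalLoopIsqc}). For a flat, finite-type morphism with irreducible source the fiber dimension is constant, so the fiber over $\widetilde{\infty}$ also has dimension $\langle\mu,2\check\rho\rangle$, which is exactly the assertion. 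With that adjustment your proof is complete and is a legitimate alternative to quoting the literature.
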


\begin{proof}
This was proven in \cite[Lemma 2.2 and the remarks thereafter]{Ngo-Polo}.
\end{proof}

\begin{lem}\label{LemmaZmu1}
Let $\beta=1$, and hence $\F_\beta=\F_q$. Let $\mcZ$ be a bound in $\Gr_{\gpSch,X}$ with reflex scheme $\widetilde{X}_\mcZ$ of global type in Definition~\ref{DefThreeTypes}\ref{DefThreeTypes_A}, such that its representative $Z$ over $\widetilde{X}_\mcZ$ is the scheme theoretic closure of its restriction $Z\times_X (X\smallsetminus\{\infty\})$. Let $\mcZ\times\mcZ(1)$ be the product bound in $\PGr_{\gpSch,X\times\infty}$ from Construction~\ref{Def_Bound_individual}.
\begin{enumerate}
\item\label{LemmaZmu1_A} 
Under the morphism $\Gr_{\gpSch,X} \to \PGr_{\gpSch,X\times\infty}$ from \eqref{EqGrCreateInfty}, the bound $\mcZ$ in $\Gr_{\gpSch,X}$ is mapped isomorphically to the bound $\mcZ\times\mcZ(1)$ in $\PGr_{\gpSch,X\times\infty}$.
\item\label{LemmaZmu1_B} 
The morphism~\eqref{EqHeckeCreateInfty} restricts to an isomorphism $\Hecke_{\gpSch,D,X}^{\mcZ}\isoto \PHecke_{\gpSch,D,X\times \infty}^{\mcZ\times\mcZ(1)}$.
\end{enumerate}
\end{lem}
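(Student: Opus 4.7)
Since \eqref{EqGrCreateInfty} is an ind-proper monomorphism (Lemma~\ref{LemmaGrCreateInfty}), its restriction to the quasi-compact closed subscheme $Z \subseteq \Gr_{\gpSch,X} \times_X \widetilde{X}_\mcZ$ representing $\mcZ$ is a proper monomorphism of schemes, hence a closed immersion; call its scheme-theoretic image $W \subseteq \PGr_{\gpSch,X\times\infty} \times_X \widetilde{X}_\mcZ$. Over the open subset $X \smallsetminus \{\infty\}$, the product decomposition from Proposition~\ref{PropGrGeneric} identifies \eqref{EqGrCreateInfty} with inserting the point $1 \cdot L^+_\infty\gpSch \in \Fl_{\gpSch,\infty}$ into the $\infty$-factor, so $W|_{X\smallsetminus\infty}$ coincides with the open part $Z|_{X\smallsetminus\infty}\times\{1\cdot L^+_\infty\gpSch\}$ of the product bound $\mcZ\times\mcZ(1)$ from Construction~\ref{Def_Bound_individual}. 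Both $W$ and the representative of $\mcZ\times\mcZ(1)$ are then scheme-theoretic closures of this common open subscheme: the former because $Z$ is so by hypothesis and the closed immersion $Z \isoto W$ preserves this property, the latter by construction. Hence $W = \mcZ\times\mcZ(1)$ as closed subschemes.

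\textbf{Part (b).} By Lemma~\ref{LemmaHeckeCreateInfty} the map \eqref{EqHeckeCreateInfty} is a schematic monomorphism, and by part~(a) (applied via the global-local functor to any \'etale-local trivialization of $L^+_{(x,\infty)}(\mcE'')$) it sends $\Hecke^{\mcZ}_{\gpSch,D,X}$ into $\PHecke^{\mcZ\times\mcZ(1)}_{\gpSch,D,X\times\infty}$. To upgrade this monomorphism to an isomorphism, the plan is to compare both bounded substacks inside the ambient $\PHecke_{\gpSch,D,X\times\infty}\times_X \widetilde{X}_\mcZ$ by working fpqc-locally over $\Bun_{\gpSch,D}$ via the right-endpoint projection (which is compatible with \eqref{EqHeckeCreateInfty} since $\mcE''=\mcE'$): on a fiber the two bounded Hecke stacks become (twists of) the bounds $Z\subset\Gr_{\gpSch,X}$ and $\mcZ\times\mcZ(1)\subset\PGr_{\gpSch,X\times\infty}$ respectively, and the map \eqref{EqHeckeCreateInfty} becomes (the twist of) \eqref{EqGrCreateInfty}. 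By part~(a) these two bounds correspond isomorphically, so the two bounded substacks coincide fpqc-locally over $\Bun_{\gpSch,D}$ and hence globally. Combined with the monomorphism property, \eqref{EqHeckeCreateInfty} restricts to the desired isomorphism.

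\textbf{Main obstacle.} The principal subtlety lies at the collision locus where the moving leg $x$ coincides with $\infty$: there a direct Beauville-Laszlo extension of $\varphi$ across the leg at $\infty$ is not available, because the formal neighborhoods $\widehat{\Gamma}_x$ and $\widehat{\infty}_S$ overlap and one cannot cleanly separate the $L^+_\infty$-data from the modification at $\Gamma_x$. The plan sidesteps this obstruction entirely by carrying out the entire comparison in the affine Grassmannian, where the collision behavior is encoded uniformly by the scheme-theoretic closure in Construction~\ref{Def_Bound_individual}. The hypothesis that $Z$ is the scheme-theoretic closure of its restriction over $X\smallsetminus\{\infty\}$ is exactly what powers part~(a), and part~(a) is in turn what drives the closure comparison in part~(b).
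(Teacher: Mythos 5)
Your proposal is correct and follows essentially the same route as the paper: part~(a) is the identical argument (the closed immersion from Lemma~\ref{LemmaGrCreateInfty} plus the fact that both $Z$ and the representative of $\mcZ\times\mcZ(1)$ are scheme-theoretic closures of the common restriction $Z|_{X\smallsetminus\{\infty\}}\times(1\cdot L^+_\infty\gpSch)$ identified via Proposition~\ref{PropGrGeneric}), and part~(b) is the same reduction to~(a) by \'etale-locally trivializing the $L^+_\Delta$-torsor of $\mcE''$ and passing to the Grassmannian fiber over $\Bun_{\gpSch,D}$, only phrased as a comparison of essential images of the monomorphism \eqref{EqHeckeCreateInfty} rather than via the paper's fiber product \eqref{EqLemmaZmu1} with its two projections.
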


\begin{proof}
\ref{LemmaZmu1_A} By Lemma~\ref{LemmaGrCreateInfty} the morphism $\mcZ\to\PGr_{\gpSch,X\times\infty}\times_X \widetilde{X}_\mcZ$ is a proper monomorphism, hence a closed immersion. We write $\widetilde{X}_\mcZ\smallsetminus\{\infty\}:=\widetilde{X}_\mcZ\times_X (X\smallsetminus\{\infty\})$. Identifying $\mcZ$ with its image, both $\mcZ$ and $\mcZ\times\mcZ(1)$ are defined as the scheme theoretic closure of their intersection with $\PGr_{\gpSch,X\times\infty}\times_X (\widetilde{X}_\mcZ\smallsetminus\{\infty\})$. The latter is isomorphic to the product $\Gr_{\gpSch,X}\times_X (\widetilde{X}_\mcZ\smallsetminus\{\infty\}) \times_{\F_q} \Fl_{\gpSch,\infty}$ by Proposition~\ref{PropGrGeneric}, and the intersections of $\mcZ$ and $\mcZ\times\mcZ(1)$ with that product are equal to $Z\times_{\widetilde{X}_\mcZ} (\widetilde{X}_\mcZ\smallsetminus\{\infty\})\times_{\F_q} (1\cdot L^+_\infty\gpSch)$. This proves \ref{LemmaZmu1_A}.

\medskip\noindent
\ref{LemmaZmu1_B} We must show that the two projection morphisms $\pr_1$ and $\pr_2$ in
\begin{equation}\label{EqLemmaZmu1}
\Hecke_{\gpSch,D,X}^{\mcZ} \; \xleftarrow{\;\pr_1\;} \;  \Hecke_{\gpSch,D,X}^{\mcZ} \underset{\;\PHecke_{\gpSch,D,X\times \infty}}{\times} \PHecke_{\gpSch,D,X\times \infty}^{\mcZ\times\mcZ(1)} \; \xrightarrow{\;\pr_2\;} \; \PHecke_{\gpSch,D,X\times \infty}^{\mcZ\times\mcZ(1)}
\end{equation}
are isomorphisms, where the fiber product is defined via the morphism \eqref{EqHeckeCreateInfty}. The projection $\pr_1$ (respectively $\pr_2$) is a closed immersion (respectively a monomorphism) by Theorem~\ref{Thm_HeckeBounded} (respectively Lemma~\ref{EqHeckeCreateInfty}). In particular, both projections are schematic by \cite[Corollaire~8.1.3 and Th\'eor\`eme~A.2]{Laumon-Moret-Bailly}. Let $(x,(\mcE,\psi),(\mcE',\psi'),\varphi)\in \Hecke_{\gpSch,D,X}^{\mcZ}(S)$ be a $\gpSch$-shtuka bounded by $\mcZ$ over a test scheme $S$. There is an \'etale covering $S'\to S$ over which a trivialization $\epsilon\colon L^+_\Delta(\mcE'_{S'})\isoto (L^+_\Delta\gpSch)_{S'}$ exists, where $\Delta:=\Gamma_x + (\infty\times_{\F_q}S')$. We let $S' \hookleftarrow \pr_1^*S'=:\widetilde{S}'$ be the base change of the closed immersion $\pr_1$ under the morphism $S'\to \Hecke_{\gpSch,D,X}^{\mcZ}$. The morphism from $\widetilde{S}'$ to the fiber product in \eqref{EqLemmaZmu1} corresponds to the object $(x,(\mcE,\psi),(\mcE',\psi'),\varphi)\in \Hecke_{\gpSch,D,X}^{\mcZ}(\widetilde{S}')$ and an object $(x,(\widetilde{\mcE},\widetilde{\psi}),(\widetilde{\mcE}',\widetilde{\psi}'),\widetilde{\varphi})\in \PHecke_{\gpSch,D,X\times \infty}^{\mcZ\times\mcZ(1)}(\widetilde{S}')$ together with an isomorphism $(f,f')\colon (x,(\widetilde{\mcE},\widetilde{\psi}),(\widetilde{\mcE}',\widetilde{\psi}'),\widetilde{\varphi})\isoto (x,(\mcE,\psi),(\mcE',\psi'),\varphi)$ in $\PHecke_{\gpSch,D,X\times \infty}(\widetilde{S}')$. In particular, $f'\colon\widetilde{\mcE}'\isoto \mcE'$ is an isomorphism of $\gpSch$-bundles. The trivialization $\epsilon$ induces the trivialization $\epsilon\circ L^+_\Delta(f')\colon L^+_\Delta(\widetilde{\mcE}')\isoto (L^+_\Delta\gpSch)_{\widetilde{S}'}$. Using these trivializations, the morphism $S'\hookleftarrow \widetilde{S}'$ is obtained as base change under $S'\to\mcZ$ from the projection $\pr_1$ in the following diagram
\begin{equation}\label{EqLemmaZmu2}
\mcZ \; \xleftarrow[\sim]{\;\pr_1\;} \; \mcZ \underset{\;\PGr_{\gpSch,X\times \infty}}{\times} (\mcZ\times\mcZ(1)) \; \xrightarrow[\sim]{\;\pr_2\;} \; (\mcZ\times\mcZ(1)).
\end{equation}
That diagram is obtained from \eqref{EqLemmaZmu1} under base change via $\Spec\F_q\to\Bun_{\gpSch,D}$ as in \eqref{EqGrFiberProd}. In diagram~\eqref{EqLemmaZmu2} the projections $\pr_1$ and $\pr_2$ are isomorphisms by \ref{LemmaZmu1_A}. This shows that $\pr_1$ in \eqref{EqLemmaZmu1} is an isomorphism. The analogous argument for the projection $\pr_2$ in \eqref{EqLemmaZmu1} starts with an object $(x,(\widetilde{\mcE},\widetilde{\psi}),(\widetilde{\mcE}',\widetilde{\psi}'),\widetilde{\varphi})\in \PHecke_{\gpSch,D,X\times \infty}^{\mcZ\times\mcZ(1)}(S)$ and proves that $\pr_2$ is an isomorphism.
\end{proof}

\subsection{Moduli spaces of shtukas}\label{Shtuka-subsection}
In this section, we recall the preliminaries on shtukas.
\begin{defn}\label{Def_Sht}
Let $D\subset X$ be a finite subscheme. The \emph{stack of global $\gpSch$-shtukas} $\Sht_{\gpSch,D,X^n,I_\bullet}$ \emph{with $n$ legs and $D$-level structure} is the stack fibered in groupoids over the category of $\F_q$-schemes, whose $S$-valued points, for an $\F_q$-scheme $S$, are tuples 
\[
\bigl((x_i)_{i=1\ldots n},\,(\mcE^{(i)},\psi^{(i)})_{i=0\ldots n},\,(\varphi^{(i-1)})_{i=1\ldots n}\,\bigr)\in\Hecke_{\gpSch,D,X^n,I_\bullet}(S)
\]
as in Definition~\ref{DefHecke_nlegs} together with:
\begin{itemize}
\item \emph{shtuka condition}: an isomorphism $\varphi^{(k)}\colon \mcE^{(k)}\isoto{}^{\tau\!}\mcE^{(0)}$ compatible with the $D$-level structure, i.e.~${}^{\tau\!}\psi^{(0)}\circ\varphi^{(k)}=\psi^{(k)}$.
\end{itemize}
Here the superscript ${}^{\tau\!}$ refers to the pullback under the absolute $q$-Frobenius $\tau=\Frob_{q,S}$ of $S$ as defined in \S \ref{subsec-notations}.

We usually drop $(\mcE^{(k)},\psi^{(k)})$ and $\varphi^{(k)}$ from the notation and simply identify $(\mcE^{(k)},\psi^{(k)})$ with ${}^{\tau\!}(\mcE^{(0)},\psi^{(0)})$. Then we can visualize the above data as
\begin{equation}\label{EqDef_Sht}
\xymatrix @C+1pc {
\underline{\mcE}:=\Bigl(\underline x,\, (\mcE^{(0)},\psi^{(0)}) \ar@{-->}[r]^-{\varphi^{(0)}}_-{x_i\colon i\in I_1} & (\mcE^{(1)},\psi^{(1)}) \ar@{-->}[r]^-{\varphi^{(1)}}_-{x_i\colon i\in I_2} & \ldots \ar@{-->}[r]^-{\varphi^{(k-1)}}_-{x_i\colon i\in I_k} & {}^{\tau\!}(\mcE^{(0)},\psi^{(0)})\Bigr)
}
\end{equation}
and call it a \emph{global $\gpSch$-shtuka with $D$-level structure over $S$}.

When $D=\varnothing$, we will drop it (and the $\psi^{(i)}$) from the notation.
\end{defn}

\begin{defn}\label{DefIsogGlobalSht}
Consider a scheme $S$ together with legs $x_i\colon S\to X\smallsetminus D$ for $i=1,\ldots,n$ and let $\underline{\mcE},\underline{\widetilde{\mcE}}\in\Sht_{\gpSch,D,X^n,I_\bullet}(S)$ be two global $\gpSch$-shtukas over $S$ with the same legs $x_i$. A \emph{quasi-isogeny} from $\underline{\mcE}$ to $\underline{\widetilde{\mcE}}$ is a tuple of isomorphisms $f^{(i)}\colon(\mcE^{(i)},\psi^{(i)})|_{X_S \setminus N_S}\isoto (\widetilde{\mcE}^{(i)},\widetilde{\psi}^{(i)})|_{X_S \setminus N_S}$ for $i=0,\ldots,n$ of $\gpSch$-bundles with $D$-level structure satisfying $\widetilde{\varphi}^{(i)}\circ f^{(i)}=f^{(i+1)}\circ\varphi^{(i)}$ for $i=0,\ldots,n-1$ and $\widetilde{\varphi}^{(k)}\circ f^{(k)}={}^{\tau\!}f^{(0)}\circ\varphi^{(k)}$, where $N\subset X$ is some proper closed subscheme. We denote the \emph{group of quasi-isogenies} from $\underline{\mcE}$ to itself by $\QIsog_S(\underline{\mcE})$.
\end{defn}

\begin{remark}\label{RemIsogGlobalSht}
If $S=\Spec\overline{\F}_q$ then we write $I_{\underline{\mcE}}(\FcnFld):=\mathrm{QIsog}_{\overline{{\F}}_\infty}(\underline{\mcE})$. We do not need the following result in this article. However, it justifies the notation. There is a linear algebraic group $I_{\underline{\mcE}}$ over $\FcnFld$ such that $I_{\underline{\mcE}}(\FcnFld)=\mathrm{QIsog}_{\overline{{\F}}_\infty}(\underline{\mcE})$. This can be proven as in \cite{AH_CMotives, AH_LRConj} by noting that $\underline{\mcE}$ gives rise to a tensor functor from $\Rep_\FcnFld \gengpSch$ to the category of $C$-motives (with $C=X$) as in \cite[(4.3)]{AH_CMotives}. Here $\Rep_\FcnFld \gengpSch$ is the neutral Tannakian category of algebraic representations of $\gengpSch$ in finite dimensional $\FcnFld$-vector spaces. By Tannakian duality the $\gpSch$-shtuka $\underline{\mcE}$ corresponds to a homomorphism $h=h_{\underline{\mcE}}$ from the Tannakian fundamental groupoid $\mathfrak{P}$ of the category of $C$-motives to the neutral groupoid of $\gengpSch$. Then $I_h:=\Aut(h)$ is the linear algebraic group over $\FcnFld$ defined by
\[
I_h(R) :=\Aut(h)(R) := \bigl\{\, g\in \gengpSch(\FcnFld^\alg \otimes_\FcnFld R)\colon \Int_g \circ h = h\,\bigr\}
\]
for $\FcnFld$-algebras $R$. Indeed, $I_h$ and $I_{\underline{\mcE}}$ are equal by Lemma \ref{lemma-ADLV-identied-Xphi}.
\end{remark}

By Definition~\ref{Def_Sht}, the stack $\Sht_{\gpSch,D,X^n,I_\bullet}$ is defined as the fiber product
\begin{equation} \label{Eq_DiagSht}
\xymatrix @C+4pc {
\Sht_{\gpSch,D,X^n,I_\bullet} \ar[r] \ar[d] &  \Hecke_{\gpSch,D,X^n,I_\bullet} \ar[d]^{(\mcE^{(0)}, \mcE^{(k)})} \\
\Bun_{\gpSch,D} \ar[r]^-{\id \times \Frob_q} & \Bun_{\gpSch,D} \times_{\F_q} \Bun_{\gpSch,D}
}
\end{equation}

There is a map $\Sht_{\gpSch,D,X^n,I_\bullet}\to (X\smallsetminus D)^n$ given by $\underline{\mcE}\mapsto (x_i)_{i=1,\ldots,n}$.
\begin{thm}\label{ThmSht}
\begin{enumerate}
\item \label{ThmSht_A} The stack $\Sht_{\gpSch,D,X^n,I_\bullet}$ is an ind-Deligne-Mumford stack, locally of ind-finite type and ind-separated over $(X\smallsetminus D)^n$. 
\item \label{ThmSht_B} If $D\subset D'\subset X$ are proper closed subschemes, then the natural morphism $\Sht_{\gpSch,D',X^n,I_\bullet} \to \Sht_{\gpSch,D,X^n,I_\bullet}\times_{(X\smallsetminus D)^n} (X\smallsetminus D')^n$ is finite, \'etale, surjective and a torsor for the finite abstract group $\ker\bigl(\gpSch(D')\to\gpSch(D)\bigr)$.
\end{enumerate}
\end{thm}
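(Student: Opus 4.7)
The plan is to deduce part (a) from the fiber product presentation~\eqref{Eq_DiagSht} combined with Proposition~\ref{PropHeckeArtin}, and to handle part (b) by a direct analysis of the fiber of the level-reduction morphism.

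For (a), first I would record that $\Hecke_{\gpSch,D,X^n,I_\bullet}$ is already an ind-Artin stack locally of ind-finite type over $X^n$ by Proposition~\ref{PropHeckeArtin}. The square~\eqref{Eq_DiagSht} realizes $\Sht_{\gpSch,D,X^n,I_\bullet}$ as its base change along the graph morphism $\id\times\Frob_q$, which is a section of the first projection from $\Bun_{\gpSch,D}\times_{\F_q} \Bun_{\gpSch,D}$, hence relatively representable by algebraic stacks. This gives the ind-Artin and locally-of-ind-finite-type properties. To upgrade \emph{Artin} to \emph{Deligne-Mumford}, I would run the standard Frobenius-kills-deformations argument: an infinitesimal automorphism $f^{(\bullet)}$ of a shtuka over a square-zero thickening $S[\epsilon]$ must satisfy $f^{(k)}={}^\tau f^{(0)}$ under the identification $\mcE^{(k)}={}^\tau\mcE^{(0)}$, but $\tau$ acts as the $q$-th power on the nilpotent ideal $(\epsilon)$ and therefore kills the infinitesimal part of $f^{(0)}$. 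Hence the inertia is unramified and $\Sht$ is DM. Ind-separatedness over $(X\smallsetminus D)^n$ follows from the ind-quasi-projectivity of $\Hecke\to X^n\times\Bun_{\gpSch,D}$ established in Proposition~\ref{PropHeckeArtin}, combined with the fact that base change along the graph of Frobenius (which is itself a monomorphism) preserves ind-separatedness.

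For (b), given $(\underline{\mcE},\psi^{(\bullet)})\in \Sht_{\gpSch,D,X^n,I_\bullet}(S)$ with legs in $(X\smallsetminus D')^n$, a lift to $\Sht_{\gpSch,D',X^n,I_\bullet}(S)$ is determined by an extension $\psi'^{(0)}$ of $\psi^{(0)}$ from $D_S$ to $D'_S$, since the remaining $\psi'^{(i)}$ are then forced by compatibility with the $\varphi^{(j)}$. I would first observe that the set of such arbitrary extensions is an $\fpqc$-torsor under the smooth affine $\F_q$-group scheme $K:=\ker\bigl(\gpSch(D')\to\gpSch(D)\bigr)$ acting by left multiplication. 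A short calculation then shows that two such extensions $\psi'^{(0)}$ and $g\cdot\psi'^{(0)}$ with $g\in K(S)$ both satisfy the remaining shtuka compatibility ${}^\tau\psi'^{(0)}\circ\varphi^{(k)}=\psi'^{(k)}$ if and only if ${}^\tau g=g$, so the shtuka-compatible extensions form a torsor under the Frobenius-fixed subsheaf $K^{\Frob=1}$.

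The main obstacle is this final identification: showing that $K^{\Frob=1}$ is the constant finite étale $\F_q$-group scheme associated to the abstract group $K(\F_q)=\ker\bigl(\gpSch(D')\to\gpSch(D)\bigr)$, and that the resulting torsor is étale and surjective. Smoothness of $K$ (itself requiring justification via Weil restriction along $D'\to\Spec\F_q$, cf.~\cite[Proposition~A.5.2]{CGP}) allows one to invoke a Lang-Steinberg argument: the equalizer of $\id_K$ and $\Frob_K$ is finite étale over $\F_q$ of degree $|K(\F_q)|$ and coincides set-theoretically with $K(\F_q)$. The same argument applied to the Lang map $g\mapsto g^{-1}\cdot{}^\tau g$ on $K_S$ produces shtuka-compatible extensions étale-locally on $S$, yielding surjectivity and the étale property, and hence completes the torsor statement.
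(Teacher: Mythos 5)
Your outline for (a) up to the Deligne--Mumford property, and your treatment of (b), follow the standard route behind the paper's citation of \cite[Theorem~3.15]{AH_Unif} (the paper itself gives no argument beyond that reference): the fiber-product presentation \eqref{Eq_DiagSht} with Proposition~\ref{PropHeckeArtin} gives ind-Artin and locally of ind-finite type, Frobenius killing infinitesimal automorphisms gives unramified inertia hence DM, and for (b) the lift is indeed governed by extending $\psi^{(0)}$ to $D'_S$, with the shtuka-compatible extensions forming a torsor under the Frobenius-fixed points of $K=\ker\bigl(\Res_{D'/\F_q}(\gpSch|_{D'})\to\Res_{D/\F_q}(\gpSch|_D)\bigr)$, identified with the constant group $\ker\bigl(\gpSch(D')\to\gpSch(D)\bigr)$ by a Lang--Steinberg argument. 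In (b) you should also record that Lang--Steinberg surjectivity (hence surjectivity and the étale-local triviality of the torsor) needs $K$ \emph{connected}; this is true -- the part coming from nilpotent thickenings of $D$ is an iterated extension of vector groups and the part over $D'\smallsetminus D$ is a Weil restriction of the fiberwise connected $\gpSch$, cf.\ \cite[Propositions~A.5.2 and A.5.9]{CGP} as in Lemma~\ref{LemmaGlobalLoopIsqc} -- but it must be said, since smoothness alone does not give it.

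The genuine gap is the ind-separatedness claim in (a). First, the graph $(\id,\Frob_q)\colon\Bun_{\gpSch,D}\to\Bun_{\gpSch,D}\times_{\F_q}\Bun_{\gpSch,D}$ is not a monomorphism of stacks: it is faithful but not full, since a morphism $(\mcE,{}^{\tau\!}\mcE)\to(\mcE',{}^{\tau\!}\mcE')$ in the product is an arbitrary pair $(\alpha,\beta)$ with no relation $\beta={}^{\tau\!}\alpha$. More seriously, even granting that the graph is schematic, affine and separated (which it is, its fibers being Isom-schemes of $\gpSch$-bundles), base change along it only yields that $\Sht_{\gpSch,D,X^n,I_\bullet}$ is ind-separated \emph{over} $\Hecke_{\gpSch,D,X^n,I_\bullet}$, equivalently over $X^n\times_{\F_q}\Bun_{\gpSch,D}$; you cannot then conclude separatedness over $(X\smallsetminus D)^n$ by composing, because $\Bun_{\gpSch,D}$ is not separated over $\F_q$ (its diagonal is affine, not proper). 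Separatedness of the shtuka stack over $X^n$ is precisely the point where the Frobenius structure must be used a second time: one has to show that the Isom-sheaf of two global $\gpSch$-shtukas is \emph{proper} (indeed finite) over the base -- note that Corollary~\ref{CorAutFinite} is deduced from this theorem, so it is not available to you. The standard argument is a valuative criterion: for a DVR $R$ with fraction field $K$, an isomorphism of shtukas over $X_K$ is a priori only a quasi-isogeny over $X_R$ with polar divisor supported on the special fibre, say $m$ times it; pulling back by $\tau=\id_X\times\Frob_{q,\Spec R}$ multiplies that divisor by $q$, and compatibility with the $\varphi^{(j)}$ (isomorphisms away from the horizontal leg divisors) forces $m=qm$, hence $m=0$, after which the isomorphism extends over the codimension-$\ge 2$ locus by normality and affineness of the Isom-scheme. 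Without this (or an equivalent) argument your proof of part (a) does not establish ind-separatedness.
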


\begin{proof}
This was proven in \cite[Theorem~3.15]{AH_Unif} building on earlier work for constant split $\gpSch$ of Varshavsky~\cite{Varshavsky04} and Lafforgue~\cite{Lafforgue12}. 
\end{proof}

The stacks $\Sht_{\gpSch,D,X^n,I_\bullet}$ and the theorem generalize results on the moduli space of $F$-sheaves $FSh_{D,r}$ which were considered by Drinfeld~\cite{Drinfeld-moduli-Fsheaves} and Lafforgue~\cite{LafforgueL02} in their proof of the Langlands correspondence for $\gpSch=\GL_2$ (resp.\ $\gpSch=\GL_r$), and its generalization $FBun$ by Varshavsky~\cite{Varshavsky04}. It likewise generalizes the moduli stacks $\mcE\ell\ell_{X,\mathscr{D},I}$ of Laumon, Rapoport and Stuhler~\cite{Laumon-Rapoport-Stuhler}, their generalizations by L.~Lafforgue~\cite{Lafforgue-Ramanujan}, Lau~\cite{Lau07}, Ng\^o~\cite{Ngo06} and Spie{\ss}~\cite{Spiess10}, the spaces $\text{Cht}_{\underline\lambda}$ of Ng\^o and Ng\^o Dac~\cite{NgoNgo,NgoDac13}, and the spaces ${\rm AbSh}^{r,d}_H$ of the first author~\cite{HartlAbSh}.

\begin{cor}\label{CorAutFinite}
Let $\underline{\mcE}$ and $\underline{\mcE}'$ be global $\gpSch$-shtukas with the same legs in $\Sht_{\gpSch,D,X^n,I_\bullet}(S)$ over an $\F_q$-scheme $S$. Then the sheaf of sets on $S_\fpqc$ given by $\underline\Isom_S(\underline{\mcE},\underline{\mcE}')\colon T\mapsto\Isom_T(\underline{\mcE}_T,\underline{\mcE}'_T)$ is representable by a scheme, which is finite and unramified over $S$. In particular, the (abstract) group of automorphisms $\Aut_S(\underline{\mcE})$ of $\underline{\mcE}$ over $S$ is finite.
\end{cor}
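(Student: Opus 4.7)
The plan is to realize $\underline\Isom_S(\underline{\mcE},\underline{\mcE}')$ as the pullback along a map $S\to \Sht\times_{(X\smallsetminus D)^n}\Sht$ of the relative diagonal, and then deduce everything from the Deligne–Mumford and separatedness properties recorded in Theorem~\ref{ThmSht}\ref{ThmSht_A}.

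First, because $\underline{\mcE}$ and $\underline{\mcE}'$ share the same legs $(x_i)_i$, the pair defines a morphism
\begin{equation*}
(\underline{\mcE},\underline{\mcE}')\colon S\longrightarrow \Sht_{\gpSch,D,X^n,I_\bullet}\times_{(X\smallsetminus D)^n}\Sht_{\gpSch,D,X^n,I_\bullet},
\end{equation*}
and by the universal property of the 2-fiber product, $\underline\Isom_S(\underline{\mcE},\underline{\mcE}')$ is canonically identified with the 2-fiber product
\begin{equation*}
S\;\times_{\Sht_{\gpSch,D,X^n,I_\bullet}\times_{(X\smallsetminus D)^n}\Sht_{\gpSch,D,X^n,I_\bullet}}\;\Sht_{\gpSch,D,X^n,I_\bullet},
\end{equation*}
where the right-hand map is the relative diagonal over $(X\smallsetminus D)^n$.

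Second, although $\Sht_{\gpSch,D,X^n,I_\bullet}$ is only ind-Deligne–Mumford, each of the shtukas $\underline{\mcE}$ and $\underline{\mcE}'$ factors through some bounded closed substack $\Sht^\mcZ_{\gpSch,D,X^n,I_\bullet}$ of the ind-structure, which by Theorem~\ref{Thm_ShtBounded-intro} is a genuine Deligne–Mumford stack, locally of finite type and separated over $X^n$. On such a stack, the relative diagonal $\Sht^\mcZ_{\gpSch,D,X^n,I_\bullet}\to \Sht^\mcZ_{\gpSch,D,X^n,I_\bullet}\times_{(X\smallsetminus D)^n}\Sht^\mcZ_{\gpSch,D,X^n,I_\bullet}$ is representable; unramified because the stack is DM (so the absolute diagonal, hence the relative one, is unramified); and proper because the stack is separated over $(X\smallsetminus D)^n$. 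A representable morphism that is simultaneously proper and unramified is finite. Pulling this finite, representable, unramified morphism back along $(\underline{\mcE},\underline{\mcE}')$, we conclude that $\underline\Isom_S(\underline{\mcE},\underline{\mcE}')$ is representable by a scheme, finite and unramified over $S$.

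For the final assertion, specialize to $\underline{\mcE}'=\underline{\mcE}$, so that the automorphism scheme $T:=\underline\Isom_S(\underline{\mcE},\underline{\mcE})\to S$ is a finite, unramified $S$-group scheme. Its identity section is both open and closed (by unramifiedness), and every other $S$-section is its translate, hence also clopen; these clopen subschemes are pairwise disjoint in $T$. Since $T$ is finite over $S$, it has only finitely many connected components (working fiberwise, or globally when $S$ is quasi-compact), so the set of $S$-sections $\Aut_S(\underline{\mcE})=T(S)$ is finite. The main (modest) obstacle is the passage through the ind-structure, but this is resolved by noting that both shtukas factor through a single bounded piece, after which the argument is a purely standard one about diagonals of separated DM stacks.
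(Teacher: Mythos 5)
Your argument is essentially the paper's: there the sheaf $\underline\Isom_S(\underline{\mcE},\underline{\mcE}')$ is identified with the base change of the diagonal of the ind-separated ind-Deligne--Mumford stack $\Sht_{\gpSch,D,X^n,I_\bullet}$ under $(\underline{\mcE},\underline{\mcE}')$, so it is unramified and proper over $S$, hence finite and affine and thus a scheme (via \cite[Lemma~4.2]{Laumon-Moret-Bailly}), exactly the diagonal argument you run. Two minor blemishes that do not change the substance: the paper argues directly with the ind-separated ind-DM diagonal instead of reducing to a bounded substack (your reduction needs $S$ quasi-compact, or a localization on $S$, since a map from a non-quasi-compact $S$ need not factor through one piece of the ind-structure), and distinct sections of the finite unramified group scheme need not be pairwise disjoint --- they are clopen, hence unions of connected components, which already yields the finiteness of $\Aut_S(\underline{\mcE})$ when $T$ has finitely many components.
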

\begin{proof}
Since $\Sht_{\gpSch,D,X^n,I_\bullet}$ is an ind-separated ind-Deligne-Mumford stack, its diagonal is unramified and proper. The base change of the diagonal under the morphism $(\underline{\mcE},\underline{\mcE}')\colon S \to \Sht_{\gpSch,D,X^n,I_\bullet}\times_{\F_q} \Sht_{\gpSch,D,X^n,I_\bullet}$ equals $\underline\Isom_S(\underline{\mcE},\underline{\mcE}')$, which is hence an algebraic space unramified and proper over $S$. In particular it is finite and affine over $S$ and hence a scheme; c.f.~\cite[Lemma~4.2]{Laumon-Moret-Bailly}.
\end{proof}

\begin{defn} \label{Def_ShtBounded}
Let $\mcZ$ be a bound in $\Gr_{\gpSch,X^n,I_\bullet}$ and let $\widetilde{X}_\mcZ$ be its reflex scheme. We define the closed substack $\Sht_{\gpSch,D,X^n,I_\bullet}^{\mcZ}$ of $\Sht_{\gpSch,D,X^n,I_\bullet}\times_{X^n} \widetilde{X}_\mcZ$ as the base change of $\Hecke_{\gpSch,D,X^n,I_\bullet}^{\mcZ}$ under the morphism $\Sht_{\gpSch,D,X^n,I_\bullet}\to \Hecke_{\gpSch,D,X^n,I_\bullet}$ from \eqref{Eq_DiagSht}. In terms of the data from \eqref{EqDef_Sht} this means that the boundedness is tested after trivializing $L^+_{\underline x}({}^{\tau\!}\mcE^{(0)})$. We say that a global $\gpSch$-shtuka $\underline{\mcE}\in(\Sht_{\gpSch,D,X^n,I_\bullet}\times_{X^n} \widetilde{X}_\mcZ)(S)$ is \emph{bounded by $\mcZ$} if it belongs to $\Sht_{\gpSch,D,X^n,I_\bullet}^{\mcZ}(S)$. 
\end{defn}

\begin{thm} \label{Thm_ShtBounded}
The stack $\Sht_{\gpSch,D,X^n,I_\bullet}^\mcZ$ is a Deligne-Mumford stack locally of finite type and separated over $(X\smallsetminus D)^n \times_{X^n} \widetilde{X}_\mcZ$. 
\end{thm}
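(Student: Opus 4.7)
The plan is to deduce the theorem directly from Theorems~\ref{ThmSht} and~\ref{Thm_HeckeBounded} by chasing properties around the Cartesian square defining $\Sht^\mcZ$. By Definition~\ref{Def_ShtBounded}, we have the Cartesian diagram
\[
\xymatrix @C+1pc {
\Sht_{\gpSch,D,X^n,I_\bullet}^\mcZ \ar[r] \ar[d] & \Hecke_{\gpSch,D,X^n,I_\bullet}^\mcZ \ar[d] \\
\Sht_{\gpSch,D,X^n,I_\bullet} \times_{X^n} \widetilde{X}_\mcZ \ar[r] & \Hecke_{\gpSch,D,X^n,I_\bullet} \times_{X^n} \widetilde{X}_\mcZ.
}
\]
My first step would be to observe that the right vertical arrow is a closed immersion by Theorem~\ref{Thm_HeckeBounded}, and hence by base change the left vertical arrow is also a closed immersion. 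Thus $\Sht^\mcZ$ is realized as a closed substack of $\Sht_{\gpSch,D,X^n,I_\bullet}\times_{X^n} \widetilde{X}_\mcZ$.

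Next, I would invoke Theorem~\ref{ThmSht}\ref{ThmSht_A}: the bottom-left corner is ind-Deligne-Mumford, locally of ind-finite type, and ind-separated over $(X\smallsetminus D)^n \times_{X^n} \widetilde{X}_\mcZ$. As a closed substack, $\Sht^\mcZ$ inherits the Deligne-Mumford property and ind-separatedness automatically over the same base.

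The main step---and the only non-formal one---is to upgrade \emph{locally of ind-finite type} to \emph{locally of finite type} for $\Sht^\mcZ$, from which ordinary separatedness will then follow. For this I would use the right column of the square: by Theorem~\ref{Thm_HeckeBounded}, $\Hecke^\mcZ$ is an Artin stack locally of finite type, the essential input being the quasi-compactness built into Definition~\ref{DefBound}\ref{DefBound_C}. The horizontal morphism $\Sht \to \Hecke$ from \eqref{Eq_DiagSht} is the base change of the Frobenius graph morphism $\id\times\Frob_q\colon \Bun_{\gpSch,D}\to \Bun_{\gpSch,D}\times_{\F_q}\Bun_{\gpSch,D}$, which is representable because $\Bun_{\gpSch,D}$ has representable diagonal (being an Artin stack, cf.~Theorem~\ref{Bun_G}). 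Consequently the left vertical base change $\Sht^\mcZ\to \Hecke^\mcZ$ is representable, which lets the ``locally of finite type'' property propagate from $\Hecke^\mcZ$ to $\Sht^\mcZ$. Once $\Sht^\mcZ$ is known to be locally of finite type, the ind-separatedness inherited from $\Sht$ becomes genuine separatedness over $(X\smallsetminus D)^n\times_{X^n}\widetilde{X}_\mcZ$, completing the proof.
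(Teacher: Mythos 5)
Your argument is correct and is essentially the route the paper intends: the paper states this theorem without a separate written proof precisely because it follows from Definition~\ref{Def_ShtBounded}, Theorem~\ref{Thm_HeckeBounded} and Theorem~\ref{ThmSht} via the Cartesian square you draw (which is the same mechanism used in the reference \cite{AH_Unif} behind Theorem~\ref{ThmSht}). The only point to state explicitly is that the graph $(\id,\Frob_q)\colon \Bun_{\gpSch,D}\to\Bun_{\gpSch,D}\times_{\F_q}\Bun_{\gpSch,D}$ is not merely representable but also separated and of finite type, being a base change of the diagonal of $\Bun_{\gpSch,D}$; this is what actually propagates ``locally of finite type'' from $\Hecke_{\gpSch,D,X^n,I_\bullet}^\mcZ$ to $\Sht_{\gpSch,D,X^n,I_\bullet}^\mcZ$ along the (top horizontal, not left vertical) arrow, and, combined with the closed immersion into the ind-separated ind-Deligne-Mumford stack $\Sht_{\gpSch,D,X^n,I_\bullet}\times_{X^n}\widetilde{X}_\mcZ$, gives the unramified proper diagonal yielding the Deligne--Mumford property and separatedness.
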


In the following we recall that the Beilinson-Drinfeld Grassmannian (and a bound therein) is a local model for the moduli spaces of (bounded) shtukas.

\begin{defn}\label{DefLocalModel}
Let $D\subset X$ be a finite subscheme. We define $\widetilde{\Sht}_{\gpSch,D,X^n,I_\bullet}$ as the stack, whose $S$-valued points, for an $\F_q$-scheme $S$, are tuples 
\[
\bigl(\underline x=(x_i)_{i=1\ldots n},\,(\mcE^{(i)},\psi^{(i)})_{i=0\ldots n},\,(\varphi^{(i-1)})_{i=1\ldots n},\varphi^{(k)}\,\bigr)\in\Sht_{\gpSch,D,X^n,I_\bullet}(S)
\]
as in Definition~\ref{Def_Sht} together with:
\begin{itemize}
\item a trivialization $\hat{\epsilon}\colon L_{\underline x}\mcE^{(k)} \isoto (L^+_{\underline x}\gpSch)_S$.
\end{itemize}
Note that $\mcL^+_{X^n}\gpSch$ is the automorphism group of the trivial $L^+_{\underline x}\gpSch$-bundle.

If $\mcZ$ is a bound in $\Gr_{\gpSch,X^n,I_\bullet}$ and $Z$ is its representative over the reflex scheme $\widetilde{X}_{\mcZ}$, we define $\widetilde{\Sht}_{\gpSch,D,X^n,I_\bullet}^\mcZ:= \widetilde{\Sht}_{\gpSch,D,X^n,I_\bullet} \times_{\Sht_{\gpSch,D,X^n,I_\bullet}} \Sht_{\gpSch,D,X^n,I_\bullet}^\mcZ$.

Forgetting the trivialization $\hat{\epsilon}$ (respectively the isomorphism $\varphi^{(k)}\colon \mcE^{(k)} \isoto {}^{\tau\!}\mcE^{(0)}$) defines the $X^n$-morphisms $\pi_1$ (respectively $\pi_2$) in the following diagrams
\begin{equation}\label{EqLocalModel}
\xymatrix @C-2pc {
& \widetilde{\Sht}_{\gpSch,D,X^n,I_\bullet} \ar[dl]_{\pi_1} \ar[dr]^{\pi_2} & & & & \widetilde{\Sht}_{\gpSch,D,X^n,I_\bullet}^\mcZ \ar[dl]_{\pi_1} \ar[dr]^{\pi_2} \\
\Sht_{\gpSch,D,X^n,I_\bullet} \ar[dr]_{\overline{\pi}_2} & & \Gr_{\gpSch,X^n,I_\bullet} \ar[dl]^{\overline{\pi}_1} & \quad\text{and}\quad & \Sht_{\gpSch,D,X^n,I_\bullet}^\mcZ \ar[dr]_{\overline{\pi}_2} & & {\enspace Z \qquad} \ar[dl]^{\overline{\pi}_1} \\
& [\Gr_{\gpSch,X^n,I_\bullet} / \mcL^+_{X^n}\gpSch] & & & & {\quad [Z / \mcL^+_{X^n}\gpSch] \quad}
}
\end{equation}
in which the bottom entries are the stack quotients modulo the $\mcL^+_{X^n}\gpSch$-action. These diagrams are called the \emph{local model diagram} for $\Sht_{\gpSch,D,X^n,I_\bullet}$ (respectively for $\Sht_{\gpSch,D,X^n,I_\bullet}^\mcZ$). 
\end{defn}

\begin{thm}\label{ThmLocMod}
In the diagrams~\eqref{EqLocalModel} the $X^n$-morphisms $\pi_2$ and $\overline{\pi}_2$ are formally smooth and both diagrams are cartesian. The $X^n$-morphisms $\pi_1$ and $\overline{\pi}_1$ are $\mcL^+_{X^n}\gpSch$-torsors and have sections \'etale locally on the target. For any such section $s$ of $\pi_1$, the composition $\pi_2\circ s$ is \'etale. In particular, $\Gr_{\gpSch,X^n,I_\bullet}$ is a \emph{local model} for $\Sht_{\gpSch,D,X^n,I_\bullet}$, in the sense that both are isomorphic locally for the \'etale topology. Likewise, $Z$ is a local model for $\Sht_{\gpSch,D,X^n,I_\bullet}^\mcZ$.

If $(c_i)_i$ is a tuple of integers as in Lemma~\ref{LemmaLoopActionOnGr} for which the $\mcL^+_{X^n}\gpSch$-action on the representatives of the bound $\mcZ$ factors through $\mcL^{+,(c_i)_i}_{X^n}\gpSch$, then the morphism $\overline{\pi}_2$ factors through the morphism
\begin{equation}\label{EqThmLocMod}
\Sht_{\gpSch,D,X^n,I_\bullet}^\mcZ \to [Z / \mcL^{+,(c_i)_i}_{X^n}\gpSch].
\end{equation}
The latter is smooth of relative dimension $(\sum_i c_i)\cdot\dim\gengpSch$, which is also equal do the relative dimension of the group scheme $\mcL^{+,(c_i)_i}_{X^n}\gpSch$ over $X^n$; see Lemma~\ref{LemmaGlobalLoopIsqc}\ref{LemmaGlobalLoopIsqc_C}.
\end{thm}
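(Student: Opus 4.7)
The strategy is standard for local-model theorems: I will establish that both diagrams in \eqref{EqLocalModel} are cartesian, with $\pi_1$ an $\mcL^+_{X^n}\gpSch$-torsor and $\pi_2$ formally smooth, then extract étale local isomorphisms, and finally identify the smooth factorization through the truncated quotient.

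\textbf{Cartesian property and the $\pi_1$-torsor.} The first diagram will be shown cartesian via an identification
$$\widetilde{\Sht}_{\gpSch,D,X^n,I_\bullet} \;\cong\; \Sht_{\gpSch,D,X^n,I_\bullet} \underset{[\Gr_{\gpSch,X^n,I_\bullet}/\mcL^+_{X^n}\gpSch]}{\times} \Gr_{\gpSch,X^n,I_\bullet}.$$
By Proposition~\ref{PropGrGlobLoc}, the stack $[\Gr/\mcL^+_{X^n}\gpSch]$ classifies chains of $L^+_{\underline x}\gpSch$-bundles with modifications (forgetting the trivialization $\hat{\epsilon}$); the map $\Sht \to [\Gr/\mcL^+]$ is the global-local functor. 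A $T$-point of the fiber product is a shtuka together with a $\Gr$-lift whose underlying local chain matches that of the shtuka, and this matching is exactly the trivialization $\hat{\epsilon}$ of $L^+_{\underline x}\mcE^{(k)}$; the precise identification is given by Lemma~\ref{LemmaBL} (Beauville-Laszlo gluing). The second diagram will be cartesian from the definition $\widetilde{\Sht}^\mcZ := \widetilde{\Sht}\times_\Sht\Sht^\mcZ$ together with the fact that $\mcZ$-boundedness is a condition on the $\Gr$-image. The map $\pi_1$ is then an $\mcL^+_{X^n}\gpSch$-torsor because its fibers parametrize trivializations $\hat{\epsilon}$, and these trivializations exist étale locally on $S$ because the truncations $\mcL^{+,(c_i)_i}_{X^n}\gpSch$ are smooth (Lemma~\ref{LemmaGlobalLoopIsqc}\ref{LemmaGlobalLoopIsqc_C}).

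\textbf{Formal smoothness of $\pi_2$.} By the cartesian property, this reduces to formal smoothness of $\Sht \to [\Gr/\mcL^+]$. Given a square-zero thickening $R_0 \hookrightarrow R$, a shtuka over $R_0$, and a lift of its local data to $R$, I will first lift $\mcE^{(0)}$ to $\tilde{\mcE}^{(0)}$ over $X_R$ using smoothness of $\Bun_\gpSch$ (Theorem~\ref{Bun_G}), adjusting the lift so that $L^+_{\underline x}\tilde{\mcE}^{(0)}$ matches the prescribed $\mcL^{(0)}$; this adjustment is possible since $L^+$-bundle lifts along a square-zero thickening of an affine base are unobstructed. The remaining $\tilde{\mcE}^{(j)}$ and $\tilde{\varphi}^{(j-1)}$ then arise from Beauville-Laszlo gluing with the lifted $\mcL^{(j)}$'s. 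The crucial step will be lifting the shtuka Frobenius $\varphi^{(k)}\colon \mcE^{(k)}\isoto {}^\tau\mcE^{(0)}$: since the absolute Frobenius raises to $q$-th powers and hence annihilates the square-zero ideal of $R_0 \hookrightarrow R$, the twist ${}^\tau\tilde{\mcE}^{(0)}$ is rigidly determined by $({}^\tau\mcE^{(0)})|_{R_0}$, independently of the chosen lift of $\mcE^{(0)}$. Consequently the obstruction to lifting $\varphi^{(k)}$ depends only on $\tilde{\mcE}^{(k)}$, and it can be killed by adjusting $\tilde{\mcE}^{(0)}$ within its deformation torsor. This is the classical Drinfeld--Varshavsky argument used in \cite{Varshavsky04, Lafforgue12, AH_Unif}.

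\textbf{Étale local model, smooth factorization, and the main obstacle.} For an étale local section $s$ of $\pi_1$, the composition $\pi_2\circ s$ is formally smooth (as a composite of formally smooth maps); a dimension count — using that $s$ cuts out a section of the $\mcL^+_{X^n}\gpSch$-torsor $\pi_1$ while $\pi_2$ is formally smooth of matching relative dimension dictated by the cartesian square — shows it is formally étale, and finite-type considerations yield étaleness. For the smooth factorization: by Lemma~\ref{LemmaLoopActionOnGr}, the $\mcL^+_{X^n}\gpSch$-action on $Z$ factors through $\mcL^{+,(c_i)_i}_{X^n}\gpSch$, so the kernel $K := \ker(\mcL^+_{X^n}\gpSch \twoheadrightarrow \mcL^{+,(c_i)_i}_{X^n}\gpSch)$ acts trivially on $Z$. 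Hence $\widetilde{\Sht}^\mcZ / K$ is an $\mcL^{+,(c_i)_i}_{X^n}\gpSch$-torsor over $\Sht^\mcZ$ equipped with an equivariant map to $Z$, which descends to the claimed morphism $\Sht^\mcZ \to [Z/\mcL^{+,(c_i)_i}_{X^n}\gpSch]$. This morphism is smooth of relative dimension $(\sum_i c_i)\cdot \dim \gengpSch$, equal to the relative dimension of $\mcL^{+,(c_i)_i}_{X^n}\gpSch$ over $X^n$ by Lemma~\ref{LemmaGlobalLoopIsqc}\ref{LemmaGlobalLoopIsqc_C}. The hard part throughout will be the lifting of the shtuka Frobenius $\varphi^{(k)}$ in the formal-smoothness step, where the rigidity of absolute Frobenius on nilpotent thickenings must be carefully balanced against the freedom in lifting $\mcE^{(0)}$.
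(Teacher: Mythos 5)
Your proposal is correct and follows essentially the same route as the paper: the paper's proof of Theorem~\ref{ThmLocMod} simply cites Varshavsky, Lafforgue and Arasteh Rad--Habibi and records the key observation that in diagram~\eqref{Eq_DiagSht} both bottom horizontal morphisms have the same differential $(\id,0)$, which is exactly the Frobenius-rigidity on square-zero thickenings that drives your lifting argument. The only difference is that you spell out the torsor/cartesian bookkeeping, the lifting of $\varphi^{(k)}$, and the smooth factorization through $[Z/\mcL^{+,(c_i)_i}_{X^n}\gpSch]$, which the paper leaves to the cited references.
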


\begin{proof}
This goes back to Varshavsky~\cite[Theorem~2.20]{Varshavsky04} for constant split reductive $\gpSch$ and was reproven by Lafforgue~\cite[Proposition~2.11]{Lafforgue12} and generalized by Arasteh Rad and Habibi~\cite[Theorem~3.2.1]{Arasteh-Habibi} to smooth affine group schemes $\gpSch$ over $X$. The proof relies on the observation that in the diagramm~\eqref{Eq_DiagSht} which defines $\Sht_{\gpSch,D,X^n,I_\bullet}$
\begin{equation*}
\xymatrix @C+4pc {
\Sht_{\gpSch,D,X^n,I_\bullet} \ar[r] \ar[d] &  \Hecke_{\gpSch,D,X^n,I_\bullet} \ar[d]^{(\mcE^{(0)}, \mcE^{(k)})} & \Gr_{\gpSch,X^n,I_\bullet} \ar[l] \ar[d] \\
\Bun_{\gpSch,D} \ar[r]^-{\id \times \Frob_q} & \Bun_{\gpSch,D} \times_{\F_q} \Bun_{\gpSch,D} & \Bun_{\gpSch,D} \ar[l]_-{\id \times \gpSch}
}
\end{equation*}
both horizontal morphisms in the bottom row have the same differential $(\id,0)$.
\end{proof}

\begin{defn}\label{Def_Sht2legs}
Let $\infty\in X(\F_q)$ and let $D\subset X\smallsetminus\{\infty\}$ be a proper closed subscheme. The \emph{stack of global $\gpSch$-shtukas} $\Sht_{\gpSch,D,X\times\infty}$ 
\emph{with two legs, one fixed at $\infty$}, is the stack, whose $S$-valued points, for $S$ an $\F_q$-scheme, are tuples $\bigl(x,\,(\mcE,\psi),(\mcE',\psi'),\,\varphi,\varphi'\,\bigr)$ where
\begin{itemize}
\item $x \in (X\smallsetminus D)(S)$ is a section, called a \emph{leg}, 
\item $(\mcE,\psi),(\mcE',\psi')$ are objects in $\Bun_{\gpSch,D}(S)$, and
\item $\varphi\colon \mcE|_{{X_S}\smallsetminus\Gamma_{x}}\isoto\mcE'|_{{X_S}\smallsetminus\Gamma_{x}}$ and $\varphi'\colon \mcE'|_{(X\smallsetminus\{\infty\})_S}\isoto {}^{\tau\!}\mcE|_{(X\smallsetminus\{\infty\})_S}$ are isomorphisms preserving the $D$-level structures, i.e.~$\psi'\circ\varphi|_{D_S}=\psi$ and ${}^{\tau\!}\psi\circ\varphi'|_{D_S}=\psi'$.
\end{itemize}
We can visualize the above data as
\begin{equation}
\xymatrix @C+1pc {
(\mcE,\psi) \ar@{-->}[r]^\varphi_{x} & (\mcE',\psi') \ar@{-->}[r]^{\varphi'}_\infty & {}^{\tau\!}(\mcE,\psi) \,.
}
\end{equation}
When $D=\varnothing$, we will drop it from the notation. The projection map 
\[\bigl(x,\,(\mcE,\psi),(\mcE',\psi'),\,\varphi,\varphi'\,\bigr)\mapsto x\] defines 
a morphism $\Sht_{\gpSch,D,X\times\infty}\to X\smallsetminus D$. 

For a bound $\mcZ$ in $\Gr_{\gpSch,X\times\infty}$, we also define the stack $\Sht_{\gpSch,D,X\times\infty}^{\mcZ}$ of global $\gpSch$-shtukas $\underline{\mcE}\in\Sht_{\gpSch,D,X\times\infty}$ that are bounded by $\mcZ$ as in Definition~\ref{Def_ShtBounded}.
\end{defn}

\begin{thm} \label{Thm_Sht2legsBounded}
The stack $\Sht_{\gpSch,D,X\times\infty}^\mcZ$ is a Deligne-Mumford stack locally of finite type and separated over $X\smallsetminus D$. Moreover, the stack $\Sht_{\gpSch,D,X\times\infty}$ is an ind-Deligne-Mumford stack, locally of ind-finite type and ind-separated over $X\smallsetminus D$. 
\end{thm}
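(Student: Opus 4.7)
The plan is to identify $\Sht_{\gpSch,D,X\times\infty}$ and its bounded variant with known pieces of the general two-leg theory, and then apply the results already established in Theorems~\ref{ThmSht} and \ref{Thm_ShtBounded}.

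First I would compare Definition~\ref{Def_Sht2legs} with Definition~\ref{Def_Sht} in the case $n=2$ and $I_\bullet=(\{1\},\{2\})$. The data of a global $\gpSch$-shtuka in $\Sht_{\gpSch,D,X\times\infty}(S)$ consists of a leg $x\in(X\smallsetminus D)(S)$, a fixed leg at $\infty\in X(\F_q)$, three $\gpSch$-bundles with $D$-level structures $(\mcE,\psi),(\mcE',\psi'),{}^{\tau\!}(\mcE,\psi)$, and modifications $\varphi$ at $x$ and $\varphi'$ at $\infty$. This is exactly the data of an object of $\Sht_{\gpSch,D,X^2,(\{1\},\{2\})}(S)$ whose second leg factors through $\{\infty\}\subset X$. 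Consequently there is a canonical isomorphism
\begin{equation*}
\Sht_{\gpSch,D,X\times\infty} \;\cong\; \Sht_{\gpSch,D,X^2,(\{1\},\{2\})} \times_{X^2} \bigl(X\times_{\F_q}\Spec\F_\infty\bigr).
\end{equation*}

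Second, since the closed immersion $X\times_{\F_q}\Spec\F_\infty \hookrightarrow X^2$ is schematic, base changing along it preserves the property of being an ind-Deligne-Mumford stack locally of ind-finite type and ind-separated over the target. Applying Theorem~\ref{ThmSht}\ref{ThmSht_A} to $\Sht_{\gpSch,D,X^2,(\{1\},\{2\})}$, which is ind-Deligne-Mumford, locally of ind-finite type and ind-separated over $(X\smallsetminus D)^2$, I obtain that $\Sht_{\gpSch,D,X\times\infty}$ is ind-Deligne-Mumford, locally of ind-finite type and ind-separated over $(X\smallsetminus D)\times_{\F_q}\Spec\F_\infty = X\smallsetminus D$.

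Third, for the bounded version, I recall from Definition~\ref{Def_ShtBounded} that $\Sht_{\gpSch,D,X^2,(\{1\},\{2\})}^\mcZ$ is a closed substack of $\Sht_{\gpSch,D,X^2,(\{1\},\{2\})}\times_{X^2}\widetilde{X}_\mcZ$. Under the identification above, and using that by the definition of the bounds $\mcZ$ in $\Gr_{\gpSch,X\times\infty}$ the reflex scheme $\widetilde{X}_\mcZ$ already lives over $X\times_{\F_q}\Spec\F_\infty$, I obtain
\begin{equation*}
\Sht_{\gpSch,D,X\times\infty}^\mcZ \;\cong\; \Sht_{\gpSch,D,X^2,(\{1\},\{2\})}^\mcZ.
\end{equation*}
Now Theorem~\ref{Thm_ShtBounded} asserts that the right-hand side is a Deligne-Mumford stack locally of finite type and separated over $(X\smallsetminus D)^2\times_{X^2}\widetilde{X}_\mcZ$. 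Projecting to the first factor yields the structure morphism to $X\smallsetminus D$, and the claimed properties are preserved under composition with the finite (hence separated, of finite type) morphism $\widetilde{X}_\mcZ\to X\times_{\F_q}\Spec\F_\infty\to X$. This completes the proof.

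The statement involves no essentially new content beyond Theorems~\ref{ThmSht} and \ref{Thm_ShtBounded}; the only point requiring attention is the bookkeeping that the fixed leg at $\infty$ really corresponds to restricting one of the two factors of $X^2$ to the $\F_q$-point $\{\infty\}$, and that the definition of boundedness in Definition~\ref{Def_ShtBounded}, when applied to the bound in $\Gr_{\gpSch,X\times\infty}$ with reflex scheme over $X\times_{\F_q}\Spec\F_\infty$, agrees with the condition on the modifications $\varphi,\varphi'$ described in Definition~\ref{Def_Sht2legs}. There is no genuine obstacle here; the entire argument is a base change and restriction.
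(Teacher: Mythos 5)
Your proposal is correct and follows exactly the route the paper intends: the paper states Theorem~\ref{Thm_Sht2legsBounded} without a separate argument precisely because it is the base change of the general two-leg case along $X\times_{\F_q}\Spec\F_\infty\hookrightarrow X^2$, so that the unbounded statement follows from Theorem~\ref{ThmSht} and the bounded one from Theorem~\ref{Thm_ShtBounded} together with Definition~\ref{Def_ShtBounded}, as you argue. The only point requiring care — that ``separated and locally of finite type over $X\smallsetminus D$'' is obtained from the corresponding statement over $(X\smallsetminus D)^2\times_{X^2}\widetilde{X}_\mcZ$ by composing with the finite morphism $\widetilde{X}_\mcZ\to X$ — is handled correctly in your write-up.
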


\begin{prop}\label{PropLocMod2}
Let $\mcZ$ be a bound in $\Gr_{\gpSch,X\times\infty}$ and let $Z$ be its representative over the reflex scheme $\widetilde{X}_{\mu,\beta}$. Then $Z$ is a local model for $\Sht_{\gpSch,D,X\times\infty}^{\mcZ}$, i.e.~both are isomorphic locally for the \'etale topology. 
\end{prop}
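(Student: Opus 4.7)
The plan is to deduce the proposition from Theorem \ref{ThmLocMod} by base change. By Definitions \ref{DefGr2legs}(a) and \ref{Def_Sht2legs}, the stacks $\Gr_{\gpSch,X\times\infty}$ and $\Sht_{\gpSch,D,X\times\infty}^\mcZ$ are obtained from their two-leg analogues $\Gr_{\gpSch,X^2,(\{1\},\{2\})}$ and $\Sht_{\gpSch,D,X^2,(\{1\},\{2\})}^\mcZ$ by base change along the closed immersion $X\times_{\F_q}\Spec\F_\infty\hookrightarrow X^2$. In particular, since $\mcZ$ is a bound in $\Gr_{\gpSch,X\times\infty}$ with representative $Z$ over the reflex scheme $\widetilde{X}_{\mu,\beta}$, the closed subscheme $Z$ equally defines a bound in the larger Beilinson-Drinfeld Grassmannian $\Gr_{\gpSch,X^2,(\{1\},\{2\})}$ whose reflex scheme still factors through $X\times_{\F_q}\Spec\F_\infty$.

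I would then apply Theorem \ref{ThmLocMod} with $n=2$, ordered partition $I_\bullet=(\{1\},\{2\})$, and bound $\mcZ$ to produce the two local model diagrams of \eqref{EqLocalModel} relating $\Sht_{\gpSch,D,X^2,(\{1\},\{2\})}^\mcZ$, the auxiliary stack $\widetilde{\Sht}_{\gpSch,D,X^2,(\{1\},\{2\})}^\mcZ$ (carrying a trivialization $\hat{\epsilon}$ of $L_{\underline x}\mcE^{(k)}$), and the local model $Z$. In this diagram $\pi_1$ is an $\mcL^+_{X^2}\gpSch$-torsor admitting étale-local sections $s$ such that $\pi_2\circ s$ is étale, and $\pi_2$ is formally smooth. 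Since the Hecke stack, the Beilinson-Drinfeld Grassmannian, the global positive loop group, the shtuka stack, and the bound $Z$ are all functorial with respect to the base $X^n$, the entire local model diagram base-changes along $X\times_{\F_q}\Spec\F_\infty\hookrightarrow X^2$ to yield the desired local model diagram for $\Sht_{\gpSch,D,X\times\infty}^\mcZ$ with local model $Z$.

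I do not anticipate a serious obstacle here, as the statement is essentially a base-change specialization of Theorem \ref{ThmLocMod}. The only verifications needed are that the truncated loop group $\mcL^{+,(c_i)_i}_{X^2}\gpSch$ through which the action factors, by Lemma \ref{LemmaLoopActionOnGr}, restricts to a smooth affine group scheme over $X\times_{\F_q}\Spec\F_\infty$ of the same relative dimension (immediate from the Weil restriction description in the proof of Lemma \ref{LemmaGlobalLoopIsqc}\ref{LemmaGlobalLoopIsqc_C}), and that étale-local sections of torsors are stable under base change (standard). Combining the base-changed $\pi_1$ and $\pi_2$ with a local section $s$ yields an étale identification between $\Sht_{\gpSch,D,X\times\infty}^\mcZ$ and the quotient of $Z$ by the induced smooth group scheme action, so that $Z$ is a local model for $\Sht_{\gpSch,D,X\times\infty}^\mcZ$ in the étale topology, as required.
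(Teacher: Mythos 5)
Your proposal is correct and follows essentially the same route as the paper, whose proof of Proposition~\ref{PropLocMod2} is simply to invoke Theorem~\ref{ThmLocMod}: since $\Sht_{\gpSch,D,X\times\infty}^{\mcZ}$ and the relevant bound are obtained from the $n=2$ objects by base change along $X\times_{\F_q}\Spec\F_\infty\hookrightarrow X^2$, the local model diagram specializes directly. Your additional verifications (behaviour of the truncated loop group and of \'etale-local sections under base change) are fine but are exactly the routine details the paper leaves implicit.
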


\begin{proof}
This follows from Theoreom~\ref{ThmLocMod}.
\end{proof}

\subsection{Interpretation in terms of chains}\label{subsec:Chains}
In this section let $\infty\in X(\F_q)$ and let $\beta\in L_\infty\gpSch(\F_\beta)$ be the element from \S\,\ref{Def_beta}. Let $\mu\in X_*(T)$ and let $\mcZ(\mu,\beta)$ be the bound in $\Gr_{\gpSch,X\times\infty}$ from Definition~\ref{DefZmubeta}.
In Corollary~\ref{CorShtWithChains} we shall give an interpretation of $\Sht_{\gpSch,D,X\times\infty}^{\mcZ(\mu,\beta)}$ in terms of chains of $\gpSch$-bundles. This relates our global $\gpSch$-shtukas bounded by $\mcZ(\mu,\beta)$ to the objects defined by ``sequences'', namely the elliptic sheaves of Drinfeld~\cite{Drinfeld-commutative-subrings,Blum-Stuhler} (see Example~\ref{ExDrinfeld}), the $\mathscr{D}$-elliptic sheaves of Laumon, Rapoport and Stuhler~\cite{Laumon-Rapoport-Stuhler} (see Example~\ref{ExLRS}), and the abelian $\tau$-sheaves of the first author~\cite{HartlAbSh} (see Example~\ref{ExAbelianSheaves}). Recall the Frobenius map $\tau_{\gengpSch_\infty}$ of the group $\gengpSch_\infty$ over $\FcnFld_\infty$ from \eqref{EqTau_G}.

\begin{defn} \label{DefCBun}
Let $D\subset X\smallsetminus\{\infty\}$ be a closed subscheme. Let $\CBun_{\gpSch,D,\beta}$ 
be the stack over $\F_\beta$ classifying chains 
\begin{equation}\label{chain-eqn}
    \ldots \xdashleftarrow{\Pi_{-1}}(\mcE_{-1},\psi_{-1})\xdashleftarrow{\Pi_0}(\mcE_{0},\psi_{0})\xdashleftarrow{\Pi_1}(\mcE_1,\psi_1)\xdashleftarrow{\Pi_2}\ldots
\end{equation}
of $\gpSch$-bundles with $D$-level structure over $S$ such that for all $i\in\Z$ the modifications are isomorphisms of $\gpSch$-bundles $\Pi_i\colon (\mcE_i,\psi_i)|_{(X\smallsetminus\{\infty\})_S} \isoto (\mcE_{i-1},\psi_{i-1})|_{(X\smallsetminus\{\infty\})_S}$ bounded by $\mcZ(\tau_{\gengpSch_\infty}^{i-1}(\beta)^{-1})$, which preserve the $D$-level structures. 

Morphisms $(f_i)_i\colon(\mcE_i,\psi_i,\Pi_i)_i\to (\mcE'_i,\psi'_i,\Pi'_i)_i$ in $\CBun_{\gpSch,D,\beta}$ are tuples of isomorphisms $f_i\colon(\mcE_i,\psi_i)\isoto(\mcE'_i,\psi'_i)$ in $\Bun_{\gpSch,D}(S)$ satisfying $\Pi'_i\circ f_i = f_{i-1}\circ \Pi_i$ for all $i$.
\end{defn}

\begin{prop}\label{PropBunWithChains}
For any fixed $j\in\Z$, 
the functor 
\[
\mathfrak{pr}_j\colon \CBun_{\gpSch,D,\beta} \isoto \Bun_{\gpSch,D}\times_{\F_q} \Spec\F_\beta \,,\quad (\mcE_i,\psi_i,\Pi_i) \longmapsto (\mcE_j,\psi_j)
\]
is an isomorphism of stacks. In particular, $\CBun_{\gpSch,D,\beta}$ is a smooth Artin-stack locally of finite type over $\F_\beta$.
\end{prop}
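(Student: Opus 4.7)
The plan is to construct an explicit quasi-inverse $\iota_j$ of $\mathfrak{pr}_j$. First, I decompose a given bundle $(\mcE,\psi) \in (\Bun_{\gpSch,D}\times_{\F_q}\Spec\F_\beta)(S)$ via the Beauville-Laszlo gluing of Lemma~\ref{LemmaBL} at the divisor $\infty \times_{\F_q} S \subset X_S$ into a triple $(\mcE^\circ, \mcL, \gamma)$, where $\mcE^\circ := \mcE|_{(X\smallsetminus\{\infty\})_S}$ retains the $D$-level structure $\psi$, where $\mcL := L^+_\infty(\mcE)$ is the $L^+_\infty\gpSch$-torsor at $\infty$, and where $\gamma\colon L_\infty(\mcE^\circ) \isoto L_\infty(\mcL)$ is the canonical identification.

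For each $i \in \Z$, I would build $\mcE_i$ from a triple $(\mcE^\circ, \mcL_i, \gamma_i)$ as follows. Let $\beta_i$ be the appropriate finite product of $\tau_{\gengpSch_\infty}^{k}(\beta)^{\pm 1}$ dictated by the chain of boundedness conditions, with $\beta_j := 1$; for instance $\mcL_{j-1} = \mcL_j \cdot \tau_{\gengpSch_\infty}^{j-1}(\beta)$ and $\mcL_{j+1} = \mcL_j \cdot \tau_{\gengpSch_\infty}^{j}(\beta)^{-1}$, and in general $\beta_i = \beta_{i-1}\cdot \tau_{\gengpSch_\infty}^{i-1}(\beta)^{-1}$. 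Since each $\tau_{\gengpSch_\infty}^{k}(\beta)$ normalizes $L^+_\infty\gpSch$ by \S\ref{Def_beta}, so does $\beta_i$. Inside the $L_\infty\gpSch$-torsor $L_\infty(\mcL)$, right multiplication by $\beta_i$ sends the $L^+_\infty\gpSch$-subtorsor $\mcL$ to a new $L^+_\infty\gpSch$-subtorsor $\mcL_i := \mcL \cdot \beta_i$ --- the fact that this is again an $L^+_\infty\gpSch$-torsor rests precisely on the normalization property. Setting $\gamma_i$ equal to $\gamma$ viewed through the identification $L_\infty(\mcL) = L_\infty(\mcL_i)$, Lemma~\ref{LemmaBL} yields $\mcE_i$, and the identity on $\mcE^\circ$ provides a modification $\Pi_i\colon \mcE_i^\circ \isoto \mcE_{i-1}^\circ$. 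The choice of the $\beta_i$ ensures that $\Pi_i$ is bounded by $\mcZ(\tau_{\gengpSch_\infty}^{i-1}(\beta)^{-1})$: upon any \'etale-local trivialization $\mcL_{i-1} \isoto L^+_\infty\gpSch$, the class of $L_\infty(\Pi_i)$ in $\Fl_{\gpSch,\infty}$ is precisely $\tau_{\gengpSch_\infty}^{i-1}(\beta)^{-1}\cdot L^+_\infty\gpSch$. Tautologically, $\mathfrak{pr}_j\circ\iota_j = \id$.

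For the opposite composition, I would start from a chain $(\mcE_i,\psi_i,\Pi_i)_i$. Since each $\Pi_i$ is an isomorphism away from $\infty$, all restrictions $\mcE_i^\circ$ are canonically identified with $\mcE_j^\circ$, so it remains to check that the local data $\mcL_i := L^+_\infty(\mcE_i)$ satisfy $\mcL_i = \mcL_j \cdot \beta_i$ inside $L_\infty(\mcL_j)$. This uses the key observation that the bound $\mcZ(\beta')$ for a normalizing $\beta'$ consists of the \emph{single} point $\beta' L^+_\infty\gpSch \in \Fl_{\gpSch,\infty}$ --- see Definition~\ref{Def_Zbeta} --- so the boundedness of each $\Pi_i$ leaves no moduli whatsoever in the reconstruction of $\mcL_i$ from $\mcL_{i-1}$. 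Combined with the uniqueness in Lemma~\ref{LemmaBL}, this supplies the natural isomorphism $\iota_j\circ \mathfrak{pr}_j \cong \id$.

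The final assertion, that $\CBun_{\gpSch,D,\beta}$ is a smooth Artin stack locally of finite type over $\F_\beta$, then transfers immediately from $\Bun_{\gpSch,D}\times_{\F_q}\Spec\F_\beta$ via Theorem~\ref{Bun_G}. The main technical point, and the one I expect to be the primary obstacle to write out cleanly, is verifying the well-definedness and functoriality of the twist operation $\mcL \mapsto \mcL \cdot \beta_i$ on $L^+_\infty\gpSch$-torsors; this rests entirely on the length-zero condition $\beta_i \cdot L^+_\infty\gpSch \cdot \beta_i^{-1} = L^+_\infty\gpSch$ from \S\ref{Def_beta}, without which $\mcL \cdot \beta_i$ would not inherit any $L^+_\infty\gpSch$-structure and the entire construction would collapse.
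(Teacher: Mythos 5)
Your proposal is correct and follows essentially the same route as the paper: reconstruct the whole chain from its $j$-th member by Beauville--Laszlo gluing at $\infty$, twisting the local $L^+_\infty\gpSch$-torsor by the normalizing elements $\tau_{\gengpSch_\infty}^{k}(\beta)^{\pm1}$, with uniqueness forced by the single-point bounds $\mcZ(\tau_{\gengpSch_\infty}^{i-1}(\beta)^{-1})$. The only difference is packaging: you build a quasi-inverse via the intrinsic subtorsor twist $\mcL\mapsto\mcL\cdot\beta_i\subset L_\infty(\mcL)$, while the paper proves full faithfulness and essential surjectivity separately and realizes the same twist through explicit \'etale descent data $h_{i+1}=\tau_{\gengpSch_\infty}^{i}(\beta)\,h_i\,\tau_{\gengpSch_\infty}^{i}(\beta)^{-1}$.
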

\begin{proof}
Since the $D$-level structure is outside $\infty$, and the modifications $\Pi_i$ take place at $\infty$, the $D$-level structure is preserved. We will ignore the level structure in the rest of the proof. In the following, we prove the result for $j=0$. The same proof holds for arbitrary $j\in\Z$.

    (1) Full faithfulness of the functor $\mathfrak{pr}_0$: Let $(\mcE_i,\Pi_i)_i$ and $(\mcE_i',\Pi_i')_i$ be two chains over $S$, and let $f_0:\mcE_0\isoto\mcE_0'$ be an isomorphism in $\Bun_\gpSch(S)$. For any $i\geq 0$ (and similarly for $i\leq 0$), the map $f_0$ induces an isomorphism outside $\infty$
    \begin{equation}\label{Eq2.3.17}
       \mcE_i|_{(X\smallsetminus\{\infty\})_S}\cong \mcE_0|_{(X\smallsetminus\{\infty\})_S}\xrightarrow[f_0]{\;\sim\,\,}\mcE_0'|_{(X\smallsetminus\{\infty\})_S}\cong \mcE_i'|_{(X\smallsetminus\{\infty\})_S}.
    \end{equation}
We apply the functor $L_\infty^+$ from Definition~\ref{DefLoopGpAtDelta} and Example~\ref{ExDivisors}(b), and choose an \'etale covering $S'\to S$, over which trivializations exist that form the vertical maps in the following diagram 
\begin{equation}
\begin{tikzcd}[column sep=12ex]
    L_\infty(\mcE_i)_{S'}\arrow[]{r}{\Pi_1\circ\ldots\circ\Pi_i}[swap]{\sim}\arrow[]{d}{\alpha_i}[swap]{\sim} & L_\infty(\mcE_0)_{S'}\arrow[]{r}{L_\infty(f_0)}[swap]{\sim}\arrow[]{d}{\alpha_0}[swap]{\sim} & L_\infty(\mcE_0')_{S'}\arrow[]{r}{(\Pi_1'\circ\ldots\circ\Pi_i')^{-1}}[swap]{\sim}\arrow[]{d}{\alpha_0'}[swap]{\sim}&L_\infty(\mcE_i')_{S'}\arrow[]{d}{\alpha_i'}[swap]{\sim}\\
    (L_\infty\gengpSch)_{S'}\arrow[]{r}{\sim}&(L_\infty\gengpSch)_{S'}\arrow[]{r}{\sim}&(L_\infty\gengpSch)_{S'}\arrow[]{r}{\sim}&(L_\infty\gengpSch)_{S'}
\end{tikzcd}
\end{equation}
Since $\tau_{\gengpSch_\infty}^k(\beta)\cdot L^+_\infty\gpSch(S') = L^+_\infty\gpSch(S')\cdot\tau_{\gengpSch_\infty}^k(\beta)$ by \S\,\ref{Def_beta}, the three horizontal isomorphisms in the bottom row lie in $\tau_{\gengpSch_\infty}^{0}(\beta)^{-1}\cdot\ldots\cdot \tau_{\gengpSch_\infty}^{i-1}(\beta)^{-1} L^+_\infty\gpSch(S')$ and $L^+_\infty\gpSch(S')$ and $\tau_{\gengpSch_\infty}^{i-1}(\beta)\cdot\ldots\cdot \tau_{\gengpSch_\infty}^{0}(\beta) L^+_\infty\gpSch(S')$, respectively. Therefore, their composition lies in $L^+_\infty\gpSch(S')$ and yields an isomorphism 
    $L_\infty^+(\mcE_i)_{S'}\isoto L_\infty^+(\mcE_i')_{S'}$, 
which descends to $S$ and glues with \eqref{Eq2.3.17} to an isomorphism $f_i\colon\mcE_i \isoto \mcE'_i$ on all of $X_S$ by Lemma~\ref{LemmaBL}. The $f_i$ define an isomorphism of chains $(f_i)_i\colon (\mcE_i,\Pi_i)_i\isoto (\mcE'_i,\Pi'_i)_i$.

(2) Essential surjectivity of the functor $\mathfrak{pr}_0$: Fix any $\mcE\in \Bun_\gpSch(S)$. We want to construct a chain of the form \eqref{chain-eqn} with $\mcE_0=\mcE$. 
For every $i\in\Z_{\ge 0}$, we can construct $\mcE_{i+1}$ from $\mcE_i$ via Lemma~\ref{LemmaBL} by glueing 
$\overset{\circ}{\mcE}_{i+1}:=\mcE_i|_{(X\smallsetminus\{\infty\})_S}$ 
with $\mcL_{i+1}$, where $\mcL_{i+1}$ is constructed from $L_\infty^+(\mcE_i)$ as follows: choose an \'etale covering $S'\to S$ and a trivialization $\alpha_i: L_\infty^+(\mcE_i)_{S'}\isoto(L^+_\infty\gpSch)_{S'}$ over $S'$. Let $h_i\in L^+_\infty\gpSch(S'')$, for $S'':=S'\times_S S'$, be given by $h_i:=\pr_2^*\alpha_i\circ\pr_1^*\alpha_i^{-1}$, where $\pr_k:S''\to S'$ is the projection onto the $k$-th factor, for $k=1,2$. Let $(\mcL_{i+1})_{S'}:=(L^+_\infty\gpSch)_{S'}$, let $\alpha_{i+1}:=\id \colon (\mcL_{i+1})_{S'}\isoto(L^+_\infty\gpSch)_{S'}$, and let 
\[
h_{i+1}:=\tau_{\gengpSch_\infty}^{i}(\beta)\cdot h_i\cdot \tau_{\gengpSch_\infty}^{i}(\beta)^{-1}\in \tau_{\gengpSch_\infty}^{i}(\beta)\cdot L^+_\infty\gpSch(S'')\cdot \tau_{\gengpSch_\infty}^{i}(\beta)^{-1}=L^+_\infty\gpSch(S'').
\]
Then $h_{i+1}$ defines a descent datum on $(\mcL_{i+1})_{S'}$ and the latter descends to an $L^+_\infty\gpSch$-bundle $\mcL_{i+1}$ on $S$. Moreover, 
\begin{equation}\label{EqDefPi_i}
    \Pi_{i+1}:=\alpha_i^{-1}\circ\tau_{\gengpSch_\infty}^{i}(\beta)^{-1}\circ\alpha_{i+1}: L_\infty(\mcL_{i+1})_{S'}=(L_\infty\gpSch)_{S'}\isoto L_\infty(\mcE_i)_{S'}
\end{equation}
satisfies $\pr_2^*\Pi_{i+1}=\pr_1^*\alpha_i^{-1}\circ h_i^{-1}\tau_{\gengpSch_\infty}^i(\beta)^{-1} h_{i+1}\circ\pr_1^*\alpha_{i+1}=\pr_1^*\Pi_i$ and descends to an isomorphism 
\begin{equation}\label{PropBunWithChainsEqPi}
\Pi_{i+1}: L_\infty(\mcL_{i+1}) \isoto L_\infty(\mcE_i) 
\end{equation}
over $S$. By Lemma~\ref{LemmaBL} we glue $\mcE_{i+1}$ from $\overset{\circ}{\mcE}_{i+1}:=\mcE_i|_{(X\smallsetminus\{\infty\})_S}$ and $\mcL_{i+1}$ via the isomorphism
\[
\Pi_{i+1}^{-1} \colon L_\infty(\overset{\circ}{\mcE}_{i+1})=L_\infty(\mcE_i|_{(X\smallsetminus\{\infty\})_S})=L_\infty(\mcE_i)\xrightarrow{\Pi_{i+1}^{-1}}L_\infty(\mcL_{i+1}).
\]
The isomorphism $\Pi_{i+1}$ from \eqref{PropBunWithChainsEqPi} extends to the isomorphism 
\[
\Pi_{i+1}=\id\colon \mcE_{i+1}|_{(X\smallsetminus\{\infty\})_S} = \overset{\circ}{\mcE}_{i+1} = \mcE_i|_{(X\smallsetminus\{\infty\})_S} \isoto \mcE_i|_{(X\smallsetminus\{\infty\})_S}.
\]
We proceed analogously for all $i<0$. This construction defines a chain \eqref{chain-eqn} in $\CBun_{\gengpSch,\beta}(S)$. 
\end{proof}

Let $\mu\in X_*(T)$. Also the bound $\mcZ(\mu,\beta)$ from Definition~\ref{DefZmubeta} can be interpreted in terms of chains. Let $\widetilde{X}_{\mu,\beta}$ be its reflex scheme.

\begin{prop}\label{PropBoundWithChains}
The bound $\mcZ(\mu,\beta)$ is represented by the closed subscheme of $\Gr_{\gpSch,X\times\infty}\times_X \widetilde{X}_{\mu,\beta}$ whose $S$-valued points are 
tuples $\bigl(x,\,(\mcE_i,\Pi_i,\mcE_i'',\Pi_i'',\diagphi_i)_{i\in\Z},\,\epsilon\bigr)$ where
\begin{itemize}
    \item $x \colon S\to \widetilde{X}_{\mu,\beta}\to X$ is a leg,
    \item together with a commutative diagram,
\begin{equation}\label{EqPropBoundWithChains}
\begin{tikzcd}
    \ldots  & \mcE_{-1} \arrow[dashed]{l}[swap]{\Pi_{-1}} \arrow[dashed]{ld}[swap]{\diagphi_{-1}} & \mcE_0\arrow[dashed]{l}[swap]{\Pi_0}\arrow[dashed]{ld}[swap]{\diagphi_0} & \mcE_1\arrow[dashed]{l}[swap]{\Pi_1}\arrow[dashed]{ld}[swap]{\diagphi_1}& \arrow[dashed]{l}[swap]{\Pi_2} \arrow[dashed]{ld}[swap]{\diagphi_2}\ldots\\
    \ldots & \mcE''_{-1}\arrow[dashed]{l}{\Pi''_{-1}}&\mcE''_0\arrow[dashed]{l}{\Pi''_0}&\mcE''_1\arrow[dashed]{l}{\Pi''_1} & \arrow[dashed]{l}{\Pi''_2}\ldots
\end{tikzcd},
\end{equation}
where the $\diagphi_i$ are isomorphisms outside the graph $\Gamma_x$ bounded by $\mu$ in the sense of Definition~\ref{Def_BoundBy_mu} and Remark~\ref{Rem_HeckeBounded}, and the $\Pi_i$ and $\Pi_i''$ are isomorphisms outside $\infty$ with $\Pi_i$ bounded by $\mcZ(\tau_{\gengpSch_\infty}^{i-1}(\beta)^{-1})$ and $\Pi_i''$ bounded by $\mcZ(\tau_{\gengpSch_\infty}^{i}(\beta)^{-1})$,
\item and $\epsilon\colon\mcE''_0\isoto \gpSch\times_XX_S$ is an isomorphism of $\gpSch$-bundles.
\end{itemize}
\end{prop}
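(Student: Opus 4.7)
My plan is to exhibit mutually inverse morphisms between the functor $F$ of chain data described in the statement and the closed subscheme of $\Gr_{\gpSch,X\times\infty}\times_X\widetilde{X}_{\mu,\beta}$ representing $\mcZ(\mu,\beta)$, using Proposition~\ref{PropBunWithChains} as the key tool.

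The passage from chain data to $\Gr_{\gpSch,X\times\infty}$ is the easy direction: I set $\mcE:=\mcE_0$, $\mcE':=\mcE_{-1}''$, $\mcE'':=\mcE_0''$, $\varphi:=\diagphi_0\colon\mcE\to\mcE'$, and $\varphi':=(\Pi_0'')^{-1}\colon\mcE'\to\mcE''$. By hypothesis $\varphi$ is bounded by $\mu$ at $x$, and $\Pi_0''$ is bounded by $\mcZ(\tau_{\gengpSch_\infty}^{0}(\beta)^{-1})=\mcZ(\beta^{-1})$, so $\varphi'$ is bounded by $\mcZ(\beta)$; together with $\epsilon\colon\mcE_0''\isoto\gpSch\times_X X_S$ this is a point of $\Gr_{\gpSch,X\times\infty}$ bounded by $\mcZ^{\leq\mu}\times\mcZ(\beta)=\mcZ(\mu,\beta)$ in the sense of Definition~\ref{DefZmubeta}.

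Conversely, given $(x,\mcE,\mcE',\mcE'',\varphi,\varphi',\epsilon)$ bounded by $\mcZ(\mu,\beta)$, I reconstruct the chain data by two applications of Proposition~\ref{PropBunWithChains}. The top chain is the unique $(\mcE_i,\Pi_i)_{i\in\Z}\in\CBun_{\gpSch,\varnothing,\beta}(S)$ with $\mathfrak{pr}_0$-value $\mcE$; its $\Pi_i$ have the required bounds by Definition~\ref{DefCBun}. For the bottom chain, I apply Proposition~\ref{PropBunWithChains} to $\mcE'$, producing $(\tilde{\mcE}_j,\tilde{\Pi}_j)_{j\in\Z}$ with $\tilde{\mcE}_0=\mcE'$ and $\tilde{\Pi}_j$ bounded by $\mcZ(\tau_{\gengpSch_\infty}^{j-1}(\beta)^{-1})$, and reindex by $\mcE_i'':=\tilde{\mcE}_{i+1}$, $\Pi_i'':=\tilde{\Pi}_{i+1}$; the one-step shift converts the bound to $\mcZ(\tau_{\gengpSch_\infty}^i(\beta)^{-1})$ and produces $\mcE_0''=\tilde{\mcE}_1$, which by its construction is a canonical $\beta$-modification of $\mcE'$ at $\infty$ and is therefore identified with the given $\mcE''$ via $\varphi'$. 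Setting $\diagphi_0:=\varphi$, I extend to $\diagphi_i\colon\mcE_i\to\mcE_{i-1}''$ for every $i$ by Lemma~\ref{LemmaBL}: outside $\Gamma_x$ I take $\varphi$ via the natural identifications $\mcE_i|_{(X\smallsetminus\{\infty\})_S}\cong\mcE|_{(X\smallsetminus\{\infty\})_S}$ and $\mcE_{i-1}''|_{(X\smallsetminus\{\infty\})_S}\cong\mcE'|_{(X\smallsetminus\{\infty\})_S}$ furnished by the chain structures; at $\infty$, the explicit formula \eqref{EqDefPi_i} shows that, after compatible \'etale trivializations, both $L^+_\infty(\mcE_i)$ and $L^+_\infty(\mcE_{i-1}'')$ differ from $L^+_\infty(\mcE)$ by the same element $\beta\cdot\tau_{\gengpSch_\infty}(\beta)\cdots\tau_{\gengpSch_\infty}^{i-1}(\beta)\in L_\infty\gpSch$ modulo $L^+_\infty\gpSch$, so they are canonically isomorphic and $\diagphi_0$ extends. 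Commutativity of the squares in \eqref{EqPropBoundWithChains} is then automatic.

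That the two constructions are mutually inverse follows from the uniqueness statements in Proposition~\ref{PropBunWithChains} together with the uniqueness of glueings in Lemma~\ref{LemmaBL}. The main difficulty I anticipate is the off-by-one shift between the indexings of the two chains: the bound on $\Pi_i''$ is $\mcZ(\tau_{\gengpSch_\infty}^i(\beta)^{-1})$ rather than $\mcZ(\tau_{\gengpSch_\infty}^{i-1}(\beta)^{-1})$, which precisely absorbs the $\beta$-modification $\varphi'$ at $\infty$ into the chain structure and thereby allows the diagonal arrows $\diagphi_i$ to be honest isomorphisms at $\infty$ rather than modifications.
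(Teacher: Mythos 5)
Your dictionary between the chain data and points of $\Gr_{\gpSch,X\times\infty}$ (namely $\mcE=\mcE_0$, $\mcE'=\mcE''_{-1}$, $\mcE''=\mcE''_0$, $\varphi=\diagphi_0$, $\varphi'=(\Pi''_0)^{-1}$) is exactly the one underlying the paper's argument, and Proposition~\ref{PropBunWithChains} and Lemma~\ref{LemmaBL} are indeed the right tools; the index shift you flag is real. But there is a genuine gap: you treat ``bounded by $\mcZ(\mu,\beta)$'' as if it were by definition the componentwise condition that $\varphi$ is bounded by $\mu$ and $\varphi'$ by $\mcZ(\beta)$. By Definition~\ref{DefZmubeta} and Construction~\ref{Def_Bound_individual}, the representative $Z(\mu,\beta)$ is the scheme-theoretic closure in $\Gr_{\gpSch,X\times\infty}\times_X\widetilde{X}_{\mu,\beta}$ of the product bound over the locus where the two legs are \emph{disjoint}; over the colliding locus (the fiber over $\infty$, the only place the proposition is later used) componentwise boundedness is neither the definition of membership in $Z(\mu,\beta)$ nor obviously equivalent to it. So in your ``easy direction'' the assertion that the chain data gives a point bounded by $\mcZ(\mu,\beta)$ is precisely the nontrivial inclusion you would have to prove, and your converse direction presupposes the componentwise statement as well: identifying $\tilde{\mcE}_1$ with $\mcE''$ needs to know that $\varphi'$ alone is bounded by $\mcZ(\beta)$, and your extension of $\diagphi_0$ to the $\diagphi_i$ ``at $\infty$'' argues that $L^+_\infty(\mcE_i)$ and $L^+_\infty(\mcE''_{i-1})$ differ from $L^+_\infty(\mcE)$ by the same loop-group element --- a computation that implicitly uses that $\varphi$ is an isomorphism at $\infty$, which fails exactly when the leg $x$ collides with $\infty$.

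The paper's proof supplies the missing mechanism: it works over the universal base $S=Z(\mu,\beta)$, observes that $\widetilde{S}:=Z(\mu,\beta)\times_X(X\smallsetminus\{\infty\})$ is schematically dense in $S$, performs the loop-group computations of your kind only over $\widetilde{S}$ (where the legs are disjoint, so they are valid), and then extends the conclusions --- that $\Pi''_i\circ\varphi'_i$ is an isomorphism on all of $X_S$ and that $\varphi_i$, hence $\diagphi_i$, is bounded by $\mu$ --- across the closure, using that these are closed conditions: the morphism factors through $\Hecke_{\gpSch,\varnothing,X}^{\mcZ(1)}$, respectively through $\PHecke_{\gpSch,\varnothing,X\times\infty}^{\mcZ(\mu,1)}\cong\Hecke_{\gpSch,\varnothing,X}^{\mcZ^{\leq\mu}}$, by Theorem~\ref{Thm_HeckeBounded} and Lemma~\ref{LemmaZmu1}\ref{LemmaZmu1_B}. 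Without this density-plus-closedness step (or an equivalent argument identifying the closure with the componentwise locus over the special fiber), your two constructions are only defined, and only inverse to each other, away from $\infty$, which misses the entire content of the proposition.
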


\begin{proof}
Let $Z(\mu,\beta)$ be the representative over $\widetilde{X}_{\mu,\beta}$ of the bound $\mcZ(\mu,\beta)$. We will work over the universal base scheme $S:=Z(\mu,\beta)$. It is defined as the scheme theoretic closure in $\Gr_{\gpSch,X\times\infty} \times_X \widetilde{X}_{\mu,\beta}$ of its restriction $\widetilde{S}:=Z(\mu,\beta)\times_X (X\smallsetminus\{\infty\})$ outside $\infty$. Therefore, $\widetilde{S}\subset S$ is schematically dense by \cite[I, Corollaire~9.5.11]{EGA1}. The restriction $\widetilde{S}$ classifies tuples $(x,\mcE_0,\mcE'_0,\mcE''_0,\varphi_0,\varphi'_0,\epsilon)\in\Gr_{\gpSch,X\times\infty}$ as in Definition~\ref{DefGr2legs}, where $\varphi_0$ is bounded by $\mcZ^{\leq\mu}$ and $\varphi'_0$ is bounded by $\mcZ(\beta)$. Under the isomorphism $\mathfrak{pr}_0$ from Proposition~\ref{PropBunWithChains}, we can uniquely extend $\mcE_0$, $\mcE'_0$, and $\mcE''_0$ to chains of $\gpSch$-bundles to obtain the rows of the following commutative diagram
\begin{equation} \label{EqPropBoundWithChains2}
\begin{tikzcd}
    \ldots & \mcE_{-1}\arrow[dotted]{d}{\varphi_{-1}} \arrow[dashed]{l}[swap]{\Pi_{-1}} & \mcE_0\arrow[dashed]{l}[swap]{\Pi_0}\arrow[dashed]{d}{\varphi_0}&\mcE_1\arrow[dashed]{l}[swap]{\Pi_1}\arrow[dotted]{d}{\varphi_1}&\arrow[dashed]{l}[swap]{\Pi_2}\ldots\\
    \ldots & \mcE'_{-1}\arrow[dotted]{d}{\varphi'_{-1}}\arrow[dashed]{l}[swap]{\Pi'_{-1}} & \mcE'_0\arrow[dashed]{l}[swap]{\Pi'_0}\arrow[dashed]{d}{\varphi'_0} & \mcE'_1\arrow[dashed]{l}[swap]{\Pi'_1}\arrow[dotted]{d}{\varphi'_1}&\arrow[dashed]{l}[swap]{\Pi'_2}\ldots\\
    \ldots & \mcE''_{-1}\arrow[dashed]{l}[swap]{\Pi''_{-1}}&\mcE''_0\arrow[dashed]{l}[swap]{\Pi''_0}&\mcE''_1\arrow[dashed]{l}[swap]{\Pi''_1}&\arrow[dashed]{l}[swap]{\Pi''_2} \ldots
\end{tikzcd}
\end{equation}
with $\Pi_i$ and $\Pi'_i$ bounded by $\mcZ(\tau_{\gengpSch_\infty}^{i-1}(\beta)^{-1})$, and $\Pi''_i$ bounded by $\mcZ(\tau_{\gengpSch_\infty}^{i}(\beta)^{-1})$. In this diagram the dotted maps $\varphi_i$ and $\varphi'_i$ for $i\ne 0$ are the induced isomorphisms outside $\Gamma_x\cup\{\infty\}$. We claim for all $i\in\Z$
\begin{enumerate}
\item \label{ProofPropBoundWithChains_A}
that $\Pi''_i\circ\varphi'_i\colon\mcE'_i|_{(X\smallsetminus\{\infty\})_S}\isoto\mcE''_{i-1}|_{(X\smallsetminus\{\infty\})_S}$ extends to an isomorphism over all of $X_S$, 
\item \label{ProofPropBoundWithChains_B}
and that $\varphi_i$, and hence also $\diagphi_i:=\Pi''_i\circ \varphi'_i \circ \varphi_i$ is bounded by $\mcZ^{\leq\mu}$. 
\end{enumerate} 
We prove the claim for a fixed $i>0$. For $i<0$ we can argue analogously. We consider the associated $L^+_\infty\gpSch$-bundles $L_\infty^+(\mcE^\Box_j)$ for $\Box\in\{\varnothing,\,',\,''\}$. Over some \'etale covering $S'\to S$ we choose trivializations
\[
\alpha^\Box_j\colon L_\infty^+(\mcE^\Box_j)_{S'}\isoto(L^+_\infty\gpSch)_{S'}
\]
for all $j\in\{0,\ldots,i\}$. 

Over $\widetilde{S}:=Z(\mu,\beta)\times_X (X\smallsetminus\{\infty\})$, the divisors $\Gamma_x$ and $\infty\times_{\F_q} \widetilde{S}$ are disjoint. There, the isomorphisms $\alpha^\Box_{j-1}\circ L_\infty(\Pi^\Box_j)\circ(\alpha^\Box_{j})^{-1}: (L_\infty\gengpSch)_{S'}\isoto (L_\infty\gengpSch)_{S'}$ for $\Box\in\{\varnothing,\,'\}$, respectively $\alpha''_{j-1}\circ L_\infty(\Pi''_j)\circ(\alpha''_{j})^{-1}$, respectively $\alpha''_{0}\circ L_\infty(\varphi'_0)\circ(\alpha'_{0})^{-1}$ are given by multiplication on the left with an element of $\tau_{\gengpSch_\infty}^{j-1}(\beta)^{-1}\cdot L^+_\infty\gpSch(S')$, respectively of $\tau_{\gengpSch_\infty}^{j}(\beta)^{-1}\cdot L^+_\infty\gpSch(S')$, respectively of $\beta\cdot L^+_\infty\gpSch(S')$; see \eqref{EqDefPi_i} in the proof of Proposition~\ref{PropBunWithChains}. We now use $\tau_{\gengpSch_\infty}^j(\beta)^{-1}\cdot L^+_\infty\gpSch(S')\cdot \tau_{\gengpSch_\infty}^j(\beta) = L^+_\infty\gpSch(S')$. 

To prove \ref{ProofPropBoundWithChains_A} we observe that over $\widetilde{S}$
\[
\alpha''_{i-1}\circ L_\infty\bigl(\Pi''_i\circ\varphi'_i)\circ(\alpha'_{i})^{-1}\;=\;\alpha''_{i-1}\circ L_\infty\bigl((\Pi''_1\circ\ldots\circ\Pi''_{i-1})^{-1} \circ\varphi'_0 \circ (\Pi'_{1}\circ\ldots\circ\Pi'_i)\bigr)\circ(\alpha'_{i})^{-1}
\]
is given by multiplication with an element of 
\[
\tau_{\gengpSch_\infty}^{i-1}(\beta)\cdot\ldots\cdot \tau_{\gengpSch_\infty}^{1}(\beta)\cdot\beta\cdot \tau_{\gengpSch_\infty}^{0}(\beta)^{-1} \cdot\ldots\cdot \tau_{\gengpSch_\infty}^{i-1}(\beta)^{-1} \cdot L^+_\infty\gpSch(S')=L^+_\infty\gpSch(S'), 
\]
and hence is an isomorphism at $\infty$. As $\Pi''_i\circ\varphi'_i$ is also an isomorphism outside $\infty$, it is an isomorphism on all of $X_{\widetilde{S}}$.

We consider the tuple $(\infty,\Pi''_i\circ\varphi'_i\colon \mcE'_i \xdashrightarrow[\infty]{} \mcE''_{i-1})$ as an object in $\Hecke_{\gpSch,\varnothing,X}(S)$. The condition that $\Pi''_i\circ\varphi'_i$ is an isomorphism over all of $X$ can be formulated as boundedness by the trivial bound $\mcZ(1):=1\cdot \mcL^+_X\gpSch\subset \Gr_{\gpSch,X}$ given as in Corollary~\ref{CorGrGlobLoc}. Since $\Pi''_i\circ\varphi'_i$ is an isomorphism on $X_{\widetilde{S}}$, the composition $\widetilde{S}\to S\to \Hecke_{\gpSch,\varnothing,X}$ factors through the closed substack $\Hecke_{\gpSch,\varnothing,X}^{\mcZ(1)}$ from Theorem~\ref{Thm_HeckeBounded}. Since $\widetilde{S}$ is schematically dense in $S$, already the morphism $S\to \Hecke_{\gpSch,\varnothing,X}$ factors through $\Hecke_{\gpSch,\varnothing,X}^{\mcZ(1)}$. We conclude that $\Pi''_i\circ\varphi'_i$ is an isomorphism on all of $X_S$.

To prove \ref{ProofPropBoundWithChains_B} we continue to work over $\widetilde{S}$, where the morphism $L^+_\infty(\varphi_0)\colon L^+_\infty\mcE_0\isoto L^+_\infty\mcE'_0$ is an isomorphism. Then
\[
\alpha'_{i}\circ L_\infty\bigl(\varphi_i)\circ(\alpha_{i})^{-1}\;=\;\alpha'_{i}\circ L_\infty\bigl((\Pi'_{1}\circ\ldots\circ\Pi'_i)^{-1} \circ\varphi_0 \circ (\Pi_{1}\circ\ldots\circ\Pi_i)\bigr)\circ(\alpha_{i})^{-1}
\]
is given by multiplication with an element of 
\[
\tau_{\gengpSch_\infty}^{i-1}(\beta)\cdot\ldots\cdot \tau_{\gengpSch_\infty}^{0}(\beta)\cdot\tau_{\gengpSch_\infty}^{0}(\beta)^{-1} \cdot\ldots\cdot \tau_{\gengpSch_\infty}^{i-1}(\beta)^{-1} \cdot L^+_\infty\gpSch(S')=L^+_\infty\gpSch(S').
\]
Therefore, over $\widetilde{S}$ the morphism $\varphi_i$ is an isomorphism at $\infty$. Since the $\Pi_i$ and $\Pi'_i$ are isomorphisms outside $\infty$, we conclude that with $\varphi_0$ also $\varphi_i$ and by \ref{ProofPropBoundWithChains_A} also $\diagphi_i$ are isomorphisms on $X_{\widetilde{S}}\smallsetminus\Gamma_x$ bounded by $\mcZ^{\leq\mu}$ at $\Gamma_x$.

We now consider the tuple $(x,\varphi_i\colon \mcE_i \xdashrightarrow[x,\infty]{} \mcE'_i)$ as an object in $\PHecke_{\gpSch,\varnothing,X\times\infty}(S)$. Our considerations show that the composition $\widetilde{S}\to S\to \PHecke_{\gpSch,\varnothing,X\times\infty}$ factors through the closed substack $\PHecke_{\gpSch,\varnothing,X\times \infty}^{\mcZ(\mu,1)}$ from Theorem~\ref{Thm_HeckeBounded} for the bound $\mcZ(\mu,1)$. Since $\widetilde{S}$ is schematically dense in $S$, already the morphism $S\to \PHecke_{\gpSch,\varnothing,X\times \infty}$ factors through $\PHecke_{\gpSch,\varnothing,X\times \infty}^{\mcZ(\mu,1)}$. But the latter is isomorphic to $\Hecke_{\gpSch,\varnothing,X}^{\mcZ^{\leq\mu}}$ by Lemma~\ref{LemmaZmu1}\ref{LemmaZmu1_B}. We conclude that $\varphi_i$ is bounded by $\mcZ^{\leq\mu}$. Thus diagram \eqref{EqPropBoundWithChains2} can be written in the form \eqref{EqPropBoundWithChains}, such that $\diagphi_i:=\Pi''_i\circ \varphi'_i\circ \varphi_i$ is bounded by $\mu$.
\end{proof}

\begin{cor}\label{CorShtWithChains}
The stack $\Sht_{\gpSch,D,X\times\infty}^{\mcZ(\mu,\beta)}$ is isomorphic to the stack over $\F_q$ whose points over $\F_q$-schemes $S$ 
are tuples $\bigl(x,(\mcE_i,\psi_i,\Pi_i,\diagphi_i)_{i\in\Z}\bigr)$ consisting of \begin{itemize}
    \item one leg $x \colon S\to \widetilde{X}_{\mu,\beta}\to X$ which factors through $X\setminus D$, 
    \item a commutative diagram
\begin{equation} \label{Eq_ShtukaDiagTildephi}
\begin{tikzcd}
    \ldots & (\mcE_{-1},\psi_{-1})\arrow[dashed]{ld}[swap]{\diagphi_{-1}}\arrow[dashed]{l}[swap]{\Pi_{-1}}&(\mcE_0,\psi_0)\arrow[dashed]{l}[swap]{\Pi_0}\arrow[dashed]{ld}[swap]{\diagphi_0}&(\mcE_1,\psi_1)\arrow[dashed]{l}[swap]{\Pi_1}\arrow[dashed]{ld}[swap]{\diagphi_1}&\arrow[dashed]{l}[swap]{\Pi_2}\arrow[dashed]{ld}[swap]{\diagphi_2}\ldots\\
    \ldots & {}^{\tau\!}(\mcE_{-1},\psi_{-1})\arrow[dashed]{l}{{}^{\tau\!}\Pi_{-1}}&{}^{\tau\!}(\mcE_0,\psi_0)\arrow[dashed]{l}{{}^{\tau\!}\Pi_0}&{}^{\tau\!}(\mcE_1,\psi_1)\arrow[dashed]{l}{{}^{\tau\!}\Pi_1}&\arrow[dashed]{l}{{}^{\tau\!}\Pi_2} \ldots
\end{tikzcd}
\end{equation}
\end{itemize}
of $\gpSch$-bundles with $D$-level structures $(\mcE_i,\psi_i)$ on $X_S$ where all the $\diagphi_i$ are isomorphisms outside the graph $\Gamma_x$ bounded by $\mu$, and the $\Pi_i$ are isomorphisms outside $\infty$ bounded by $\mcZ(\tau_{\gengpSch_\infty}^{i-1}(\beta)^{-1})$. 
\end{cor}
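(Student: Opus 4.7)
The plan is to construct mutually inverse equivalences of stacks by combining Proposition~\ref{PropBunWithChains} (unique extension of a single $\gpSch$-bundle to a chain) with Proposition~\ref{PropBoundWithChains} (chain description of the bound $\mcZ(\mu,\beta)$), using the shtuka condition to identify $\mcE''$ with ${}^\tau\mcE$.

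For the forward direction, given a shtuka $\underline{\mcE}=(x, (\mcE, \psi), (\mcE', \psi'), \varphi, \varphi')\in \Sht_{\gpSch,D,X\times\infty}^{\mcZ(\mu,\beta)}(S)$, I would first apply Proposition~\ref{PropBunWithChains} with $(\mcE_0, \psi_0):=(\mcE,\psi)$ to obtain a unique chain $(\mcE_i,\psi_i,\Pi_i)_{i\in\Z}\in\CBun_{\gpSch,D,\beta}(S)$. Étale-locally on $S$, choose a trivialization $\epsilon\colon L^+_\infty({}^{\tau\!}\mcE)\isoto L^+_\infty\gpSch$; by Definition~\ref{Def_ShtBounded}, the resulting point of $\Gr_{\gpSch,X\times\infty}\times_X\widetilde{X}_{\mu,\beta}$ factors through $Z(\mu,\beta)$, so Proposition~\ref{PropBoundWithChains} yields the full commutative diagram~\eqref{EqPropBoundWithChains}. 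The shtuka condition gives $\mcE_0''={}^{\tau\!}\mcE_0$; since the bounds $\mcZ(\tau_{\gengpSch_\infty}^i(\beta)^{-1})$ prescribed for $\Pi''_i$ in Proposition~\ref{PropBoundWithChains} are precisely the bounds obtained by Frobenius pullback of the $\CBun$-bounds on $\Pi_i$, the uniqueness statement in Proposition~\ref{PropBunWithChains} (applied to the corresponding reindexed chain) forces $\mcE_i''={}^{\tau\!}\mcE_i$ and $\Pi_i''={}^{\tau\!}\Pi_i$. Under this identification, diagram~\eqref{EqPropBoundWithChains} becomes~\eqref{Eq_ShtukaDiagTildephi} with $\diagphi_i$ the diagonal maps. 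Since neither the chain extension nor the $\diagphi_i$ depend on $\epsilon$, the data descend to $S$.

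For the inverse direction, given chain data $(x, (\mcE_i, \psi_i, \Pi_i, \diagphi_i)_i)$, set $(\mcE,\psi):=(\mcE_0,\psi_0)$ and $\mcE_i'':={}^{\tau\!}\mcE_i$, $\Pi_i'':={}^{\tau\!}\Pi_i$; this produces a diagram of the form~\eqref{EqPropBoundWithChains}. Étale-locally choosing a trivialization $\epsilon$ of $L^+_\infty({}^{\tau\!}\mcE_0)$, Proposition~\ref{PropBoundWithChains} identifies this with a point of $Z(\mu,\beta)\subset\Gr_{\gpSch,X\times\infty}\times_X\widetilde{X}_{\mu,\beta}$, whose underlying datum in $\Gr_{\gpSch,X\times\infty}$ supplies a tuple $(x, \mcE, \mcE', \mcE''={}^{\tau\!}\mcE, \varphi, \varphi', \epsilon)$ as in Definition~\ref{DefGr2legs}(a). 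Forgetting $\epsilon$ yields a shtuka in $\Sht_{\gpSch,D,X\times\infty}^{\mcZ(\mu,\beta)}(S)$ after descent from the étale cover (again using that $\mcE'$, $\varphi$, and $\varphi'$ are independent of $\epsilon$). Mutual inversion follows by unwinding the constructions together with the uniqueness of the chain extension. The main technical obstacle is the Frobenius-shifted bookkeeping of bounds, namely verifying that ${}^{\tau\!}\Pi_i$ is bounded by $\mcZ(\tau_{\gengpSch_\infty}^{i}(\beta)^{-1})$ whenever $\Pi_i$ is bounded by $\mcZ(\tau_{\gengpSch_\infty}^{i-1}(\beta)^{-1})$, so that the identification $\mcE_i''={}^{\tau\!}\mcE_i$ is compatible with the chain structure in Proposition~\ref{PropBoundWithChains} and hence legitimately furnishes the shtuka condition.
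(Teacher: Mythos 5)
Your proposal follows essentially the same route as the paper: extend the bundles to chains via Proposition~\ref{PropBunWithChains}, feed the boundedness by $\mcZ(\mu,\beta)$ into Proposition~\ref{PropBoundWithChains}, and use the shtuka condition to identify the double-primed chain with the Frobenius pullback of the $\mcE_i$-chain, defining $\diagphi_i$ as the resulting composites. The one packaging difference is that you invoke the uniqueness/full-faithfulness of the chain extension (after reindexing for the Frobenius-shifted bounds) where the paper instead redoes the explicit trivialization computation producing the isomorphisms $\varphi''_i:={}^{\tau\!}\Pi_i^{-1}\circ\varphi''_{i-1}\circ\Pi''_i$; this is legitimate, since that computation is exactly the content of full faithfulness, and the bound-shift check you flag (that ${}^{\tau\!}\Pi_i$ is bounded by $\mcZ(\tau_{\gengpSch_\infty}^{i}(\beta)^{-1})$ when $\Pi_i$ is bounded by $\mcZ(\tau_{\gengpSch_\infty}^{i-1}(\beta)^{-1})$) does hold. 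Two small corrections: because the legs may collide, the boundedness test must use a trivialization of $L^+_\Delta({}^{\tau\!}\mcE)$ along the full divisor $\Delta=\Gamma_x+(\infty\times_{\F_q}S')$ (as in the paper), not merely $L^+_\infty$; and in the inverse direction the $\epsilon$ of Proposition~\ref{PropBoundWithChains} is a trivialization of $\mcE''_0$ over all of $X_S$, so recovering $\mcE'$ requires a Beauville--Laszlo glueing step (or just the observation $\mcE'\cong{}^{\tau\!}\mcE_{-1}$ with $\varphi=\diagphi_0$ and $\varphi'=({}^{\tau\!}\Pi_0)^{-1}$) --- a point the paper likewise leaves implicit.
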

\begin{proof}
Let $\underline{\mcE}=(x,(\mcE_0,\psi_0),(\mcE_0',\psi_0'),(\mcE_0'',\psi_0''),\varphi_0,\varphi_0',\varphi_0'')\in \Sht_{\gpSch,D,X\times\infty}^{\mcZ(\mu,\beta)}(S)$ be a global $\gpSch$-shtuka over $S$ bounded by $\mcZ(\mu,\beta)$, where $\varphi_0''\colon (\mcE_0'',\psi_0'') \isoto {}^{\tau\!}(\mcE_0,\psi_0)$ is the isomorphism, which we often supress from the notation. Under the isomorphism $\mathfrak{pr}_0$ from Proposition~\ref{PropBunWithChains}, we can uniquely extend $(\mcE_0,\psi_0)$, $(\mcE'_0,\psi'_0)$, and $(\mcE''_0,\psi''_0)$ to chains of $\gpSch$-bundles to obtain the rows of the following commutative diagram
\begin{equation} \label{EqCorShtWithChains}
\begin{tikzcd}
    \ldots & (\mcE_{-1},\psi_{-1})\arrow[dotted]{d}{\varphi_{-1}}\arrow[dashed]{l}[swap]{\Pi_{-1}}&(\mcE_0,\psi_0)\arrow[dashed]{l}[swap]{\Pi_0}\arrow[dashed]{d}{\varphi_0}&(\mcE_1,\psi_1)\arrow[dashed]{l}[swap]{\Pi_1}\arrow[dotted]{d}{\varphi_1}&\arrow[dashed]{l}[swap]{\Pi_2}\ldots\\
    \ldots & (\mcE'_{-1},\psi'_{-1})\arrow[dotted]{d}{\varphi'_{-1}}\arrow[dashed]{l}[swap]{\Pi'_{-1}}&(\mcE'_0,\psi'_0)\arrow[dashed]{l}[swap]{\Pi'_0}\arrow[dashed]{d}{\varphi'_0}&(\mcE'_1,\psi'_1)\arrow[dashed]{l}[swap]{\Pi'_1}\arrow[dotted]{d}{\varphi'_1}&\arrow[dashed]{l}[swap]{\Pi'_2}\ldots\\
    \ldots & (\mcE''_{-1},\psi''_{-1})\arrow[dotted]{d}{\varphi''_{-1}} \arrow[dashed]{l}[swap]{\Pi''_{-1}}&(\mcE''_0,\psi''_0)\arrow[dashed]{l}[swap]{\Pi''_0}\arrow{d}{\varphi''_0}[swap]{\cong} &(\mcE''_1,\psi''_1)\arrow[dashed]{l}[swap]{\Pi''_1}\arrow[dotted]{d}{\varphi''_1}&\arrow[dashed]{l}[swap]{\Pi''_2}\ldots\\
    \ldots & {}^{\tau\!}(\mcE_{-1},\psi_{-1}) \arrow[dashed]{l}[swap]{{}^{\tau\!}\Pi_{-1}}&{}^{\tau\!}(\mcE_0,\psi_0)\arrow[dashed]{l}[swap]{{}^{\tau\!}\Pi_0}&{}^{\tau\!}(\mcE_1,\psi_1)\arrow[dashed]{l}[swap]{{}^{\tau\!}\Pi_1}&\arrow[dashed]{l}[swap]{{}^{\tau\!}\Pi_2}\ldots
\end{tikzcd}
\end{equation}
with $\Pi_i$ and $\Pi'_i$ bounded by $\mcZ(\tau_{\gengpSch_\infty}^{i-1}(\beta)^{-1})$, and $\Pi''_i$ bounded by $\mcZ(\tau_{\gengpSch_\infty}^{i}(\beta)^{-1})$. In this diagram the dotted maps $\varphi_i$, $\varphi'_i$, and $\varphi''_i$ for $i\ne 0$ are the induced isomorphisms outside $\Gamma_x\cup\{\infty\}$. 

We first show that the isomorphism $\varphi_0''\colon(\mcE''_0,\psi''_0)\isoto{}^{\tau\!}(\mcE_0,\psi_0)$ outside $\infty$ inductively induces isomorphisms $\varphi''_i:={}^{\tau\!}\Pi_{i}^{-1} \circ \varphi''_{i-1}\circ \Pi''_{i}\colon (\mcE''_i,\psi''_i)\isoto {}^{\tau\!}(\mcE_i,\psi_i)$ on all of $X_S$ for all $i>0$ and similarly for $i<0$. After choosing trivializations of $L^+_\infty(\mcE_i)$ and $L^+_\infty(\mcE''_i)$ over an \'etale covering of $S$, the $\Pi_{i}$ are given by multiplication with $\tau_{\gengpSch_\infty}^{i-1}(\beta)^{-1}$, and hence $\Pi''_{i}$ and ${}^{\tau\!}\Pi_{i}$ are given by multiplication with $\tau_{\gengpSch_\infty}^{i}(\beta)^{-1}$. 
Therefore, the $\varphi''_i$ are given by multiplication with an element of $\tau_{\gengpSch_\infty}^{i}(\beta)\cdot L^+_\infty\gpSch\cdot \tau_{\gengpSch_\infty}^{i}(\beta)^{-1}=L^+_\infty\gpSch$ for $i>0$, respectively $\tau_{\gengpSch_\infty}^{i+1}(\beta)^{-1}\cdot L^+_\infty\gpSch\cdot \tau_{\gengpSch_\infty}^{i+1}(\beta)=L^+_\infty\gpSch$ for $i<0$. This proves that all $\varphi''_i$ are isomorphisms on all of $X_S$.

Now we choose an \'etale covering $S'\to S$ and a trivialization $\epsilon\colon L^+_\Delta(\mcE''_0)\isoto (L^+_\Delta\gpSch)_{S'}$ for the divisor $\Delta:=\Gamma_x + (\infty\times_{\F_q}S')$. Then the boundedness of $\underline{\mcE}$ by $\mcZ(\mu,\beta)$ implies that we get a morphism $S'\to Z(\mu,\beta)$ for the representative $Z(\mu,\beta)$ of $\mcZ(\mu,\beta)$ over $\widetilde{X}_{\mu,\beta}$. By (the proof of) Proposition~\ref{PropBoundWithChains} the unique extension of $\underline{\mcE}$ to the diagram~\eqref{EqCorShtWithChains} corresponds to the unique extension of the morphism $S'\to Z(\mu,\beta)$ to data over $S'$ as in diagram~\eqref{EqPropBoundWithChains2}. There we saw for all $i\in\Z$, that $\Pi''_i\circ \varphi'_i$ is an isomorphism on all of $X$, and that $\Pi''_i\circ\varphi'_i\circ\varphi_i$ is an isomorphism outside $\Gamma_x$ bounded by $\mu$. We now take $\diagphi_i:=\varphi''_{i-1}\circ\Pi''_i\circ\varphi'_i\circ\varphi_i\colon (\mcE_i,\psi_i)|_{X_S\smallsetminus\Gamma_x} \isoto {}^{\tau\!}(\mcE_{i-1},\psi_{i-1})|_{X_S\smallsetminus\Gamma_x}$, which is bounded by $\mu$. Descending back to $S$ finishes the proof.
\end{proof}

\begin{cor} \label{CorIsog[n]}
We use the interpretation of $\Sht_{\gpSch,D,X\times\infty}^{\mcZ(\mu,\beta)}$ in terms of chains as in Corollary~\ref{CorShtWithChains}.
\begin{enumerate}
\item \label{CorIsog[n]_A}
There is an action of $\Z$ on the stack $\Sht_{\gpSch,D,X\times\infty}^{\mcZ(\mu,\beta)}$, which for an integer $n\in\Z$ is given by the index shift
\begin{equation}
\begin{split}
[n]\colon\qquad\quad \Sht_{\gpSch,D,X\times\infty}^{\mcZ(\mu,\beta)} & \longrightarrow \Sht_{\gpSch,D,X\times\infty}^{\mcZ(\mu,\beta)} \\
(x,\mcE_i,\psi_i,\Pi_i,\diagphi_i)_{i\in\Z} & \longmapsto (x,\mcE^{\mathrm{new}}_i,\psi^{\mathrm{new}}_i,\Pi^{\mathrm{new}}_i,\diagphi_i{}^{\mathrm{new}})_{i\in \Z}
\end{split}
\end{equation}
with $(\mcE^{\mathrm{new}}_i,\psi^{\mathrm{new}}_i,\Pi^{\mathrm{new}}_i,\diagphi_i{}^{\mathrm{new}}):=(\mcE_{i+n},\psi_{i+n},\Pi_{i+n},\diagphi_{i+n})$.
\item \label{CorIsog[n]_B}
For all integers $m,n$ and every global $\gpSch$-shtuka $\underline{\mcE}=(x,\mcE_i,\psi_i,\Pi_i,\diagphi_i)\in \Sht_{\gpSch,D,X\times\infty}^{\mcZ(\mu,\beta)}(S)$ over a scheme $S$, there is a quasi-isogeny 
\[
\Pi^n\colon [n+m](\underline{\mcE}) \to [m](\underline{\mcE}),
\]
which is given by the isomorphism $\Pi_{i+1}\circ\ldots\circ \Pi_{i+n}\colon \mcE_{i+n}\to \mcE_i$ for $n\ge 0$ (respectively $\Pi_i^{-1}\circ\ldots\circ \Pi_{i+n+1}^{-1}\colon \mcE_{i+n}\to \mcE_i$ for $n<0$) on $(X\smallsetminus\{\infty\})_S$. \qed
\end{enumerate}
\end{cor}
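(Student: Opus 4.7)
My plan is to reduce to the chain reformulation of $\Sht_{\gpSch,D,X\times\infty}^{\mcZ(\mu,\beta)}$ supplied by Corollary~\ref{CorShtWithChains}, under which an $S$-point is a tuple $(x,\mcE_i,\psi_i,\Pi_i,\diagphi_i)_{i\in\Z}$ with specified commutativities and bounds. In this picture both the shift action $[n]$ and the quasi-isogenies $\Pi^n$ become essentially combinatorial, so the real content is checking that the defining conditions of the stack are preserved by the constructions on chains.

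\medskip

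For part~(a), I define $[n]$ by the reindexing formula in the statement. The three requirements of Corollary~\ref{CorShtWithChains} for the shifted tuple are: (i) the commutative parallelograms $\diagphi_{i-1}\circ\Pi_i={}^{\tau\!}\Pi_{i-1}\circ\diagphi_i$, which hold after substituting $i\mapsto i+n$ because they hold for all indices in the original chain; (ii) the $\mu$-boundedness of each $\diagphi_i^{\mathrm{new}}=\diagphi_{i+n}$, which is part of the original data; (iii) the prescribed bounds $\mcZ(\tau_{\gengpSch_\infty}^{i-1}(\beta)^{-1})$ on each $\Pi_i^{\mathrm{new}}=\Pi_{i+n}$. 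The bounds in (iii) are imposed uniformly over all $i\in\Z$ in the chain, and the reindexing $i\mapsto i+n$ relabels both the $\Pi$'s and the bounds, so the family of conditions is preserved as a whole. The formula immediately gives $[n+m]=[n]\circ[m]$ and $[0]=\id$, so this defines a $\Z$-action.

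\medskip

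For part~(b), given $\underline{\mcE}=(x,\mcE_i,\psi_i,\Pi_i,\diagphi_i)_{i\in\Z}\in\Sht_{\gpSch,D,X\times\infty}^{\mcZ(\mu,\beta)}(S)$, I define $\Pi^n\colon[n+m](\underline{\mcE})\to[m](\underline{\mcE})$ componentwise. For $n\ge0$ the $i$-th component is the composition $\Pi_{i+m+1}\circ\cdots\circ\Pi_{i+m+n}\colon\mcE_{i+m+n}\to\mcE_{i+m}$, which is an isomorphism over $(X\smallsetminus\{\infty\})_S$ since each $\Pi_j$ is; for $n<0$ use the inverse composition. The $D$-level structure is preserved automatically because $D\subset X\smallsetminus\{\infty\}$, so each $\Pi^n_i$ qualifies as a quasi-isogeny in the sense of Definition~\ref{DefIsogGlobalSht}. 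Compatibility with the shtuka modifications $\diagphi_i$, namely the identity $\diagphi^{[m]}_{i-1}\circ\Pi^n_i={}^{\tau\!}\Pi^n_{i-1}\circ\diagphi^{[n+m]}_i$, follows by iterating the commutativity of the parallelograms in diagram~\eqref{Eq_ShtukaDiagTildephi}.

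\medskip

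The delicate point I expect to be the main obstacle is the bookkeeping in (iii) of part~(a), i.e.\ the correct matching of the sequence of bounds $\mcZ(\tau_{\gengpSch_\infty}^{i-1}(\beta)^{-1})$ under the reindexing $i\mapsto i+n$; everything else reduces to routine verifications on diagrams of chains, using that $D\cap\{\infty\}=\varnothing$ and that each $\Pi_j$ is already an isomorphism outside $\infty$.
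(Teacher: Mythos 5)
The paper itself gives no argument for this corollary (it is stated with an immediate \qed as a consequence of Corollary~\ref{CorShtWithChains}), so the comparison is with the verification you attempt to spell out. Your routine checks are fine: the commutativity $\diagphi_{i-1}\circ\Pi_i={}^{\tau\!}\Pi_{i-1}\circ\diagphi_i$ and the $\mu$-boundedness of the $\diagphi_i$ are index-uniform and survive the reindexing, the composition law $[n+m]=[n]\circ[m]$ is formal, and in part~(b) your componentwise definition of $\Pi^n$ and the iterated-parallelogram argument for compatibility with the $\diagphi$'s is exactly what is needed (note also that (b) only uses the shift as an endomorphism of the \emph{unbounded} stack $\Sht_{\gpSch,D,X\times\infty}$, so it is independent of the delicate point below).

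The gap is precisely at the step you flag and then dismiss. Your justification ``the bounds in (iii) are imposed uniformly over all $i\in\Z$ \ldots\ so the family of conditions is preserved as a whole'' is not correct: the bound required of $\Pi_i$ is the \emph{specific} subscheme $\mcZ(\tau_{\gengpSch_\infty}^{i-1}(\beta)^{-1})$, which genuinely depends on $i$ (through the Frobenius twist of $\beta$), and boundedness in Definition~\ref{Def_HeckeBounded} is tested against that fixed representative over the reflex scheme, with the structure map of $S$ unchanged by $[n]$. After the shift, all you know is that $\Pi_i^{\mathrm{new}}=\Pi_{i+n}$ is bounded by $\mcZ(\tau_{\gengpSch_\infty}^{i+n-1}(\beta)^{-1})$, whereas membership in the chain description of $\Sht_{\gpSch,D,X\times\infty}^{\mcZ(\mu,\beta)}$ demands boundedness by $\mcZ(\tau_{\gengpSch_\infty}^{i-1}(\beta)^{-1})$. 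These two single-coset bounds coincide if and only if $\tau_{\gengpSch_\infty}^{n}(\beta)^{-1}\beta\in L^+_\infty\gpSch$, i.e.\ the coset $\beta\cdot L^+_\infty\gpSch$ is fixed by $\tau_{\gengpSch_\infty}^{n}$; this is automatic when $\F_\beta=\F_q$ (as in Examples~\ref{ExDrinfeld} and \ref{ExLRS}, where $\tau_{\gengpSch_\infty}(\beta)=\beta$), but it is not a formal consequence of the normalizing condition $\beta\cdot L^+_\infty\gpSch\cdot\beta^{-1}=L^+_\infty\gpSch$ of \S\,\ref{Def_beta}, since the Frobenius can act nontrivially on the image of $\beta$ in $N(L^+_\infty\gpSch)/L^+_\infty\gpSch$. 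So as written your argument only produces a map into the analogous chain-stack with the rotated bounds $\mcZ(\tau_{\gengpSch_\infty}^{i+n-1}(\beta)^{-1})$, i.e.\ into $\Sht_{\gpSch,D,X\times\infty}^{\mcZ(\mu,\tau^n(\beta))}$; to conclude part~(a) you must either prove $\tau_{\gengpSch_\infty}(\beta)^{-1}\beta\in L^+_\infty\gpSch$ in the situation at hand (so that $\mcZ(\tau^{j}(\beta)^{-1})=\mcZ(\beta^{-1})$ for all $j$ and the bounds really are uniform), or identify the rotated-bound stack with $\Sht_{\gpSch,D,X\times\infty}^{\mcZ(\mu,\beta)}$ (e.g.\ by also twisting the $\F_\beta$-structure of the test scheme), which is an additional argument your proposal does not supply.
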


\subsection{Local shtukas and Rapoport-Zink spaces}\label{subsec-LocSht-and-RZ}

Let $v\in X$ be a place of $\FcnFld$ and let $q_v:=\#\F_v=q^{[\F_v:\F_q]}$. For a scheme $S\in\Nilp_{\Oo_v}$ let $\hat{\tau}_v:=\Frob_{q_v,S}$ be the $\F_v$-Frobenius of $S$ as defined in \S \ref{subsec-notations}.

\begin{defn}\label{Def_LocSht}
Let $S\in\Nilp_{\Oo_v}$. A \emph{local $\gpSch_v$-shtuka over $S\in \Nilp_{\Oo_v}$} is a pair $\underline{\mcL} = (\mcL,\widehat{\varphi})$ consisting of an $L^+_v\gpSch$-bundle $\mcL$ on $S$ and an isomorphism of the associated $L_v\gpSch$-bundles $\widehat{\varphi}\colon L_v\mcL \isoto {}^{\hat{\tau}_v\!} L_v\mcL$ from \eqref{EqLoopTorsorDelta} and Example~\ref{ExDivisors}(b). 
We denote the stack fibered in groupoids over $\Nilp_{\Oo_v}$ which classifies local $\gpSch_v$-shtukas by $\LocSht_{\gpSch_v}$.
\end{defn}

\begin{remark}\label{Rem_LocShtInversePhi}
Note that in the literature on local $\gpSch$-shtukas usually an isomorphism ${}^{\hat{\tau}_v\!} L_v\mcL \isoto L_v\mcL$ is considered. Our $\widehat{\varphi}$ is the inverse of that isomorphism. 
\end{remark}

\begin{remark}\label{Rem_LocShtTrivial}
By definition, an $L^+_v\gpSch$-bundle $\mcL$ over a scheme $S$ can be trivialized over a suitable \'etale covering $S'\to S$ via an isomorphism $\hat\alpha\colon \mcL \isoto (L^+_v\gpSch)_{S'}$. In this case we usually write $\underline{\mcL}\cong\bigl((L^+_v\gpSch)_{S'},b\bigr)$ meaning that the isomorphism ${}^{\hat{\tau}_v\!}\hat\alpha\circ\widehat{\varphi}\circ \hat\alpha^{-1}$ of the trivial $L_v\gpSch$-bundle is given as multiplication on the left with $b\in L_v\gpSch(S')$. Note that if $S$ is the spectrum of a strictly henselian local ring, then it has no non-trivial \'etale coverings and we can take $S'=S$.
\end{remark}

\begin{defn}
A \emph{quasi-isogeny} $f\colon\underline{\mcL}\to\underline{\mcL}'$ between two local $\gpSch_v$-shtukas $\underline{\mcL}=(\mcL,\widehat{\varphi})$ and $\underline{\mcL}'=(\mcL' ,\widehat{\varphi}')$ over $S\in\Nilp_{\Oo_v}$ is an isomorphism of the associated $L_v\gpSch$-bundles $f \colon  L_v\mcL \to L_v\mcL'$ satisfying ${}^{\hat{\tau}_v\!}f\circ\widehat{\varphi}=\widehat{\varphi}'\circ f$. We denote by $\QIsog_S(\underline{\mcL},\underline{\mcL}')$ the set of quasi-isogenies between $\underline{\mcL}$ and $\underline{\mcL}'$ over $S$, and we write $\QIsog_S(\underline{\mcL}):=\QIsog_S(\underline{\mcL},\underline{\mcL})$ for the quasi-isogeny group of $\underline{\mcL}$. 
\end{defn}

\begin{example}\label{ExFrobIsogLocSht}
Let $\underline{\mcL}=(\mcL,\widehat{\varphi})$ be a local $\gpSch$-shtuka over $S\in\Nilp_{\Oo_v}$. Then ${}^{\hat{\tau}_v\!}\underline{\mcL}=({}^{\hat{\tau}_v\!}\mcL,{}^{\hat{\tau}_v\!}\widehat{\varphi})$ is a local $\gpSch$-shtuka over $S$, too, and $f:=\widehat{\varphi}\colon L_v\mcL\isoto L_v{}^{\hat{\tau}_v\!}\mcL$ satisfies ${}^{\hat{\tau}_v\!}f \circ \widehat{\varphi}={}^{\hat{\tau}_v\!}\widehat{\varphi}\circ f$. Therefore, $\widehat{\varphi}\colon \underline{\mcL}\to {}^{\hat{\tau}_v\!}\underline{\mcL}$ is a quasi-isogeny, called the \emph{$q_v$-Frobenius isogeny} of $\underline{\mcL}$.
\end{example}

We recall the rigidity of quasi-isogenies of local $\gpSch_v$-shtukas from \cite[Proposition~2.11]{AH_Local}.

\begin{prop}[Rigidity of quasi-isogenies for local $\gpSch_v$-shtukas] \label{PropRigidityLocal}
Let $S$ be a scheme in $\Nilp_{\Oo_v}$ and let $j \colon  \bar{S}\to S$ be a closed immersion  defined by a sheaf of ideals which is locally nilpotent. Let $\underline{\mcL}=(\mcL,\widehat{\varphi})$ and $\underline{\mcL}'=(\mcL',\widehat{\varphi}')$ be two local $\gpSch_v$-shtukas over $S$. Then
\[
\QIsog_S(\underline{\mcL}, \underline{\mcL}') \longto \QIsog_{\bar{S}}(j^*\underline{\mcL}, j^*\underline{\mcL}') ,\quad f \mapsto j^*f
\]
is a bijection of sets.
\end{prop}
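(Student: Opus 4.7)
The plan is the standard ``Frobenius contraction'' argument. Both the source and target of the map $f\mapsto j^*f$ are sheaves on $S_\fpqc$, so the assertion is \'etale-local on $S$. Thus I may assume $S=\Spec R$ is affine, that $\bar S=\Spec R/\mathfrak{a}$ for some locally nilpotent ideal $\mathfrak{a}\subset R$ (indeed, after shrinking $S$ we may take $\mathfrak{a}^N=0$ for some fixed $N$), and, after pulling back to an \'etale cover, that both $\mcL$ and $\mcL'$ are trivialized as $L_v^+\gpSch$-bundles. As in Remark~\ref{Rem_LocShtTrivial}, the isomorphisms $\widehat{\varphi}$ and $\widehat{\varphi}'$ are then given by left multiplication with elements $b,b'\in L_v\gpSch(R)$, and a quasi-isogeny $f\colon \underline{\mcL}\to\underline{\mcL}'$ corresponds to an element $f\in L_v\gpSch(R)$ satisfying the $\sigma$-conjugation equation
\[
{}^{\hat{\tau}_v\!}f\cdot b \;=\; b'\cdot f.
\]
I need to show that reduction modulo $\mathfrak{a}$ gives a bijection between elements of $L_v\gpSch(R)$ and of $L_v\gpSch(R/\mathfrak{a})$ satisfying this equation.

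The key observation is Frobenius contractivity: since $\hat{\tau}_v$ is the $q_v$-power Frobenius on $R$, it sends $\mathfrak{a}^k$ into $\mathfrak{a}^{k q_v}$; combined with $\mathfrak{a}^N=0$, this means that after finitely many iterations the image of any element of $\mathfrak{a}$ under $\hat{\tau}_v$ vanishes. For uniqueness, suppose $f_1,f_2\in L_v\gpSch(R)$ are two lifts of the same $\bar f$ satisfying the equation. Then $g:=f_1 f_2^{-1}\in L_v\gpSch(R)$ reduces to $1$ modulo $\mathfrak{a}$ and satisfies ${}^{\hat{\tau}_v\!}g = b'\,g\,(b')^{-1}$, i.e.\ $g = (b')^{-1}\cdot{}^{\hat{\tau}_v\!}g\cdot b'$. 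Iterating yields
\[
g \;=\; (b')^{-1}\cdot{}^{\hat{\tau}_v\!}(b')^{-1}\cdots{}^{\hat{\tau}_v^{k-1}\!}(b')^{-1}\cdot{}^{\hat{\tau}_v^k\!}g\cdot{}^{\hat{\tau}_v^{k-1}\!}b'\cdots{}^{\hat{\tau}_v\!}b'\cdot b'
\]
for every $k\geq 1$. Since $g\equiv 1$ mod $\mathfrak{a}$, one has ${}^{\hat{\tau}_v^k\!}g\equiv 1$ mod $\mathfrak{a}^{q_v^k}$, and for $k$ large enough this forces ${}^{\hat{\tau}_v^k\!}g=1$, hence $g=1$.

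For existence, choose any lift $f_0\in L_v\gpSch(R)$ of $\bar f$; such a lift exists Zariski-locally because $L_v\gpSch$ is an ind-affine ind-scheme and $\Spec R\to \Spec R/\mathfrak{a}$ is a nilpotent thickening, so one may lift by lifting coordinates in a quasi-compact open of $L_v\gpSch$ containing the image of $\bar f$. Define inductively $f_{n+1}:=(b')^{-1}\cdot{}^{\hat{\tau}_v\!}f_n\cdot b$. The relation ${}^{\hat{\tau}_v\!}\bar f\cdot\bar b=\bar b'\cdot \bar f$ gives $f_1\equiv f_0$ mod $\mathfrak{a}$, and by induction $f_{n+1}\cdot f_n^{-1}\equiv 1$ mod $\mathfrak{a}^{q_v^n}$, which is $0$ once $q_v^n\geq N$. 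Hence the sequence stabilizes at some $f\in L_v\gpSch(R)$ satisfying ${}^{\hat{\tau}_v\!}f\cdot b=b'\cdot f$ and reducing to $\bar f$. This produces the desired preimage.

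The main obstacles are bookkeeping ones: first, verifying that the \'etale-local trivializations and the passage from geometric objects to the group-theoretic equation can be carried out sheaf-theoretically (so that descent from \'etale covers back to $S$ is legitimate, using that $\QIsog$ is a sheaf); and second, making the Frobenius iteration rigorous inside the ind-scheme $L_v\gpSch$, where one must confirm that all the lifts $f_n$ and their differences remain within a common quasi-compact piece of $L_v\gpSch$ throughout the (finitely many) iteration steps — which follows because the $f_n$ all agree with $f_0$ modulo a nilpotent ideal and hence factor through the same closed sub-ind-scheme cut out by bounded denominators.
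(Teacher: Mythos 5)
Your argument is sound and ultimately rests on the same mechanism as the paper's proof: the $q_v$-Frobenius raises the nilpotent ideal to powers that eventually vanish. The implementations differ, however. The paper reduces (by induction over $\Oo_S/\mathcal{I}^{q_v^n}$) to the case $\mathcal{I}^{q_v}=(0)$, where $\hat{\tau}_v$ factors as $S\xrightarrow{\,i\,}\bar{S}\xrightarrow{\,j\,}S$, so that ${}^{\hat{\tau}_v\!}f=i^*(j^*f)$; the identity $f=(\widehat{\varphi}')^{-1}\circ i^*(j^*f)\circ\widehat{\varphi}$ then shows in one stroke that $f$ is both determined by and reconstructible from $j^*f$ --- no trivializations, no iteration, and no smoothness of $\gpSch$ are used. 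You instead trivialize \'etale-locally, rewrite the condition as ${}^{\hat{\tau}_v\!}f\cdot b=b'\cdot f$, and run a successive-approximation argument; your congruence bookkeeping (coefficient-wise Frobenius sends $\mathfrak{a}$-coefficient Laurent series to $\mathfrak{a}^{q_v}$-coefficient ones, and conjugation preserves congruence to $1$) is correct, and the descent back to $S$ via the sheaf property together with the already-established injectivity over $S'\times_S S'$ is legitimate.

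The one step that is not justified as written is the existence of the initial lift $f_0$: ``lifting coordinates in a quasi-compact open of $L_v\gpSch$'' is not a valid principle, since points of an (ind-)affine scheme need not lift along nilpotent thickenings. The lift does exist, but for a different reason: the kernel of $R\dbl z_v\dbr[z_v^{-1}]\to (R/\mathfrak{a})\dbl z_v\dbr[z_v^{-1}]$ consists of Laurent series with all coefficients in $\mathfrak{a}$ and hence is nilpotent when $\mathfrak{a}^N=(0)$, so the smoothness of $\gpSch_v$ over $\Oo_v$ (a standing hypothesis of the paper) gives surjectivity of $\gpSch(R\dpl z_v\dpr)\to\gpSch((R/\mathfrak{a})\dpl z_v\dpr)$. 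Alternatively, you can avoid choosing $f_0$ altogether by borrowing the paper's factorization trick: after reducing to $\mathfrak{a}^{q_v}=(0)$, the coefficient-wise map $\bar r\mapsto r^{q_v}$ gives a well-defined ring homomorphism $R/\mathfrak{a}\to R$, and $\bar f\mapsto (b')^{-1}\cdot i^*(\bar f)\cdot b$ already produces the required lift in a single step, after which your contraction argument gives uniqueness. With that repair your proof is complete, at the price of using smoothness (or the factorization) where the paper's coordinate-free argument needs neither trivialization nor iteration.
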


\begin{proof}
Let $\mathcal{I}$ be the ideal sheaf defining $j\colon \bar{S}\into S$. Arguing by induction over $\Oo_S/\mathcal{I}^{q_v^n}$ it suffices to treat the case where $\mathcal{I}^{q_v}=(0)$. In this case the morphism $\hat\tau_v=\Frob_{q_v,S}$ factors as $S\xrightarrow{\;i}\bar{S}\xrightarrow{\;j}S$ where $i$ is the identity on the underlying topological space $|\bar{S}|=|S|$ and on the structure sheaf this factorization is given by
\begin{eqnarray}\label{EqTauFactors}
\textstyle\Oo_S \enspace \xrightarrow{\enspace j^\ast\;} & \Oo_{\bar{S}} & \textstyle\xrightarrow{\enspace i^\ast\;} \enspace \Oo_S\\
\textstyle b\quad \mapsto\;\: & b \mod \mathcal{I}& \textstyle\;\:\mapsto\quad b^{q_v}\,. \nonumber
\end{eqnarray}
Therefore ${}^{\hat\tau_v\!}f=i^\ast(j^\ast f)$ for any $f\in\QIsog_S(\underline{\mcL},\underline{\mcL}')$. We obtain the diagram
\begin{equation}\label{EqRigidity}
\xymatrix @C=5pc @R+0.5pc {
L_\infty\mcL \ar[d]^\cong_{\textstyle\widehat{\varphi}} \ar[r]_\cong^{\textstyle f} & L_\infty\mcL' \ar[d]_\cong^{\textstyle\widehat{\varphi}'} \\
{}^{\hat\tau_v\!}L_\infty\mcL \ar[r]_\cong^{\textstyle i^\ast(j^\ast f)\enspace} & {}^{\hat\tau_v\!}L_\infty\mcL'
}
\end{equation}
from which the bijectivity is obvious.
\end{proof}

\begin{defn}\label{DefLocShtBounded}
Consider the local type \ref{DefThreeTypes_B} at the place $v$ in Definition~\ref{DefThreeTypes} and let $\mcZ$ be a bound in $\Gr_{\gpSch,X}\times_X \widetilde{X}_{\mcZ}$ with reflex scheme $\widetilde{X}_{\mcZ}$. Then $\widetilde{X}_{\mcZ}=\Spec\Oo_{\mcZ}$ for a finite ring extension $\Oo_{\mcZ}$ of $\Oo_v$. Let $Z$ be the representative of $\mcZ$ over $\Oo_\mcZ$. Recall from Corollary~\ref{CorGrGlobLoc} that $\Gr_{\gpSch,X}\times_X \Spf\Oo_{\mcZ} \cong \Fl_{\gpSch,v} \widehattimes_{\F_v} \Spf\Oo_{\mcZ}$. Let $S\in\Nilp_{\Oo_{\mcZ}}$. A local $\gpSch_v$-shtuka $\underline{\mcL}:=(\mcL,\widehat{\varphi})$ is \emph{bounded by $\mcZ$} if for every (some) \'etale covering $S'\to S$ and every (some) trivialization $\hat\alpha\colon \mcL \isoto (L^+_v\gpSch)_{S'}$ the element $b={}^{\hat{\tau}_v\!}\hat\alpha\circ\widehat{\varphi}\circ \hat\alpha^{-1}\in L_v\gpSch(S')$ factors through $Z\subset \Fl_{\gpSch,v}\widehattimes_{\F_v}\Spf \Oo_{\mcZ}$ when viewed as a morphism $b\colon S'\to \Fl_{\gpSch,v}\widehattimes_{\F_v}\Spf \Oo_{\mcZ}$. Note that the equivalences of ``every'' and ``some'' was proven in \cite[Remark~4.9]{AH_Local}. 

We write $\LocSht_{\gpSch_v}^{\mcZ}$ for the stack of local $\gpSch_v$-shtukas bounded by $\mcZ$. When $\mcZ=\mcZ^{\leq\mu}\times_{\widetilde{X}_\mu} \Spf\Oo_\mu$ for the bound $\mcZ^{\leq\mu}$ from Definition~\ref{Def_BoundBy_mu}, we write $\LocSht_{\gpSch_v}^{\leq \mu}$. 
\end{defn}

\begin{remark}\label{Rem_LocShtInversePhi2}
We continue with Remark~\ref{Rem_LocShtInversePhi}. Let $\underline{\mcL}=(\mcL,\widehat{\varphi})$ be a local $\gpSch_v$-shtuka as in our Definition~\ref{Def_LocSht}. In the literature on local $\gpSch$-shtukas like \cite{AH_Local, AH_Unif, HV1, HV2, HartlViehmann3} where the Frobenius of a local $\gpSch_v$-shtuka is an isomorphism ${}^{\hat{\tau}_v\!} L_v\mcL \isoto L_v\mcL$ one would have to consider $(\mcL,\widehat{\varphi}^{-1})$ as the local $\gpSch_v$-shtuka. In the language of that literature one says that ``$(\mcL,\widehat{\varphi}^{-1})$ is bounded by a bound $\widetilde{\mcZ}$'' if $\widehat{\varphi}^{-1}$ is bounded by $\widetilde{\mcZ}$. This coincides with our definition that ``$(\mcL,\widehat{\varphi})$ is bounded by $\mcZ$'' if one takes $\widetilde{\mcZ}=\mcZ^{-1}$ in the sense of \cite[Remark~2.3 and Lemma~2.12]{HartlViehmann3}.
\end{remark}

\begin{defn}\label{DefGlobLocG}
Let $S\in\Nilp_{\Breve{\Oo}_\infty}$. The \emph{global-local functor}
\begin{equation}\label{Eq_DefGlobLocG}
L^+_{\infty,\gpSch}\colon \Sht_{\gpSch,D,X\times\infty}(S) \longto \LocSht_{\gpSch_\infty}(S)
\end{equation}
is defined as follows. Let $\underline{\mcE}=\bigl(x,\,(\mcE,\psi,)(\mcE',\psi'),\varphi,\varphi'\bigr)\in \Sht_{\gpSch,D,X\times\infty}(S)$ be a global $\gpSch$-shtuka over $S$ as in Definition~\ref{Def_Sht2legs}. Then 
\begin{equation}\label{Eq_DefGlobLocG2}
L^+_{\infty,\gpSch}(\underline{\mcE}) := \bigl(L^+_{\infty,\gpSch}(\mcE), L_{\infty,\gpSch}(\varphi'\circ\varphi)\bigr).
\end{equation}
\end{defn}

\begin{remark}
Note that in the previous definition 
\[
\varphi'\circ\varphi\colon \mcE|_{(X\setminus\{\infty\})_S}\isoto {}^{\tau\!}\mcE|_{(X\setminus\{\infty\})_S}
\]
is an isomorphism of $\gpSch$-bundles, because $S\in\Nilp_{\Breve{\Oo}_\infty}$ implies that $X_S\setminus\Gamma_x = (X\setminus\{\infty\})_S$. So $L^+_{\infty,\gpSch}(\underline{\mcE})$ recovers the combined modification of $\mcE$ at the two legs $x$ and $\infty$.
\end{remark}

\begin{numberedparagraph}\label{ParInnerFormM}
Let $\beta\in L_\infty\gpSch(\F_\beta)$ as in \S\,\ref{Def_beta}. Recall the Frobenius $\tau_{\gengpSch_\infty}\colon L_\infty\gpSch(S)\to L_\infty\gpSch(S)$ from \eqref{EqTau_G}. Let $M:=M_{\beta^{-1}}$ be the inner form over $\FcnFld_\infty$ of $\gengpSch_\infty$ given by $\beta^{-1}$, i.e.~there is an isomorphism of linear algebraic groups over $\Breve{\FcnFld}_\infty$
\[
\iota\colon \gengpSch_\infty \times_{\FcnFld_\infty} \Breve{\FcnFld}_\infty \isoto M_{\beta^{-1}} \times_{\FcnFld_\infty} \Breve{\FcnFld}_\infty
\]
with $\FcnFld_\infty$-structure on $M_{\beta^{-1}}$ given by the Frobenius 
\begin{equation}\label{Eq_tau_M}
\tau_M(g)=\iota\bigl(\beta^{-1} \cdot \tau_{\gengpSch_\infty}(\iota^{-1}(g)) \cdot \beta\bigr)=\iota\bigl(\beta^{-1} \cdot {}^{\tau\!}\iota^{-1}(\tau_M(g)) \cdot \beta\bigr) 
\end{equation}
i.e.~given by $\id=\iota\circ\Int_{\beta^{-1}}\circ {}^{\tau\!} \iota^{-1}$ and ${}^{\tau\!} \iota=\iota\circ \Int_{\beta^{-1}}$. 

Since $\beta^{-1}\cdot \gpSch_\infty\cdot\beta = \gpSch_\infty$, there exists a smooth affine group scheme $\mcM=\mcM_{\beta^{-1}}$ over $\Oo_\infty$ such that $\iota$ restricts to an isomorphism of algebraic groups over $\Breve{\Oo}_\infty$
\begin{equation}\label{Eq_mcM}
\iota\colon \gpSch_\infty \times_{\Oo_\infty} \Breve{\Oo}_\infty \isoto \mcM_{\beta^{-1}} \times_{\Oo_\infty} \Breve{\Oo}_\infty\,. 
\end{equation}
We use $\iota$ to identify $L^+_\infty\gpSch(R)=\gpSch_\infty(R\dbl z \dbr) =\mcM(R\dbl z \dbr)=L^+_\infty\mcM(R)$ for $\Breve{\Oo}_\infty$-algebras $R$. 

The bound $\mcZ^{\leq\mu}\times_{\widetilde{X}_\mu} \Spf\Oo_\mu$ from Definition~\ref{DefLocShtBounded} induces a bound in $\Fl_{\mcM,\infty}$ which has a representative over $\OReflZMuBeta$, and which we will use to bound local $\mcM$-shtukas in Definition~\ref{DefRZforM}. That local bound depends on the choice of the map $\Spec \Oo_{\mu,\beta}\to\widetilde{X}_{\mu,\beta}$.
\end{numberedparagraph}

\begin{prop} \label{PropGvsM}
For any $S\in\Nilp_{\Breve{\Oo}_\infty}$ and any $\beta\in L_\infty\gpSch(\F_\beta)$ as in \S\,\ref{Def_beta}, there is an equivalence of categories given by left translation by $\beta^{-1}$
\begin{equation}\label{EqEquivG-M-Shtukas}
t_{\beta^{-1}}\colon \LocSht_{\gpSch_\infty}(S) \isoto \LocSht_{\mcM_{\beta^{-1}}}(S),
\end{equation}
such that the underlying $L^+_\infty\gpSch$-bundle $\mcL$ of a local $\gpSch_\infty$-shtuka $\underline{\mcL}=(\mcL,\widehat{\varphi})$ coincides with the underlying $L^+_\infty\mcM_{\beta^{-1}}$-bundle of $t_{\beta^{-1}}(\underline{\mcL})$. In terms of trivialized local shtukas, the functor $t_{\beta^{-1}}$ is given by 
\[
\bigl((L^+_\infty\gpSch)_S, b\bigr) \longmapsto \bigl((L^+_\infty\mcM_{\beta^{-1}})_S, \beta^{-1} b\bigr)
\]
for $b\in L_\infty\gpSch(S)=L_\infty\mcM_{\beta^{-1}}(S)$. The functor $t_{\beta^{-1}}$ also sends quasi-isogenies to quasi-isogenies.
\end{prop}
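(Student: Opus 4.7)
The approach is to reduce to trivialized local shtukas via an \'etale covering and verify that the formula ``multiply the Frobenius by $\beta^{-1}$ on the left'' descends to a well-defined functor. The essential ingredient is the identity $\tau_{\mcM_{\beta^{-1}}}=\Int_{\beta^{-1}}\circ\tau_{\gengpSch_\infty}$ characterizing the inner form, already recorded in \eqref{Eq_tau_M}.

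First I observe that on $\Nilp_{\Breve{\Oo}_\infty}$ the isomorphism $\iota$ of \eqref{Eq_mcM} identifies the sheaves of groups $L^+_\infty\gpSch$ and $L^+_\infty\mcM_{\beta^{-1}}$ (and likewise $L_\infty\gpSch=L_\infty\mcM_{\beta^{-1}}$): for any $\Breve{\Oo}_\infty$-algebra $R$ one has $L^+_\infty\gpSch(R)=\gpSch_\infty(R\dbl z\dbr)=\mcM_{\beta^{-1}}(R\dbl z\dbr)=L^+_\infty\mcM_{\beta^{-1}}(R)$ as stated in \S\ref{ParInnerFormM}. Hence their respective bundle categories on $S\in\Nilp_{\Breve{\Oo}_\infty}$ are canonically equivalent, with identical underlying sheaves, which accounts for the claim about underlying bundles. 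Choose an \'etale cover $S'\to S$ and a trivialization $\hat\alpha\colon\mcL_{S'}\isoto(L^+_\infty\gpSch)_{S'}$, so $\underline{\mcL}_{S'}\cong\bigl((L^+_\infty\gpSch)_{S'},b\bigr)$ for some $b\in L_\infty\gpSch(S')$, and define $t_{\beta^{-1}}(\underline{\mcL})_{S'}:=\bigl((L^+_\infty\mcM_{\beta^{-1}})_{S'},\beta^{-1}b\bigr)$.

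To descend this construction to $S$, I check invariance under change of trivialization. A second trivialization $\hat\alpha'=h\cdot\hat\alpha$ with $h\in L^+_\infty\gpSch(S')$ replaces $b$ by $b'=\tau_{\gengpSch_\infty}(h)\,b\,h^{-1}$, where $\tau_{\gengpSch_\infty}$ denotes the Frobenius from \eqref{EqTau_G}. Under the identification $L^+_\infty\gpSch=L^+_\infty\mcM_{\beta^{-1}}$, the formula \eqref{Eq_tau_M} reads $\tau_{\mcM_{\beta^{-1}}}(h)=\beta^{-1}\tau_{\gengpSch_\infty}(h)\,\beta$, so that
\[
\beta^{-1}b'\;=\;\beta^{-1}\tau_{\gengpSch_\infty}(h)\,b\,h^{-1}\;=\;\bigl(\beta^{-1}\tau_{\gengpSch_\infty}(h)\,\beta\bigr)(\beta^{-1}b)\,h^{-1}\;=\;\tau_{\mcM_{\beta^{-1}}}(h)\cdot(\beta^{-1}b)\cdot h^{-1},
\]
which is exactly the cocycle transformation law on the $\mcM$-side. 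Consequently $\bigl((L^+_\infty\mcM_{\beta^{-1}})_{S'},\beta^{-1}b\bigr)$ descends to an $\mcM_{\beta^{-1}}$-shtuka on $S$, functorially in $\underline{\mcL}$. The same manipulation shows that a quasi-isogeny $f\colon\underline{\mcL}\to\underline{\mcL}'$ (trivialized as multiplication by $u\in L_\infty\gpSch(S')$ with $\tau_{\gengpSch_\infty}(u)\,b=b'\,u$) yields $\tau_{\mcM_{\beta^{-1}}}(u)\cdot(\beta^{-1}b)=(\beta^{-1}b')\cdot u$, so $u$ remains a quasi-isogeny after applying $t_{\beta^{-1}}$.

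Finally, the quasi-inverse $t_\beta\colon\LocSht_{\mcM_{\beta^{-1}}}(S)\to\LocSht_{\gpSch_\infty}(S)$ is defined symmetrically by $\bigl((L^+_\infty\mcM_{\beta^{-1}})_{S'},c\bigr)\mapsto\bigl((L^+_\infty\gpSch)_{S'},\beta c\bigr)$ and descends by the equivalent identity $\tau_{\gengpSch_\infty}=\Int_\beta\circ\tau_{\mcM_{\beta^{-1}}}$; the compositions $t_\beta\circ t_{\beta^{-1}}$ and $t_{\beta^{-1}}\circ t_\beta$ act as the identity on cocycles since $\beta\cdot\beta^{-1}=1$. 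The entire content of the proof is the Frobenius-twist identity already provided by \eqref{Eq_tau_M}; everything else is routine cocycle bookkeeping, so no substantial obstacle arises.
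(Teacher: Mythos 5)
Your proposal is correct and follows essentially the same route as the paper: trivialize \'etale-locally, declare the new Frobenius to be $\beta^{-1}b$, and deduce well-definedness and compatibility with quasi-isogenies from the single identity $\tau_{\mcM_{\beta^{-1}}}=\Int_{\beta^{-1}}\circ\tau_{\gengpSch_\infty}$, with $t_{\beta}$ as quasi-inverse. The only presentational difference is that the paper runs your cocycle computation once more with the descent datum $h\in L^+_\infty\gpSch(S'\times_S S')$ of the bundle (not merely a change of trivialization over $S'$) to descend the twisted Frobenius to $S$; your identity applies verbatim in that setting, so nothing essential is missing.
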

\begin{proof}
Let $\underline{\mcL}:=(\mcL,\widehat{\varphi})$ be a local $\gpSch_\infty$-shtuka over $S$. Let $S'\to S$ be an \'etale covering over which a trivialization $\alpha\colon \underline{\mcL}_{S'}\isoto \bigl((L^+_\infty\gpSch)_{S'}, b'\bigr)$ exists with $b':={}^{\tau\!}\alpha\circ\widehat{\varphi}\circ\alpha^{-1}\in L_\infty\gpSch(S')$. Over $S''=S'\times_S S'$, we obtain the descent datum $h:=\pr_2^*\alpha\circ \pr_1^* \alpha^{-1}\in L^+_\infty\gpSch(S'')$ for $\mcL$, where $\pr_j\colon S''\to S'$ is the projection onto the $j$-th factor for $j=1,2$. Then ${}^{\tau\!}\mcL$ is trivialized over $S'$ by ${}^{\tau\!}\alpha$ with descent datum $\tau_{\gengpSch_\infty}(h)=\pr_2^*({}^{\tau\!}\alpha)\circ \pr_1^*({}^{\tau\!}\alpha)^{-1}\in L^+_\infty\gpSch(S'')$. The fact that $\widehat{\varphi}$ is defined over $S$ is equivalent to $\pr_1^*\widehat{\varphi}=\pr_2^*\widehat{\varphi}$, which in turn is equivalent to the equation
\begin{equation}\label{EqPropGvsM_1}
\tau_{\gengpSch_\infty}(h)\cdot\pr_1^*b'=\pr_2^*({}^{\tau\!}\alpha)\circ \pr_1^*\widehat{\varphi}\circ \pr_1^*\alpha^{-1}=\pr_2^*({}^{\tau\!}\alpha)\circ \pr_2^*\widehat{\varphi}\circ \pr_1^*\alpha^{-1}=\pr_2^*(b') \cdot h.
\end{equation}
We now view $\mcL$ as an $L^+_\infty\mcM$-bundle via the isomorphism $\iota$ from \eqref{Eq_mcM}. Over $S'$, we obtain the trivialization $\iota\circ\alpha\colon \mcL_{S'}\isoto (L^+_\infty\mcM)_{S'}$, which gives rise to the descent datum $\iota(h)=\iota\circ\pr_2^*\alpha\circ \pr_1^* \alpha^{-1}\circ\iota^{-1}\in L^+_\infty\mcM(S'')$. However, ${}^{\tau\!}\mcL$ is now trivialized over $S'$ by ${}^{\tau\!}(\iota\circ\alpha)\colon {}^{\tau\!}\mcL_{S'}\isoto (L^+_\infty\mcM)_{S'}$ and has the descent datum $\pr_2^*{}^{\tau\!}(\iota\circ\alpha)\circ \pr_1^*{}^{\tau\!}(\iota\circ\alpha)^{-1}={}^{\tau\!}\iota \circ \pr_2^*({}^{\tau\!}\alpha)\circ \pr_1^*({}^{\tau\!}\alpha)^{-1} \circ {}^{\tau\!}\iota^{-1}$ which sends the neutral element $1\in L^+_\infty\mcM(S'')$ to ${}^{\tau\!}\iota (\tau_{\gengpSch_\infty}(h))=\iota\bigl(\beta^{-1}\cdot \tau_{\gengpSch_\infty}(h) \cdot\beta\bigr)=\tau_M(\iota(h))$ by definition of $\tau_M$. 

We equip $\mcL_{S'}$ with the new Frobenius $\widehat{\varphi}':={}^{\tau\!}(\iota\circ\alpha)^{-1}\circ \iota(\beta^{-1} b')\circ (\iota\circ\alpha)\colon L_\infty\mcL_{S'}\isoto {}^{\tau\!} L_\infty\mcL_{S'}$ such that $\iota\circ\alpha\colon (\mcL_{S'},\widehat{\varphi}')\isoto \bigl((L^+_\infty\mcM)_{S'},\iota(\beta^{-1}b')\bigr)$ is an isomorphism of local $\mcM$-shtukas over $S'$. From \eqref{EqPropGvsM_1}, we obtain the middle equality in the equation 
\[
\tau_M(\iota(h))\cdot \pr_1^*\iota(\beta^{-1}b') = \iota\bigl(\beta^{-1}\tau_{\gengpSch_\infty}(h)\beta\cdot \beta^{-1}\cdot \pr_1^*b'\bigr) = \iota\bigl(\beta^{-1}\cdot \pr_2^*b'\cdot h\bigr) = \pr_2^*\iota(\beta^{-1}b')\cdot \iota(h),
\]
where the first (resp.~last) equality follows because $\iota$ commutes with $\pr_1^*$ (resp.~$\pr_2^*$). Replacing $\tau_{\gengpSch_\infty}$ by $\tau_M\circ\iota$ and $b'$ by $\iota(\beta^{-1}b')$ in \eqref{EqPropGvsM_1}, this implies $\pr_1^*\widehat{\varphi}'=\pr_2^*\widehat{\varphi}'$, and thus $\widehat{\varphi}'$ descends to an isomorphism $L_\infty\mcL\isoto {}^{\tau\!} L_\infty\mcL$ over $S$. This defines the local $\mcM$-shtuka $t_{\beta^{-1}}(\underline{\mcL}):=(\mcL,\widehat{\varphi}')$ over $S$ and the functor $t_{\beta^{-1}}$. To visualize the construction, we have the diagram
\begin{equation*}
    \begin{tikzcd}
        (\mcL,\widehat{\varphi})_{S'}=\underline{\mcL}_{S'}\arrow[]{r}{\alpha}[swap]{\sim}\arrow[mapsto]{d}[swap]{t_{\beta^{-1}}}& ((L^+_\infty\gpSch)_{S'},b')\arrow[]{r}{\iota}[swap]{\sim}&((L^+_\infty\mcM_{\beta^{-1}})_{S'},\iota(b'))\arrow[mapsto]{d}{t_{\beta^{-1}}}\\
        (\mcL,\widehat{\varphi}')_{S'}=t_{\beta^{-1}}(\underline{\mcL})_{S'}\arrow[]{rr}{\iota\circ\alpha}[swap]{\sim}&&((L^+_\infty\mcM_{\beta^{-1}})_{S'},\iota(\beta^{-1}\cdot b'))
    \end{tikzcd}
\end{equation*}

We must show that $t_{\beta^{-1}}$ is independent of the choices of $S'$ and $\alpha$. If we choose a different $\widetilde{S}'$, we may as well replace it with a common refinement with the previous $S'$ and assume $\widetilde{S}'=S'$. Then the new $\widetilde{\alpha}$ differs from the previous $\alpha$ by left multiplication with an element $g:=\widetilde{\alpha}\circ \alpha^{-1}\in L^+_\infty\gpSch(S')$. This gives the new descent datum $\tilde h:=\pr_2^*\widetilde{\alpha} \circ \pr_1^*\widetilde{\alpha}^{-1}=\pr_2^*g\cdot h\cdot \pr_1^*g^{-1}$ of the $L^+_\infty\gpSch$-bundle $\mcL$ and changes $b'$ to $\tilde b'=\tau_{\gengpSch_\infty}(g)\cdot b' \cdot g^{-1}$. It also changes the trivialization of the $L^+_\infty\mcM$-bundle $\mcL$ to $\iota\circ\widetilde{\alpha}=\iota(g)\cdot \iota\circ\alpha$ and $\iota(\beta^{-1}b')$ to $\iota(\beta^{-1}\tilde b')=\iota(\beta^{-1}\tau_{\gengpSch_\infty}(g)\beta\cdot\beta^{-1} b' g^{-1})=\tau_M(\iota(g))\cdot \iota(\beta^{-1} b')\cdot \iota(g)^{-1}$. Therefore, $t_{\beta^{-1}}(g):=\iota(g)\colon \bigl((L^+_\infty\mcM)_{S'}, \iota(\beta^{-1} b')\bigr)\isoto \bigl((L^+_\infty\mcM)_{S'}, \iota(\beta^{-1}\tilde b')\bigr)$ is an isomorphism over $S'$. This shows that the change of the trivialization from $\alpha$ to $\widetilde{\alpha}$ does not change the $L^+_\infty\mcM$-bundle $\mcL$ and its new Frobenius $\widehat{\varphi}'$. Thus the functor $t_{\beta^{-1}}$ is well defined.

Clearly $t_{\beta^{-1}}$ is an equivalence, as its inverse functor is given by $t_{\iota(\beta)}$. Let $f:\underline{\mcL}\to \underline{\widetilde{\mcL}}$ be a quasi-isogeny. We have trivializations over a suitable \'etale covering $S'$ of $S$
\begin{equation}
    \alpha: \underline{\mcL}_{S'}\isoto((L^+_\infty\gpSch)_{S'},b')\quad\text{and}\quad \widetilde{\alpha}: \underline{\widetilde{\mcL}}_{S'}\isoto((L^+_\infty\gpSch)_{S'},\widetilde{b}').
\end{equation}
Let $\widetilde{\alpha}\circ f\circ\alpha^{-1}=:g\in L_\infty\gpSch(S')$. 
The same computations as in the preceeding paragraph with $g\in L_\infty\gpSch(S')$ instead of $L^+_\infty\gpSch(S')$ show that $t_{\beta^{-1}}(f)$ is a quasi-isogeny $t_{\beta^{-1}}(\underline{\mcL})\to t_{\beta^{-1}}(\underline{\widetilde{\mcL}})$. Thus the functor $t_{\beta^{-1}}$ is compatible with quasi-isogenies. 
\end{proof}

Let $\mcZ(\mu,\beta)$ be the bound in $\Gr_{\gpSch,X\times\infty}$ from Definition~\ref{DefZmubeta}. The following is a variant of the \emph{global-local functor} from \eqref{EqL^+_v} and \eqref{EqL_v}. It is the appropriate global-local functor for global shtukas in $\Sht_{\gpSch,D,X\times\infty}^{\mcZ(\mu,\beta)}$ when the two legs collide.

\begin{defn}\label{DefGlobLocM}
Let $S\in\Nilp_{\Breve{\Oo}_\infty}$. The \emph{$\beta^{-1}$-twisted global-local functor}
\begin{equation}\label{Eq_DefGlobLocM}
L^+_{\infty,\mcM_{\beta^{-1}}}:=t_{\beta^{-1}}\circ L^+_{\infty,\gpSch}\colon \Sht_{\gpSch,D,X\times\infty}(S) \longto \LocSht_{\mcM_{\beta^{-1}}}(S)
\end{equation}
is defined as follows. Let $\underline{\mcE}=\bigl(x,\,(\mcE,\psi),(\mcE',\psi'),\,\varphi,\varphi'\,\bigr)\in \Sht_{\gpSch,D,X\times\infty}(S)$ be a global $\gpSch$-shtuka over $S$ as in Definition~\ref{Def_Sht2legs}. Then 
\begin{equation}\label{Eq_DefGlobLocM2}
L^+_{\infty,\mcM_{\beta^{-1}}}(\underline{\mcE}) := t_{\beta^{-1}}\bigl(L^+_{\infty,\gpSch}(\mcE), L_{\infty,\gpSch}(\varphi'\circ\varphi)\bigr).
\end{equation}
\end{defn}

Note that when $\beta=1$, the $\beta^{-1}$-twisted global-local functor $L_{\infty,\mcM_{\beta^{-1}}}^+=L_{\infty,\gpSch}^+$ recovers the (usual) global-local functor given in \eqref{EqL^+_v} and \eqref{EqL_v}.

\begin{cor} \label{CorGlobalLocalFunctorWithChains}
Let $\underline{\mcE}\in \Sht_{\gpSch,D,X\times\infty}^{\mcZ(\mu,\beta)}(S)$ be given in terms of Corollary~\ref{CorShtWithChains} as the tuple $\underline{\mcE}=\bigl(x,(\mcE_i,\psi_i,\Pi_i,\diagphi_i)_{i\in\Z}\bigr)$. Then 
\begin{equation}\label{Eq_CorGlobalLocalFunctorWithChains}
L^+_{\infty,\mcM_{\beta^{-1}}}({}^{\tau\!}\mcE_{-1})\cong {}^{\tau\!} L^+_{\infty,\mcM_{\beta^{-1}}}(\mcE_0)
\end{equation}
and the $\beta^{-1}$-twisted global-local functor $L^+_{\infty,\mcM_{\beta^{-1}}}$ sends $\underline{\mcE}$ to 
\begin{equation}\label{Eq_CorGlobalLocalFunctorWithChains2}
L^+_{\infty,\mcM_{\beta^{-1}}}(\underline{\mcE})=\bigl(L^+_{\infty,\mcM_{\beta^{-1}}}(\mcE_0), L^+_{\infty,\mcM_{\beta^{-1}}}(\diagphi_0)\bigr).
\end{equation}
In particular, the local $\mcM$-shtuka $L^+_{\infty,\mcM_{\beta^{-1}}}(\underline{\mcE})$ is bounded by $\mu$.
\end{cor}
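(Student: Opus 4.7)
The plan is to unwind the definition of $L^+_{\infty,\mcM_{\beta^{-1}}}$ and substitute in the chain data. Writing $(\mcE,\mcE',\varphi,\varphi')$ for the underlying two-leg data $(\mcE_0,\mcE_0',\varphi_0,\varphi_0')$ of $\underline{\mcE}$, Definition~\ref{DefGlobLocM} gives
\[
L^+_{\infty,\mcM_{\beta^{-1}}}(\underline{\mcE}) \;=\; t_{\beta^{-1}}\bigl(L^+_\infty(\mcE),\,L_\infty(\varphi'\circ\varphi)\bigr).
\]
From the proof of Corollary~\ref{CorShtWithChains}, the identification $\mcE_0''={}^{\tau\!}\mcE_0$ makes $\varphi_0''=\id$, and the recursive relation $\varphi_0''={}^{\tau\!}\Pi_0^{-1}\circ\varphi_{-1}''\circ\Pi_0''$ forces $\varphi_{-1}''\circ\Pi_0''={}^{\tau\!}\Pi_0$. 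Substituting into $\diagphi_0=\varphi_{-1}''\circ\Pi_0''\circ\varphi'\circ\varphi$ yields the key identity
\[
\varphi'\circ\varphi \;=\; ({}^{\tau\!}\Pi_0)^{-1}\circ\diagphi_0.
\]

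Next I choose compatible trivializations over an \'etale cover $S'\to S$. Fix $\alpha_0\colon L^+_\infty(\mcE_0)\isoto(L^+_\infty\gpSch)_{S'}$. Since $\Pi_0$ is bounded by $\mcZ(\tau_{\gengpSch_\infty}^{-1}(\beta)^{-1})$ and $\tau_{\gengpSch_\infty}^{-1}(\beta)$ normalizes $L^+_\infty\gpSch$ by~\S\ref{Def_beta}, one may choose a trivialization $\alpha_{-1}$ of $L^+_\infty(\mcE_{-1})$ so that $L_\infty(\Pi_0)$ reads as left multiplication by $\tau_{\gengpSch_\infty}^{-1}(\beta)^{-1}$. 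Pulling back along $\tau$ and using $\tau_{\gengpSch_\infty}(\tau_{\gengpSch_\infty}^{-1}(\beta)^{-1})=\beta^{-1}$, the map $L_\infty({}^{\tau\!}\Pi_0)$ equals $\beta^{-1}$ with respect to ${}^{\tau\!}\alpha_0$ and ${}^{\tau\!}\alpha_{-1}$. Setting $d:=L_\infty(\diagphi_0)$ in the trivializations $\alpha_0$ and ${}^{\tau\!}\alpha_{-1}$, I obtain $L_\infty(\varphi'\circ\varphi)=\beta\cdot d$.

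Applying $t_{\beta^{-1}}$ as in Proposition~\ref{PropGvsM} then gives
\[
L^+_{\infty,\mcM_{\beta^{-1}}}(\underline{\mcE}) \;\cong\; \bigl((L^+_\infty\mcM)_{S'},\,\iota(\beta^{-1}\cdot\beta\cdot d)\bigr) \;=\; \bigl((L^+_\infty\mcM)_{S'},\,\iota(d)\bigr),
\]
whose underlying $L^+_\infty\mcM$-bundle is $\iota(L^+_\infty(\mcE_0))=L^+_{\infty,\mcM_{\beta^{-1}}}(\mcE_0)$ and whose Frobenius is $\iota\circ L_\infty(\diagphi_0)=L^+_{\infty,\mcM_{\beta^{-1}}}(\diagphi_0)$. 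This establishes identity~\eqref{Eq_CorGlobalLocalFunctorWithChains2}. For~\eqref{Eq_CorGlobalLocalFunctorWithChains}, the $L_\infty\gpSch$-isomorphism $L_\infty({}^{\tau\!}\Pi_0)\colon L_\infty({}^{\tau\!}\mcE_{-1})\isoto {}^{\tau\!}L_\infty(\mcE_0)$, read as $\beta^{-1}$ in these trivializations, becomes under $\iota$ and the identity ${}^{\tau\!}\iota=\iota\circ\Int_{\beta^{-1}}$ from~\S\ref{ParInnerFormM} the desired isomorphism $L^+_{\infty,\mcM_{\beta^{-1}}}({}^{\tau\!}\mcE_{-1})\cong{}^{\tau\!}L^+_{\infty,\mcM_{\beta^{-1}}}(\mcE_0)$, since that relation is precisely what converts the $\tau_\gengpSch$-pullback of a $\gpSch$-bundle (viewed via $\iota$) into the $\tau_M$-pullback of its $\iota$-image.

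Finally, $\diagphi_0$ being bounded by $\mu$ in the sense of Definition~\ref{Def_BoundBy_mu} means that $L_\infty(\diagphi_0)$ lies in the affine Schubert variety $\Gr_{\gpSch,X}^{\leq\mu}$ locally at $\infty$; transported via $\iota$, the element $\iota(d)$ lands in the corresponding Schubert variety for $\mcM$, so $L^+_{\infty,\mcM_{\beta^{-1}}}(\underline{\mcE})$ is bounded by $\mu$ in the sense of Definition~\ref{DefLocShtBounded}. The main obstacle in executing the plan is the bookkeeping in the third paragraph: tracking how the two Frobenii $\tau_{\gengpSch}$ and $\tau_M$ (which differ by $\Int_{\beta^{-1}}$) act compatibly on trivializations and descent data, so that the identifications of bundles, of $\tau$-pullbacks, and of Schubert bounds all match strictly rather than only up to additional cocycles.
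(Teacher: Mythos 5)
Your proposal is correct and takes essentially the same route as the paper's own proof: both exploit that $L_{\infty,\gpSch}(\Pi_0)$ is multiplication by $\tau_{\gengpSch_\infty}^{-1}(\beta)^{-1}$ in the trivializations coming from Proposition~\ref{PropBunWithChains}, so that the $\beta^{-1}$-twist of $t_{\beta^{-1}}$ cancels against ${}^{\tau\!}\Pi_0$ and leaves $L_{\infty,\mcM_{\beta^{-1}}}(\diagphi_0)$ as the Frobenius, with \eqref{Eq_CorGlobalLocalFunctorWithChains} coming from the same conjugation relation. The only (cosmetic) difference is that the paper makes your final ``that relation is precisely what converts\dots'' step explicit by matching descent data, namely $\iota(\tau_{\gengpSch_\infty}(h_{-1}))=\tau_M(\iota(h_0))$ via $h_{-1}=\tau_{\gengpSch_\infty}^{-1}(\beta)^{-1}h_0\,\tau_{\gengpSch_\infty}^{-1}(\beta)$, which is the rigorous form of your identification of $L^+_{\infty,\mcM_{\beta^{-1}}}({}^{\tau\!}\mcE_{-1})$ with ${}^{\tau\!}L^+_{\infty,\mcM_{\beta^{-1}}}(\mcE_0)$.
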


\begin{proof}
To prove \eqref{Eq_CorGlobalLocalFunctorWithChains}, let $S'\to S$ be an \'etale covering over which trivializations $\alpha_i\colon L^+_{\infty,\gpSch}(\mcE_i)\isoto (L^+_\infty\gpSch)_{S'}$ for $i=-1,0$ exist. After base change under $\tau=\Frob_{q,S'}$ this yields trivializations ${}^{\tau\!}\alpha_i\colon L^+_{\infty,\gpSch}({}^{\tau\!}\mcE_i)\isoto (L^+_\infty\gpSch)_{S'}$. We now view these $L^+_\infty\gpSch$-bundles as $L^+_\infty\mcM$-bundles via the isomorphism $\iota$ from \eqref{Eq_mcM}. As such they are trivialized by $\iota\circ\alpha_i\colon L^+_{\infty,\mcM}(\mcE_i)\isoto (L^+_\infty\mcM)_{S'}$ and $\iota\circ{}^{\tau\!}\alpha_i\colon L^+_{\infty,\mcM}({}^{\tau\!}\mcE_i)\isoto (L^+_\infty\mcM)_{S'}$ and ${}^{\tau\!}(\iota\circ\alpha_i)\colon {}^{\tau\!}L^+_{\infty,\mcM}(\mcE_i)\isoto (L^+_\infty\mcM)_{S'}$.

Over $S''=S'\times_S S'$ we obtain the descent data $h_i:=\pr_2^*\alpha_i \circ \pr_1^*\alpha_i^{-1}\in L^+_\infty\gpSch(S'')$ for $L^+_{\infty,\gpSch}(\mcE_i)$ and $\tau_{\gengpSch_\infty}(h_i)=\pr_2^*({}^{\tau\!}\alpha_i) \circ \pr_1^*({}^{\tau\!}\alpha_i)^{-1}\in L^+_\infty\gpSch(S'')$ for $L^+_{\infty,\gpSch}({}^{\tau\!}\mcE_i)$, as well as $\iota(h_i)=\iota\circ\pr_2^*\alpha_i \circ \pr_1^*\alpha_i^{-1}\circ\iota^{-1}\in L^+_\mcM(S'')$ for $L^+_{\infty,\mcM}(\mcE_i)$ and $\iota(\tau_{\gengpSch_\infty}(h_i))=\iota\circ\pr_2^*({}^{\tau\!}\alpha_i) \circ \pr_1^*({}^{\tau\!}\alpha_i)^{-1}\circ\iota^{-1}\in L^+_\infty\mcM(S'')$ for $L^+_{\infty,\mcM}({}^{\tau\!}\mcE_i)$ and $\tau_M(\iota(h_i))=\pr_2^*{}^{\tau\!}(\iota\circ\alpha_i) \circ \pr_1^*{}^{\tau\!}(\iota\circ\alpha_i)^{-1}\in L^+_\infty\mcM(S'')$ for ${}^{\tau\!}L^+_{\infty,\mcM}(\mcE_i)$; see the proof of Proposition~\ref{PropGvsM}. Now by construction of $\mcE_{-1}$ from $\mcE_0$ in Proposition~\ref{PropBunWithChains}, we have $h_{-1}=\tau_{\gengpSch_\infty}^{-1}(\beta)^{-1}\cdot h_0\cdot \tau_{\gengpSch_\infty}^{-1}(\beta)$. This implies $\tau_M(\iota(h_0))=\iota(\beta^{-1}\cdot\tau_{\gengpSch_\infty}(h_0)\cdot\beta)=\iota(\tau_{\gengpSch_\infty}(\tau_{\gengpSch_\infty}^{-1}(\beta)^{-1}\cdot h_0\cdot \tau_{\gengpSch_\infty}^{-1}(\beta)))=\iota(\tau_{\gengpSch_\infty}(h_{-1}))$, and hence  \eqref{Eq_CorGlobalLocalFunctorWithChains} follows. 

To prove \eqref{Eq_CorGlobalLocalFunctorWithChains2}, we recall from the construction of $t_{\beta^{-1}}$ in the proof of Proposition~\ref{PropGvsM}, that over $S'$ the Frobenius of $L^+_{\infty,\mcM_{\beta^{-1}}}(\underline{\mcE})_{S'}$ is given by the left hand side in the following equation
\[
\iota\bigl(\beta^{-1}\cdot {}^{\tau\!}\alpha_0\circ L_{\infty,\gpSch}(\varphi'_0\circ\varphi_0)\circ \alpha_0^{-1}\bigr)\;=\;\iota\bigl({}^{\tau\!}\alpha_1\circ L_{\infty,\gpSch}({}^{\tau\!}\Pi_0\circ\varphi'_0\circ\varphi_0)\circ \alpha_0^{-1}\bigr).
\]
This equation comes from the construction of $L_{\infty,\gpSch}(\Pi_0):=\alpha_{-1}^{-1}\circ \tau_{\gengpSch_\infty}^{-1}(\beta)^{-1}\circ\alpha_0$ in \eqref{EqDefPi_i} in Proposition~\ref{PropBunWithChains}, which implies $\beta^{-1}\cdot {}^{\tau\!}\alpha_0={}^{\tau\!}\alpha_1\circ L_{\infty,\gpSch}({}^{\tau\!}\Pi_0)$. Now the claim follows from the definition of $\diagphi_0:={}^{\tau\!}\Pi_0\circ\varphi'_0 \circ \varphi_0$ in the proof of Corollary~\ref{CorShtWithChains}.
\end{proof}

The following is an analogue of \cite[Proposition 8.1]{HartlAbSh}.
\begin{prop} \label{PropQIsogLocalGlobal}
Let $S\in \Nilp_{\Breve{\Oo}_\infty}$. 
\begin{enumerate}
    \item  \label{PropQIsogLocalGlobal_A}
    Given global $\gpSch$-shtukas $\underline{\mcE},\underline{\widetilde{\mcE}}\in \Sht_{\gpSch,D,X\times\infty}(S)$. Any quasi-isogeny $\delta:\underline{\widetilde{\mcE}}\to \underline{\mcE}$ induces a quasi-isogeny 
    \[
    \hat{\delta}:=L_{\infty,\mcM_{\beta^{-1}}}(\delta)\colon L_{\mcM,\infty
    }(\underline{\widetilde{\mcE}})\to L_{\mcM,\infty
    }(\underline{\mcE})
    \]
    of the corresponding local $\mcM$-shtukas. 
    \item \label{PropQIsogLocalGlobal_B}
    Conversely, fix a global $\gpSch$-shtuka $\underline{\mcE}\in \Sht_{\gpSch,D,X\times\infty}(S)$. Any quasi-isogeny $\hat{\delta}\colon \underline{\widetilde{\mcL}}\to L_{\infty,\mcM_{\beta^{-1}}}(\underline{\mcE})$ of local $\mcM$-shtukas over $S$ comes from a unique (up to a canonical isomorphism) global $\gpSch$-shtuka $\underline{\widetilde{\mcE}}$ and a quasi-isogeny $\delta: \underline{\widetilde{\mcE}}\to\underline{\mcE}$ which is an isomorphism outside $\infty$ such that $L_{\infty,\mcM_{\beta^{-1}}}(\underline{\widetilde{\mcE}})=\underline{\widetilde{\mcL}}$ and $L_{\infty,\mcM_{\beta^{-1}}}(\delta) = \hat{\delta}$.
    In this setting, we will write $\hat{\delta}^*\underline{\mcE}$ to denote $\underline{\widetilde{\mcE}}$. 
\end{enumerate}
\end{prop}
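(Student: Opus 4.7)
Part (a) is essentially formal. By Definition~\ref{DefIsogGlobalSht}, a quasi-isogeny $\delta=(f,f')\colon \underline{\widetilde{\mcE}}\to\underline{\mcE}$ consists of isomorphisms of $\gpSch$-bundles on $X_S\setminus N_S$ for some proper closed $N\subset X$, compatible with the modifications $\varphi,\varphi'$. The plan is to apply the functor $L_\infty$ from \eqref{EqL_v} to $f$, obtaining an isomorphism of the associated $L_\infty\gengpSch$-bundles on $S$. The compatibility of $\delta$ with the combined modifications $\varphi'\circ\varphi$ and $\widetilde{\varphi}'\circ\widetilde{\varphi}$ then shows that this isomorphism intertwines the Frobenius morphisms of the local $\gpSch_\infty$-shtukas $L^+_{\infty,\gpSch}(\underline{\mcE})$ and $L^+_{\infty,\gpSch}(\underline{\widetilde{\mcE}})$, giving a quasi-isogeny $L^+_{\infty,\gpSch}(\delta)$. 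Composing with the equivalence $t_{\beta^{-1}}$ from Proposition~\ref{PropGvsM}, which sends quasi-isogenies to quasi-isogenies, will produce the desired $\hat\delta:=L^+_{\infty,\mcM_{\beta^{-1}}}(\delta)$.

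For part (b), the plan is to reconstruct $\underline{\widetilde{\mcE}}$ via Beauville-Laszlo gluing (Lemma~\ref{LemmaBL}). First, I would use the quasi-inverse $t_{\iota(\beta)}$ of $t_{\beta^{-1}}$ to convert $\hat\delta$ into a quasi-isogeny $\hat\delta_\gpSch\colon t_{\iota(\beta)}(\underline{\widetilde{\mcL}})\to L^+_{\infty,\gpSch}(\underline{\mcE})$ of local $\gpSch_\infty$-shtukas; I denote the underlying $L^+_\infty\gpSch$-bundle of $t_{\iota(\beta)}(\underline{\widetilde{\mcL}})$ by $\widetilde{\mcL}_\gpSch$, so that $\hat\delta_\gpSch\colon L_\infty\widetilde{\mcL}_\gpSch\isoto L_\infty\mcE$ is an isomorphism of $L_\infty\gengpSch$-bundles. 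I would then apply Lemma~\ref{LemmaBL} with $\Delta=\{\infty\}\times_{\F_q}S$ twice: gluing $\mcE|_{(X\setminus\{\infty\})_S}$ with $\widetilde{\mcL}_\gpSch$ along $\hat\delta_\gpSch$ produces $\widetilde{\mcE}$ together with a canonical identification $f\colon\widetilde{\mcE}|_{(X\setminus\{\infty\})_S}\isoto \mcE|_{(X\setminus\{\infty\})_S}$, and gluing $\mcE'|_{(X\setminus\{\infty\})_S}$ with the \emph{same} $\widetilde{\mcL}_\gpSch$ along $L_\infty(\varphi)\circ\hat\delta_\gpSch$ produces $\widetilde{\mcE}'$ together with $f'\colon\widetilde{\mcE}'|_{(X\setminus\{\infty\})_S}\isoto \mcE'|_{(X\setminus\{\infty\})_S}$. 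Since $D\subset X\setminus\{\infty\}$, the level structures transport. Using the identity $X_S\setminus\Gamma_x=(X\setminus\{\infty\})_S$ for $S\in\Nilp_{\Breve{\Oo}_\infty}$, I define $\widetilde{\varphi}:=(f')^{-1}\circ\varphi\circ f$ and $\widetilde{\varphi}':=({}^{\tau\!}f)^{-1}\circ\varphi'\circ f'$, yielding $\underline{\widetilde{\mcE}}:=(x,\widetilde{\mcE},\widetilde{\mcE}',\widetilde{\varphi},\widetilde{\varphi}')$ and $\delta:=(f,f')\colon\underline{\widetilde{\mcE}}\to\underline{\mcE}$.

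To verify the shtuka condition, a direct computation from the gluing will show
\[
L_\infty(\widetilde{\varphi}'\circ\widetilde{\varphi})=({}^{\tau\!}\hat\delta_\gpSch)^{-1}\circ L_\infty(\varphi'\circ\varphi)\circ\hat\delta_\gpSch,
\]
which equals the Frobenius of $t_{\iota(\beta)}(\underline{\widetilde{\mcL}})$ by the Frobenius-compatibility of the quasi-isogeny $\hat\delta_\gpSch$. Applying $t_{\beta^{-1}}$ gives $L^+_{\infty,\mcM_{\beta^{-1}}}(\underline{\widetilde{\mcE}})=\underline{\widetilde{\mcL}}$ and $L^+_{\infty,\mcM_{\beta^{-1}}}(\delta)=\hat\delta$. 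Uniqueness up to canonical isomorphism follows from Lemma~\ref{LemmaBL}: any other solution $(\underline{\mcE}^\#,\delta^\#)$ yields the same triple $(\mcE|_{(X\setminus\{\infty\})_S},\widetilde{\mcL}_\gpSch,\hat\delta_\gpSch)$ for $\widetilde{\mcE}^\#$ and analogous data for the primed version, so the equivalence of categories in Lemma~\ref{LemmaBL} identifies them canonically with $\underline{\widetilde{\mcE}}$. The main obstacle is conceptual: the local $\mcM$-shtuka $\underline{\widetilde{\mcL}}$ only records the combined modification at $\infty$, so the individual splitting into modifications at $x$ and at $\infty$ for $\underline{\widetilde{\mcE}}$ is not determined by $\underline{\widetilde{\mcL}}$ alone. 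The trick is to let $\underline{\mcE}$ furnish the template by taking the same $L^+_\infty\gpSch$-bundle $\widetilde{\mcL}_\gpSch$ for both $\widetilde{\mcE}$ and $\widetilde{\mcE}'$ but gluing them with a twist by $L_\infty(\varphi)$, so that $\widetilde{\varphi}$ is carried by $\varphi$ on the global part and the Frobenius matching at $\infty$ reduces precisely to the quasi-isogeny condition on $\hat\delta$.
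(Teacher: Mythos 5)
Part (a) of your proposal is correct and is essentially the paper's argument: apply the global-local functor to $f$, use $f'\circ\widetilde{\varphi}=\varphi\circ f$ and ${}^{\tau\!}f\circ\widetilde{\varphi}'=\varphi'\circ f'$ to intertwine the combined Frobenii, and invoke Proposition~\ref{PropGvsM} for the $\beta^{-1}$-twist. In part (b), your construction of $\widetilde{\mcE}$ (gluing $\mcE|_{(X\smallsetminus\{\infty\})_S}$ with $\widetilde{\mcL}$ along $\hat\delta$) also agrees with the paper, but your gluing of the intermediate bundle does not: you glue $\mcE'|_{(X\smallsetminus\{\infty\})_S}$ with $\widetilde{\mcL}$ along $L_\infty(\varphi)\circ\hat\delta_\gpSch$, whereas the paper glues along $\varphi\circ\beta^{-1}\cdot\hat\delta$, i.e.\ with an extra twist by $\beta^{-1}$; equivalently, the paper obtains $\widetilde{\mcE}'$ from ${}^{\tau\!}\widetilde{\mcE}$ by the chain construction of Proposition~\ref{PropBunWithChains}, so that $\widetilde{\varphi}'$ is bounded by $\mcZ(\beta)$. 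With your choice one computes $L_\infty(\widetilde{\varphi})=\hat\delta_\gpSch^{-1}\circ L_\infty(\varphi)^{-1}\circ L_\infty(\varphi)\circ\hat\delta_\gpSch=\id$, so by Lemma~\ref{LemmaBL} your $\widetilde{\varphi}$ extends to an isomorphism across $\Gamma_x$ and the entire combined modification is pushed into $\widetilde{\varphi}'$, whose relative position at $\infty$ is (up to $L^+_\infty\gpSch$) $\beta$ times the Frobenius of $\underline{\widetilde{\mcL}}$, which lies in $\mcZ(\beta)$ only when that Frobenius is a unit. Consequently, when $\underline{\mcE}$ is bounded by $\mcZ(\mu,\beta)$ and $\underline{\widetilde{\mcL}}$ is bounded by $\mu$, your $\hat{\delta}^*\underline{\mcE}$ is in general \emph{not} bounded by $\mcZ(\mu,\beta)$ (the modification at the moving leg is trivial, hence in general not in $\Gr^{\leq\mu}_{\gpSch,X}$, e.g.\ for minuscule $\mu$, and the modification at $\infty$ is not of type $\beta$). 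That is precisely the property the proposition must deliver where it is used: in \S\ref{PointDefTheta} ($\hat\delta^*\FramingObject$ must land in the bounded stack), in \S\ref{center-action-paragraph}, in Proposition~\ref{PropSerre-Tate}, and hence in Theorem~\ref{Uniformization1}.

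Your uniqueness argument also has a genuine gap, and the two issues are the same issue. The stated conditions ($L_{\infty,\mcM_{\beta^{-1}}}(\underline{\widetilde{\mcE}})=\underline{\widetilde{\mcL}}$, $L_{\infty,\mcM_{\beta^{-1}}}(\delta)=\hat\delta$, $\delta$ an isomorphism outside $\infty$) only see $\widetilde{\mcE}$ and the composite $\widetilde{\varphi}'\circ\widetilde{\varphi}$; they impose no constraint on $L^+_\infty(\widetilde{\mcE}')$ or on how the modification splits between the leg $x$ and $\infty$. So the step ``any other solution yields the same triple \ldots and analogous data for the primed version'' is unjustified: your solution and the paper's are two non-isomorphic solutions of the literal conditions, differing exactly by a $\beta$-modification of $\widetilde{\mcE}'$ at $\infty$. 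Uniqueness only holds after normalizing the splitting at $\infty$, which the paper does by demanding that $\widetilde{\varphi}'$ be bounded by $\mcZ(\beta)$ and appealing to the equivalence $\mathfrak{pr}_0$ of Proposition~\ref{PropBunWithChains}. To repair your proof, replace your gluing map by the $\beta^{-1}$-twisted one (equivalently, build $\widetilde{\mcE}'$ from ${}^{\tau\!}\widetilde{\mcE}$ via the chain construction) and include the $\mcZ(\beta)$-boundedness of $\widetilde{\varphi}'$ as the normalization that makes the pair $(\underline{\widetilde{\mcE}},\delta)$ unique.
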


\begin{proof}
\ref{PropQIsogLocalGlobal_A} A quasi-isogeny $\delta\colon \underline{\widetilde{\mcE}}=(x,(\widetilde{\mcE},\widetilde{\psi}),(\widetilde{\mcE}',\widetilde{\psi}'),\widetilde{\varphi},\widetilde{\varphi}')\to \underline{\mcE}=(x,(\mcE,\psi),(\mcE',\psi'),\varphi,\varphi')$ is given by two isomorphisms $f\colon (\widetilde{\mcE},\widetilde{\psi})|_{X_S\smallsetminus N_S}\isoto (\mcE,\psi)|_{X_S\smallsetminus N_S}$ and $f'\colon (\widetilde{\mcE}',\widetilde{\psi}')|_{X_S\smallsetminus N_S}\isoto (\mcE',\psi')|_{X_S\smallsetminus N_S}$ for some proper closed subscheme $N\subset X$, such that $f'\circ\widetilde{\varphi}=\varphi\circ f$ and ${}^{\tau\!} f\circ\widetilde{\varphi}'=\varphi'\circ f'$. Then $\hat{\delta}:= L^+_{\infty,\mcM_{\beta^{-1}}}(f)$ satisfies $\tau_M(\hat{\delta})\cdot\bigl(\beta^{-1} \circ L^+_{\infty,\gpSch}(\widetilde{\varphi}'\circ\widetilde{\varphi})\bigr)=\beta^{-1}\cdot\tau_{\gengpSch_\infty}(\hat{\delta})\circ L^+_{\infty,\gpSch}(\widetilde{\varphi}'\circ\widetilde{\varphi})=\bigl(\beta^{-1}\cdot L^+_{\infty,\gpSch}(\varphi'\circ\varphi)\bigr)\circ \hat{\delta}$, and hence is a quasi-isogeny $L_{\infty,\mcM_{\beta^{-1}}}(\underline{\widetilde{\mcE}})\to L_{\infty,\mcM_{\beta^{-1}}}(\underline{\mcE})$. (Here, strictly speaking, the equality $\tau_M(\hat{\delta})\cdot\beta^{-1}=\beta^{-1}\cdot\tau_{\gengpSch_\infty}(\hat{\delta})$ only makes sense after trivializing the $L^+_\infty\mcM$-bundles $L^+_\infty(\widetilde{\mcE})$ and $L^+_\infty(\mcE)$ over an \'etale covering $S'\to S$ and viewing $\hat{\delta}$ as an element of $L^+_\infty\mcM(S')$, as in the proof of Proposition~\ref{PropGvsM}.)

\medskip\noindent
\ref{PropQIsogLocalGlobal_B} We construct $\underline{\widetilde{\mcE}}=(x,(\widetilde{\mcE},\widetilde{\psi}),(\widetilde{\mcE}',\widetilde{\psi}'),\widetilde{\varphi},\widetilde{\varphi}')$ with the Beauvill-Laszlo glueing in Lemma~\ref{LemmaBL} as follows. $\widetilde{\mcE}$ is obtained by glueing $\mcE|_{(X\smallsetminus\{\infty\})_S}$ with $\widetilde{\mcL}$ via the isomorphism 
\[
\hat{\delta}\colon L_\infty\widetilde{\mcL}\isoto L_\infty(L^+_\infty\mcE)=L_\infty(\mcE|_{(X\smallsetminus\{\infty\})_S}). 
\]
Moreover, $\widetilde{\mcE}'$ is obtained from ${}^{\tau\!}\widetilde{\mcE}$ as in the proof of Proposition~\ref{PropBunWithChains}, such that the isomorphism $\widetilde{\varphi}'\colon \widetilde{\mcE}'|_{(X\smallsetminus\{\infty\})_S} \isoto {}^{\tau\!}\widetilde{\mcE}|_{(X\smallsetminus\{\infty\})_S}$ is bounded by $\mcZ(\beta)$, or equivalently by glueing $\mcE'|_{(X\smallsetminus\{\infty\})_S}$ with $\widetilde{\mcL}$ via the isomorphism 
\[
\varphi\circ\beta^{-1}\cdot\hat{\delta}\colon L_\infty\widetilde{\mcL}\isoto L_\infty(L^+_\infty\mcE')=L_\infty(\mcE'|_{(X\smallsetminus\{\infty\})_S}). 
\]
Finally $\widetilde{\psi}=\psi$ and $\widetilde{\psi}'=\psi'$ and $\widetilde{\varphi}=\varphi$ and $\widetilde{\varphi}'=\varphi'$ and $\delta=(\id_{\mcE},\id_{\mcE'})$ as isomorphisms over $(X\smallsetminus\{\infty\})_S=X_S\smallsetminus\Gamma_x$. 
\end{proof}

When $\beta=1$, Proposition \ref{PropQIsogLocalGlobal} is analogous to the theory of abelian varieties and $p$-divisible groups.

Next we come to the analogue of the Serre-Tate-Theorem. 

\begin{defn}\label{DefDefo}
Let $S\in\Nilp_{\Breve{\Oo}_\infty}$ and let $j:\bar{S}\into S$ be a closed subscheme defined by a locally nilpotent sheaf $\mathcal{I}$ of ideals. Let $\underline{\bar{\mcE}}\in\Sht_{\gpSch,D,X\times\infty}(\bar{S})$ be a global $\gpSch$-shtuka over $\bar{S}$. The \emph{category $\Defo_S(\underline{\bar{\mcE}})$ of deformations of $\underline{\bar{\mcE}}$ to $S$} has 
\begin{description}
\item[as objects] all pairs $(\underline{\mcE}\,,\,\alpha\colon j^*\underline{\mcE}\isoto\underline{\bar{\mcE}})$ where $\underline{\mcE}$ is a global $\gpSch$-shtuka over $S$ and $\alpha$ an isomorphism of global $\gpSch$-shtukas over $\bar{S}$,
\item[as morphisms] isomorphisms between the $\underline{\mcE}$'s that are compatible with the $\alpha$'s.
\end{description}
If $\underline{\bar\mcL}:=L^+_{\infty,\mcM_{\beta^{-1}}} (\underline{\bar{\mcE}})$ is the associated local $\mcM_{\beta^{-1}}$-shtuka over $\bar{S}$, we similarly define the \emph{category $\Defo_S(\underline{\bar\mcL})$ of deformations of $\underline{\bar\mcL}$ to $S$}. By rigidity of quasi-isogenies (\cite[Prop 5.9]{AH_Local} and Proposition~\ref{PropRigidityLocal}), all $\Hom$-sets in these categories contain at most one element. 
\end{defn}

The following result generalizes \cite[Thm~8.4]{HartlAbSh}. It is the analogue of the classical Serre-Tate theorem for abelian varieties. 

\begin{prop}\label{PropSerre-Tate}
In the situation of Definition~\ref{DefDefo} the functor 
\[
\Defo_S(\underline{\bar{\mcE}}) \isoto \Defo_S(\underline{\bar\mcL})\,\qquad (\underline{\mcE}, \alpha)\longmapsto\bigl(L^+_{\infty,\mcM_{\beta^{-1}}}(\underline{\mcE}), L^+_{\infty,\mcM_{\beta^{-1}}}(\alpha)\bigr)
\]
induced from $L^+_{\infty,\mcM_{\beta^{-1}}}$ is an equivalence. 
\end{prop}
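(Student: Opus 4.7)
My plan is to prove the equivalence by combining the rigidity of quasi-isogenies for local $\mcM$-shtukas (Proposition~\ref{PropRigidityLocal}) with the reconstruction statement Proposition~\ref{PropQIsogLocalGlobal}\ref{PropQIsogLocalGlobal_B}, which together let us transport any local-side datum to a global one. Because both $\Hom$-sets in $\Defo_S(\underline{\bar{\mcE}})$ and $\Defo_S(\underline{\bar\mcL})$ contain at most one element (by rigidity of quasi-isogenies, cf.~Proposition~\ref{PropRigidityLocal} and its global counterpart), establishing the equivalence reduces to (a) showing a morphism exists on one side iff it exists on the other, and (b) essential surjectivity.

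For (a), the forward direction is functorial. For the reverse, given an isomorphism $\hat f\colon L^+_{\infty,\mcM_{\beta^{-1}}}(\underline{\mcE}_1)\isoto L^+_{\infty,\mcM_{\beta^{-1}}}(\underline{\mcE}_2)$ of local $\mcM$-shtukas compatible with the deformation data, I apply Proposition~\ref{PropQIsogLocalGlobal}\ref{PropQIsogLocalGlobal_B} with $\hat\delta:=\hat f$ to produce a quasi-isogeny $f\colon \underline{\mcE}_1\to\underline{\mcE}_2$ over $S$ that is an isomorphism outside $\infty$ and whose associated local quasi-isogeny is $\hat f$. Since $\hat f$ is an isomorphism of local shtukas and $f$ is already an isomorphism away from $\infty$, $f$ is an isomorphism of global shtukas; its restriction to $\bar S$ agrees with the prescribed iso by uniqueness in Proposition~\ref{PropQIsogLocalGlobal}\ref{PropQIsogLocalGlobal_B}.

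For essential surjectivity, given $(\underline{\mcL},\gamma\colon j^\ast\underline{\mcL}\isoto \underline{\bar\mcL})$, the strategy is: first construct any ad-hoc lift $\underline{\mcE}_0\in\Sht_{\gpSch,D,X\times\infty}(S)$ of $\underline{\bar{\mcE}}$ with $\alpha_0\colon j^\ast\underline{\mcE}_0\isoto \underline{\bar{\mcE}}$; then modify it at $\infty$ so that its associated local shtuka becomes $\underline{\mcL}$. Given $\underline{\mcE}_0$, both $\underline{\mcL}$ and $L^+_{\infty,\mcM_{\beta^{-1}}}(\underline{\mcE}_0)$ are lifts of $\underline{\bar\mcL}$, and the isomorphism $\bar f:=L^+_{\infty,\mcM_{\beta^{-1}}}(\alpha_0)^{-1}\circ\gamma\colon j^\ast\underline{\mcL}\isoto j^\ast L^+_{\infty,\mcM_{\beta^{-1}}}(\underline{\mcE}_0)$ lifts uniquely by Proposition~\ref{PropRigidityLocal} to a quasi-isogeny $\hat f\colon \underline{\mcL}\to L^+_{\infty,\mcM_{\beta^{-1}}}(\underline{\mcE}_0)$ over $S$. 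Applying Proposition~\ref{PropQIsogLocalGlobal}\ref{PropQIsogLocalGlobal_B} to $\hat f$ yields $\underline{\mcE}:=\hat f^\ast\underline{\mcE}_0$ together with $\delta\colon \underline{\mcE}\to\underline{\mcE}_0$ such that $L^+_{\infty,\mcM_{\beta^{-1}}}(\underline{\mcE})=\underline{\mcL}$ and $L^+_{\infty,\mcM_{\beta^{-1}}}(\delta)=\hat f$. Because $j^\ast\hat f=\bar f$ is an isomorphism, $j^\ast\delta$ is also an isomorphism at $\infty$, hence globally, and $\alpha:=\alpha_0\circ j^\ast\delta$ is the desired deformation iso, with $L^+_{\infty,\mcM_{\beta^{-1}}}(\alpha)=\gamma$ by a straightforward diagram chase.

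The main obstacle is the ad-hoc lift $\underline{\mcE}_0$: because $\Sht_{\gpSch,D,X\times\infty}$ is not smooth, lifts along the nilpotent thickening $\bar S\into S$ are not automatic. The plan is to construct $\underline{\mcE}_0$ step by step: lift $\bar x$ to $x\colon S\to X$ using smoothness of $X$ over $\F_q$; lift $(\bar\mcE,\bar\psi),(\bar\mcE',\bar\psi')$ to $\gpSch$-bundles with $D$-level structure over $X_S$ using smoothness of $\Bun_{\gpSch,D}$ (Theorem~\ref{Bun_G}); and finally lift the two modifications $\bar\varphi,\bar\varphi'$. For this last step, by the remark after Definition~\ref{DefGlobLocG}, for $S\in\Nilp_{\Breve{\Oo}_\infty}$ both modifications live over $(X\smallsetminus\{\infty\})_{\bar S}$; since $\infty$ is ample on the smooth projective curve $X$, the scheme $X\smallsetminus\{\infty\}$ is affine and the $\Isom$-torsor (under the smooth affine group $\underline{\Aut}$ of a lifted bundle) is smooth affine, so after passing to an affine open cover of $\bar S$ the relevant obstructions in $H^2$ vanish and one lifts affine-locally, gluing the local lifts using uniqueness after further rigidification. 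With $\underline{\mcE}_0$ secured, the remainder of the argument is formal.
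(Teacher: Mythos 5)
There is a genuine gap, and it sits in the step you rely on everywhere else: the reverse direction of (a). Given an isomorphism $\hat f\colon L^+_{\infty,\mcM_{\beta^{-1}}}(\underline{\mcE}_1)\isoto L^+_{\infty,\mcM_{\beta^{-1}}}(\underline{\mcE}_2)$, Proposition~\ref{PropQIsogLocalGlobal}\ref{PropQIsogLocalGlobal_B} applied to $\hat\delta:=\hat f$ does \emph{not} produce a quasi-isogeny $\underline{\mcE}_1\to\underline{\mcE}_2$; it produces a \emph{new} global shtuka $\hat f^*\underline{\mcE}_2$ (glued from $\underline{\mcE}_2|_{(X\smallsetminus\{\infty\})_S}$ and $L^+_{\infty,\mcM_{\beta^{-1}}}(\underline{\mcE}_1)$) together with a quasi-isogeny to $\underline{\mcE}_2$. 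Identifying $\hat f^*\underline{\mcE}_2$ with $\underline{\mcE}_1$ over $S$ is precisely the nontrivial content: it amounts to lifting the isomorphism $\alpha_2^{-1}\circ\alpha_1$ over $(X\smallsetminus\{\infty\})_{\bar S}$, where the Frobenius $\varphi'\circ\varphi$ is an isomorphism, to an isomorphism over $(X\smallsetminus\{\infty\})_S$ compatibly with the Frobenii. The ``uniqueness in Proposition~\ref{PropQIsogLocalGlobal}\ref{PropQIsogLocalGlobal_B}'' you invoke only says the pair $(\underline{\widetilde{\mcE}},\delta)$ is determined by $(\underline{\mcE}_2,\hat f)$; it says nothing about $\underline{\mcE}_1$. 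One can repair this (lift $\alpha_2^{-1}\alpha_1$ by rigidity of quasi-isogenies of global shtukas, identify its local component with $\hat f$ by the uniqueness in Proposition~\ref{PropRigidityLocal}, and then show the lifted quasi-isogeny is an isomorphism away from $\infty$ by iterating the Frobenius equation as in the proof of Proposition~\ref{PropRigidityLocal}), but none of this is in your write-up, and the argument as stated is circular. This matters doubly because your essential-surjectivity step for non-affine $S$ needs exactly this full faithfulness to glue the affine-local constructions; deformations of the bundles themselves are not unique, so ``gluing the local lifts using uniqueness after further rigidification'' has no content without it. (Also, the obstruction remark is off: lifting sections of a smooth affine scheme over an affine base along a nilpotent thickening is unobstructed; the relevant obstruction groups for non-affine bases would be $H^1$, not $H^2$, and they do not obviously vanish.)

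By contrast, the paper never constructs an ad-hoc lift by smoothness at all. After reducing to $\mathcal{I}^q=(0)$, the absolute Frobenius factors as $\tau=j\circ i$ through $\bar S$, so $i^*\underline{\bar{\mcE}}$ is a \emph{canonical} lift of the Frobenius twist of $\underline{\bar{\mcE}}$ over $S$; combined with the index shift $[-1]$ and the quasi-isogeny $\Pi$ from Corollary~\ref{CorIsog[n]}, this gives the global shtuka $\underline{\widetilde{\mcE}}$ and the quasi-isogeny $\bar\delta$ directly, after which rigidity (Proposition~\ref{PropRigidityLocal}) and Proposition~\ref{PropQIsogLocalGlobal}\ref{PropQIsogLocalGlobal_B} produce an explicit inverse functor, so full faithfulness is not needed as a separate input. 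This Frobenius-factorization trick is the key idea your proposal is missing; without it, both your faithfulness argument and your globalization of the ad-hoc lift remain unproved.
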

\begin{proof}

We construct the inverse of the above functor. Write $x\colon S\to \Spf \Breve{\Oo}_\infty$ for the structure morphism of $S\in\Nilp_{\Breve{\Oo}_\infty}$, and let $\bar{x}:=x\circ j\colon \bar{S}\to \Spf \Breve{\Oo}_\infty$. Write $\underline{\bar{\mcE}}\in\Sht_{\gpSch,D,X\times\infty}(\bar{S})$ as $\underline{\bar{\mcE}}=(\bar{x},(\bar{\mcE},\bar{\psi}),(\bar{\mcE}',\bar{\psi}'),\bar{\varphi},\bar{\varphi}')$. It suffices to treat the case where $\mathcal{I}^q = (0)$. In this case the morphism $\tau=\Frob_{q,S}$ factors as $\tau=j\circ i$ as in \eqref{Eq_TauFactors}. Let $(\underline\mcL,\hat{\alpha}\colon j^*\underline\mcL \isoto \underline{\bar\mcL})$ be an object of $\Defo_S(\underline{\bar\mcL})$. Since $X_S\smallsetminus\Gamma_x= (X\smallsetminus\{\infty\})_S$, we can define the global $\gpSch$-shtuka $\underline{\widetilde{\mcE}}:= [-1](x,i^*(\bar{\mcE},\bar{\psi}),i^*(\bar{\mcE}',\bar{\psi}'),i^*\bar{\varphi},i^*\bar{\varphi}')$ over $S$, where $[-1]$ denotes the index shift from Corollary~\ref{CorIsog[n]}\ref{CorIsog[n]_A}. It satisfies $[1](j^*\underline{\widetilde{\mcE}})= (\bar{x},({}^{\tau\!}\bar{\mcE},{}^{\tau\!}\bar{\psi}),({}^{\tau\!}\bar{\mcE}',{}^{\tau\!}\bar{\psi}'),{}^{\tau\!}\bar{\varphi},{}^{\tau\!}\bar{\varphi}')$. The isomorphisms $\bar{\varphi}'\circ\bar{\varphi}\colon (\bar{\mcE},\bar{\psi})|_{(X\smallsetminus\{\infty\})_S}\isoto ({}^{\tau\!}\bar{\mcE},{}^{\tau\!}\bar{\psi})|_{(X\smallsetminus\{\infty\})_S}$ and ${}^{\tau\!}\bar{\varphi}\circ\bar{\varphi}'\colon (\bar{\mcE}',\bar{\psi}')|_{(X\smallsetminus\{\infty\})_S}\isoto ({}^{\tau\!}\bar{\mcE}',{}^{\tau\!}\bar{\psi}')|_{(X\smallsetminus\{\infty\})_S}$ define a quasi-isogeny $\underline{\bar{\mcE}} \rightarrow [1](j^* \underline{\widetilde{\mcE}})$. We compose it with the quasi-isogeny $\Pi\colon [1](j^* \underline{\widetilde{\mcE}}) \to j^*\underline{\widetilde{\mcE}}$ from Corollary~\ref{CorIsog[n]}\ref{CorIsog[n]_B} to obtain a quasi-isogeny $\bar{\delta}\colon \underline{\bar{\mcE}} \rightarrow j^* \underline{\widetilde{\mcE}}$ which is an isomorphism outside $\infty$. 

We write $L^+_{\infty,\mcM_{\beta^{-1}}}(\underline{\widetilde{\mcE}})=\underline{\widetilde\mcL}$ and $\underline{\bar\mcL}=(\bar\mcL,\overline{\widehat{\varphi}})$. By Corollary~\ref{CorGlobalLocalFunctorWithChains} we have
\[
{}^{\tau\!}\underline{\bar{\mcL}}= L^+_{\infty,\mcM}({}^{\tau\!}\bar{\mcE}_{-1}) = L^+_{\infty,\mcM}([-1]{}^{\tau\!}\underline{\bar{\mcE}}) = L^+_{\infty,\mcM}(j^*\underline{\widetilde{\mcE}})= j^*\underline{\widetilde{\mcL}}.
\]
We view ${\overline{\widehat{\varphi}}}\colon \underline{\bar\mcL} \to {}^{\tau\!}\underline{\bar\mcL}$ as a quasi-isogeny as in Example~\ref{ExFrobIsogLocSht}, 
and compose it with $\hat{\alpha}$ to obtain the quasi-isogeny $\overline{\hat{\gamma}}:= \overline{\widehat{\varphi}} \circ \hat{\alpha} \colon j^*\underline\mcL \rightarrow {}^{\tau\!}\underline{\bar{\mcL}}=j^*\underline{\widetilde\mcL}$. By rigidity of quasi-isogenies (Proposition~\ref{PropRigidityLocal}) it lifts to a quasi-isogeny $\hat{\gamma}\colon \underline\mcL \rightarrow \underline{\widetilde\mcL}$ with $j^*\hat\gamma=\overline{\hat{\gamma}}$. As in Proposition~\ref{PropQIsogLocalGlobal}\ref{PropQIsogLocalGlobal_B}, we put $\underline{\mcE}:=\hat\gamma^*\underline{\widetilde{\mcE}}$ and recall that there is a quasi-isogeny $\gamma\colon \underline{\mcE} \rightarrow \underline{\widetilde{\mcE}}$ of global $\gpSch$-shtukas, which is an isomorphism outside $\infty$, and satisfies $L_{\infty,\mcM_{\beta^{-1}}}(\underline{\mcE})=\underline{\mcL}$ and $L_{\infty,\mcM_{\beta^{-1}}}(\gamma)=\hat\gamma$. We may now define the functor 
\[
Defo_S(\underline{\bar\mcL})\rightarrow Defo(\bar{\underline{\mcE}})
\]
by sending $(\underline\mcL,\hat{\alpha}\colon j^*\underline\mcL \isoto \underline{\bar\mcL})$ to $(\underline{\mcE},\bar{\delta}^{-1}\circ j^*\gamma)$. The quasi-isogeny $\alpha:=\bar{\delta}^{-1}\circ j^\ast \gamma\colon j^*\underline{\mcE}\to \underline{\bar{\mcE}}$ is an isomorphism outside $\infty$ by construction, and also at $\infty$ because $L_{\infty,\mcM_{\beta^{-1}}}(\alpha) = L_{\infty,\mcM_{\beta^{-1}}}(\Pi\circ\bar{\varphi}'\circ\bar{\varphi})^{-1}\circ\overline{\widehat{\varphi}} \circ \hat{\alpha} = \hat\alpha$ by Corollary~\ref{CorGlobalLocalFunctorWithChains}. It can easily be seen by the above construction that these functors are actually inverse to each other.
\end{proof}

Next we recall the definition of the Rapoport-Zink spaces for local shtukas bounded by a cocharacter $\mu\in X_*(T)$. Recall the bound $\mcZ^{\leq\mu}$ from Definition~\ref{Def_BoundBy_mu} and the local bound $\mcZ^{\leq\mu}\times_{\widetilde{X}_\mu} \Spec \Oo_\mu$ from Definition~\ref{DefLocShtBounded} used to bound local shtukas by $\mu$. That local bound depends on the choice of the map $\Spec \Oo_\mu\to\widetilde{X}_\mu$.

\begin{defn}\label{DefRZforM}
Fix an element $b\in LM(\overline{\F}_\infty)=L_\infty\gengpSch(\overline{\F}_\infty)$. Consider the local $\mcM$-shtuka $\ulLocalFramingObject=(L^+_\infty\mcM_{\overline{\F}_\infty},b)\in \LocSht_{\mcM}$ over $\overline{\F}_\infty$. 

(1) We define the \emph{Rapoport-Zink space} $\RZ_{\mcM,\ulLocalFramingObject}^{\leq \mu}$ \emph{(of the framing object $\underline{\mathbb{L}}$)} as the functor whose $S$-points, for $S\in\Nilp_{\Breve{\Oo}_\mu}$, are given by
\begin{align} \label{Eq_DefRZforM}
    \RZ_{\mcM,\ulLocalFramingObject}^{\leq \mu}(S):=\Bigl\{\,(\underline{\mcL},\hat{\delta}) & \text{ where $\underline{\mcL}$ is local $\mcM$-shtuka over $S$ bounded by $\mu$}, \nonumber 
    \\
    & \text{and }\hat{\delta}:\underline{\mcL}\to \ulLocalFramingObject_S \text{ is a quasi-isogeny}\,\Bigr\}. 
\end{align}
Here ``bounded by $\mu$'' means bounded by the bound $\mcZ^{\leq\mu}$ from Definition \ref{Def_BoundBy_mu}; compare \S\,\ref{ParInnerFormM}. 

(2) We define the \emph{affine Deligne--Lusztig variety} $X_{\mcM}^{\leq\mu}(b)$ by 
\begin{equation}
    X_{\mcM}^{\leq \mu}(b)(\overline{\F}_\infty):=\{\overline{g}\in (L_\infty\mcM/L^+_\infty\mcM)(\overline{\F}_\infty)\colon \tau_M(g)^{-1}b g\text{ is bounded by }\mu\}.
\end{equation}
\end{defn}

\begin{remark}\label{Rem_LocShtInversePhi3}
In the literature on affine Deligne-Lusztig varieties for an element $\tilde b\in M(\Breve{\FcnFld}_\infty)=LM(\overline{\F}_q)$ one usually requires that $g^{-1} \tilde{b}\,\tau_M(g)$ is bounded. However the $\tilde{b}$ in that literature is equal to our $b^{-1}$ as explained in Remarks~\ref{Rem_LocShtInversePhi} and \ref{Rem_LocShtInversePhi2}. Note that $\tau_M(g)^{-1} b g$ is bounded by $\mu$ if and only if $g^{-1}b^{-1}\tau_M(g)$ is bounded by $-\mu$. So our Rapoport-Zink spaces and affine Deligne-Lusztig varieties are the same as the ones usually considered in the literature after changing $\mu$ and $b$ to $-\mu$ and $b^{-1}$.
\end{remark}

\begin{remark} \label{RemRZIndProper}
Using the description of $\Fl_{\mcM,\infty}=L_\infty\mcM/L^+_\infty\mcM$ from Lemma~\ref{LemmaAffineFlagVar}, the space $\RZ_{\mcM,\ulLocalFramingObject}^{\leq \mu}$ can also be described as 
\begin{equation}
    \RZ_{\mcM,\ulLocalFramingObject}^{\leq \mu}(S):=\{\overline{g}\in (L_\infty\mcM/L^+_\infty\mcM)(S)\colon \tau_M(g)^{-1}b g\text{ is bounded by }\mu\},
\end{equation}
where $\overline{g}\in (L_\infty\mcM/L^+_\infty\mcM)(S)$ is represented by $g\in L_\infty\mcM(S')$ for some \'etale covering $S'\to S$. 

In particular, $\RZ_{\mcM,\ulLocalFramingObject}^{\leq \mu}$ is an ind-closed ind-subscheme of $\Fl_{\mcM,\infty}$, compare Remark~\ref{Rem_HeckeBounded} and Theorem~\ref{Thm_HeckeBounded}. 
\end{remark}

We recall the following 

\begin{thm}\label{ThmRRZSp} 
$\RZ_{\mcM,\ulLocalFramingObject}^{\leq \mu}$ is representable by a formal scheme over $\Spf\Breve{\Oo}_\mu$, which is locally formally of finite type and separated. Its underlying reduced subscheme is precisely $X_{\mcM}^{\leq \mu}(b)$. In particular, $X_{\mcM}^{\leq \mu}(b)$ is a reduced scheme locally of finite type and separated over $\overline{\F}_\infty$. 
\end{thm}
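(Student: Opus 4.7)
The plan is to deduce this statement from the representability theorem in \cite[Theorem~4.18]{AH_Local}, which establishes the analogous result for Rapoport-Zink spaces of bounded local shtukas over arbitrary smooth affine group schemes with connected fibers and reductive generic fiber. Our group scheme $\mcM=\mcM_{\beta^{-1}}$ over $\Oo_\infty$ satisfies these hypotheses: it is an inner form of $\gpSch_\infty$ via the isomorphism $\iota$ in \eqref{Eq_mcM}, and $\gpSch_\infty$ itself has them by our standing assumption on $\gpSch$.

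First I would use Remark~\ref{RemRZIndProper} together with Lemma~\ref{LemmaAffineFlagVar} to realise $\RZ_{\mcM,\ulLocalFramingObject}^{\leq\mu}$ as an ind-closed ind-subscheme of $\Fl_{\mcM,\infty}\widehattimes_{\F_\infty}\Spf\Breve{\Oo}_\mu$. After an \'etale trivialization of the underlying $L^+_\infty\mcM$-bundle on a test scheme $S$, the quasi-isogeny $\hat{\delta}$ is represented by an element $g\in L_\infty\mcM(S')$, and the bound $\mu$, transported from the Beilinson-Drinfeld Grassmannian via Corollary~\ref{CorGrGlobLoc} and the inner twist $\iota$, cuts out a quasi-compact closed ind-subscheme. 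To upgrade this ind-closed description to a formal scheme locally formally of finite type over $\Spf\Breve{\Oo}_\mu$, I would invoke the rigidity of quasi-isogenies (Proposition~\ref{PropRigidityLocal}): on any $S\in\Nilp_{\Breve{\Oo}_\mu}$ the datum $\hat{\delta}$ is uniquely determined by its restriction to $S_{\mathrm{red}}$, so the formal structure on $\RZ_{\mcM,\ulLocalFramingObject}^{\leq\mu}$ is obtained by completing along each underlying reduced point within the local model $Z^{\leq\mu}\subset \Fl_{\mcM,\infty}\widehattimes\Spf\Breve{\Oo}_\mu$, the latter being quasi-compact by Lemma~\ref{LemmaLoopActionOnGr}. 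Separatedness follows from the ind-separatedness of $\Fl_{\mcM,\infty}$ together with the uniqueness part of Proposition~\ref{PropRigidityLocal}. This reproduces the argument in \cite[\S\,4]{AH_Local} verbatim once one has the correct setup.

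For the identification of the underlying reduced subscheme with $X_\mcM^{\leq\mu}(b)$, observe that over $\Spec\overline{\F}_\infty$ every $L^+_\infty\mcM$-bundle is trivial, so a point of $\RZ_{\mcM,\ulLocalFramingObject}^{\leq\mu}(\overline{\F}_\infty)$ is described by a single coset $\bar g\in \Fl_{\mcM,\infty}(\overline{\F}_\infty)$, where $g\in L_\infty\mcM(\overline{\F}_\infty)$ represents $\hat{\delta}$. The compatibility ${}^{\hat{\tau}_\infty\!}\hat{\delta}\circ\widehat{\varphi}=b\circ\hat{\delta}$ forces the Frobenius of $\underline{\mcL}$ to be $\tau_M(g)^{-1}b\,g$, and the condition that $\underline{\mcL}$ be bounded by $\mu$ becomes exactly the ADLV condition. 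Reducedness is automatic because $\Fl_{\mcM,\infty}\times_{\F_\infty}\overline{\F}_\infty$ is a reduced ind-scheme by (the local analogue of) Lemma~\ref{LemmaGlobalLoopIsqc}\ref{LemmaGlobalLoopIsqc_B}.

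The main technical obstacle is verifying that the notion of ``bounded by $\mu$'' for local $\mcM$-shtukas (Definition~\ref{DefLocShtBounded}), which comes from the bound $\mcZ^{\leq\mu}$ on the global Grassmannian via the comparison in Corollary~\ref{CorGrGlobLoc}, matches the boundedness condition used in \cite{AH_Local} under the inner twist $\iota$. Since $\iota$ is defined over $\Breve{\Oo}_\infty$ and $\Breve{\Oo}_\mu$ contains $\Breve{\Oo}_\infty$ as a finite unramified extension, the affine Schubert varieties $Z^{\leq\mu}$ in $\Fl_{\gpSch,\infty}\widehattimes\Spf\Breve{\Oo}_\mu$ and $\Fl_{\mcM,\infty}\widehattimes\Spf\Breve{\Oo}_\mu$ are identified via $\iota$, so this compatibility should follow directly once the definitions are unravelled, and no genuinely new input beyond \cite[Theorem~4.18]{AH_Local} is required.
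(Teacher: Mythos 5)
Your proposal is correct and follows essentially the same route as the paper, whose entire proof is to invoke \cite[Theorem~4.18]{AH_Local} for the group $\mcM$. The one point the paper makes explicitly that you gloss over is the convention mismatch recorded in Remarks~\ref{Rem_LocShtInversePhi2} and \ref{Rem_LocShtInversePhi3}: in \cite{AH_Local} the Frobenius of a local shtuka goes in the opposite direction, so their theorem is applied after replacing $(\mu,b)$ by $(-\mu,b^{-1})$, rather than via the inner-twist compatibility of bounds that you emphasize.
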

\begin{proof}
In view of Remarks~\ref{Rem_LocShtInversePhi2} and \ref{Rem_LocShtInversePhi3} this was proven in \cite[Theorem 4.18]{AH_Local}.
\end{proof}

\begin{remark}\label{RemMActsOnRZ}
The group $\QIsog_{\BaseFldInSectUnif}(\ulLocalFramingObject)$ of quasi-isogenies of $\ulLocalFramingObject:=(L^+_\infty\mcM_{\overline{\F}_\infty},b)$ acts on $\RZ_{\mcM,\ulLocalFramingObject}^{\leq \mu}$ via $j\cdot(\underline{\widehat{\mcL}},\hat\delta):=(\underline{\widehat{\mcL}},j\circ\hat\delta)$, for $j\in \QIsog_{\BaseFldInSectUnif}(\ulLocalFramingObject)$. There is a connected algebraic group $J_{\ulLocalFramingObject}$ over $\FcnFld_\infty$ whose $R$-points, for a $\FcnFld_\infty$-algebra $R$, is the group
\begin{equation}\label{EqGroupJ}
J_{\ulLocalFramingObject}(R):=\bigl\{\,j \in \mcM(R\otimes_{\FcnFld_\infty} \Breve{\FcnFld}_\infty)\colon \tau_M(j)^{-1}b j=b\,\bigr\}\,,
\end{equation}
see \cite[Remark~4.16]{AH_Local}.
In particular, $\QIsog_{\BaseFldInSectUnif}(\ulLocalFramingObject)=J_{\ulLocalFramingObject}(\FcnFld_\infty)$. This group also acts on $X_{\mcM}^{\leq \mu}(b)$ via multiplication $j\colon g\mapsto j\cdot g$ for $j\in J_{\ulLocalFramingObject}(\FcnFld_\infty)$.
\end{remark}

\subsection{Tate modules}
In this section, we consider the fiber products
\begin{align}
\Sht_{\gpSch,D,\widehat{\infty}\times\infty}&:=\Sht_{\gpSch,D,X\times\infty} \times_X \Spf \Oo_\infty \qquad\text{and} \label{EqShtHatInfty} \\
\Sht_{\gpSch,D,\infty\times\infty}&:=\Sht_{\gpSch,D,X\times\infty} \times_X \Spec \F_\infty \label{EqShtInfty} 
\end{align}
on which the moving leg $x\colon S\to X$ factors through $\widehat{\infty}:=\Spf \Oo_\infty$ or $\infty:=\Spec\F_\infty$, respectively. Recall that we have an additional fixed leg at $\infty$. This guarantees that we have a Tate module at every place $v\neq \infty$.

Let $\mathbb{O}^\infty:=\mathbb{O}_{\FcnFld}^\infty:=\prod_{v\neq\infty}\Oo_v$ be the ring of integral adeles of $X$ (i.e.~of the function field $\FcnFld=\F_q(X)$) outside $\infty$. Let $\mathbb{A}^\infty:=\mathbb{A}_{\FcnFld}^\infty:=\mathbb{O}^\infty \otimes_{\Oo_X} \FcnFld$ be the ring of adeles of $X$ outside $\infty$. The group $\gpSch(\mathbb{O}^\infty)$ acts through Hecke correspondences on the tower $\{\Sht_{\gpSch,D,\widehat{\infty}\times\infty}\}_D$.~We want to extend this to an action of $\gengpSch(\mathbb{A}^\infty)$; see Definition~\ref{Defn-Hecke-corr} below. For this purpose, we generalize the notion of level structures on global $\gpSch$-shtukas in this subsection. 

Let $S$ be a connected scheme in $\Nilp_{\Oo_\infty}$. We fix a geometric base point $\bar{s}$ of $S$. Let $\Rep_{\mathbb{O}^\infty}\gpSch$ be the category of pairs $(V,\rho)$, where $V$ is a finite free $\mathbb{O}^\infty$-module and $\rho\colon\gpSch\times_X\Spec\mathbb{O}^\infty\to\GL_{\mathbb{O}^\infty}(V)$ is an $\mathbb{O}^\infty\,$-morphism of algebraic groups. Let $\mathrm{Funct}^\otimes(\Rep_{\mathbb{O}^\infty}\gpSch,\; \mathfrak{M} od_{\mathbb{O}^\infty [\pi_1^\et(S,\bar{s})]})$ denote the category of tensor functors from $\Rep_{\mathbb{O}^\infty} \gpSch$ to the category $\mathfrak{M} od_{\mathbb{O}^\infty[\pi_1^\et(S,\bar{s})]}$ of $\mathbb{O}^\infty[\pi_1^\et(S,\bar{s})]$-modules. For a proper closed subscheme $D$ of $X\smallsetminus\{\infty\}$ the sheaf $\Oo_D$ is an $\mathbb{O}^\infty$-module and we can consider $\rho|_D\colon\gpSch|_D:=\gpSch\times_X D\to \GL_{\Oo_D}(V\otimes_{\mathbb{O}^\infty} \Oo_D)$.

Let $\underline{\mcE}=(x,\mcE,\mcE',\varphi,\varphi')\in\Sht_{\gpSch,\varnothing,\widehat{\infty}\times\infty}(S)$ be a global $\gpSch$-shtuka over $S$. Fix a proper closed subscheme $D$ of $X\smallsetminus\{\infty\}$ and let $\underline{\mcE}|_{D_S}:=\underline{\mcE}\times_{X_S}D_S$ denote the pullback of $\underline{\mcE}$ to $D_S$. Also fix $(V,\rho)\in \Rep_{\mathbb{O}^\infty}\gpSch$ and let 
\[
\mcF:=\rho_*\mcE|_{D_S}:=(\rho|_D)_*(\mcE|_{D_S}):=(\mcE|_{D_S})\overset{\gpSch|_D}{\times}(V\otimes_{\mathbb{O}^\infty} \Oo_D)
\]
denote the pushout vector bundle on $D_S$ of rank equal to $\dim V$. Since $D_S$ is disjoint from the graphs of the legs $x$ and $\infty$, the maps $\varphi|_{D_S}$ and $\varphi'|_{D_S}$ of $\underline{\mcE}|_{D_S}$ are isomorphisms. We equip $\mcF$ with the Frobenius isomorphism $\varphi_\mcF:=(\rho|_D)_*(\varphi'\circ\varphi)|_{D_S}\colon \mcF\isoto {}^{\tau\!} \mcF$. It is an isomorphism of $\Oo_{D_S}$-modules. For the fixed geometric base point $\bar s=\Spec k$ of $S$ let $\mcF_{\bar s}:= \mcF\otimes_{\Oo_S} k$.
Let 
\begin{equation}
(\rho_*\underline{\mcE}|_{D_{\bar{s}}})^\varphi:=\{m\in \mcF_{\bar s}\colon \varphi_\mcF(m)={}^{\tau\!} m\}
\end{equation}
be the $\varphi$-invariants. They form a free $\Oo_D$-module of rank equal to $\dim V$, equipped with a continuous action of the \'etale fundamental group $\pi_1^\et(S,\bar{s})$. This module $(\rho_*\underline{\mcE}|_{D_{\bar{s}}})^\varphi$ is independent of $\bar{s}$ up to a change of base point. 

Let $\check{\mathcal{T}}_{\mcE}\in \mathrm{Funct}^\otimes(\Rep_{\mathbb{O}^\infty}\gpSch,\; \mathfrak{M} od_{\mathbb{O}^\infty [\pi_1^\et(S,\bar{s})]})$ be the tensor functor defined as 
\begin{equation}
    \check{\mathcal{T}}_{\underline{\mcE}}(\rho):=\underset{D\subset X\setminus \infty}{\varprojlim}(\rho_*\underline\mcE|_{D_{\bar{s}}})^\varphi.
\end{equation}
Likewise, $\check{\mathcal{V}}_{\mcE}\in \mathrm{Funct}^\otimes(\Rep_{\mathbb{O}^\infty}\gpSch,\; \mathfrak{M} od_{\mathbb{A}^\infty [\pi_1^\et(S,\bar{s})]})$ is the tensor functor defined as 
\begin{equation}
    \check{\mathcal{V}}_{\underline{\mcE}}(\rho):=\mathbb{A}^\infty \otimes_{\mathbb{O}^\infty} \underset{D\subset X\setminus \infty}{\varprojlim}(\rho_*\underline{\mcE}|_{D_{\bar{s}}})^\varphi .
\end{equation}

\begin{defn}\label{DefTateModule}
We define the (\emph{dual})\footnote{In the case of abelian varieties $\mcA$, the Tate module $T(\mcA)$ is the homology $\prod\limits_{\ell}H_{1,\et}(\mcA,\Z_{\ell})$, while for $\gpSch$-shtukas $\underline{\mcE}$, the Tate module $\check{\mathcal{T}}_{\underline{\mcE}}(\rho)$ is the cohomology $\prod\limits_{v\neq\infty}H^1_{\et}(\rho_*\underline{\mcE},\Oo_v)$, see \cite[Definition~3.4.1]{HartlKim}, hence the ``dual'' terminology.} \emph{Tate-module functor} as follows.
\begin{align}\label{tatefunctor}
\begin{split}
\check{\mathcal{T}}_{-}\colon \Sht_{\gpSch,D,\widehat{\infty}\times\infty}(S)&\longrightarrow \mathrm{Funct}^\otimes (\Rep_{\mathbb{O}^\infty}\gpSch\,,\,\mathfrak{M} od_{\mathbb{O}^\infty[\pi_1^\et(S,\bar{s})]})\,\\
\underline{\mcE} &\longmapsto \check{\mathcal{T}}_{\underline\mcE},
\end{split}
\end{align}
We define the \emph{(dual) rational Tate-module functor} as follows. 
\begin{equation}
\begin{split}
\check{\mathcal{V}}_{-}\colon \Sht_{\gpSch,D,\widehat{\infty}\times\infty}(S) &\longrightarrow \mathrm{Funct}^\otimes (\Rep_{\mathbb{O}^\infty}\gpSch\,,\,\mathfrak{M} od_{\mathbb{A}^\infty[\pi_1^\et(S,\bar{s})]})\,\\ \label{rationaltatefunctor}
\underline{\mcE} &\longmapsto  \check{\mathcal{V}}_{\underline\mcE}.
\end{split}
\end{equation}
The functor $\check{\mathcal{V}}_{-}$ moreover transforms quasi-isogenies $\delta:\underline{\mcE}\to\underline{\mcE}'$ of global $\gpSch$-shtukas into isomorphisms $\check{\mathcal{V}}_{\delta}: \check{\mathcal{V}}_{\underline{\mcE}}\to\check{\mathcal{V}}_{\underline{\mcE}'}$ of their rational Tate modules. 
\end{defn}

Let $\omega_{\mathbb{O}^\infty}\colon \Rep_{\mathbb{O}^\infty}\gpSch \to \mathfrak{M} od_{\mathbb{O}^\infty}$ and $\omega:=\omega_{\mathbb{O}^\infty}\otimes_{\mathbb{O}^\infty}\mathbb{A}^\infty\colon \Rep_{\mathbb{O}^\infty}\gpSch \to \mathfrak{M} od_{\mathbb{A}^\infty}$ denote the forgetful functors sending $(V,\rho)$ to $V$, and $V\otimes_{\mathbb{O}^\infty} \mathbb{A}^\infty$, respectively. For a global $\gpSch$-shtuka $\underline{\mcE}$ over $S$, consider the sets of isomorphisms of tensor functors $\Isom^{\otimes}(\omega_{\mathbb{O}^\infty},\check{\mathcal{T}}_{\underline{\mcE}})$ and $\Isom^{\otimes}(\omega,\check{\mathcal{V}}_{\underline{\mcE}})$. These sets are non-empty by Lemma~\ref{LSisnonempty} below. Since $X\smallsetminus\{\infty\}$ is the spectrum of a Dedekind domain, by the generalized Tannakian formalism \cite[Corollary~5.20]{Wed}, we have $ \gengpSch(\mathbb{A}^\infty)=\Aut^\otimes(\omega)$ and $\gpSch(\mathbb{O}^\infty)=\Aut^\otimes(\omega_{\mathbb{O}^\infty})$. By the definition of $\check{\mathcal{T}}_{\underline{\mcE}}$ in \eqref{tatefunctor}, $\Isom^{\otimes}(\omega_{\mathbb{O}^\infty},\check{\mathcal{T}}_{\underline{\mcE}})$ admits an action of $\pi_1^\et(S,\bar{s})\times\gpSch(\mathbb{O}^\infty)$ where $\gpSch(\mathbb{O}^\infty)$ acts through $\omega_{\mathbb{O}^\infty}$ and $\pi_1^\et(S,\bar{s})$ acts through $\check{\mathcal{T}}_{\underline{\mcE}}$. Likewise, $\Isom^{\otimes}(\omega,\check{\mathcal{V}}_{\underline{\mcE}})$ admits an action of $\pi_1^\et(S,\bar{s})\times \gengpSch(\mathbb{A}^\infty)$.

\begin{lem}\label{LSisnonempty}
The sets $\Isom^{\otimes}(\omega_{\mathbb{O}^\infty},\check{\mathcal{T}}_{\underline{\mcE}})$ and $\Isom^{\otimes}(\omega,\check{\mathcal{V}}_{\underline{\mcE}})$ are non-empty. Moreover, $\Isom^{\otimes}(\omega_{\mathbb{O}^\infty},\check{\mathcal{T}}_{\underline{\mcE}})$ (resp.~$\Isom^{\otimes}(\omega,\check{\mathcal{V}}_{\underline{\mcE}})$) is a principal homogeneous space under the group $\gpSch(\mathbb{O}^\infty)$ (resp.~$\gengpSch(\mathbb{A}^\infty)$).
\end{lem}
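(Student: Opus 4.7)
The plan is to first reduce the two assertions to the non-emptiness of $\Isom^\otimes(\omega_{\mathbb{O}^\infty},\check{\mathcal{T}}_{\underline{\mcE}})$, and then prove non-emptiness by reducing to a Lang-Steinberg-type argument place by place.

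First I would observe that the second assertion is formal once non-emptiness is established. Indeed, by the generalized Tannakian formalism of \cite{Wed} cited before the lemma we have $\gpSch(\mathbb{O}^\infty)=\Aut^\otimes(\omega_{\mathbb{O}^\infty})$ and $\gengpSch(\mathbb{A}^\infty)=\Aut^\otimes(\omega)$. Given any two isomorphisms $\alpha,\beta \colon \omega_{\mathbb{O}^\infty}\isoto \check{\mathcal{T}}_{\underline{\mcE}}$, the composite $\alpha^{-1}\circ\beta$ lies in $\Aut^\otimes(\omega_{\mathbb{O}^\infty})=\gpSch(\mathbb{O}^\infty)$, so the group acts simply transitively; the rational case is identical.

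The heart of the proof is non-emptiness of $\Isom^\otimes(\omega_{\mathbb{O}^\infty},\check{\mathcal{T}}_{\underline{\mcE}})$, and here the argument goes as follows. Both source and target are limits over finite closed subschemes $D\subset X\smallsetminus\{\infty\}$ of truncated pieces, and the transition maps are compatible, so it suffices to exhibit, compatibly in $D$, an isomorphism of tensor functors $\omega_D\isoto \check{\mathcal{T}}_{\underline{\mcE},D}$ where $\omega_D$ sends $(V,\rho)\mapsto V\otimes_{\mathbb{O}^\infty}\Oo_D$ and $\check{\mathcal{T}}_{\underline{\mcE},D}(V,\rho):=(\rho_*\underline{\mcE}|_{D_{\bar s}})^\varphi$. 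Since the leg $x$ factors through $\widehat{\infty}$, the divisor $D_S$ is disjoint from both legs, so $\varphi'\circ\varphi$ restricts to an isomorphism of the $\gpSch|_D$-bundle $\mcE|_{D_S}$ with its Frobenius pullback, i.e.\ an \emph{étale} local datum. Pulling back to the geometric base point $\bar s=\Spec k$ with $k$ algebraically closed, this data is classified by a torsor over $\Spec k$ under the smooth affine group scheme $\Res_{D/\F_q}(\gpSch|_D)\times_{\F_q}k$; by Lemma~\ref{LemmaGlobalLoopIsqc}\ref{LemmaGlobalLoopIsqc_C} (applied for $n=0$ and $D$ playing the role of the divisor) this Weil restriction has geometrically connected fibers, so Lang's theorem implies the torsor is trivial. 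Choosing a trivialization yields an isomorphism of $\Oo_D$-module valued tensor functors, and the Frobenius-invariants of the trivialized bundle are canonically identified with $V\otimes_{\mathbb{O}^\infty}\Oo_D$.

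The main obstacle is to make these trivializations compatible as $D$ grows, so as to obtain an element of the inverse limit. I would handle this by a standard successive-approximation argument: given a trivialization over $D$, the space of lifts to a larger $D'\supset D$ is a torsor under the smooth connected affine group scheme $\ker(\Res_{D'/\F_q}\gpSch|_{D'}\to \Res_{D/\F_q}\gpSch|_D)$ base changed to $k$, which is again trivialized by Lang. Taking the inverse limit produces an isomorphism $\omega_{\mathbb{O}^\infty}\isoto \check{\mathcal{T}}_{\underline{\mcE}}$ as desired. The rational version follows immediately by tensoring with $\mathbb{A}^\infty$ over $\mathbb{O}^\infty$, which is an exact operation and preserves tensor functoriality.
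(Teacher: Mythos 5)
The paper gives no argument of its own here: it simply cites \cite[Lemma~6.2]{AH_Unif}, so your proposal has to be measured against the proof of that external lemma, and in substance you reconstruct essentially the same strategy, correctly in outline. The principal-homogeneity part is indeed formal once non-emptiness is known, using $\Aut^\otimes(\omega_{\mathbb{O}^\infty})=\gpSch(\mathbb{O}^\infty)$ and $\Aut^\otimes(\omega)=\gengpSch(\mathbb{A}^\infty)$ from \cite{Wed}, and non-emptiness reduces to trivializing, for each finite $D\subset X\smallsetminus\{\infty\}$, the pair $(\mcE|_{D_{\bar s}},\varphi_\mcF)$ and then passing to the limit. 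Two steps are stated more loosely than they should be. First, this pair is not literally ``a torsor under $\Res_{D/\F_q}(\gpSch|_D)\times_{\F_q}k$'': one first trivializes the underlying bundle (a torsor under a smooth affine group scheme over the Artinian local rings of $D_{\bar s}$ with separably closed residue field, hence trivial), after which $\varphi_\mcF$ becomes an element $b\in\Res_{D/\F_q}(\gpSch|_D)(k)$, and what you actually need is surjectivity of the twisted Lang map $g\mapsto g^{-1}\,b\,\sigma(g)$. Second, the classical Lang--Steinberg theorem is over $\overline{\F}_q$, whereas the geometric point $\bar s=\Spec k$ of $S\in\Nilp_{\Oo_\infty}$ can be a much larger algebraically closed field; the argument does extend (the relative $q$-Frobenius is a $k$-morphism with vanishing differential, so both the untwisted and twisted Lang maps are \'etale with dense open image on the connected group $\Res_{D/\F_q}(\gpSch|_D)_k$, and one intersects images exactly as in Lang's proof), but this extension is the real content and should be spelled out or cited — it is precisely the ``every bundle-with-Frobenius over a separably closed field is trivial'' statement on which \cite{AH_Unif} rests. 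Also, Lemma~\ref{LemmaGlobalLoopIsqc}\ref{LemmaGlobalLoopIsqc_C} does not literally apply ``for $n=0$''; connectedness and smoothness of the Weil restriction come directly from \cite[Propositions A.5.2 and A.5.9]{CGP}. Finally, your successive-approximation step over growing $D$ works, but it can be replaced by the simpler observation that each level-$D$ set $\Isom^\otimes(\omega_D,\check{\mathcal{T}}_{\underline{\mcE},D})$ is a torsor under the finite group $\gpSch(\Oo_D)$, so the inverse limit of nonempty finite sets over a cofiltered index set is nonempty; the rational statement then follows by tensoring with $\mathbb{A}^\infty$ as you say.
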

\begin{proof}
This is \cite[Lemma~6.2]{AH_Unif}.
\end{proof}

\begin{defn}\label{DeFfrobIsog}
Let $m\in\N_0$.  
For every global $\gpSch$-shtuka $\underline{\mcE}:=(x,(\mcE,\psi),(\mcE',\psi'),\varphi,\varphi')\in \Sht_{\gpSch,D,\infty\times\infty}(S)$, the \emph{$q^m$-Frobenius quasi-isogeny} $\Phi_{\underline{\mcE}}^m\colon\underline{\mcE}\longto{}^{\tau^m\!} \underline{\mcE}
$ is defined as the tuple $\Phi_{\underline{\mcE}}^m = (f,f')$ with
\begin{alignat*}{4}
f & :={}^{\tau^{m-1}}(\varphi'\circ\varphi)\circ\ldots\circ{}^{\tau\!} (\varphi'\circ\varphi)\circ (\varphi'\circ\varphi) && \colon\;(\mcE,\psi)|_{(X\setminus\infty)_S} && \longto{}^{\tau^m\!}(\mcE,\psi)|_{(X\setminus\infty)_S} \\
f' & :={}^{\tau^m\!}\varphi\circ {}^{\tau^{m-1}}(\varphi'\circ\varphi)\circ\ldots\circ{}^{\tau\!} (\varphi'\circ\varphi)\circ \varphi' && \colon\;(\mcE',\psi')|_{(X\setminus\infty)_S} && \longto{}^{\tau^m\!}(\mcE',\psi')|_{(X\setminus\infty)_S}
\end{alignat*}
satisfying ${}^{\tau^m\!}\varphi\circ f = f'\circ \varphi$ and ${}^{\tau^m\!}\varphi'\circ f'={}^{\tau\!}f \circ \varphi'$; see Definition~\ref{DefIsogGlobalSht}.

Here we observe that the global $\gpSch$-shtuka ${}^{\tau^m\!}\underline{\mcE}$ is obtained by pulling back $\underline{\mcE}$ under the absolute $q^m$-Frobenius $\tau^m=\Frob_{q^m,S}\colon S\to S$. The leg $x$ of $\underline{\mcE}$ satisfies $x\circ\Frob_{q^m,S}=\Frob_{q^m,\F_\infty}\circ x=x$, because $x\colon S\to X$ factors through $\{\infty\}=\Spec\F_\infty\in X$ and $\Frob_{q^m,\F_\infty}=\id_{\F_\infty}$ by our assumption $\F_\infty=\F_q$. So $\underline{\mcE}$ and ${}^{\tau^m\!}\underline{\mcE}$ have the leg.
\end{defn}

\begin{cor}\label{CorFrobIsog}
Keep the situation of Definition~\ref{DeFfrobIsog}.

If $\gamma\in\Isom^{\otimes}(\omega,\check{\mathcal{V}}_{\underline{\mcE}})$ then we have an equality ${}^{\tau^m\!} (\gamma)=\check{\mathcal{V}}_{\Phi_{\underline{\mcE}}^m}\circ\gamma$ inside $\Isom^{\otimes}(\omega,\check{\mathcal{V}}_{{}^{\tau^m\!} \underline{\mcE}})$.
\end{cor}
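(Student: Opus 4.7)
The plan is to check the claimed equality pointwise on representations $(V,\rho)\in\Rep_{\mathbb{O}^\infty}\gpSch$ and pro-finite-levels $D\subset X\smallsetminus\{\infty\}$, and to reduce it to iterating the very definition of $\varphi$-invariance $m$ times. Fix such data, and write $\mcF:=\rho_\ast\mcE|_{D_S}$ with Frobenius $\varphi_\mcF:=\rho_\ast(\varphi'\circ\varphi)|_{D_S}\colon \mcF\isoto{}^{\tau\!}\mcF$ (an isomorphism because $D_S$ misses both graphs $\Gamma_x$ and $\infty_S$). By definition an element of $(\rho_\ast\underline{\mcE}|_{D_{\bar s}})^\varphi$ is an $m\in\mcF_{\bar s}$ satisfying $\varphi_\mcF(m)={}^{\tau\!}m$ under the canonical comparison of stalks; the Tate module $\check{\mathcal{V}}_{\underline{\mcE}}(\rho)$ is assembled from such $m$ by passing to the limit over $D$ and inverting primes outside $\infty$.

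Next I would compute $\check{\mathcal{V}}_{\Phi^m_{\underline{\mcE}}}\circ\gamma$ on a fixed $v\in\omega(\rho)$ with $\gamma(v)=m$. By Definition~\ref{DeFfrobIsog}, the quasi-isogeny component $f$ of $\Phi^m_{\underline{\mcE}}$ is the composite ${}^{\tau^{m-1}\!}(\varphi'\circ\varphi)\circ\ldots\circ(\varphi'\circ\varphi)\colon \mcE|_{(X\smallsetminus\infty)_S}\to{}^{\tau^m\!}\mcE|_{(X\smallsetminus\infty)_S}$. Pushing out and restricting to $D_S$ yields
\[
\rho_\ast(f)\;=\;{}^{\tau^{m-1}\!}\varphi_\mcF\circ\ldots\circ{}^{\tau\!}\varphi_\mcF\circ\varphi_\mcF\colon\mcF\isoto{}^{\tau^m\!}\mcF.
\]
An easy induction on $k$ using the relation $\varphi_\mcF(m)={}^{\tau\!}m$ gives
\[
({}^{\tau^{k}\!}\varphi_\mcF\circ\ldots\circ{}^{\tau\!}\varphi_\mcF\circ\varphi_\mcF)(m)\;=\;{}^{\tau^{k+1}\!}m,
\]
the inductive step being ${}^{\tau^{k}\!}\varphi_\mcF({}^{\tau^k\!}m)={}^{\tau^k\!}(\varphi_\mcF(m))={}^{\tau^{k+1}\!}m$. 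Taking $k=m-1$ shows $\check{\mathcal{V}}_{\Phi^m_{\underline{\mcE}}}\bigl(\gamma(v)\bigr)={}^{\tau^m\!}m$.

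On the other side, ${}^{\tau^m\!}\gamma$ is the pullback of $\gamma$ under $\tau^m=\Frob_{q^m,S}$. Since the forgetful functor $\omega$ takes values in $\mathbb{A}^\infty$-modules and does not involve $S$, the canonical identification ${}^{\tau^m\!}\omega=\omega$ fixes $v$; whereas the natural isomorphism ${}^{\tau^m\!}\check{\mathcal{V}}_{\underline{\mcE}}=\check{\mathcal{V}}_{{}^{\tau^m\!}\underline{\mcE}}$ sends the stalk element $m\in\mcF_{\bar s}$ to ${}^{\tau^m\!}m\in({}^{\tau^m\!}\mcF)_{\bar s}$. Hence ${}^{\tau^m\!}\gamma(v)={}^{\tau^m\!}m$ as well, matching the previous calculation. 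Naturality in $(V,\rho)$ and compatibility of the construction across the inverse limit over $D$ and the base change to $\mathbb{A}^\infty$ then upgrade this pointwise equality to the claimed equality of tensor natural transformations in $\Isom^\otimes(\omega,\check{\mathcal{V}}_{{}^{\tau^m\!}\underline{\mcE}})$.

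The only real subtlety — which I would want to treat carefully — is the bookkeeping for the canonical isomorphisms $({}^{\tau^k\!}\mcF)_{\bar s}\cong$ ``$\tau^k$-twist of $\mcF_{\bar s}$'' at the geometric base point, and verifying that under these identifications the relation $\varphi_\mcF(m)={}^{\tau\!}m$ can indeed be iterated as above. Once those identifications are fixed once and for all (which is automatic since $\bar s$ lies over the algebraically closed residue field of $\Nilp_{\Oo_\infty}$), both computations give the common value ${}^{\tau^m\!}m$, completing the proof.
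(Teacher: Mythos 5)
Your computation is correct and is essentially the argument the paper delegates to \cite[Corollary~6.3]{AH_Unif}: one checks the identity representation-wise and level-wise on $\varphi$-invariants, where iterating $\varphi_\mcF(m)={}^{\tau}m$ through the composite ${}^{\tau^{m-1}}\varphi_\mcF\circ\cdots\circ\varphi_\mcF=\rho_*(f)|_{D_S}$ gives ${}^{\tau^m}m$, matching the canonical identification $({}^{\tau^m}\mcF_{\bar s})^{\varphi}\cong$ image of $(\mcF_{\bar s})^{\varphi}$ under $m\mapsto{}^{\tau^m}m$ used to define ${}^{\tau^m}(\gamma)$. The bookkeeping point you flag is handled exactly as you suggest (the leg being constant at $\infty$ ensures $\varphi'\circ\varphi$ is an isomorphism on $D_S$ and that $\underline{\mcE}$ and ${}^{\tau^m}\underline{\mcE}$ share the same leg), so no gap remains.
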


\begin{proof}
This 
follows by the same argument as in \cite[Corollary~6.3]{AH_Unif}.
\end{proof}

\begin{example}\label{ExIsogTildephi}
For $m=1$ the composition of the $q$-Frobenius quasi-isogeny $\Phi_{\underline{\mcE}}\colon\underline{\mcE}\longto{}^{\tau\!} \underline{\mcE}$ with the isogeny $\Pi\colon \underline{\mcE}\longto [-1](\underline{\mcE})$ from Corollary~\ref{CorIsog[n]}\ref{CorIsog[n]_B} is given by the maps $\diagphi_i\colon \mcE_i \to {}^{\tau\!}\mcE_{i+1}$ from diagram~\eqref{Eq_ShtukaDiagTildephi}, which are isomorphisms on $(X\smallsetminus\{\infty\})_S$.
\end{example}

\begin{defn}\label{DefRatLevelStr}
Let $S\in\Nilp_{\Oo_\infty}$ be a connected scheme. 
For a compact open subgroup $H\subseteq  \gengpSch(\mathbb{A}^\infty)$, we define a \emph{rational $H$-level structure} $\bar\gamma$ on a global $\gpSch$-shtuka $\underline{\mcE}$ over $S$ as a $\pi_1^\et(S,\bar{s})$-invariant $H$-orbit $\bar\gamma=\gamma H$ in $\Isom^{\otimes}(\omega,\check{\mathcal{V}}_{\underline{\mcE}})$. For a non-connected scheme $S$, we make a similar definition by choosing a base point on each connected component and a rational $H$-level structure on the restriction to each connected component separately.

Let $\mcZ$ be a bound in $\Gr_{\gpSch,X\times\infty}\times_X \Spf\Oo_\infty$ which in terms of Definition~\ref{DefThreeTypes} is of local type \ref{DefThreeTypes_B} at the moving leg and of finite type \ref{DefThreeTypes_C} at the fixed leg $\infty$. Its reflex scheme is $\Spec\Oo_\mcZ$ for a finite ring extension $\Oo_\infty\subset \Oo_\mcZ$. We write $\kappa_\mcZ$ for the residue field of $\Oo_\mcZ$. We denote by $\Sht_{\gpSch,H,\widehat{\infty}\times\infty}$ (respectively $\Sht_{\gpSch,H,\widehat{\infty}\times\infty}^{\mcZ}
$) the category fibered in groupoids over $\Nilp_{\Oo_\infty}$ (respectively $\Nilp_{\Oo_\mcZ}$) whose $S$-valued points $\Sht_{\gpSch,H,\widehat{\infty}\times\infty}(S)$ (respectively $\Sht_{\gpSch,H,\widehat{\infty}\times\infty}^{\mcZ}(S)$) is the category whose 
\begin{itemize}
    \item objects are tuples $(\underline{\mcE},\gamma H)$ consisting of a global $\gpSch$-shtuka  $\underline{\mcE}$ over $S$ (respectively, which is bounded by $\mcZ$) together with a rational $H$-level structure $\gamma H$;
    \item morphisms are quasi-isogenies of global $\gpSch$-shtukas that are isomorphisms above $\infty$ and are compatible with the $H$-level structures.
\end{itemize}
We also set
\begin{align}
\Sht_{\gpSch,H,\infty\times\infty} & := \Sht_{\gpSch,H,\widehat{\infty}\times\infty}\times_{\Oo_\infty} \Spec \F_\infty \qquad \text{and} \\
\Sht_{\gpSch,H,\infty\times\infty}^{\mcZ} & := \Sht_{\gpSch,H,\widehat{\infty}\times\infty}^{\mcZ} \times_{\Oo_\mcZ} \Spec \kappa_\mcZ
\end{align}
\end{defn}

This definition of level structures generalizes our initial Definition~\ref{DefD-LevelStr} as follows. 
\begin{prop}\label{H_DL-Str} 
Let $D\subset X$ be a proper closed subscheme disjoint from $\infty$. Consider the compact open subgroup $H_D:=\ker\bigl(\gpSch(\mathbb{O}^\infty)\to\gpSch(\Oo_D)\bigr)\subset \gengpSch(\mathbb{A}^\infty)$. There is a canonical isomorphism of stacks
$$
\Sht_{\gpSch,D,\widehat{\infty}\times\infty} \enspace \isoto \enspace\Sht_{\gpSch,H_D,\widehat{\infty}\times\infty}.
$$
In particular, it induces an isomorphism of the moduli stacks of bounded $\gpSch$-shtukas
$$
\Sht_{\gpSch,D,\widehat{\infty}\times\infty}^\mcZ \enspace \isoto \enspace\Sht_{\gpSch,H_D,\widehat{\infty}\times\infty}^\mcZ.
$$
\end{prop}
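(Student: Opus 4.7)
The plan is to construct mutually inverse functors between $\Sht_{\gpSch,D,\widehat{\infty}\times\infty}$ and $\Sht_{\gpSch,H_D,\widehat{\infty}\times\infty}$ at the unbounded level, then observe that the boundedness condition by $\mcZ$ depends only on the underlying $\gpSch$-shtuka (i.e.~on the modifications $\varphi,\varphi'$, not on any level structure), so the equivalence restricts immediately to the bounded substacks.

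First I would build the forward functor. Given $(\underline{\mcE},\psi)\in\Sht_{\gpSch,D,\widehat{\infty}\times\infty}(S)$, the trivialization $\psi\colon \mcE|_{D_S}\isoto \gpSch\times_X D_S$ induces, for every $(V,\rho)\in\Rep_{\mathbb{O}^\infty}\gpSch$, an isomorphism $\rho_*\psi\colon \rho_*(\mcE|_{D_S})\isoto V\otimes_{\mathbb{O}^\infty}\Oo_{D_S}$. Since $D$ is disjoint from both legs $x$ and $\infty$, the composition $\varphi'\circ\varphi$ restricts to an honest isomorphism on $D_S$ that preserves $\psi$, and hence $\rho_*\psi$ intertwines the Frobenius $\varphi_\mcF=\rho_*(\varphi'\circ\varphi)|_{D_S}$ with the canonical Frobenius on the target. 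Passing to $\varphi$-invariants at the base point $\bar s$ yields a canonical, $\pi_1^\et(S,\bar s)$-equivariant identification $(\rho_*\underline{\mcE}|_{D_{\bar s}})^\varphi\cong V\otimes_{\mathbb{O}^\infty}\Oo_D$. Taking the limit over finite subschemes $D'\supseteq D$ (using compatible trivializations whose reductions to $D$ agree with $\psi$) and then tensoring with $\mathbb{A}^\infty$ produces a rational isomorphism $\gamma_\psi\in\Isom^\otimes(\omega,\check{\mathcal{V}}_{\underline{\mcE}})$ whose $H_D$-orbit is the desired rational $H_D$-level structure $\bar\gamma_\psi$.

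For the inverse functor, given $(\underline{\mcE},\bar\gamma)$, I reduce $\bar\gamma$ modulo $H_D$ to obtain a well-defined, $\pi_1^\et(S,\bar s)$-equivariant tensor isomorphism $\omega_{\mathbb{O}^\infty}\otimes_{\mathbb{O}^\infty}\Oo_D\isoto \check{\mathcal{T}}_{\underline{\mcE}}\otimes_{\mathbb{O}^\infty}\Oo_D$. By Tannakian duality (\cite[Corollary~5.20]{Wed}), this corresponds to an isomorphism of $\gpSch|_D$-torsors between the trivial torsor and the étale sheaf of $\varphi$-invariants of $\mcE|_{D_S}$. The key technical step is to show that, because $D$ avoids both legs, $\mcE|_{D_S}$ equipped with its Frobenius $\varphi'\circ\varphi$ is a crystal whose scheme of $\varphi$-fixed points is representable by an étale $\gpSch|_D$-torsor over $S$, and the associated pushforward $\gpSch|_D$-bundle on $D_S$ recovers $\mcE|_{D_S}$; this is a Lang-torsor / Artin--Schreier-type argument for the smooth group scheme $\gpSch|_D$. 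The $\pi_1^\et(S,\bar s)$-equivariance of $\bar\gamma$ then supplies descent data which produces a trivialization $\psi\colon \mcE|_{D_S}\isoto \gpSch\times_X D_S$.

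Finally, I would verify that the two functors are mutually inverse — a direct unwinding, since Frobenius-invariants of a trivialized torsor are tautologically trivial — and that morphisms match: on both sides, morphisms are quasi-isogenies that are isomorphisms at $\infty$ and preserve the respective level structure, and the two preservation conditions are visibly equivalent under the constructed bijection. The main obstacle is the Lang-torsor step for the smooth, possibly non-constant group scheme $\gpSch|_D$, namely the assertion that the $\varphi$-fixed-point functor on $\mcE|_{D_S}$ is representable by a scheme étale and $\gpSch|_D$-torsorial over $S$; smoothness of $\gpSch$ together with disjointness of $D$ from the legs (ensuring $\varphi'\circ\varphi$ is an honest isomorphism on $D_S$) are exactly the inputs needed, and the argument should proceed in analogy with the disjoint-legs setup treated in \cite[\S\,6]{AH_Unif}.
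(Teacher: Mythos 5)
The forward construction is fine and is the same route as the proof the paper invokes (it cites \cite[Theorem~6.5]{AH_Unif}): from $\psi$ one gets, via $\varphi$-invariants, a $\pi_1^\et(S,\bar s)$-equivariant trivialization of $\check{\mathcal{T}}_{\underline{\mcE}}\otimes_{\mathbb{O}^\infty}\Oo_D$, which lifts to some $\gamma\in\Isom^{\otimes}(\omega_{\mathbb{O}^\infty},\check{\mathcal{T}}_{\underline{\mcE}})$ because $\gpSch(\mathbb{O}^\infty)\to\gpSch(\Oo_D)$ is surjective by smoothness, and the class $\gamma H_D$ is well defined; likewise the Lang-torsor step (for the smooth Weil restriction of $\gpSch|_D$) is standard and unproblematic. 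The genuine gap is in your inverse functor. An $H_D$-level structure is a $\pi_1$-stable orbit $\gamma H_D$ in $\Isom^{\otimes}(\omega,\check{\mathcal{V}}_{\underline{\mcE}})$, i.e.\ an isomorphism of \emph{rational} Tate modules; such a $\gamma$ need not carry the lattice $\omega_{\mathbb{O}^\infty}$ into $\check{\mathcal{T}}_{\underline{\mcE}}$, so the step ``reduce $\bar\gamma$ modulo $H_D$ to obtain a tensor isomorphism over $\Oo_D$'' is not defined: there is no reduction of $\gamma$ to $\Oo_D$ unless $\gamma$ is integral, and integrality is not part of the data. This is exactly why morphisms in $\Sht_{\gpSch,H_D,\widehat{\infty}\times\infty}$ are taken to be quasi-isogenies that are isomorphisms above $\infty$ (Definition~\ref{DefRatLevelStr}): for essential surjectivity one must first replace $\underline{\mcE}$ by the shtuka $\underline{\mcE}'$ determined by the lattice $\gamma(\omega_{\mathbb{O}^\infty})\subset\check{\mathcal{V}}_{\underline{\mcE}}$, together with a prime-to-$\infty$ quasi-isogeny $\underline{\mcE}'\to\underline{\mcE}$ inducing this lattice (this is the analogue of Proposition~\ref{PropQIsogLocalGlobal}\ref{PropQIsogLocalGlobal_B} at the finitely many places where the lattices differ), and only then does your argument produce $\psi$. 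This lattice-modification step is the substantive content of the proposition and is missing from your proposal.

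A second, related inaccuracy: you assert that ``on both sides, morphisms are quasi-isogenies that are isomorphisms at $\infty$,'' but on the left-hand side $\Sht_{\gpSch,D,\widehat{\infty}\times\infty}$ the morphisms are isomorphisms of shtukas preserving $\psi$ (it is built from the Hecke stack), while on the right they are quasi-isogenies. So full faithfulness is not ``visible'': you must show that a quasi-isogeny $\alpha\colon\underline{\mcE}\to\underline{\widetilde{\mcE}}$ which is an isomorphism above $\infty$ and satisfies $\check{\mathcal{V}}_\alpha\circ\gamma_\psi H_D=\gamma_{\widetilde\psi}H_D$ necessarily maps $\check{\mathcal{T}}_{\underline{\mcE}}$ onto $\check{\mathcal{T}}_{\underline{\widetilde{\mcE}}}$ (because $\gamma_\psi(\omega_{\mathbb{O}^\infty})=\check{\mathcal{T}}_{\underline{\mcE}}$ and $H_D\subset\gpSch(\mathbb{O}^\infty)$), and hence, by Beauville--Laszlo glueing at the finitely many places where $\alpha$ is not yet known to be an isomorphism, is an honest isomorphism carrying $\psi$ to $\widetilde\psi$. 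With these two points supplied, your outline agrees with the argument of \cite[Theorem~6.5]{AH_Unif} that the paper cites; your final remark that boundedness depends only on the underlying shtuka, so the equivalence restricts to the bounded substacks, is correct.
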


\begin{proof}
This follows as in \cite[Theorem 6.5]{AH_Unif}. 
\end{proof}

\begin{remark}\label{RemIndAlgStack}
(a) Let $(\underline{\mcE},\gamma H)\in\Sht_{\gpSch,H,\widehat{\infty}\times\infty}(S)$. Then every choice of a representative $\gamma\in\Isom^{\otimes}(\omega,\check{\mathcal{V}}_{\underline{\mcE}})$ of the $H$-level structure $\gamma H$ induces a representation of the \'etale fundamental group given by
\begin{equation}\label{EqRepOfpi1}
\rho_{\underline{\mcE},\gamma}\colon \pi_1^\et(S,\bar s)\;\longto\; H\,,\quad g\;\longmapsto\;\gamma^{-1}\circ g(\gamma)\;=:\;\rho_{\underline{\mcE},\gamma}(g)\,.
\end{equation}
It is a group homomorphism because $$\rho_{\underline{\mcE},\gamma}(gg')=\gamma^{-1}\circ g(\gamma)\circ g\bigl(\gamma^{-1}\circ g'(\gamma)\bigr)=\rho_{\underline{\mcE},\gamma}(g)\cdot\rho_{\underline{\mcE},\gamma}(g'),$$ as $\gamma^{-1}\circ g'(\gamma)$ lies in $H$ on which $\pi_1^\et(S,\bar s)$ acts trivially. Replacing $\gamma$ by $\gamma h$, for $h\in H$, gives $\rho_{\underline{\mcE},\gamma}=\Int_h\circ\rho_{\underline{\mcE},\gamma h}$.

\medskip\noindent
(b) For any compact open subgroup $H\subseteq \gengpSch(\mathbb{A}^\infty)$ and any element $h\in \gengpSch(\mathbb{A}^\infty)$, there is an isomorphism $\Sht_{\gpSch,H,\widehat{\infty}\times\infty}\;\isoto\;\Sht_{\gpSch,h^{-1}Hh,\widehat{\infty}\times\infty}$ of stacks, given by $(\underline{\mcE},\gamma H)\mapsto\bigl(\underline{\mcE},\gamma h(h^{-1}Hh)\bigr)$. 
\end{remark}

\begin{prop}\label{PropLSGGsht1}
\begin{enumerate}
\item \label{PropLSGGsht1_A}
For any compact open subgroup $H\subseteq \gengpSch(\mathbb{A}^\infty)$, the stack 
$\Sht_{\gpSch,H,\widehat{\infty}\times\infty}$ is an ind-Deligne-Mumford stack, ind-separated and locally of ind-finite type over $\Spf \Oo_\infty$. 
If $\mathfrak{m}_\mcZ$ denotes the maximal ideal of $\Oo_\mcZ$, then for every $e\in\N$ the fiber product 
\[
\Sht_{\gpSch,H,\widehat{\infty}\times\infty}^\mcZ \widehattimes_{\Oo_\mcZ} \Spec \Oo_\mcZ/\mathfrak{m}_\mcZ^e
\]
is an (algebraic) Deligne-Mumford stack separated and locally of finite type over $\Spec\Oo_\mcZ/\mathfrak{m}_\mcZ^e$, and $\Sht_{\gpSch,H,\widehat{\infty}\times\infty}^\mcZ$ is a locally noetherian, adic formal algebraic Deligne-Mumford stack, separated and locally of finite type over $\Spf\Oo_\mcZ$ in the sense of \cite[Appendix~A]{HartlAbSh}. 
\item \label{PropLSGGsht1_B}
If $\widetilde H\subset H\subseteq \gengpSch(\mathbb{A}^\infty)$ are compact open subgroups then the forgetful morphism
\begin{equation}\label{forgetting-level-map}
\Sht_{\gpSch,\widetilde H,\widehat{\infty}\times\infty}\;\longto\;\Sht_{\gpSch,H,\widehat{\infty}\times\infty},\quad (\underline{\mcE},\gamma\widetilde H)\;\longmapsto\;(\underline{\mcE},\gamma H)
\end{equation}
is finite \'etale and surjective. Moreover, the same is true for the stack $\Sht_{\gpSch,\widetilde H,\widehat{\infty}\times\infty}^\mcZ$. 
\item \label{PropLSGGsht1_C}
Furthermore, if $\widetilde H$ is a normal subgroup of $H$, then the group $H/\widetilde H$ acts on $\Sht_{\gpSch,\widetilde H,\widehat{\infty}\times\infty}$ from the right via $h\widetilde H\colon(\underline{\mcE},\gamma\widetilde H)\mapsto(\underline{\mcE},\gamma h\widetilde H)$ for $h\widetilde H\in H/\widetilde H$. 

Moreover, the stack $\Sht_{\gpSch,H,\widehat{\infty}\times\infty}$ is canonically isomorphic to the stack quotient $\bigl[\Sht_{\gpSch,\widetilde H,\widehat{\infty}\times\infty}\big/(H/\widetilde H)\bigr]$ and $\Sht_{\gpSch,\widetilde H,\widehat{\infty}\times\infty}$ is a right $H \slash \widetilde H$-torsor over $\Sht_{\gpSch,H,\widehat{\infty}\times\infty}$ under the forgetful morphism \eqref{forgetting-level-map}. The same is true for the bounded shtukas. 
\end{enumerate}
\end{prop}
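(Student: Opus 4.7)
The strategy is to reduce to the case of finite level structures $H=H_D$ from Definition~\ref{DefD-LevelStr}, where Proposition~\ref{H_DL-Str} identifies our stack with the previously studied $\Sht_{\gpSch,D,\widehat{\infty}\times\infty}$, and then invoke Theorem~\ref{ThmSht} for part~\ref{PropLSGGsht1_A} and Theorem~\ref{Thm_Sht2legsBounded} for the bounded version, after base change to $\Spf\Oo_\infty$ (respectively $\Spf\Oo_\mcZ$). This mirrors the treatment of disjoint legs in \cite[\S\,6]{AH_Unif}.

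Concretely, given a compact open $H\subset\gengpSch(\mathbb{A}^\infty)$, I would first find a proper closed subscheme $D\subset X\smallsetminus\{\infty\}$ large enough so that $H_D\subset H$; since the $H_D$ form a fundamental system of neighborhoods of $1$ in the compact group $\gpSch(\mathbb{O}^\infty)$ and $H\cap\gpSch(\mathbb{O}^\infty)$ is open therein, such $D$ exists, and using part~(b) of Remark~\ref{RemIndAlgStack} to conjugate $H$ if necessary, one arranges that $H_D$ is also normal in $H$, so that $H/H_D$ is a finite abstract group. The forgetful map $(\underline{\mcE},\gamma H_D)\mapsto(\underline{\mcE},\gamma H)$ together with the right action $(\underline{\mcE},\gamma H_D)\cdot hH_D:=(\underline{\mcE},\gamma hH_D)$ then presents
$$
\Sht_{\gpSch,H,\widehat{\infty}\times\infty}\;\cong\;\bigl[\Sht_{\gpSch,H_D,\widehat{\infty}\times\infty}\big/(H/H_D)\bigr].
$$
\'Etale-local surjectivity of this presentation is the content of Lemma~\ref{LSisnonempty}, and freeness on objects follows from the $H$-torsor description of rational level structures. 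Part~\ref{PropLSGGsht1_A} is then immediate: the ind-DM, ind-separated, locally-of-ind-finite-type properties of $\Sht_{\gpSch,D,\widehat{\infty}\times\infty}$ from Theorem~\ref{ThmSht}\ref{ThmSht_A} survive quotient by the finite group $H/H_D$. For the bounded case, Theorem~\ref{Thm_Sht2legsBounded} yields a separated DM stack locally of finite type over each $\Spec\Oo_\mcZ/\mathfrak{m}_\mcZ^e$; quotienting by $H/H_D$ at every finite level and taking the formal colimit produces the locally noetherian adic formal algebraic DM-stack structure in the sense of \cite[Appendix~A]{HartlAbSh}.

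For parts~\ref{PropLSGGsht1_B} and \ref{PropLSGGsht1_C}, one further shrinks $D$ so that $H_D\subset\widetilde H$ with $H_D$ normal in $H$. The composite
$$
\Sht_{\gpSch,H_D,\widehat{\infty}\times\infty}\;\longto\;\Sht_{\gpSch,\widetilde H,\widehat{\infty}\times\infty}\;\longto\;\Sht_{\gpSch,H,\widehat{\infty}\times\infty}
$$
factors a $(H/H_D)$-torsor as a $(\widetilde H/H_D)$-torsor followed by the right-hand morphism (both torsor structures coming from Theorem~\ref{ThmSht}\ref{ThmSht_B} via Proposition~\ref{H_DL-Str}). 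The normality $\widetilde H/H_D\triangleleft H/H_D$ then identifies the right-hand map as a $(H/\widetilde H)$-torsor, hence finite \'etale and surjective, with the action described in part~\ref{PropLSGGsht1_C} being the one induced from the $(H/H_D)$-action on the total cover.

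The main obstacle is the first step: establishing that the forgetful morphism $\Sht_{\gpSch,H_D,\widehat{\infty}\times\infty}\to\Sht_{\gpSch,H,\widehat{\infty}\times\infty}$ is an \'etale-local $(H/H_D)$-torsor. This requires upgrading the $\fpqc$-local triviality of $\Isom^\otimes(\omega,\check{\mathcal{V}}_{\underline{\mcE}})$ from Lemma~\ref{LSisnonempty} to \'etale-local triviality, which should follow because the dual Tate modules $\check{\mathcal{T}}_{\underline{\mcE}}$ are constructible \'etale sheaves that become trivial on a profinite-\'etale neighborhood of any geometric point; one must additionally verify that the $H$-torsor structure on $\Isom^\otimes(\omega,\check{\mathcal{V}}_{\underline{\mcE}})$ is compatible with the choice of base point used for non-connected $S$, which is handled by applying the construction component-wise.
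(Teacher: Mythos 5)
Your overall strategy (reduce to $H_D$-level via Proposition~\ref{H_DL-Str}, quote Theorems~\ref{ThmSht} and \ref{Thm_Sht2legsBounded}, and then pass to $H$ by a finite quotient) is the same as the paper's. But there is a genuine gap at the pivotal step: you claim that, after conjugating $H$ using Remark~\ref{RemIndAlgStack}(b), one can arrange $H_D\subset H$ with $H_D$ \emph{normal} in $H$. This is not available. $H_D$ is normal in $\gpSch(\mathbb{O}^\infty)$, but an arbitrary compact open $H\subseteq\gengpSch(\mathbb{A}^\infty)$ need not be contained in $\gpSch(\mathbb{O}^\infty)$, need not normalize any $H_D$, and in general is not even conjugate into $\gpSch(\mathbb{O}^\infty)$ (replacing $H$ by $h^{-1}Hh$ changes nothing here: one would need $hH_Dh^{-1}$ to be of the form $H_{D'}$, which it is not). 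Without normality, $H/H_D$ is not a group, so your presentation $\Sht_{\gpSch,H,\widehat{\infty}\times\infty}\cong[\Sht_{\gpSch,H_D,\widehat{\infty}\times\infty}/(H/H_D)]$ does not even make sense as stated; the same defect recurs in your treatment of parts (b) and (c), where you again shrink $D$ ``so that $H_D$ is normal in $H$''.

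The paper's proof repairs exactly this point by a two-step quotient: set $H_1:=H\cap\gpSch(\mathbb{O}^\infty)$ (open, hence of finite index in the compact group $H$) and $H_2:=\bigcap_{h\in H/H_1}hH_1h^{-1}$, which is compact open, normal in $H$, and of finite index in $\gpSch(\mathbb{O}^\infty)$; then choose $D$ with $H_D\subset H_2$, so that $H_D$ is normal in $H_2$ (because $H_2\subset\gpSch(\mathbb{O}^\infty)$). One then applies part (c) twice, first to $H_D\triangleleft H_2$ and then to $H_2\triangleleft H$, obtaining $\Sht_{\gpSch,H,\widehat{\infty}\times\infty}$ from $\Sht_{\gpSch,H_D,\widehat{\infty}\times\infty}$ by successive quotients by the finite groups $H_2/H_D$ and $H/H_2$; separatedness is then deduced from the forgetful morphisms being finite surjective with separated source. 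Note also the logical order: the paper takes (b) and (c) as the standard statements (proved as in \cite[Theorem~6.7]{AH_Unif}, which is where the torsor/\'etale-descent content you sketch in your last paragraph actually lives) and uses them as input for (a), rather than deriving (b) and (c) from Theorem~\ref{ThmSht}\ref{ThmSht_B} alone. Your factorization idea for (b)/(c) is workable once you insert the intermediate normal subgroup as above, but as written the normality claim is the missing ingredient and cannot be obtained by conjugation.
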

\begin{proof}
Assertions~\ref{PropLSGGsht1_B} and \ref{PropLSGGsht1_C} are standard. See for example \cite[Theorem 6.7]{AH_Unif} for a detailed proof.      

\smallskip\noindent
\ref{PropLSGGsht1_A} The intersection $H_1:=H\cap\gpSch(\mathbb{O}^\infty)$ has finite index in $H$, because it is open and $H$ is compact. Thus the intersection $H_2:=\bigcap_{h\in H/H_1}h H_1 h^{-1}\subset H_1\subset \gpSch(\mathbb{O}^\infty)$ is compact open, normal in $H$, and of finite index in $\gpSch(\mathbb{O}^\infty)$. There is a proper closed subscheme $D\subset X$ with $H_D\subset H_2$, and this is a normal subgroup because $H_D$ is normal in $\gpSch(\mathbb{O}^\infty)$. Therefore, statement \ref{PropLSGGsht1_A} holds for $\Sht_{\gpSch,H_D,\widehat{\infty}\times\infty}$ by Theorems~\ref{H_DL-Str} and \ref{Thm_Sht2legsBounded}, see the definition in \eqref{EqShtHatInfty}. Consequently also $\Sht_{\gpSch,H_2,\widehat{\infty}\times\infty}$ and $\Sht_{\gpSch,H,\widehat{\infty}\times\infty}$ are ind-Deligne-Mumford stacks locally of ind-finite type over $\Spf \Oo_\infty$ by \ref{PropLSGGsht1_C} because they are obtained as stack quotients by finite groups. They are (ind-)separated over $\Spf \Oo_\infty$, because the forgetful morphisms in \ref{PropLSGGsht1_C} are finite surjective with (ind-)separated source.
\end{proof}

\subsection{Examples and relation to the previous literature}

We explain how our Theorems~\ref{Uniformization1} and \ref{Uniformization2} generalize uniformization results in the literature.

\begin{example}\label{ExDrinfeld}
Drinfeld~\cite{Drinfeld-elliptic-modules} defined ``elliptic modules'' (which are today called \emph{Drinfeld modules}), and constructed moduli spaces for them. In \cite{Drinfeld-commutative-subrings} he also defined the equivalent notion of \emph{elliptic sheaves}; see also \cite[Chapter~3]{Blum-Stuhler}. An elliptic sheaf over an $\F_q$-scheme $S$ is by definition a global $\gpSch$-shtuka $\underline{\mcE}=\bigl(x,(\mcE_i,\psi_i,\Pi_i,\diagphi_i)_{i\in\Z}\bigr)$ as in Corollary~\ref{CorShtWithChains} for $\gengpSch=\GL_r$, $\mu=(0,\ldots,0,-1)$. The group scheme $\gpSch\times_X (X\smallsetminus\{\infty\})$ equals $\GL_r$, but $\gpSch_\infty$ is an Iwahori subgroup of $\GL_r$ (e.g.~whose reduction is the Borel of lower triangular matrices), and 
\begin{equation}\label{EqExDrinfeld}
\beta = \left( \raisebox{4.6ex}{$
\xymatrix @C=0.2pc @R=0pc {
0 \ar@{.}[rrr]\ar@{.}[drdrdrdr] & & & 0 & z_\infty\\
1\ar@{.}[drdrdr]  & & &  & 0 \ar@{.}[ddd]\\
0 \ar@{.}[dd]\ar@{.}[drdr] & & & &  \\
& & & & \\
0 \ar@{.}[rr] & & 0 & 1 & 0\\
}$}
\right),
\end{equation}
where $z_\infty$ is a uniformizer at $\infty$. Its reflex ring $\BreveOReflZMuBeta=\Breve{\Oo}_\infty$. The elliptic sheaf $\underline{\mcE}$ is written in \cite{Drinfeld-commutative-subrings,Blum-Stuhler} in terms of chains of vector bundles $\mcF_i$ of rank $r$. These vector bundles are obtained from our right $\gpSch$-bundles $\mcE_i$ as $\mcF_i:=\mcE_i\times^{\gpSch} \Oo_{X_S}^r$ via the action of $\gpSch$ on $\Oo_{X_S}^r$ by left multiplication. The boundedness of $\diagphi_i$ by $\mu=(0,\ldots,0,-1)$ and of $\Pi_i$ by $\tau_{\gengpSch_\infty}^{i-1}(\beta)^{-1}=\beta^{-1}$ implies that $(\diagphi_i)^{-1}\colon {}^{\tau\!}\mcF_{i-1}\to\mcF_i$ and $\Pi_i^{-1}\colon \mcF_{i-1}\to\mcF_i$ are morphisms of vector bundles on $X_S$ whose cokernels are locally free of rank $1$ over $\Oo_S$ supported on $\infty$, and $\Gamma_x$, respectively. Moreover, for any $i$ the composition $\Pi_{i+r}^{-1}\circ\ldots\circ\Pi_{i+1}^{-1}\colon\mcF_i\to\mcF_{i+r}$ is bounded by $\beta^r= z_\infty\in \FcnFld_\infty^\times\subset \GL_r(\FcnFld_\infty)$. This is equivalent to the periodicity condition formulated in \cite[ii) on page~146]{Blum-Stuhler}. In this case $\mcM_{\beta^{-1}}$ is the group of units in the maximal order of the central division algebra over $\FcnFld_\infty$ of Hasse-invariant $1/r$. 

Drinfeld~\cite[Proposition~3]{Drinfeld-commutative-subrings} proved that the category of elliptic sheaves of rank $r$ over $S$ is equivalent to the category of Drinfeld $A$-modules over $S$ of rank $r$, where $A=\Gamma(X\smallsetminus\{\infty\},\Oo_X)$. In \cite{Drinfeld-elliptic-modules} he showed that the moduli spaces $M^r_{A,D}:=\Sht_{\gpSch,D,X\times\infty}^{\mcZ(\mu,\beta)}$ of Drinfeld $A$-modules of rank $r$ with $D$-level structure are uniformized over $\C_\infty$ by his $(r-1)$-dimensional upper halfspace $\Omega^r_{\FcnFld_\infty}$. This uniformization was worked out by Blum and Stuhler~\cite[Chapter~4]{Blum-Stuhler} in terms of elliptic sheaves. Both uniformization results are implied by our Theorem~\ref{Uniformization1Intro} as follows. As the global framing object $\FramingObject$  
we take the global $\gpSch$-shtuka over $\F_q$ given by $\mcE_0=\gpSch$ and $\diagphi_0={}^{\tau\!}\Pi_0$. Its quasi-isogeny group $I_\FramingObject$ equals $\gengpSch$. Then $\ulLocalFramingObject=L^+_{\infty,\mcM}(\FramingObject)=\bigl((L^+_\infty\mcM_{\beta^{-1}})_{\overline{\F}_q}, \beta^{-1})$, i.e.~$b=\beta^{-1}$. In view of Remark~\ref{Rem_LocShtInversePhi3}, the Rapoport-Zink space $\RZ_{\mcM,\ulLocalFramingObject}^{\leq\mu}$ is the disjoint union indexed by $\Z$ of Deligne's formal model $\widehat{\Omega}^r_{\FcnFld_\infty}$ of $\Omega^r_{\FcnFld_\infty}$; see for example~\cite[Example~6.14]{HartlViehmann3}. By the ``Lemma of the critical index'' \cite[Lemma~3.3.1]{Blum-Stuhler}, the isogeny class $\mathcal{X}_\FramingObject$ of $\FramingObject$ from Theorem~\ref{Uniformization1Intro} is the whole fiber $\Sht_{\gpSch,X\times\infty}^{\mcZ(\mu,\beta)}\times_X \Spec\overline{\F}_\infty$. This also follows from the fact that $B(M_{\beta^{-1}},-\mu)$ is a set of one element by \cite[\S\,6.11]{KottwitzII}. Thus this whole fiber is the \emph{Newton stratum} of $\ulLocalFramingObject:=L^+_{\infty,\mcM}(\FramingObject)$, i.e. for every global $\gpSch$-shtukas $\underline{\mcE}$ in this fiber the associated local $\mcM$-shtuka $L^+_{\infty,\mcM}(\underline{\mcE})$ is isogenous to $\ulLocalFramingObject$. By \cite[Proposition~11.4]{HartlAbSh} this implies that already $\underline{\mcE}$ and $\FramingObject$ are isogenous. As a consequence $\mathcal{X}_\FramingObject=\Sht_{\gpSch,X\times\infty}^{\mcZ(\mu,\beta)}\times_X \Spec\overline{\F}_\infty$. The adelic uniformization isomorphism described in \cite[Theorem~5.6]{Deligne-Husemoller} follows from our Theorem~\ref{Uniformization1Intro}
\begin{equation*}\label{Eq_ExLRS}
\Theta_{\FramingObject,\mathcal{X}}\colon \GL_r(\FcnFld)\backslash \widehat{\Omega}^r_{\FcnFld_\infty} \times \gengpSch(\mathbb{A}^\infty)/H_D \; \isoto \; M^r_{A,D} \times_X \Spf\Breve{\Oo}_\infty\,.
\end{equation*}
Note that we can use $\widehat{\Omega}^r_{\FcnFld_\infty}$ instead of $\RZ_{\mcM,\ulLocalFramingObject}^{\leq\mu}=\coprod_\Z \widehat{\Omega}^r_{\FcnFld_\infty}$, because the existence of elements in $\FcnFld^\times$ with vanishing order $1$ at $\infty$ implies $\FcnFld^\times\backslash\Z\times (\mathbb{A}^\infty)^\times=\FcnFld^\times\backslash\{1\}\times(\mathbb{A}^\infty)^\times$.
\end{example}

\begin{example}\label{ExLRS}
As a variant of the previous example, Laumon, Rapoport and Stuhler~\cite{Laumon-Rapoport-Stuhler} defined $\mathscr{D}$-elliptic sheaves, where $\mathscr{D}$ is a sheaf on $X$ of maximal orders in a central division algebra over $\FcnFld$ of dimension $d^2$. The point $\infty$ is chosen such that $\mathscr{D}\otimes_{\Oo_X} \Oo_\infty\cong M_d(\Oo_\infty)$. The group scheme $\gpSch\times_X (X\smallsetminus\{\infty\})$ equals the group $\mathscr{D}^\times$ of units in $\mathscr{D}$, and $\gpSch_\infty$ is an Iwahori subgroup of $\GL_r$ (e.g.~whose reduction is the Borel of lower triangular matrices). We take $\mu=(0,\ldots,0,-1)$, and $\beta$ is given by the matrix from \eqref{EqExDrinfeld}. Its reflex ring $\BreveOReflZMuBeta=\Breve{\Oo}_\infty$. Then by definition, a $\mathscr{D}$-elliptic sheaf over an $\F_q$-scheme $S$ is a global $\gpSch$-shtuka $\underline{\mcE}=\bigl(x,(\mcE_i,\psi_i,\Pi_i,\diagphi_i)_{i\in\Z}\bigr)$ as in Corollary~\ref{CorShtWithChains}. 
It is written in \cite[Definition~2.2]{Laumon-Rapoport-Stuhler} in terms of right modules $\mcF_i$ over $\mathscr{D}\otimes_{\F_q} \Oo_S$ which are locally free of rank one. These modules are obtained from our right $\gpSch$-bundles $\mcE_i$ as $\mcF_i:=\mcE_i\times^{\mathscr{D}^\times} \mathscr{D}$ via the action of $\gpSch=\mathscr{D}^\times$ on $\mathscr{D}$ by left multiplication. As in the previous example, $\mcM_{\beta^{-1}}$ is the group of units in the maximal order of the central division algebra over $\FcnFld_\infty$ of Hasse-invariant $1/r$.

The uniformization at $\infty$ of the moduli space $\mcE\ell\ell_{X,\mathscr{D},D}:=\Sht_{\gpSch,H_D,X\times\infty}^{\mcZ(\mu,\beta)}$ of $\mathscr{D}$-elliptic sheaves was mentioned by Laumon, Rapoport and Stuhler~\cite[(14.19)]{Laumon-Rapoport-Stuhler}, but not proven. We prove it in our Theorem~\ref{Uniformization1Intro}. As the global framing object $\FramingObject$ we take the global $\gpSch$-shtuka over $\F_q$ given by $\mcE_0=\gpSch$ and $\diagphi_0={}^{\tau\!}\Pi_0$. Then $\ulLocalFramingObject=L^+_{\infty,\mcM}(\FramingObject)=\bigl((L^+_\infty\mcM_{\beta^{-1}})_{\overline{\F}_q}, \beta^{-1})$, i.e.~$b=\beta^{-1}$. Its quasi-isogeny group $I_\FramingObject$ equals $\gengpSch$. As in the previous example, the Rapoport-Zink space $\RZ_{\mcM,\ulLocalFramingObject}^{\leq\mu}$ is the disjoint union indexed by $\Z$ of Deligne's formal model $\widehat{\Omega}^d_{\FcnFld_\infty}$ of $\Omega^d_{\FcnFld_\infty}$, and the isogeny class $\mathcal{X}_\FramingObject$ of $\FramingObject$ from Theorem~\ref{Uniformization1Intro} is the whole fiber $\Sht_{\gpSch,X\times\infty}^{\mcZ(\mu,\beta)}\times_X \Spec\overline{\F}_\infty$. We obtain the uniformization isomorphism
\begin{equation*}\label{Eq_ExLRS}
\Theta_{\FramingObject,\mathcal{X}}\colon \gengpSch(\FcnFld)\backslash \widehat{\Omega}^d_{\FcnFld_\infty} \times \gengpSch(\mathbb{A}^\infty)/H_D \; \isoto \; \mcE\ell\ell_{X,\mathscr{D},D} \times_X \Spf\Breve{\Oo}_\infty\,.
\end{equation*}
As in the previous example note that we can use $\widehat{\Omega}^r_{\FcnFld_\infty}$ instead of $\RZ_{\mcM,\ulLocalFramingObject}^{\leq\mu}=\coprod_\Z \widehat{\Omega}^r_{\FcnFld_\infty}$, because $\FcnFld^\times\backslash\Z\times (\mathbb{A}^\infty)^\times=\FcnFld^\times\backslash\{1\}\times(\mathbb{A}^\infty)^\times$.
\end{example}

\section{Uniformization}\label{ChaptUnif}

\subsection{Source and target of the uniformization map}\label{subsec:SourceTarget}

\begin{numberedparagraph}\label{SetupUniformization}
Throughout Chapter~\ref{ChaptUnif} we fix a global framing object $\FramingObject\in \Sht_{\gpSch,\varnothing,X\times\infty}^{\mcZ(\mu,\beta)}(\overline{\F}_\infty)$ and let $\ulLocalFramingObject:=(\LocalFramingObject,\widehat{\varphi}_{\LocalFramingObject}):=L^+_{\infty,\mcM_{\beta^{-1}}}(\FramingObject)$ denote the associated local $\mcM$-shtuka over $\BaseFldInSectUnif$, where $L^+_{\infty,\mcM_{\beta^{-1}}}$ is the $\beta^{-1}$-twisted global-local functor from Definition~\ref{DefGlobLocM}. We fix a trivialization $\ulLocalFramingObject\cong(L^+_\infty\mcM_{\overline{{\F}}_\infty},b)$ for $[b]\in B(M,\mu)$ by Proposition \ref{PropBoundWithChains}. Recall from Definition~\ref{DefIsogGlobalSht} and Remark~\ref{RemIsogGlobalSht} the group $I_{\FramingObject} (\FcnFld):=\mathrm{QIsog}_{\overline{\F}_\infty}(\FramingObject)$ of quasi-isogenies of the framing object.
\end{numberedparagraph}

\begin{prop}\label{Prop7.1}
Let $S$ be a connected non-empty scheme in $\Nilp_{\BreveOReflZMuBeta}$. 
Then the natural group homomorphism $\QIsog_\BaseFldInSectUnif(\FramingObject)\to\QIsog_{S}(\FramingObject_S)$, $g\mapsto g \times_{\BaseFldInSectUnif} S=:g_S$ is an isomorphism. 
\end{prop}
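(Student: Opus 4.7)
The map is a group homomorphism by construction.

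For injectivity, pick any geometric point $\bar s\to S$ (using non-emptiness); the further restriction $\QIsog_S(\FramingObject_S)\to \QIsog_{\bar s}(\FramingObject_{\bar s})$ composed with the given map is injective, since morphisms of $\gpSch$-bundles are determined by their base change along any faithfully flat extension.

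For surjectivity, fix $g\in\QIsog_S(\FramingObject_S)$. I will produce a preimage by descending the local and global parts of $g$ separately and glueing them via Proposition~\ref{PropQIsogLocalGlobal}. First, applying the $\beta^{-1}$-twisted global-local functor of Definition~\ref{DefGlobLocM} gives
\[
\hat g \;:=\; L^+_{\infty,\mcM_{\beta^{-1}}}(g)\;\in\;\QIsog_S(\ulLocalFramingObject_S);
\]
by rigidity of local shtuka quasi-isogenies (Proposition~\ref{PropRigidityLocal}), $\hat g$ descends uniquely from $S$ to $S_{\mathrm{red}}$. Second, the restriction $g|_{(X\smallsetminus\{\infty\})_S}$, together with its Frobenius compatibility, is encoded by the induced automorphism of the (pro-\'etale, hence locally constant on connected bases) rational Tate module functor $\check{\mathcal{V}}_{\FramingObject}$ of Definition~\ref{DefTateModule}; connectedness of $S$ and the algebraic closedness of $\BaseFldInSectUnif$ pin this automorphism down to a single element of $\gengpSch(\mathbb{A}^\infty)$ agreeing with $\check{\mathcal{V}}(g_{\bar s})$ at any geometric point $\bar s$ lying over $\Spec\BaseFldInSectUnif$. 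Combining these pieces via the Beauville--Laszlo glueing (Lemma~\ref{LemmaBL}) and Proposition~\ref{PropQIsogLocalGlobal}(b) produces $\tilde g\in\QIsog_\BaseFldInSectUnif(\FramingObject)$ with $\tilde g_S=g$.

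The main difficulty will be descending the local part $\hat g$ from $S_{\mathrm{red}}$ all the way to $\BaseFldInSectUnif$: Proposition~\ref{PropRigidityLocal} only treats the nilpotent direction, and the further descent across a connected, possibly higher-dimensional reduced base must use a form of horizontal rigidity coming from the locally-constant nature of quasi-isogenies as captured by the Tate module (or, equivalently, by the affine Deligne--Lusztig variety description of $\QIsog_\BaseFldInSectUnif(\ulLocalFramingObject)=J_\ulLocalFramingObject(\FcnFld_\infty)$ from Remark~\ref{RemMActsOnRZ}). Once this horizontal rigidity is established, the two descended pieces match automatically (both are obtained from restricting the same $g$, and the descents are unique), so the glueing in Proposition~\ref{PropQIsogLocalGlobal}(b) goes through without further compatibility checks.
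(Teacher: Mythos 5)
Your injectivity argument is fine, and the reduction along nilpotents via Proposition~\ref{PropRigidityLocal} (resp.\ \cite[Proposition~5.9]{AH_Local} for the global side) is also correct. But the surjectivity part has a genuine gap, and it sits exactly at the heart of the proposition: you still have to show that a quasi-isogeny of the \emph{constant} shtuka $\FramingObject_S$ over a reduced, connected, possibly higher-dimensional base $S$ is itself pulled back from $\BaseFldInSectUnif$. You name this ``horizontal rigidity'' and then assume it (``once this horizontal rigidity is established\ldots''), but this is not a technical afterthought --- it \emph{is} the content of the proposition once the easy nilpotent and injectivity steps are done. Neither of the tools you invoke supplies it: Proposition~\ref{PropRigidityLocal} only moves across locally nilpotent ideals, and the Tate-module observation only controls the \'etale realization of $g$. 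Knowing that $\check{\mathcal{V}}_g$ is a single element of $\gengpSch(\mathbb{A}^\infty)$ does not by itself descend $g$: the rational Tate module is blind to what happens along the divisor where $g$ fails to be an isomorphism and, more importantly, you would still need a faithfulness-plus-effectivity argument to conclude that the morphism of $\gpSch$-bundles over $X_S\smallsetminus N_S$ underlying $g$ is constant in the $S$-direction. Likewise, feeding a (hypothetically descended) $\hat g$ into Proposition~\ref{PropQIsogLocalGlobal}\ref{PropQIsogLocalGlobal_B} produces a new pair $(\hat g^*\FramingObject,\delta)$ over $\BaseFldInSectUnif$, not an element of $\QIsog_\BaseFldInSectUnif(\FramingObject)$; identifying $\hat g^*\FramingObject$ with $\FramingObject$ compatibly with $g$ again requires the very constancy statement you have postponed.

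The paper's own proof simply invokes \cite[Proposition~7.2]{AH_Unif}, where the missing step is carried out directly: after reducing to $S$ reduced, one regards $g$ (via a faithful representation of $\gpSch$) as a $\tau$-invariant section of a Hom-sheaf of vector bundles pulled back from $X_{\BaseFldInSectUnif}$ over $X_S\smallsetminus N_S$, and shows that $\tau$-invariant sections over a connected reduced base over the algebraically closed field $\BaseFldInSectUnif$ form the constant group $\QIsog_\BaseFldInSectUnif(\FramingObject)$ (an Artin--Schreier/Lang-type argument), so $g$ is constant. To repair your proposal you would need to prove such a statement; as written, the argument assumes what is to be shown.
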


\begin{proof}
This follows as in \cite[Proposition~7.2]{AH_Unif}.
\end{proof}

We next describe the source of the morphism $\Theta_{\FramingObject}$ from \eqref{EqUnifIntro}. 
Let $\RZ_{\mcM,\ulLocalFramingObject}^{\leq \mu}$ be the Rapoport-Zink space of $\ulLocalFramingObject$ with underlying topological space $X_{\mcM}^{\leq \mu}(b)$, see Theorem~\ref{ThmRRZSp}. 

\begin{numberedparagraph}\label{Point7.6}
Recall that the group $J_{\ulLocalFramingObject}(\FcnFld_\infty)=\QIsog_\BaseFldInSectUnif(\ulLocalFramingObject)$ of quasi-isogenies of $\ulLocalFramingObject$ over $\BaseFldInSectUnif$ acts naturally on $\RZ_{\mcM,\ulLocalFramingObject}^{\leq \mu}$ and on $X_{\mcM}^{\leq \mu}(b)$; see Remark~\ref{RemMActsOnRZ}. In particular, we see that the group $I_{\FramingObject}(\FcnFld)$ acts on $\RZ_{\mcM,\ulLocalFramingObject}^{\leq \mu}$ and $X_{\mcM}^{\leq \mu}(b)$ via the natural group homomorphism 
\begin{equation}\label{EqI(\FcnFld)}
L_{\infty,\mcM_{\beta^{-1}}}\colon I_{\FramingObject}(\FcnFld) \;\longto\; J_{\ulLocalFramingObject}(\FcnFld_\infty) \,,
\end{equation}
which sends a quasi-isogeny $\eta\in I_{\FramingObject}(\FcnFld)$ of $\FramingObject$ to the quasi-isogeny $L_{\mcM_{\beta^{-1}}}(\eta)$ of $\ulLocalFramingObject=L_{\mcM_{\beta^{-1}}}(\FramingObject)$.

The group $I_{\FramingObject}(\FcnFld)$ also acts naturally on $\check{{\mathcal{V}}}_{\FramingObject}$ and $\Isom^{\otimes}(\omega,\check{{\mathcal{V}}}_{\FramingObject})$ by sending $\gamma\in\Isom^{\otimes}(\omega,\check{{\mathcal{V}}}_{\FramingObject})$ to $\check{\mathcal{V}}_\eta\circ\gamma$ for $\eta\in I_{\FramingObject}(\FcnFld)$; see Definition~\ref{DefTateModule} and Lemma~\ref{LSisnonempty}. Upon choosing an element $\gamma_0\in\Isom^{\otimes}(\omega_{\mathbb{O}^\infty} ,\check{{\mathcal{T}}}_{\FramingObject})$, this defines an injective morphism 
\begin{equation}\label{EqEpsilon}
\zeta\colon  I_{\FramingObject}(\FcnFld) \;\longto\; \Aut^\otimes(\omega )\;\cong\;\gengpSch(\mathbb{A}^\infty),\quad\eta\mapsto\gamma_0^{-1}\circ\check{{\mathcal{V}}}_\eta\circ\gamma_0\,.
\end{equation}
The map $\zeta$ depends on $\gamma_0$ only up to $\gengpSch(\A^\infty)$-conjugation. 
\begin{lem}\label{injective-L-and-zeta}
The group homomorphisms $L_{\infty,\mcM_{\beta^{-1}}}$ and $\zeta$ from \eqref{EqI(\FcnFld)} and \eqref{EqEpsilon} are injective.
\end{lem}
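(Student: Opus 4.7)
The plan is to identify both $L_{\infty,\mcM_{\beta^{-1}}}$ and $\zeta$ as maps induced on $\FcnFld$-points by morphisms of affine $\FcnFld$-group schemes, so that injectivity reduces to the injectivity of the standard embeddings $\FcnFld\hookrightarrow \FcnFld_\infty$ and $\FcnFld\hookrightarrow\mathbb{A}^\infty$. The key input is the identification $I_{\FramingObject}=I_h$ from Remark~\ref{RemIsogGlobalSht} (via the forward-referenced Lemma~\ref{lemma-ADLV-identied-Xphi}), which realizes $I_{\FramingObject}$ as the automorphism group of the tensor functor $h=h_{\FramingObject}\colon \Rep_\FcnFld \gengpSch \to \mathcal{M}ot_X^\infty$. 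In particular, $I_{\FramingObject}$ is a closed affine subgroup scheme of (a twist of) $\gengpSch$ over $\FcnFld$, so the canonical map $I_{\FramingObject}(\FcnFld)\to I_{\FramingObject}(K)$ is injective for every field extension $K/\FcnFld$.

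For the injectivity of $\zeta$, I would argue as follows. The rational Tate module functor $\check{\mathcal{V}}_{\FramingObject}$ is obtained by post-composing the $\mathcal{M}ot_X^\infty$-valued functor $h$ with the fiber functor on $\mathcal{M}ot_X^\infty$ that takes $\check\varphi$-invariants in the $\mathbb{A}^\infty$-adelic direction. The trivialization $\gamma_0$ identifies this composed fiber functor with $\omega$, so that for $\eta\in I_{\FramingObject}(\FcnFld)$ the element $\zeta(\eta)=\gamma_0^{-1}\circ\check{\mathcal{V}}_\eta\circ\gamma_0$ is precisely the image of $\eta$ under the homomorphism of $\FcnFld$-group schemes $I_h\hookrightarrow \gengpSch\times_\FcnFld\Spec\mathbb{A}^\infty$ (the inclusion is a closed immersion because at every finite place the inner form becomes a subgroup of $\gengpSch$ via the trivialization). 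Passing to $\FcnFld$-points and then base changing by the injective ring map $\FcnFld\hookrightarrow\mathbb{A}^\infty$ yields $\zeta$, and both stages preserve injectivity.

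For the injectivity of $L_{\infty,\mcM_{\beta^{-1}}}$ I would factor it as
\[
I_{\FramingObject}(\FcnFld)\;\hookrightarrow\;I_{\FramingObject}(\FcnFld_\infty)\;\hookrightarrow\;J_{\ulLocalFramingObject}(\FcnFld_\infty).
\]
The first injection is again the base change $\FcnFld\hookrightarrow\FcnFld_\infty$ applied to the affine group scheme $I_{\FramingObject}$. The second injection records the $\beta^{-1}$-twisted global-local functor at $\infty$: by Proposition~\ref{PropGvsM} the functor $t_{\beta^{-1}}\circ L^+_{\infty,\gpSch}$ turns a quasi-isogeny of $\FramingObject$ into a quasi-isogeny of $\ulLocalFramingObject$, and this construction is induced by an inclusion of $\FcnFld_\infty$-algebraic groups $I_{\FramingObject,\FcnFld_\infty}\hookrightarrow J_{\ulLocalFramingObject}$ (the target $J_{\ulLocalFramingObject}$ being defined via \eqref{EqGroupJ} as a closed subgroup of $\mcM_{\Breve{\FcnFld}_\infty}$, into which $I_{\FramingObject,\Breve{\FcnFld}_\infty}\subset\gengpSch_{\Breve{\FcnFld}_\infty}\cong\mcM_{\Breve{\FcnFld}_\infty}$ lands by the defining equation $\Int_g\circ h=h$). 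Alternatively, one may argue directly using Proposition~\ref{PropQIsogLocalGlobal}\ref{PropQIsogLocalGlobal_B} and the rigidity of quasi-isogenies (Proposition~\ref{PropRigidityLocal}): if $L_{\infty,\mcM_{\beta^{-1}}}(\eta)=\id$, then the restriction of $\eta$ to a formal neighborhood of $\infty$ is the identity, hence $\eta$ is an isomorphism at $\infty$, and the unique-lift statement of Proposition~\ref{PropQIsogLocalGlobal}\ref{PropQIsogLocalGlobal_B} together with the Tannakian rigidity forces $\eta=\id$.

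The main obstacle is the second step above, namely establishing that $L_{\infty,\mcM_{\beta^{-1}}}$ really arises from a closed immersion of $\FcnFld_\infty$-group schemes $I_{\FramingObject,\FcnFld_\infty}\hookrightarrow J_{\ulLocalFramingObject}$. Once this is in place, both injectivity statements become formal. One needs to identify the defining equations of $J_{\ulLocalFramingObject}$ (involving $\tau_M$ and $b=\beta^{-1}\cdot L^+_{\infty,\gpSch}(\varphi'\circ\varphi)$) with the Tannakian condition $\Int_g\circ h=h$ at the local component of $h$ at $\infty$; this is essentially the content of the $\beta^{-1}$-twist in Definition~\ref{DefGlobLocM} and Corollary~\ref{CorGlobalLocalFunctorWithChains}.
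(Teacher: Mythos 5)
Your strategy—realize $I_{\FramingObject}$ as an affine $\FcnFld$-group via the Tannakian identification $I_{\FramingObject}=I_h$ and reduce injectivity to base change along $\FcnFld\hookrightarrow\FcnFld_\infty$, $\FcnFld\hookrightarrow\mathbb{A}^\infty$—is genuinely different from the paper's, but as written it has gaps at exactly the points where the content of the lemma sits. First, both reductions hinge on unproven monomorphism claims: that $\zeta$ is induced (via $\gamma_0$) by an injective homomorphism from $I_h\otimes_\FcnFld\mathbb{A}^\infty$ into $\gengpSch_{\mathbb{A}^\infty}$, and that $L_{\infty,\mcM_{\beta^{-1}}}$ comes from a closed immersion $I_{\FramingObject,\FcnFld_\infty}\hookrightarrow J_{\ulLocalFramingObject}$. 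You yourself flag the second as ``the main obstacle,'' but verifying either one amounts to proving that the \'etale (resp.\ crystalline/local) realization is faithful on quasi-isogenies—which is precisely what the lemma asserts, so nothing has been reduced. Moreover, the paper deliberately avoids the algebraicity of $I_{\FramingObject}$ (Remark~\ref{RemIsogGlobalSht} states it is not needed in this article), so importing it, and the comparison $I_{\FramingObject}=I_h$ of Lemma~\ref{lemma-ADLV-identied-Xphi}, is a heavy and here circular-feeling input. Second, your fallback argument for $L_{\infty,\mcM_{\beta^{-1}}}$ does not work: Proposition~\ref{PropQIsogLocalGlobal}\ref{PropQIsogLocalGlobal_B} only gives uniqueness among quasi-isogenies that are isomorphisms \emph{outside} $\infty$, whereas a general $\eta\in I_{\FramingObject}(\FcnFld)$ is only an isomorphism outside some finite $N$ which may contain other places, so the uniqueness statement does not apply to $\eta$; and rigidity (Proposition~\ref{PropRigidityLocal}) concerns nilpotent thickenings and is vacuous over the reduced base $\overline{\F}_\infty$.

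For comparison, the paper's proof is short and elementary: choose a faithful representation $\rho\colon\gpSch\into\GL(\mathcal{V})$, so that $\eta$ induces a quasi-isogeny of the vector-bundle shtuka $M=\rho_*\FramingObjectComp$. If $\zeta(\eta)=1$, then $\check{\mathcal{V}}_\eta=\id$, and the faithfulness of the Tate-module functor for vector-bundle shtukas (\cite[Proposition~3.4]{AH_Local}) forces $\eta|_{X_{\overline{\F}_\infty}\setminus\{\infty\}}=\id$, hence $\eta=\id$. For $L_{\infty,\mcM_{\beta^{-1}}}$, note that $L^+_{\infty}(M)$ is the completion of $M$ along $\Gamma_x$, and completion at a point of an integral curve is faithful on homomorphisms of vector bundles, so $L_{\infty,\mcM_{\beta^{-1}}}(\eta)=\id$ already forces $\eta=\id$. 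If you want to keep your group-theoretic route, you would need to actually establish the two realization monomorphisms, and the natural way to do that is again via these faithfulness statements.
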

\begin{proof}
Consider a faithful representation $\rho\colon\gpSch\into\GL({\mathcal{V}})$ as in \cite[Proposition~2.2(a)]{AH_Unif} for a vector bundle $\mathcal{V}$ on $X$. Then $\rho_*\eta$ induces a quasi-isogeny of the vector bundle $M:=\rho_*\FramingObjectComp$ on $X_S$ associated with $\rho_*\FramingObject$. If $\eta$ lies in the kernel of $\zeta$ then its restriction to $X_\BaseFldInSectUnif\setminus\{\infty\}$ is the identity on $M$ because of \cite[Proposition~3.4]{AH_Local}. Therefore $\eta$ must be the identity. This proves that $\zeta$ is injective. On the other hand, $L^+_{\infty,\mcM_{\beta^{-1}}}(M)$ is the completion of $M$ at the graph $\Gamma_{x}$ of $x\colon\Spec\BaseFldInSectUnif\to X$. Since this completion functor is faithful, also $L_{\infty,\mcM_{\beta^{-1}}}\colon I_{\FramingObject}(\FcnFld) \to J_{\ulLocalFramingObject}(\FcnFld_\infty)$ is injective.
\end{proof}
By Lemma \ref{injective-L-and-zeta}, we have 
the following injective morphism
\begin{equation}
(L_{\infty,\mcM_{\beta^{-1}}},\zeta)\colon I_{\FramingObject}(\FcnFld)\hookrightarrow J_{\ulLocalFramingObject}(\FcnFld_\infty)\times \gengpSch(\mathbb{A}^\infty)
\end{equation}
and we identify $I_{\FramingObject}(\FcnFld)$ with its image. 
\begin{lem}\label{LemmaIisDiscrete}
$I_{\FramingObject}(\FcnFld)$ is a discrete subgroup of $J_{\ulLocalFramingObject}(\FcnFld_\infty)\times \gengpSch(\mathbb{A}^\infty)$.
\end{lem}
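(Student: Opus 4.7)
The plan is to exhibit an open neighborhood of the identity in the product topological group $J_{\ulLocalFramingObject}(\FcnFld_\infty)\times \gengpSch(\mathbb{A}^\infty)$ whose intersection with $I_{\FramingObject}(\FcnFld)$ reduces to the identity. On the local factor I would take $K_\infty\subseteq J_{\ulLocalFramingObject}(\FcnFld_\infty)$ to be the subgroup of quasi-isogenies of $\ulLocalFramingObject$ that are honest automorphisms of $\ulLocalFramingObject$ as a local $\mcM$-shtuka; equivalently, $K_\infty=J_{\ulLocalFramingObject}(\FcnFld_\infty)\cap L^+_\infty\mcM(\overline{\F}_\infty)$, which is a compact open subgroup of $J_{\ulLocalFramingObject}(\FcnFld_\infty)$. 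On the adelic factor I fix a proper closed subscheme $D\subset X\smallsetminus\{\infty\}$ and use the compact open subgroup $H_D=\ker\bigl(\gpSch(\mathbb{O}^\infty)\to \gpSch(\Oo_D)\bigr)\subset\gengpSch(\mathbb{A}^\infty)$ from Proposition~\ref{H_DL-Str}.

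Next I would show that $I_{\FramingObject}(\FcnFld)\cap (K_\infty\times H_D)\subseteq \Aut_{\overline{\F}_\infty}(\FramingObject)$. Given $\eta$ in this intersection, the condition $L^+_{\infty,\mcM_{\beta^{-1}}}(\eta)\in K_\infty$ says that $\eta$ induces an honest automorphism of $\ulLocalFramingObject$ as local $\mcM$-shtuka, so $\eta$ is an isomorphism on the formal neighborhood of $\infty$. The condition $\zeta(\eta)\in H_D\subseteq\gpSch(\mathbb{O}^\infty)$ says that $\check{\mathcal{V}}_\eta$ carries the integral Tate module $\check{\mathcal{T}}_{\FramingObject}$ (via $\gamma_0$) isomorphically onto itself. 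Composing with a faithful representation $\rho\colon\gpSch\hookrightarrow\GL(\mathcal{V})$ as in the proof of Lemma~\ref{injective-L-and-zeta}, the induced quasi-isogeny $\rho_*\eta$ acts integrally on the $v$-adic Tate modules at every place $v\neq\infty$; by the shtuka analogue of the classical Tate-module integrality criterion, $\rho_*\eta$ must be an isomorphism of vector bundles on $X_{\overline{\F}_\infty}\smallsetminus\{\infty\}$, and since $\rho$ is faithful this forces $\eta$ itself to be an isomorphism there. Together with integrality at $\infty$, we conclude $\eta\in \Aut_{\overline{\F}_\infty}(\FramingObject)$.

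By Corollary~\ref{CorAutFinite}, the group $\Aut_{\overline{\F}_\infty}(\FramingObject)$ is finite, so the intersection $I_{\FramingObject}(\FcnFld)\cap (K_\infty\times H_D)$ is finite. Since the ambient group $J_{\ulLocalFramingObject}(\FcnFld_\infty)\times\gengpSch(\mathbb{A}^\infty)$ is Hausdorff, I can shrink $K_\infty\times H_D$ to a smaller open neighborhood $U$ of the identity disjoint from the (finitely many) non-identity elements of this intersection. Then $U\cap I_{\FramingObject}(\FcnFld)=\{1\}$, which is precisely the discreteness of $I_{\FramingObject}(\FcnFld)$.

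The main hurdle will be rigorously justifying the step that an element $\eta\in I_{\FramingObject}(\FcnFld)$ whose image under $\zeta$ is integral must be a genuine isomorphism of $\gpSch$-bundles, and not merely a quasi-isogeny, on $X_{\overline{\F}_\infty}\smallsetminus\{\infty\}$. This is the shtuka version of the classical statement that an integral action on prime-to-$p$ Tate modules upgrades a quasi-isogeny of abelian varieties (or of $p$-divisible groups) to an honest isogeny, and one can likely prove it by reducing to the case of vector bundles via a faithful representation of $\gpSch$ and then checking integrality place by place via the $v$-adic completion of the Tate module for $v\ne\infty$.
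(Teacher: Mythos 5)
Your proposal is correct and follows essentially the same route as the paper: intersect $I_{\FramingObject}(\FcnFld)$ with a compact open subgroup of the form (automorphisms of the local framing shtuka at $\infty$) $\times$ (an integral level subgroup of $\gpSch(\mathbb{O}^\infty)$), observe that such elements are honest automorphisms of $\FramingObject$, and conclude finiteness from Corollary~\ref{CorAutFinite}. The "main hurdle" you flag — upgrading a quasi-isogeny that is integral at $\infty$ and on the Tate modules to a genuine automorphism — is exactly the step the paper handles (tersely) via the identification $\gpSch(\mathbb{O}^\infty)=\gamma_0\Aut^{\otimes}(\check{\mathcal{T}}_{\FramingObject})\gamma_0^{-1}$, so you are not missing anything beyond what the paper itself leaves implicit.
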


\begin{proof}   
To show this, we take an open subgroup $U\subset\Aut_\BaseFldInSectUnif(\ulLocalFramingObject)$ and consider the open subgroup $U\times\gpSch(\mathbb{O}^\infty)\subset J_{\ulLocalFramingObject}(\FcnFld_\infty)\times \gengpSch(\mathbb{A}^\infty)$. Since $\gpSch(\mathbb{O}^\infty)=\gamma_0\Aut^{\otimes}(\check{{\mathcal{T}}}_{\FramingObject})\gamma_0^{-1}$, the elements of $I_{\FramingObject}(\FcnFld)\cap \bigl(U\times\gpSch(\mathbb{O}^\infty)\bigr)$ give automorphisms of the global $\gpSch$-shtuka $\FramingObject$. Then the finiteness of $I_{\FramingObject}(\FcnFld)\cap \bigl(U\times\gpSch(\mathbb{O}^\infty)\bigr)$ follows from Corollary~\ref{CorAutFinite}. 
\end{proof}

To state the properties of the source of $\Theta_{\FramingObject}$, we say that a formal algebraic Deligne-Mumford stack $\mathcal{Y}$ over $\Spf\BreveOReflZMuBeta$ (see \cite[Definition~A.5]{HartlAbSh}) is \emph{$\mathcal{I}$-adic} for a sheaf of ideals $\mathcal{I}\subset\Oo_\mathcal{Y}$, if for some (any) presentation $Y\to\mathcal{Y}$, the formal scheme $Y$ is $\mathcal{I}\Oo_Y$-adic, i.e.~$\mathcal{I}^r\Oo_Y$ is an ideal of definition of $Y$ for all $r\in \N_{>0}$. We then call $\mathcal{I}$ an \emph{ideal of definition of $\mathcal{Y}$}. We say that $\mathcal{Y}$ is \emph{locally formally of finite type} over $\Spf\BreveOReflZMuBeta$ if $\mathcal{Y}$ is locally noetherian, adic, and if the closed substack defined by the largest ideal of definition (see \cite[A.7]{HartlAbSh}) is an algebraic stack locally of finite type over $\Spec\BaseFldInSectUnif$.
\end{numberedparagraph}
\noindent Let $H\subset \gengpSch(\mathbb{A}^\infty)$ be a compact open subgroup. Consider the countable double coset 
\begin{equation}\label{double-coset-IF}
I_{\FramingObject}(\FcnFld) \backslash\Isom^{\otimes}(\omega ,\check{{\mathcal{V}}}_{\FramingObject})/H\cong I_{\FramingObject}(\FcnFld) \backslash \gengpSch(\mathbb{A}^\infty)/H.
\end{equation}
For $\bar\gamma:=\gamma H\in \Isom^{\otimes}(\omega,\check{\mathcal{V}}_{\FramingObject})/H$ the coset $h_\gamma H\subset \gengpSch(\mathbb{A}^\infty)/H$ of $h_\gamma:=\gamma_0^{-1}\gamma \in \gengpSch(\mathbb{A}^\infty)$ is well defined. We set  
\begin{equation}
\Gamma_{\bar\gamma}\;:=\; I_{\FramingObject}(\FcnFld)\cap \bigl(J_{\ulLocalFramingObject}(\FcnFld_\infty)\times h_\gamma H h_\gamma^{-1}\bigr)\;\subset\; J_{\ulLocalFramingObject}(\FcnFld_\infty).
\end{equation}
This is a discrete subgroup for the $\infty$-adic topology by Lemma~\ref{LemmaIisDiscrete}. Moreover, $\Gamma_{\overline{\gamma}}$ is separated in the profinite topology, i.e.~for every $1\ne \eta \in \Gamma_{\bar\gamma}$, there is a normal subgroup of finite index in $\Gamma_{\bar\gamma}$ that does not contain $g$. Indeed, there is a normal subgroup $\widetilde H\subset H$ of finite index such that $h_\gamma\widetilde H h_\gamma^{-1}$ does not contain the element $\zeta(\eta)\ne1$. 

\begin{prop}\label{PropQuotientByI}
(a) The quotient of $\RZ_{\mcM,\ulLocalFramingObject}^{\leq \mu}\times \Isom^{\otimes}(\omega ,\check{{\mathcal{V}}}_{\FramingObject})/H$ by the abstract group $I_{\FramingObject}(\FcnFld)$ exists as a locally noetherian, adic, formal algebraic Deligne-Mumford stack locally formally of finite type over $\Spf\BreveOReflZMuBeta$ and is given by the following disjoint union 
\begin{equation}\label{EqSourceOfTheta}
I_{\FramingObject}(\FcnFld) \big{\backslash}\bigl(\RZ_{\mcM,\ulLocalFramingObject}^{\leq \mu}\times \Isom^{\otimes}(\omega ,\check{{\mathcal{V}}}_{\FramingObject})/H\bigr) \enspace\cong\enspace \coprod_{\bar\gamma} \Gamma_{\bar\gamma}\big{\backslash}\RZ_{\mcM,\ulLocalFramingObject}^{\leq \mu}\,.
\end{equation}
Here $\bar\gamma:=\gamma H\in\Isom^{\otimes}(\omega ,\check{{\mathcal{V}}}_{\FramingObject})/H$ runs through a set of representatives for the double coset \eqref{double-coset-IF}.

(b) The quotient morphisms 
\begin{align}\label{EqPropQuotientByIEtale}
\RZ_{\mcM,\ulLocalFramingObject}^{\leq \mu} & \enspace \onto \enspace \Gamma_{\bar\gamma}\big{\backslash}\RZ_{\mcM,\ulLocalFramingObject}^{\leq \mu} \qquad \text{and} \nonumber \\
\bigl(\RZ_{\mcM,\ulLocalFramingObject}^{\leq \mu}\times \Isom^{\otimes}(\omega ,\check{{\mathcal{V}}}_{\FramingObject})/H\bigr) & \enspace \onto \enspace I_{\FramingObject}(\FcnFld) \big{\backslash}\bigl(\RZ_{\mcM,\ulLocalFramingObject}^{\leq \mu}\times \Isom^{\otimes}(\omega ,\check{{\mathcal{V}}}_{\FramingObject})/H\bigr) 
\end{align}
are adic and \'etale.

(c) In particular, the closed substack of \eqref{EqSourceOfTheta} defined by the largest ideal of definition is the Deligne-Mumford stack locally of finite type over $\Spec\BaseFldInSectUnif$ given by
\begin{equation}\label{EqReducedSourceOfTheta}
I_{\FramingObject}(\FcnFld) \big{\backslash}X_{\mcM}^{\leq \mu}(b)\times \Isom^{\otimes}(\omega ,\check{{\mathcal{V}}}_{\FramingObject})/H \enspace\cong\enspace \coprod_{\bar\gamma} \Gamma_{\bar\gamma}\big{\backslash}X_{\mcM}^{\leq \mu}(b)\,.
\end{equation}
\end{prop}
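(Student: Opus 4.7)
The plan is to first decompose the $I_{\FramingObject}(\FcnFld)$-quotient orbit-by-orbit on the discrete factor $\Isom^{\otimes}(\omega,\check{\mathcal{V}}_{\FramingObject})/H$, and then handle each resulting $\Gamma_{\bar\gamma}$-quotient of $\RZ_{\mcM,\ulLocalFramingObject}^{\leq\mu}$ separately. For orbit counting, first I would identify $\Isom^{\otimes}(\omega,\check{\mathcal{V}}_{\FramingObject})/H$ equivariantly with $\gengpSch(\mathbb{A}^\infty)/H$ via $\gamma\mapsto \gamma_0^{-1}\gamma\cdot H$, where the $I_{\FramingObject}(\FcnFld)$-action on the right is given by left translation through $\zeta$. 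The $I_{\FramingObject}(\FcnFld)$-orbits are then indexed by a set of representatives $\bar\gamma=\gamma H$ of the double coset in \eqref{double-coset-IF}, and for each such $\bar\gamma$ the stabilizer is
\[
\{\eta\in I_{\FramingObject}(\FcnFld)\colon \zeta(\eta)\in h_\gamma H h_\gamma^{-1}\}\;=\;\Gamma_{\bar\gamma}
\]
by the very definition of $\Gamma_{\bar\gamma}$. Hence the decomposition~\eqref{EqSourceOfTheta} holds tautologically as a disjoint union of stack quotients, provided we establish that each $\Gamma_{\bar\gamma}\backslash\RZ_{\mcM,\ulLocalFramingObject}^{\leq\mu}$ exists as a formal algebraic DM stack with the asserted properties and that the quotient maps are adic and \'etale.

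Next I would analyse the action of $\Gamma_{\bar\gamma}$ on $\RZ_{\mcM,\ulLocalFramingObject}^{\leq\mu}$. It acts through the first projection into $J_{\ulLocalFramingObject}(\FcnFld_\infty)$ (see \S\,\ref{Point7.6}), and by Lemma~\ref{LemmaIisDiscrete} the subgroup $\Gamma_{\bar\gamma}\subset J_{\ulLocalFramingObject}(\FcnFld_\infty)$ is discrete. The key claim is that $\Gamma_{\bar\gamma}$ acts on $\RZ_{\mcM,\ulLocalFramingObject}^{\leq\mu}$ properly discontinuously, i.e.~for every quasi-compact open formal subscheme $V\subset\RZ_{\mcM,\ulLocalFramingObject}^{\leq\mu}$ the set of $\eta\in\Gamma_{\bar\gamma}$ with $\eta(V)\cap V\ne\varnothing$ is finite. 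I would prove this by combining: (i) the continuity of the $J_{\ulLocalFramingObject}(\FcnFld_\infty)$-action from Remark~\ref{RemMActsOnRZ} together with the description of $\RZ_{\mcM,\ulLocalFramingObject}^{\leq\mu}$ as an ind-closed ind-subscheme of the affine flag variety $\Fl_{\mcM,\infty}$ (Remark~\ref{RemRZIndProper}); (ii) the local formal finite-typeness of $\RZ_{\mcM,\ulLocalFramingObject}^{\leq\mu}$ from Theorem~\ref{ThmRRZSp}, which guarantees that $V$ is contained in a quasi-compact open subscheme stable under some open compact subgroup $U\subset J_{\ulLocalFramingObject}(\FcnFld_\infty)$; and (iii) the discreteness of $\Gamma_{\bar\gamma}$, which forces $\Gamma_{\bar\gamma}\cap U$ (and hence the stabilizer of $V$ up to translation) to be finite.

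Having proper discontinuity, I would then use that $\Gamma_{\bar\gamma}$ is separated in the profinite topology (as noted before the statement) to produce a normal subgroup $\Gamma_0\triangleleft\Gamma_{\bar\gamma}$ of finite index such that $\Gamma_0$ acts freely on $\RZ_{\mcM,\ulLocalFramingObject}^{\leq\mu}$: any non-trivial element $\eta\in\Gamma_{\bar\gamma}$ with a fixed point yields a non-trivial automorphism of the global $\gpSch$-shtuka attached to that point, and by Corollary~\ref{CorAutFinite} there are only finitely many such $\eta$ up to the relevant level, so one can intersect them out. Then $\Gamma_0\backslash\RZ_{\mcM,\ulLocalFramingObject}^{\leq\mu}$ exists as a formal scheme, locally formally of finite type over $\Spf\BreveOReflZMuBeta$ (the quotient by a free properly discontinuous action on a locally noetherian adic formal scheme), and the quotient map is \'etale and adic; the further quotient by the finite group $\Gamma_{\bar\gamma}/\Gamma_0$ is then a formal algebraic DM-stack with the same properties. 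This proves (a) and (b) simultaneously.

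For (c), I would invoke Theorem~\ref{ThmRRZSp}, according to which the closed subscheme of $\RZ_{\mcM,\ulLocalFramingObject}^{\leq\mu}$ defined by the largest ideal of definition is precisely $X_{\mcM}^{\leq\mu}(b)$. Because the quotient morphisms \eqref{EqPropQuotientByIEtale} are adic and \'etale, they pull back and push forward the largest ideal of definition, so passing to the reduced locus in each $\Gamma_{\bar\gamma}\backslash\RZ_{\mcM,\ulLocalFramingObject}^{\leq\mu}$ yields exactly $\Gamma_{\bar\gamma}\backslash X_{\mcM}^{\leq\mu}(b)$, giving the identification~\eqref{EqReducedSourceOfTheta}. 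The main obstacle I expect is the verification of proper discontinuity in the formal (ind-scheme) setting: one must carefully control how $\RZ_{\mcM,\ulLocalFramingObject}^{\leq\mu}$ is exhausted by quasi-compact opens stable under open compact subgroups of $J_{\ulLocalFramingObject}(\FcnFld_\infty)$, and ensure that translates by $\Gamma_{\bar\gamma}$ meet only finitely many such pieces; everything else is formal consequence of standard quotient constructions.
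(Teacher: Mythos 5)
Your plan is substantively correct, but it is worth pointing out that the paper itself does essentially none of this work explicitly: the actual proof of Proposition~\ref{PropQuotientByI} consists of a single citation to Proposition~4.27 of the paper \cite{AH_Local} by Arasteh Rad and the first author, which establishes exactly the statement that the quotient of $\RZ_{\mcM,\ulLocalFramingObject}^{\leq\mu}$ (resp.\ $X_{\mcM}^{\leq\mu}(b)$) by a discrete, profinitely-separated subgroup of $J_{\ulLocalFramingObject}(\FcnFld_\infty)$ exists as a formal algebraic (resp.\ algebraic) Deligne--Mumford stack which is Zariski-locally a quotient of a (formal) scheme by a finite group. Your proposal is, in effect, a reconstruction of what goes on inside that citation. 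The orbit decomposition into $\coprod_{\bar\gamma}\Gamma_{\bar\gamma}\backslash\RZ_{\mcM,\ulLocalFramingObject}^{\leq\mu}$, the use of Lemma~\ref{LemmaIisDiscrete} to get discreteness of $\Gamma_{\bar\gamma}$ in $J_{\ulLocalFramingObject}(\FcnFld_\infty)$, the passage to a free-acting normal subgroup of finite index via profinite separation, and the inheritance of the largest ideal of definition along adic \'etale quotient maps for part~(c) are all the right ingredients, and this is indeed the content of the cited proposition.

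One step in your write-up is stated in a slightly misleading order. In the free-subgroup argument you say that a nontrivial $\eta$ with a fixed point gives a nontrivial automorphism of a global shtuka, and that by Corollary~\ref{CorAutFinite} there are ``only finitely many such $\eta$.'' Taken literally this is not what Corollary~\ref{CorAutFinite} gives: it bounds the automorphism group of one fixed shtuka, not the set of all $\eta\in\Gamma_{\bar\gamma}$ that fix some point somewhere. The correct logical order is the one you implicitly use elsewhere: first establish proper discontinuity (only finitely many $\eta\in\Gamma_{\bar\gamma}$ satisfy $\eta V\cap V\neq\varnothing$ for a given quasi-compact open $V$, using discreteness of $\Gamma_{\bar\gamma}$ and a $U$-stable quasi-compact exhaustion of $\RZ_{\mcM,\ulLocalFramingObject}^{\leq\mu}$), and only then intersect the finitely many finite-index normal subgroups furnished by profinite separation for those finitely many offending $\eta$, so as to obtain a $\Gamma_0\triangleleft\Gamma_{\bar\gamma}$ of finite index acting freely on the $\Gamma_0$-stable hull of $V$. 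You do flag proper discontinuity as the main expected obstacle, and you are right that this is the nontrivial input; in the paper it is precisely what is delegated to \cite[Proposition~4.27 and its proof]{AH_Local}, where the explicit description of $\RZ_{\mcM,\ulLocalFramingObject}^{\leq\mu}$ as an increasing union of quasi-compact closed Schubert-type subschemes stable under open compact subgroups of $J_{\ulLocalFramingObject}(\FcnFld_\infty)$ is used to make this rigorous. With that reordering and that reference in hand, your plan matches the intended proof.
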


\begin{remark}
Note that all unipotent subgroups of $J_{\ulLocalFramingObject}(\FcnFld_\infty)$ are torsion, so they might not act fixed-point-freely on $\RZ_{\mcM,\ulLocalFramingObject}^{\leq\mu}$ and $X_{\mcM}^{\leq\mu}(b)$, thus the quotients \eqref{EqSourceOfTheta} and \eqref{EqReducedSourceOfTheta} may not be (formal) schemes. 
\end{remark}

\begin{proof}[Proof of Proposition~\ref{PropQuotientByI}]
The quotients $\Gamma_{\bar\gamma}\big{\backslash}\RZ_{\mcM,\ulLocalFramingObject}^{\leq \mu}$ and $I_{\FramingObject}(\FcnFld) \big{\backslash}\bigl(\RZ_{\mcM,\ulLocalFramingObject}^{\leq \mu}\times \Isom^{\otimes}(\omega ,\check{{\mathcal{V}}}_{\FramingObject})/H\bigr)$ are formal algebraic Deligne-Mumford stacks by \cite[Proposition~4.27]{AH_Local}. That they are Deligne-Mumford and the last assertion about \eqref{EqReducedSourceOfTheta} follow from the proof of \cite[Proposition~4.27]{AH_Local} where it is shown that \eqref{EqReducedSourceOfTheta} (respectively \eqref{EqSourceOfTheta}) are locally the stack quotient of a (formal) scheme by a finite group. 
\end{proof}

\subsection{Various actions on the source and target}

We keep the situation of Section~\ref{subsec:SourceTarget}. There is an action of $\gengpSch(\mathbb{A}^\infty)$ by Hecke correspondences, which is explicitly given as follows. 
\begin{defn}\label{Defn-Hecke-corr}
Let $H,H'\subset \gengpSch(\mathbb{A}^\infty)$ be compact open subgroups and let $h\in \gengpSch(\mathbb{A}^\infty)$. We define the Hecke correspondence $\pi(h)_{H'\!,H}$ by the two diagrams: the local case
\begin{equation}\label{EqHeckeSource}
\xymatrix @C=-4pc {
& \RZ_{\mcM,\ulLocalFramingObject}^{\leq \mu}\times \Isom^{\otimes}(\omega ,\check{{\mathcal{V}}}_{\FramingObject})/(hHh^{-1}\cap H') \ar[ld] \ar[rd]& \\
\RZ_{\mcM,\ulLocalFramingObject}^{\leq \mu}\times \Isom^{\otimes}(\omega ,\check{{\mathcal{V}}}_{\FramingObject})/H& & \RZ_{\mcM,\ulLocalFramingObject}^{\leq \mu}\times \Isom^{\otimes}(\omega ,\check{{\mathcal{V}}}_{\FramingObject})/H'\ar@{-->}[ll]_{\pi(h)_{H',H}}\\
}
\end{equation}
On the $S$-valued points, it is given by
\begin{equation*}
\begin{tikzcd}
& (\underline{\mcL},\hat{\delta}) \times \gamma(hHh^{-1}\cap H')\arrow[mapsto]{ld}{}
\arrow[]{rd}{}
\\
(\underline{\mcL},\hat{\delta}) \times \gamma hH & & (\underline{\mcL},\hat{\delta}) \times \gamma H' \arrow[dotted,mapsto]{ll}{}
\end{tikzcd}
\end{equation*}
We define the Hecke correspondence in the global case by 
\begin{equation}\label{EqHeckeTarget}
\begin{tikzcd}
& \Sht_{\gpSch,(hHh^{-1}\cap H'),\widehat{\infty}\times\infty}^{\mcZ(\mu,\beta)} \arrow[]{ld}{} \arrow[]{rd}{} & \\
\Sht_{\gpSch,H,\widehat{\infty}\times\infty}^{\mcZ(\mu,\beta)} & & \Sht_{\gpSch,H'\!,\widehat{\infty}\times\infty}^{\mcZ(\mu,\beta)} \arrow[dotted]{ll}{\pi(h)_{H',H}}
\end{tikzcd}
\end{equation}
On the level of $S$-points, it is given by
\begin{equation*}
\begin{tikzcd}
& (\underline{\mcE},\gamma(hHh^{-1}\cap H')) \arrow[mapsto]{ld}{} 
\arrow[mapsto]{rd}{}
\\
(\underline{\mcE},\gamma hH) & & (\underline{\mcE},\gamma H') \arrow[dotted,mapsto]{ll}{}
\end{tikzcd}
\end{equation*}
\end{defn}

\begin{example}\label{ExLSGGsht1}
A special case for $H'\subset H$ and $h=1$ are the forgetful morphisms 
\[
\pi(1)_{H'\!,H}\colon\RZ_{\mcM,\ulLocalFramingObject}^{\leq \mu}\times \Isom^{\otimes}(\omega ,\check{{\mathcal{V}}}_{\FramingObject})/H'\to\RZ_{\mcM,\ulLocalFramingObject}^{\leq \mu}\times \Isom^{\otimes}(\omega ,\check{{\mathcal{V}}}_{\FramingObject})/H
\]
and $\pi(1)_{H'\!,H}\colon\Sht_{\gpSch,H'\!,\widehat{\infty}\times\infty}^{\mcZ(\mu,\beta)}\to\Sht_{\gpSch,H,\widehat{\infty}\times\infty}^{\mcZ(\mu,\beta)}$, which are finite \'etale and surjective; see Proposition~\ref{PropLSGGsht1}\ref{PropLSGGsht1_B}.
\end{example}

\begin{numberedparagraph}\label{center-action-paragraph}
There is a another group acting on the source and the target of the morphism $\Theta_{\FramingObject}$ from \eqref{EqUnifIntro}, namely the group $Z(\FcnFld_\infty)$ where $Z\subset \gpSch\times_X\Spec \FcnFld =\gengpSch$ is the center. Since the center of $M$ and that of $\gengpSch_\infty$ coincide, $Z(\FcnFld_\infty)$ lies in the center of $L_\infty\mcM(\overline{\F}_\infty)$. 
Recall the maps $\tau_{\gengpSch_\infty}$ and $\tau_M$ from \eqref{Eq_tau_M}. Writing $\ulLocalFramingObject\cong\bigl((L^+_\infty\mcM)_\BaseFldInSectUnif,b\bigr)$ with $b\in L_\infty\mcM(\BaseFldInSectUnif)=L_\infty\gpSch(\BaseFldInSectUnif)$, there is an inclusion
\begin{align}
\label{EqActionCenter1}
\begin{split}
Z(\FcnFld_\infty) \enspace\longinto\enspace & J_{\ulLocalFramingObject}(\FcnFld_\infty)\;=\;\bigl\{\,j\in L_\infty\mcM(\BaseFldInSectUnif)\colon \tau_M(j)\, b=b\,j\,\bigr\}\;=\;\QIsog_\BaseFldInSectUnif(\ulLocalFramingObject)\,, \\
c \quad\enspace\longmapsto\enspace & \enspace\quad c_\ulLocalFramingObject \enspace\; := \quad c=\tau_{\gengpSch_\infty}(c) = \beta^{-1}\tau_{\gengpSch_\infty}(c) \beta = \tau_M(c) = b^{-1}\tau_M(c)b 
\end{split}
\end{align}
through which $c\in Z(\FcnFld_\infty)$ acts on $(\underline{\mcL},\hat{\delta})\in\RZ_{\mcM,\ulLocalFramingObject}^{\leq \mu}$ via $c\colon (\underline{\mcL},\hat{\delta})\mapsto (\underline{\mcL},c_{\ulLocalFramingObject}\circ\hat{\delta})$, where we denote the image of $c$ inside $\QIsog_{\overline{\F}_\infty}(\ulLocalFramingObject)$ by $c_{\ulLocalFramingObject}$ whenever confusion may arise.

This action can also be described in a different way as follows. 

For any local $\mcM$-shtuka $\underline{\mcL}$ over a scheme $S\in\Nilp_{\breve\Oo_\infty}$, we claim that $c$ induces an element $c_{\underline{\mcL}}\in\QIsog_S(\underline{\mcL})$ of the quasi-isogeny group of $\underline{\mcL}$ such that $\hat{\delta}\circ c_{\underline{\mcL}}=c_{\ulLocalFramingObject}\circ\hat{\delta}$. Over an \'etale covering $S'\to S$, we can choose a trivialization $\alpha\colon\underline{\mcL}_{S'}\isoto\bigl((L^+_\infty\mcM)_{S'},b'\bigr)$ for $b'\in L_\infty\mcM(S')$. Then $c=\tau_M(c)$ implies $b'c= \tau_M(c)b'$, i.e.~$\alpha^{-1}\circ c\circ\alpha\in\QIsog_{S'}(\underline{\mcL})$. 

Next we show that this quasi-isogeny descends to $S$. Let $\pr_1,\pr_2\colon S'':=S'\times_S S' \to S'$ be the projections onto the first and second factor. Set $h:=\pr_2^*\alpha\circ \pr_1^*\alpha^{-1}\in L^+_\infty\mcM(S'')$. Then $h \circ \pr_1^*c\circ h^{-1}=\pr_1^*c = \pr_2^*c$ implies
\[
\pr_1^*(\alpha^{-1}\circ c\circ\alpha)\;=\;\pr_2^*\alpha^{-1}\circ h \circ \pr_1^*c\circ h^{-1}\circ \pr_2^*\alpha\;=\;\pr_2^*(\alpha^{-1}\circ c\circ\alpha)\,.
\]
Therefore, $\alpha^{-1}\circ c\circ\alpha$ descends to a quasi-isogeny of $\underline{\mcL}$ over $S$ which we denote by $c_{\underline{\mcL}}\in\QIsog_S(\underline{\mcL})$. If moreover we are given a quasi-isogeny $\hat{\delta}\colon\underline{\mcL}\to \ulLocalFramingObject_{S}$, i.e.~if $(\underline{\mcL},\hat{\delta})\in\RZ_{\mcM,\ulLocalFramingObject}^{\leq \mu}(S)$, then $\hat{\delta}$ is automatically compatible with the quasi-isogenies $c_{\underline{\mcL}}\in\QIsog_S(\underline{\mcL})$ and $c_{\ulLocalFramingObject}\in\QIsog_\BaseFldInSectUnif(\ulLocalFramingObject)$, i.e.~$\hat{\delta}\circ c_{\underline{\mcL}}=c_{\ulLocalFramingObject}\circ\hat{\delta}$.~To see this:~using the trivialization $\alpha$ over $S'$ from above, $\hat{\delta}$ corresponds to $g:=\hat{\delta}\circ\alpha^{-1}\in L_\infty\gpSch(S')$. Thus
\begin{equation}
\hat{\delta}\circ c_{\underline{\mcL}}\;:=\;\hat{\delta}\circ(\alpha^{-1}\circ c\circ\alpha)\;=\;g\circ c\circ\alpha\;=\;c\circ g\circ\alpha\;=\;c_{\ulLocalFramingObject}\circ\hat{\delta}\,.
\end{equation}
This shows that the action of $c\in Z(\FcnFld_\infty)$ on $\RZ_{\mcM,\ulLocalFramingObject}^{\leq \mu}$ is given as
\begin{equation}\label{EqActionCenter3}
c\colon\RZ_{\mcM,\ulLocalFramingObject}^{\leq \mu}\;\longto\;\RZ_{\mcM,\ulLocalFramingObject}^{\leq \mu}\,,\quad (\underline{\mcL},\hat{\delta})\;\longmapsto\; (\underline{\mcL},\hat{\delta}\circ c_{\underline{\mcL}})\;=\;(\underline{\mcL},c_{\ulLocalFramingObject}\circ\hat{\delta})\,. 
\end{equation}
As such, it does not matter which quasi-isogeny group we realize $Z(\FcnFld_\infty)$ in. 
The action of $Z(\FcnFld_\infty)$ on $\RZ_{\mcM,\ulLocalFramingObject}^{\leq \mu}$ commutes with the action of $\eta\in I_{\FramingObject}(\FcnFld)$, as $\eta$ and $c$ act on $\RZ_{\mcM,\ulLocalFramingObject}^{\leq \mu}\times \Isom^{\otimes}(\omega ,\check{{\mathcal{V}}}_{\FramingObject})/H$ by 
\begin{equation}
(\underline{\mcL},\hat{\delta}) \times \gamma H\;\longmapsto\;(\underline{\mcL},L_{\infty,\mcM_{\beta^{-1}}}(\eta)\circ\hat{\delta}\circ c) \times \check{{\mathcal{V}}}_\eta\gamma H\,.
\end{equation}

On the other hand, $c\in Z(\FcnFld_\infty)$ also acts on the target $\Sht_{\gpSch,H,\widehat{\infty}\times\infty}^{\mcZ(\mu,\beta)}$ of the map $\Theta_{\FramingObject}$ as follows. Let $(\underline{\mcE},\gamma H)$ be in $\Sht_{\gpSch,H,\widehat{\infty}\times\infty}^{\mcZ(\mu,\beta)}(S)$ for some $S\in\Nilp_\BreveOReflZMuBeta$. Consider the associated local $\mcM$-shtuka $\underline{\mcL}:=L^+_{\infty,\mcM_{\beta^{-1}}}(\underline{\mcE})$. We have seen above that $c$ induces an element $c_{\underline{\mcL}}\in\QIsog_S(\underline{\mcL})$ of the quasi-isogeny group of $\underline{\mcL}$. By Proposition~\ref{PropQIsogLocalGlobal}\ref{PropQIsogLocalGlobal_B}, there is a uniquely determined global $\gpSch$-shtuka $c^*\,\underline{\mcE}$ and a quasi-isogeny $c_{\underline{\mcE}}\colon c^*\,\underline{\mcE}\to\underline{\mcE}$, which is an isomorphism outside $\infty$ and satisfies $L_{\infty,\mcM_{\beta^{-1}}}(c_{\underline{\mcE}})=c_{\underline{\mcL}}$. We can now define the action of $c\in Z(\FcnFld_\infty)$ on $(\underline{\mcE},\gamma H)\in\Sht_{\gpSch,H,\widehat{\infty}\times\infty}^{\mcZ(\mu,\beta)}(S)$ as
\begin{equation}\label{EqActionCenter2}
c\colon\;(\underline{\mcE},\gamma H)\;\longmapsto\;(c^*\,\underline{\mcE}\,,\,\check{\mathcal{V}}_{c_{\underline{\mcE}}}^{-1}\gamma H)\,.
\end{equation}
\end{numberedparagraph}

\begin{numberedparagraph}\label{Weil-descent-paragraph}
The source and target of $\Theta_{\FramingObject}$ carry a \emph{Weil-descent datum} for the ring extension $\OReflZMuBeta\subset\BreveOReflZMuBeta$, compare~\cite[Definition~3.45]{RZ}. We explain what this means. Recall that $\KappaReflZMuBeta$ is the residue field of $\OReflZMuBeta$. Consider the $\OReflZMuBeta$-automorphism $\lambda$ of $\BreveOReflZMuBeta$ inducing
\begin{equation}\label{EqWeilDescent_lambda}
\lambda|_\BaseFldInSectUnif\;:=\;\Frob_{\#\KappaReflZMuBeta,\BaseFldInSectUnif}\colon x\;\longmapsto\;x^{\#\KappaReflZMuBeta}\quad\text{for }x\in\BaseFldInSectUnif
\end{equation}
on the residue field $\overline{\F}_\infty$ of $\BreveOReflZMuBeta$.
For a scheme $(S,\theta)\in\Nilp_{\BreveOReflZMuBeta}$, where $\theta\colon S\to\Spf\BreveOReflZMuBeta$ denotes the structure morphism of the scheme $S$, we denote by $S_{[\lambda]}\in\Nilp_{\BreveOReflZMuBeta}$ the pair ($S,\lambda\circ\theta)$. For a stack $\mathcal{H}$ over $\Spf\BreveOReflZMuBeta$, we define the stack ${}^\lambda\mathcal{H}$ by 
\[
{}^\lambda\mathcal{H}(S)\;:=\;\mathcal{H}(S_{[\lambda]})\,.
\]
\begin{defn}
A \emph{Weil-descent datum} on $\mathcal{H}$ is an isomorphism of stacks $\mathcal{H}\isoto{}^\lambda\mathcal{H}$, i.e.~an equivalence $\mathcal{H}(S)\isoto\mathcal{H}(S_{[\lambda]})$ for every $S\in\Nilp_{\BreveOReflZMuBeta}$ compatible with morphisms in $\Nilp_{\BreveOReflZMuBeta}$.
\end{defn}
Let $S\in\Nilp_{\BreveOReflZMuBeta}$. Under the inclusion $\Nilp_{\BreveOReflZMuBeta}\into\Nilp_{\OReflZMuBeta}$, we have $S=S_{[\lambda]}$ in $\Nilp_{\OReflZMuBeta}$.~Therefore, 
on $\Sht_{\gpSch,H,\widehat{\infty}\times\infty}^{\mcZ(\mu,\beta)}\widehattimes_{\OReflZMuBeta} \Spf\BreveOReflZMuBeta$, the canonical Weil-descent datum is given by the identity
\begin{equation}\label{EqDescentDatumOnNablaH}
\id\colon\Sht_{\gpSch,H,\widehat{\infty}\times\infty}^{\mcZ(\mu,\beta)}(S)\;\isoto\;\Sht_{\gpSch,H,\widehat{\infty}\times\infty}^{\mcZ(\mu,\beta)}(S_{[\lambda]})\,,\quad(\underline{\mcE},\gamma H)\;\longmapsto\;(\underline{\mcE},\gamma H).
\end{equation}
On $\RZ_{\mcM,\ulLocalFramingObject}^{\leq \mu}$, we consider the Weil descent datum given by
\begin{align}\label{EqDescentDatumSource}
\begin{split}
\RZ_{\mcM,\ulLocalFramingObject}^{\leq \mu}(S)\;\isoto\; & \RZ_{\mcM,\ulLocalFramingObject}^{\leq \mu}(S_{[\lambda]}),\\
(\underline{\mcL},\hat{\delta}\colon\underline{\mcL}\to\ulLocalFramingObject_{S})\;\longmapsto\; & (\underline{\mcL},\theta^*(\widehat{\varphi}_{\LocalFramingObject}^{[\KappaReflZMuBeta\colon\F_q]})\circ\hat{\delta}\colon\underline{\mcL}\to\ulLocalFramingObject_{S_{[\lambda]}})\,,
\end{split}
\end{align}
where $\widehat{\varphi}_{\LocalFramingObject}$ is the Frobenius of the local shtuka $\ulLocalFramingObject=(\ulLocalFramingObject,\widehat{\varphi}_{\LocalFramingObject})$. 
Here we observe that $\ulLocalFramingObject_{S}:=\theta^*\ulLocalFramingObject$ and $\ulLocalFramingObject_{S_{[\lambda]}}:=(\lambda\circ\theta)^*\ulLocalFramingObject=\theta^*\lambda^*\ulLocalFramingObject=\theta^*({}^{\tau^{[\KappaReflZMuBeta\colon\F_q]}}\ulLocalFramingObject)$, and that $\widehat{\varphi}_{\LocalFramingObject}^{[\KappaReflZMuBeta\colon\F_q]}\colon\ulLocalFramingObject\to{}^{\tau^{[\KappaReflZMuBeta\colon\F_q]}}\ulLocalFramingObject$ is a quasi-isogeny. 

\begin{remark}
On on $\RZ_{\mcM,\ulLocalFramingObject}^{\leq \mu}$ there is even a Weil descent datum for the ring extension $\Oo_\mu\subset \Breve{\Oo}_\mu$. It is defined analogously by replacing $\KappaReflZMuBeta$ in \eqref{EqWeilDescent_lambda} and \eqref{EqDescentDatumSource} by $\Oo_\mu$. We do not discuss the question whether this Weil descent datum on $\RZ_{\mcM,\ulLocalFramingObject}^{\leq \mu}$ is effective. In the analogous situation for $p$-divisible groups, this is true and proven by Rapoport and Zink in \cite[Theorem~3.49]{RZ}. Their argument uses a morphism $\mathbb{G}_m\to\gpSch$, which might not exist in our setup. 
\end{remark}
Moreover, on $\RZ_{\mcM,\ulLocalFramingObject}^{\leq \mu}\times \Isom^{\otimes}(\omega ,\check{{\mathcal{V}}}_{\FramingObject})/H$ we consider the product of the Weil Descent datum \eqref{EqDescentDatumSource} with the identity on $\Isom^{\otimes}(\omega ,\check{{\mathcal{V}}}_{\FramingObject})/H$. Let $\eta\in I_{\FramingObject}(\FcnFld)$. Since 
\begin{align*}
\theta^*(\widehat{\varphi}_{\LocalFramingObject}^{[\KappaReflZMuBeta\colon\F_q]}\circ L_{\infty,\mcM_{\beta^{-1}}}(\eta)) & =\theta^*\Bigl({}^{\tau^{[\KappaReflZMuBeta\colon\F_q]}}(L_{\infty,\mcM_{\beta^{-1}}}(\eta))\circ\widehat{\varphi}_{\LocalFramingObject}^{[\KappaReflZMuBeta\colon\F_q]}\Bigr) \\
& =(\lambda\circ\theta)^*(L_{\infty,\mcM_{\beta^{-1}}}(\eta))\circ\theta^*(\widehat{\varphi}_{\LocalFramingObject}^{[\KappaReflZMuBeta\colon\F_q]}),
\end{align*}
this product Weil descent datum commutes with the action of $I_{\FramingObject}(\FcnFld)$ via the following diagram

\begin{equation}
\xymatrix @C+2pc @R=1pc {
(\underline{\mcL},\hat{\delta}) \ar@{|->}[r]^-{\textstyle\eta} \ar@{|->}[dd]^(0.45){\textstyle\text{Weil descent}} & (\underline{\mcL},\theta^*(L_{\infty,\mcM_{\beta^{-1}}}(\eta))\circ\hat{\delta}) \ar@{|->}[dd]^(0.45){\textstyle\text{Weil descent}} \\ \\
(\underline{\mcL},\theta^*(\widehat{\varphi}_{\LocalFramingObject}^{[\KappaReflZMuBeta\colon\F_q]})\circ\hat{\delta}) \ar@{|->}[rd]^-{\textstyle\eta} & (\underline{\mcL},\theta^*(\widehat{\varphi}_{\LocalFramingObject}^{[\KappaReflZMuBeta\colon\F_q]})\circ\theta^*(L_{\infty,\mcM_{\beta^{-1}}}(\eta))\circ\hat{\delta}) \ar@{=}[d]\\
& \;(\underline{\mcL},(\lambda\circ\theta)^*(L_{\infty,\mcM_{\beta^{-1}}}(\eta))\circ\theta^*(\widehat{\varphi}_{\LocalFramingObject}^{[\KappaReflZMuBeta\colon\F_q]})\circ\hat{\delta})\,.
}
\end{equation}
This defines a Weil descent datum on $\mathcal{Y}:=I_{\FramingObject}(\FcnFld) \big{\backslash}\RZ_{\mcM,\ulLocalFramingObject}^{\leq \mu}\times \Isom^{\otimes}(\omega ,\check{{\mathcal{V}}}_{\FramingObject})/H$ by
\begin{align}\label{EqDescentDatumSourceModI}
\mathcal{Y}(S) \;\isoto\; & {}^\lambda\mathcal{Y}(S)\;=\;\mathcal{Y}(S_{[\lambda]}) \\
(\underline{\mcL},\hat{\delta}, \gamma H)\;\longmapsto\; & (\underline{\mcL},\theta^*(\widehat{\varphi}_{\LocalFramingObject}^{[\KappaReflZMuBeta\colon\F_q]})\circ\hat{\delta}, \gamma H)\,. \nonumber
\end{align}
\end{numberedparagraph}

\begin{numberedparagraph}\label{Point7.10}
Now we define the Frobenius endomorphism on the source and target of $\Theta_{\FramingObject}$. For every multiple $m\in\N_0$ of $[\Oo_\mu:\F_q]$, the special fiber $\RZ_{\mcM,\ulLocalFramingObject}^{\leq \mu}\widehattimes_{\BreveOReflZMuBeta}\Spec\BaseFldInSectUnif$ of $\RZ_{\mcM,\ulLocalFramingObject}^{\leq \mu}$ carries a Frobenius endomorphism structure $\Phi_m$ defined as follows. Consider the absolute $q^m$-Frobenius $\tau^m:=\Frob_{q^m,S}\colon S\to S$ on an $\BaseFldInSectUnif$-scheme $S$. Consider a pair $(\underline{\mcL},\hat{\delta})\in\RZ_{\mcM,\ulLocalFramingObject}^{\leq \mu}(S)$, which induces the left horizontal morphisms in the diagram
\begin{equation}\label{qm-Frob-diagram}
\xymatrix @C+1pc {
S \ar[d]_{\textstyle\Frob_{q^m,S}} \ar[rrrr]^-{\textstyle(\underline{\mcL},\hat{\delta})} \ar[drrrr]_(.35){\textstyle({}^{\tau^m\!}\underline{\mcL},{}^{\tau^m\!}\hat{\delta})\qquad} & & & & \RZ_{\mcM,\ulLocalFramingObject}^{\leq \mu}\widehattimes_{\BreveOReflZMuBeta}\Spec\BaseFldInSectUnif \ar[d]_{\textstyle\Frob_{q^m}} \ar[r] & \Spec\BaseFldInSectUnif \ar[d]_{\textstyle\Frob_{q^m,\BaseFldInSectUnif}} \\
S \ar[rrrr]_-{\textstyle(\underline{\mcL},\hat{\delta})} & & & &\RZ_{\mcM,\ulLocalFramingObject}^{\leq \mu}\widehattimes_{\BreveOReflZMuBeta}\Spec\BaseFldInSectUnif \ar[r] & \Spec\BaseFldInSectUnif 
}
\end{equation}
Let $\theta\colon S\to\Spec\BaseFldInSectUnif$ be the structure morphism of $S$. Then the upper-left $S$ in diagram \eqref{qm-Frob-diagram}, viewed as a scheme over the lower-right $\Spec\BaseFldInSectUnif$, has structure morphism $\Frob_{q^m,\BaseFldInSectUnif}\circ\theta$. Thus in terms of \S\ref{Weil-descent-paragraph}, taking $\lambda:=\lambda_m:=\Frob_{q^m,\BaseFldInSectUnif}$, the upper-left $S$ becomes $S_{[\lambda_m]}$ over the lower-right $\Spec\BaseFldInSectUnif$. 

Let $\mcZ(\mu):=\mcZ^{\leq\mu}$ be the bound from Definition \ref{Def_BoundBy_mu}, which we used as the bound on $\RZ_{\mcM,\ulLocalFramingObject}^{\leq\mu}$ from Definition \ref{DefRZforM}. Its special fiber $\mcZ(\mu)_\infty:=\mcZ(\mu)\times_{\widetilde{X}_\mu} \Spec \kappa_\mu$ is defined over $\kappa_\mu$. Since $m$ is a multiple of $[\kappa_\mu\colon\F_q]$, the Frobenius $\widehat{\varphi}_{{}^{\tau^m\!}\mcL}={}^{\tau^m\!}\widehat{\varphi}_{\mcL}$ lies in ${}^{\tau^m\!}\mcZ(\mu)_\infty=\mcZ(\mu)_\infty$. Thus ${}^{\tau^m\!}\underline{\mcL}$ is also bounded by $\mcZ(\mu)$, and therefore the diagonal arrow $({}^{\tau^m\!}\underline{\mcL},{}^{\tau^m\!}\hat{\delta})$ lies in $\RZ_{\mcM,\ulLocalFramingObject}^{\leq \mu}(S_{[\lambda_m]})={}^{\lambda_m}\bigl(\RZ_{\mcM,\ulLocalFramingObject}^{\leq \mu}\bigr)(S)$. 

Via the Weil descent datum \eqref{EqDescentDatumSource}, $({}^{\tau^m\!}\underline{\mcL},{}^{\tau^m\!}\hat{\delta})$ is mapped to $({}^{\tau^m\!}\underline{\mcL}, \theta^*(\widehat{\varphi}_{\LocalFramingObject}^{\;-m})\circ{}^{\tau^m\!}\hat{\delta})$, which lies in $\RZ_{\mcM,\ulLocalFramingObject}^{\leq \mu}(S)$. We therefore define the $q^m$-Frobenius endomorphism of $\RZ_{\mcM,\ulLocalFramingObject}^{\leq \mu}\widehattimes_{R}\Spec\BaseFldInSectUnif$ as
\begin{align}\label{EqFrobOnRZ}
\Phi_m\colon\;\RZ_{\mcM,\ulLocalFramingObject}^{\leq \mu}\widehattimes_{R}\Spec\BaseFldInSectUnif \;\longto\; & \bigl(\RZ_{\mcM,\ulLocalFramingObject}^{\leq \mu}\widehattimes_{R}\Spec\BaseFldInSectUnif\bigr) \\
(\underline{\mcL},\hat{\delta}) \;\longmapsto\; & ({}^{\tau^m\!}\underline{\mcL}, \widehat{\varphi}_{\LocalFramingObject}^{\;-m}\circ{}^{\tau^m\!}\hat{\delta})\;=\;({}^{\tau^m\!}\underline{\mcL}, \hat{\delta}\circ\widehat{\varphi}_{\mcL}^{\;-m})\,. \nonumber
\end{align}
The product of the $q^m$-Frobenius endomorphism $\Phi_m$ from \eqref{EqFrobOnRZ} with the identity on $\Isom^{\otimes}(\omega ,\check{{\mathcal{V}}}_{\FramingObject})/H$ gives a $q^m$-Frobenius morphism (which we again denote by $\Phi_m$)
\begin{align}\label{EqFrobOnSource}
\Phi_m\colon\;\bigl(\RZ_{\mcM,\ulLocalFramingObject}^{\leq \mu}\widehattimes_{R}\Spec\BaseFldInSectUnif\bigr) & \times \Isom^{\otimes}(\omega ,\check{{\mathcal{V}}}_{\FramingObject})/H\;\longto\; \\
& \longto\; \bigl(\RZ_{\mcM,\ulLocalFramingObject}^{\leq \mu}\widehattimes_{R}\Spec\BaseFldInSectUnif\bigr)\times \Isom^{\otimes}(\omega ,\check{{\mathcal{V}}}_{\FramingObject})/H\,.\nonumber
\end{align}
Since the composition of $\eta\in I_{\FramingObject}(\FcnFld)$ and $\Phi_m$ is given by 
\[
(\underline{\mcL},\hat{\delta}) \times \gamma H\;\longmapsto\;(\underline{\mcL},L_{\infty,\mcM_{\beta^{-1}}}(\eta)\circ\hat{\delta}\circ\widehat{\varphi}_{\mcL}^{\;-m}) \times \check{{\mathcal{V}}}_\eta\gamma H,
\]
it follows that $\Phi_m$ commutes with the action of $ I_{\FramingObject}(\FcnFld)$. 
This defines the $q^m$-Frobenius endomorphism $\Phi_m$ of the source 
$I_{\FramingObject}(\FcnFld) \big{\backslash}\RZ_{\mcM,\ulLocalFramingObject}^{\leq \mu}\times \Isom^{\otimes}(\omega ,\check{{\mathcal{V}}}_{\FramingObject})/H$ of $\Theta_{\FramingObject}$
.

Now we discuss the Frobenius endomorphisms on the target. Let $m\in\N_0$ be a multiple of $[\KappaReflZMuBeta\colon\F_q]$. Since 
$\Sht_{\gpSch,H,\widehat{\infty}\times\infty}^{\mcZ(\mu,\beta)}\times_{\OReflZMuBeta} \Spec\BaseFldInSectUnif=(\Sht_{\gpSch,H,\widehat{\infty}\times\infty}^{\mcZ(\mu,\beta)}\times_{\OReflZMuBeta} \KappaReflZMuBeta)\times_{\KappaReflZMuBeta}\Spec\BaseFldInSectUnif$, the map $\id\times\tau^m$ defines the relative $q^m$-Frobenius of the left-hand side, which is an endomorphism, because this stack arises by base change from $\Spec\KappaReflZMuBeta$. 
Explicitly, this $q^m$-Frobenius endomorphism is given by
\begin{align}\label{EqFrobOnTarget}
\Phi_m\colon \Sht_{\gpSch,H,\widehat{\infty}\times\infty}^{\mcZ(\mu,\beta)}\widehattimes_{\OReflZMuBeta} \Spec\BaseFldInSectUnif\;\longto\; & \Sht_{\gpSch,H,\widehat{\infty}\times\infty}^{\mcZ(\mu,\beta)}\widehattimes_{\OReflZMuBeta} \Spec\BaseFldInSectUnif \nonumber \\
(\underline{\mcE},\gamma H) \;\longmapsto\; & ({}^{\tau^m\!}\underline{\mcE},{}^{\tau^m\!}(\gamma)H)\,.
\end{align}
We observe that ${}^{\tau^m\!}\underline{\mcE}$ is indeed bounded by $\mcZ(\mu,\beta)$: Since $m$ is a multiple of $[\KappaReflZMuBeta\colon\F_q]$ and the special fiber $\mcZ(\mu,\beta)_\infty:={\mcZ(\mu,\beta)}\times_{\widetilde{X}_{\mu,\beta}} \Spec\KappaReflZMuBeta$ of the bound ${\mcZ(\mu,\beta)}$ 
is defined over $\KappaReflZMuBeta$, it follows that ${}^{\tau^m\!}\underline{\mcE}$ is bounded by ${}^{\tau^m\!}\mcZ(\mu,\beta)_\infty=\mcZ(\mu,\beta)_\infty$.
\end{numberedparagraph}

\subsection{Statement of the uniformization theorem}
In this subsection, we define the uniformization morphism and state our main results, Theorems~\ref{Uniformization1} and \ref{Uniformization2}, which will be proven in Section~\ref{subsec:ProofMainThms}. Recall from \S\ref{SetupUniformization} that we fix a global $\gpSch$-shtuka $\FramingObject\in\Sht_{\gpSch,\varnothing,X\times\infty}^{\mcZ(\mu,\beta)}(\overline{\F}_\infty)$ and its associated local $\mcM$-shtuka $\ulLocalFramingObject:=L^+_{\infty,\mcM_{\beta^{-1}}}(\FramingObject)$. We fix a trivialization $\ulLocalFramingObject\cong\left((L^+_\infty\mcM)_{\overline{\F}_\infty},b\right)$. 
\begin{numberedparagraph}\label{PointDefTheta}
To define the uniformization map $\Theta_{\FramingObject}$, let
\[
(\underline{\mcL},\hat{\delta}:\underline{\mcL}\to \ulLocalFramingObject_S)\in \RZ_{\mcM,\ulLocalFramingObject}^{\leq\mu}(S), \quad\text{for }S\in\Nilp_{\BreveOReflZMuBeta},
\]
where $\underline{\mcL}\in \mathrm{LocSht}_{\mcM}^{\leq \mu}(S)$ is a local $\mcM$-shtuka bounded by $\mu$. By Proposition~\ref{PropQIsogLocalGlobal}\ref{PropQIsogLocalGlobal_B}, there is a global $\gpSch$-shtuka $\hat{\delta}^*\FramingObject$ in $\Sht_{\gpSch,\varnothing,\widehat{\infty}\times\infty}^{\mcZ(\mu,\beta)}(S)$ and a quasi-isogeny $\delta\colon \hat{\delta}^*\FramingObject\to \FramingObject$ with $L_{\infty,\mcM_{\beta^{-1}}}(\delta) = \hat{\delta}$. Since $\delta$ is defined over $S$, the isomorphism $\check{\mathcal{V}}_\delta$ is equivariant for the action of $\pi_1^\et(S,\bar s)$ which acts on $\check{\mathcal{V}}_\FramingObject$ through the map $\pi_1^\et(S,\bar s)\to \pi_1^\et(\Spec\overline{\F}_q,\bar s)=(1)$, that is trivially. Let $H\subset \gengpSch(\A^\infty)$ be an arbitrary compact open subgroup. In particular, the $H$-orbit $\check{\mathcal{V}}_\delta^{-1}\circ\gamma H$ of the tensor isomorphism $\check{\mathcal{V}}_\delta^{-1}\circ\gamma\colon\omega \isoto\check{\mathcal{V}}_{\hat{\delta}^*\FramingObject}$ is invariant under $\pi_1^\et(S,\bar s)$. This defines the following morphism 
\begin{equation}\label{EqTheta}
\begin{split}
\widetilde{\Theta}_{\FramingObject}: \RZ_{\mcM,\ulLocalFramingObject}^{\leq \mu}\times \Isom^{\otimes}(\omega,\check{{\mathcal{V}}}_{\FramingObject})/H & \longrightarrow \Sht_{\gpSch,H,\widehat{\infty}\times\infty}^{\mcZ(\mu,\beta)}\widehattimes_{\OReflZMuBeta} \Spf\BreveOReflZMuBeta \\
(\underline{\mcL},\hat{\delta},\gamma H)&\longmapsto (\hat{\delta}^*\FramingObject,\check{\mathcal{V}}_{\delta}^{-1}\circ \gamma H),
\end{split}
\end{equation}
which is obviously equivariant for the action of the center $Z(\FcnFld_\infty)$ given in \eqref{EqActionCenter3} and \eqref{EqActionCenter2}, and the action of $\gengpSch(\mathbb{A}^\infty)$ through Hecke correspondences given in \eqref{EqHeckeSource} and \eqref{EqHeckeTarget}.\end{numberedparagraph}

\begin{thm}
\label{Uniformization1} 

Consider a compact open subgroup $H\subset \gengpSch(\mathbb{A}^\infty)$. 
The morphism $\widetilde{\Theta}_{\FramingObject}$ from \eqref{EqTheta} is $I_{\FramingObject}(\FcnFld)$-invariant, where $I_{\FramingObject}(\FcnFld)$ acts trivially on the target and diagonally on the source as described in \S\ref{Point7.6}. Furthermore, this morphism factors through a morphism 
\begin{equation}\label{EqUnifMorph}
\Theta_{\FramingObject}\colon  I_{\FramingObject}(\FcnFld) \big{\backslash}\RZ_{\mcM,\ulLocalFramingObject}^{\leq \mu}\times \Isom^{\otimes}(\omega ,\check{{\mathcal{V}}}_{\FramingObject})/H \;\longto\; \Sht_{\gpSch,H,\widehat{\infty}\times\infty}^{\mcZ(\mu,\beta)}\widehattimes_{\OReflZMuBeta} \Spf\BreveOReflZMuBeta
\end{equation}
of formal algebraic Deligne-Mumford stacks over $\Spf\BreveOReflZMuBeta$ that is a monomorphism, i.e.~the functor $\Theta_{\FramingObject}$ is fully faithful, or equivalently its diagonal is an isomorphism. Both $\widetilde{\Theta}_{\FramingObject}$ and $\Theta_{\FramingObject}$ are ind-proper and formally \'etale.
\end{thm}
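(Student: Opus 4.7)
My plan is to address the four assertions in turn: $I_\FramingObject(\FcnFld)$-invariance of $\widetilde{\Theta}_\FramingObject$, the monomorphism property of $\Theta_\FramingObject$, formal \'etaleness, and ind-properness. The overall strategy is to reduce global statements to local ones via the $\beta^{-1}$-twisted global-local functor and the Serre--Tate-type equivalence of Proposition~\ref{PropSerre-Tate}, and to control the action of $I_\FramingObject(\FcnFld)$ using Propositions~\ref{Prop7.1} and~\ref{PropQuotientByI}.

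For $I$-invariance, given $\eta \in I_\FramingObject(\FcnFld)$, its action on the source sends $(\underline{\mcL}, \hat{\delta}, \gamma H)$ to $(\underline{\mcL}, L_{\infty,\mcM_{\beta^{-1}}}(\eta)\circ\hat{\delta}, \check{\mathcal{V}}_\eta\circ\gamma H)$. By the uniqueness in Proposition~\ref{PropQIsogLocalGlobal}\ref{PropQIsogLocalGlobal_B}, the global $\gpSch$-shtuka attached to $L_{\infty,\mcM_{\beta^{-1}}}(\eta)\circ\hat{\delta}$ is again $\hat{\delta}^*\FramingObject$, now with the tautological quasi-isogeny $\eta\circ\delta$ to $\FramingObject_S$. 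Since $\check{\mathcal{V}}_{\eta\circ\delta}^{-1}\circ\check{\mathcal{V}}_\eta = \check{\mathcal{V}}_\delta^{-1}$, the image under $\widetilde{\Theta}_\FramingObject$ is unchanged. The existence of the formal-algebraic quotient provided by Proposition~\ref{PropQuotientByI} then gives the factorization $\Theta_\FramingObject$. For the monomorphism property, take a connected $S \in \Nilp_{\BreveOReflZMuBeta}$ and an isomorphism $f\colon \hat{\delta}_1^*\FramingObject \isoto \hat{\delta}_2^*\FramingObject$ in the target compatible with level structures. Then $\delta_2\circ f\circ\delta_1^{-1}$ is a quasi-isogeny of $\FramingObject_S$, which by Proposition~\ref{Prop7.1} descends uniquely to $\eta \in I_\FramingObject(\FcnFld)$; a direct check, using Lemma~\ref{injective-L-and-zeta} to track the action on Tate modules, shows that $\eta$ identifies the two $S$-points in the quotient, proving full faithfulness of $\Theta_\FramingObject$.

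For formal \'etaleness I would apply the Serre--Tate equivalence of Proposition~\ref{PropSerre-Tate}: for any closed immersion $\bar{S} \hookrightarrow S$ in $\Nilp_{\BreveOReflZMuBeta}$ cut out by a locally nilpotent ideal, deforming $\hat{\delta}^*\FramingObject|_{\bar S}$ to $S$ is equivalent to deforming the associated local $\mcM$-shtuka, which is exactly the deformation information in the source. Quasi-isogenies lift uniquely by the rigidity of Proposition~\ref{PropRigidityLocal}, and rational level structures lift uniquely since $\pi_1^\et$ is invariant under nilpotent thickenings. This yields formal \'etaleness of $\widetilde{\Theta}_\FramingObject$, whence of $\Theta_\FramingObject$ because the quotient map of Proposition~\ref{PropQuotientByI} is adic and \'etale.

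For ind-properness, one decomposes the source as $\coprod_{\bar\gamma} \Gamma_{\bar\gamma}\backslash \RZ_{\mcM,\ulLocalFramingObject}^{\leq\mu}$ and invokes Theorem~\ref{ThmRRZSp} to see that $\RZ_{\mcM,\ulLocalFramingObject}^{\leq\mu}$ is ind-proper on each Schubert-type truncation. Combined with the separatedness of the target from Proposition~\ref{PropLSGGsht1}, any formally \'etale proper monomorphism of formal Deligne--Mumford stacks is a closed immersion, so the restriction of $\widetilde{\Theta}_\FramingObject$ to each finite-type piece is a closed immersion into a quasi-compact substack of the target. \textbf{The principal obstacle} I foresee is controlling how ind-properness passes through the $I_\FramingObject(\FcnFld)$-quotient: one must verify that the discrete group $\Gamma_{\bar\gamma}$ of Lemma~\ref{LemmaIisDiscrete} acts properly on $\RZ_{\mcM,\ulLocalFramingObject}^{\leq\mu}$ with finite stabilizers on each level, which ultimately rests on the finiteness of automorphism groups of bounded global shtukas provided by Corollary~\ref{CorAutFinite}.
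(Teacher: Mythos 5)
Your proposal is essentially correct and follows the same overall architecture as the paper's proof (invariance and factorization in the theorem itself; formal \'etaleness and ind-properness in Lemma~\ref{LemmaThetaEtaleProper}; the monomorphism property in Lemma~\ref{LemmaThetaismono}). There are two places where your route genuinely diverges from the paper's, which are worth noting.

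For formal \'etaleness, you invoke the Serre--Tate equivalence (Proposition~\ref{PropSerre-Tate}). The paper proves Lemma~\ref{LemmaThetaEtaleProper} directly from rigidity of quasi-isogenies (Propositions~\ref{PropRigidityLocal} and \cite[Proposition~5.9]{AH_Local}) together with Proposition~\ref{PropQIsogLocalGlobal}\ref{PropQIsogLocalGlobal_B}, bypassing the Serre--Tate formulation. Your version is conceptually tidier (it identifies the deformation-theoretic content explicitly), but it is an equivalent repackaging, since Proposition~\ref{PropSerre-Tate} is itself established from rigidity. Both are valid. The one caveat is that the rigidity argument for quasi-isogenies (including the quasi-isogeny $\hat{\delta}$ to the framing object) has to be applied in tandem with the deformation equivalence, which you do mention.

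For ind-properness, the ``principal obstacle'' you foresee is not in fact an obstacle, and chasing it down would be unnecessary work. Proposition~\ref{PropQuotientByI} has already established that the quotient $I_{\FramingObject}(\FcnFld) \backslash\bigl(\RZ_{\mcM,\ulLocalFramingObject}^{\leq\mu}\times\Isom^\otimes(\omega,\check{\mathcal{V}}_\FramingObject)/H\bigr)$ exists as a locally noetherian, adic formal algebraic Deligne--Mumford stack, and that the quotient map in \eqref{EqPropQuotientByIEtale} is adic and \'etale. The paper then simply observes that $\widetilde{\Theta}_\FramingObject$ is ind-proper (because the source is ind-proper over $\Spf\BreveOReflZMuBeta$), and that ind-properness of $\Theta_\FramingObject$ follows because the \'etale quotient map is surjective. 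You do not need to re-examine the properness of the $\Gamma_{\bar\gamma}$-action or re-invoke Corollary~\ref{CorAutFinite}; that finiteness is already baked into the existence of the quotient stack from Proposition~\ref{PropQuotientByI}. One more citation correction: Theorem~\ref{ThmRRZSp} yields that $\RZ_{\mcM,\ulLocalFramingObject}^{\leq\mu}$ is locally formally of finite type and separated, but not ind-properness; the ind-properness is taken from Remark~\ref{RemRZIndProper} (realizing the Rapoport--Zink space as an ind-closed ind-subscheme of the ind-proper $\Fl_{\mcM,\infty}$).

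Your treatment of $I_\FramingObject(\FcnFld)$-invariance via the uniqueness (``up to canonical isomorphism'') in Proposition~\ref{PropQIsogLocalGlobal}\ref{PropQIsogLocalGlobal_B} is correct, though the paper spells it out by explicitly exhibiting the identifying quasi-isogeny $\tilde{\delta}^{-1}\eta\delta$, which is an isomorphism at $\infty$ because its image under $L_{\infty,\mcM_{\beta^{-1}}}$ is the identity; if you write this up you should supply that verification rather than leaning only on ``uniqueness''. Your monomorphism argument via Proposition~\ref{Prop7.1} is the same as the paper's (Lemma~\ref{LemmaThetaismono}).
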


\begin{proof}
An elelemt $\eta\in I_{\FramingObject}(\FcnFld)$ acts on the source of the morphism $\widetilde{\Theta}_{\FramingObject}$ by sending an $S$-valued point $(\underline{\mcL},\hat{\delta}) \times \gamma H$ to $(\underline{\mcL},L_{\infty,\mcM_{\beta^{-1}}}(\eta)\hat{\delta})\times \check{\mathcal{V}}_\eta\circ\gamma H$. These two $S$-valued points are mapped under $\widetilde{\Theta}_{\FramingObject}$ to global $\gpSch$-shtukas with $H$-level structure $(\underline{\mcE},\check{\mathcal{V}}_\delta^{-1}\gamma H)$ and $(\underline{\widetilde{\mcE}},\check{\mathcal{V}}_{\tilde\delta}^{-1}\check{\mathcal{V}}_\eta\gamma H)$ over $S$, where $\delta\colon\underline{\mcE}:=\hat{\delta}^*\FramingObject\to\FramingObject$ is the isogeny satisfying $L_{\infty,\mcM_{\beta^{-1}}}(\delta)=\hat{\delta}$ and $\tilde\delta\colon\underline{\widetilde{\mcE}}:=(L_{\infty,\mcM_{\beta^{-1}}}(\eta)\hat{\delta})^*\,\FramingObject\to\FramingObject$ is the isogeny satisfying $L_{\infty,\mcM_{\beta^{-1}}}(\tilde\delta)=L_{\infty,\mcM_{\beta^{-1}}}(\eta)\hat{\delta}$. Since $\check{\mathcal{V}}_{\tilde\delta^{-1}\eta\delta}\circ\check{\mathcal{V}}_\delta^{-1}\gamma H=\check{\mathcal{V}}_{\tilde\delta}^{-1}\check{\mathcal{V}}_\eta\gamma H$, these two global $\gpSch$-shtukas with $H$-level structure are isomorphic via the quasi-isogeny $\tilde\delta^{-1}\eta\delta\colon\underline{\mcE}\to\underline{\widetilde{\mcE}}$, which is an isomorphism at $\infty$, because $L_{\infty,\mcM_{\beta^{-1}}}(\tilde\delta^{-1}\eta\delta)=(L_{\infty,\mcM_{\beta^{-1}}}(\eta)\hat{\delta})^{-1}\circ L_{\infty,\mcM_{\beta^{-1}}}(\eta)\hat{\delta}=\id$. In other words, $\widetilde{\Theta}_{\FramingObject}$ is invariant under the action of $I_{\FramingObject}(\FcnFld)$ and factors through the morphism $\Theta_{\FramingObject}$ from \eqref{EqUnifMorph} of formal algebraic Deligne-Mumford stacks. 

The remaining statements will be proven in Lemmas \ref{LemmaThetaEtaleProper} through \ref{LemmaThetaisadic}. 
\end{proof}
Recall that the scheme $X_{\mcM}^{\leq \mu}(b)\times \Isom^{\otimes}(\omega ,\check{{\mathcal{V}}}_{\FramingObject})/H$  is locally of finite type over $\BaseFldInSectUnif$. 
Let $\{T_j\}$ be a set of representatives of $I_{\FramingObject}(\FcnFld)$-orbits of its irreducible components. 
\begin{lem}\label{LemmaImageOfTheta}
\begin{enumerate}
\item\label{LemmaImageOfTheta_A}
The image $\widetilde{\Theta}_{\FramingObject}(T_j)$ of $T_j$ under $\widetilde{\Theta}_{\FramingObject}$ is a closed substack with the reduced substack structure, and each $\widetilde{\Theta}_{\FramingObject}(T_j)$ intersects only finitely many $\widetilde{\Theta}_{\FramingObject}(T_j')$ for $j'\neq j$. 
\item\label{LemmaImageOfTheta_B}
Let $\mathcal{X}_\FramingObject$ be the union of all the $\widetilde{\Theta}_{\FramingObject}(T_j)$. Then $\mathcal{X}_\FramingObject$ (with its reduced structure) is a separated Deligne-Mumford stack over $\overline{\F}_q$. Its underlying set $|\mathcal{X}_\FramingObject|\subset\Sht_{\gpSch,H,\infty\times\infty}^{\mcZ(\mu,\beta)}\times_{\KappaReflZMuBeta} \Spec\overline{\F}_q$ is the isogeny class of $\FramingObject$, i.e.~the set of all $(\underline{\mcE},\gamma H)$ for which $\underline{\mcE}$ is isogenous to $\FramingObject$.

Moreover, the morphism $\widetilde{\Theta}_{\FramingObject}$ restricted to $X_{\mcM}^{\leq \mu}(b)$ factors through a map $\iota_{\mathcal{X}}:\mathcal{X}_\FramingObject\to \Sht_{\gpSch,H,\widehat{\infty}\times\infty}^{\mcZ(\mu,\beta)}\widehattimes_{\OReflZMuBeta} \Spf\BreveOReflZMuBeta$. 
\end{enumerate}
\end{lem}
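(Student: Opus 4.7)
\smallskip
\noindent\textbf{Proof plan for Lemma \ref{LemmaImageOfTheta}.}
For part \ref{LemmaImageOfTheta_A}, I will first reduce to a statement about irreducible components of the affine Deligne–Lusztig variety. Since $\Isom^{\otimes}(\omega,\check{{\mathcal{V}}}_{\FramingObject})/H$ is a discrete set (a torsor under $\gengpSch(\mathbb{A}^\infty)/H$), every irreducible component $T_j$ of $X_{\mcM}^{\leq\mu}(b)\times\Isom^{\otimes}(\omega,\check{{\mathcal{V}}}_{\FramingObject})/H$ has the form $T\times\{\gamma_j H\}$ for an irreducible component $T$ of $X_{\mcM}^{\leq\mu}(b)$. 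The crucial geometric input is that each such $T$ is quasi-compact (indeed projective), which holds because $X_{\mcM}^{\leq\mu}(b)$ is of ind-finite type and, after translating by a suitable element of $J_{\ulLocalFramingObject}(\FcnFld_\infty)$, $T$ lands in a bounded Schubert variety $\Fl_{\mcM,\infty}^{\leq\mu'}$ for some dominant $\mu'$. Granting this, since $\widetilde{\Theta}_{\FramingObject}$ is ind-proper by Theorem~\ref{Uniformization1}, its restriction to the quasi-compact closed subscheme $T_j$ is proper, so the scheme-theoretic image $\widetilde{\Theta}_{\FramingObject}(T_j)$, equipped with its reduced substack structure, is a proper closed substack of $\Sht_{\gpSch,H,\infty\times\infty}^{\mcZ(\mu,\beta)}\times_{\KappaReflZMuBeta}\Spec\overline{\F}_q$.

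For the finite intersection property, suppose $\widetilde{\Theta}_{\FramingObject}(T_j)\cap\widetilde{\Theta}_{\FramingObject}(T_{j'})\neq\varnothing$ for $j\ne j'$. By Theorem~\ref{Uniformization1} the induced map $\Theta_{\FramingObject}$ is a monomorphism, so any geometric point in the intersection has preimages $y\in T_j$ and $y'\in T_{j'}$ that lie in a single $I_{\FramingObject}(\FcnFld)$-orbit, whence $T_{j'}$ meets $\eta\cdot T_j$ for some $\eta\in I_{\FramingObject}(\FcnFld)$. Because $T_j$ is quasi-compact and $X_{\mcM}^{\leq\mu}(b)$ is locally of finite type, only finitely many irreducible components of the ambient scheme can meet $T_j$; in particular, only finitely many translates $\eta\cdot T_j$ contribute, so the number of such $T_{j'}$ is finite.

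For part~\ref{LemmaImageOfTheta_B}, the union $\mathcal{X}_{\FramingObject}=\bigcup_j \widetilde{\Theta}_{\FramingObject}(T_j)$ is, by part~\ref{LemmaImageOfTheta_A}, a locally finite union of proper reduced closed substacks of the separated Deligne--Mumford stack $\Sht_{\gpSch,H,\infty\times\infty}^{\mcZ(\mu,\beta)}\times_{\KappaReflZMuBeta}\Spec\overline{\F}_q$, and hence is itself a separated Deligne--Mumford stack locally of finite type over $\overline{\F}_q$. To identify the underlying set $|\mathcal{X}_{\FramingObject}|$ with the isogeny class of $\FramingObject$, I will argue in two directions. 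Any object $\widetilde{\Theta}_{\FramingObject}(\underline{\mcL},\hat{\delta},\gamma H)=(\hat{\delta}^*\FramingObject,\check{\mathcal{V}}_{\delta}^{-1}\gamma H)$ is by construction (Proposition~\ref{PropQIsogLocalGlobal}\ref{PropQIsogLocalGlobal_B}) equipped with a quasi-isogeny $\delta\colon\hat{\delta}^*\FramingObject\to\FramingObject$, so lies in the isogeny class. Conversely, given $(\underline{\mcE},\gamma H)\in\Sht_{\gpSch,H,\infty\times\infty}^{\mcZ(\mu,\beta)}(\overline{\F}_q)$ with a quasi-isogeny $\delta\colon\underline{\mcE}\to\FramingObject$, Proposition~\ref{PropQIsogLocalGlobal}\ref{PropQIsogLocalGlobal_A} produces $\hat{\delta}:=L^+_{\infty,\mcM_{\beta^{-1}}}(\delta)\colon L^+_{\infty,\mcM_{\beta^{-1}}}(\underline{\mcE})\to\ulLocalFramingObject$, yielding a point of $\RZ_{\mcM,\ulLocalFramingObject}^{\leq\mu}(\overline{\F}_q)$ whose underlying reduced point lies in $X_{\mcM}^{\leq\mu}(b)$, and $\widetilde{\Theta}_{\FramingObject}$ sends this point together with $\check{\mathcal{V}}_{\delta}\gamma H$ back to $(\underline{\mcE},\gamma H)$. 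Finally, the map $\iota_{\mathcal{X}}$ is simply the composition of the tautological closed immersion $\mathcal{X}_{\FramingObject}\hookrightarrow\Sht_{\gpSch,H,\infty\times\infty}^{\mcZ(\mu,\beta)}\times_{\KappaReflZMuBeta}\Spec\overline{\F}_q$ with the natural morphism to $\Sht_{\gpSch,H,\widehat{\infty}\times\infty}^{\mcZ(\mu,\beta)}\widehattimes_{\OReflZMuBeta}\Spf\BreveOReflZMuBeta$, and the restriction of $\widetilde{\Theta}_{\FramingObject}$ to $X_{\mcM}^{\leq\mu}(b)\times\Isom^{\otimes}(\omega,\check{{\mathcal{V}}}_{\FramingObject})/H$ factors through $\iota_{\mathcal{X}}$ by construction of $\mathcal{X}_{\FramingObject}$.

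The main obstacle is justifying that the individual irreducible components of $X_{\mcM}^{\leq\mu}(b)$ are quasi-compact: the whole ADLV is only locally of finite type and has infinitely many components (acted on by $J_{\ulLocalFramingObject}(\FcnFld_\infty)$), so one needs the classical but nontrivial fact that each component is bounded in the affine flag variety after a $J$-translation. Everything else is a formal consequence of ind-properness and the monomorphism property of $\Theta_{\FramingObject}$ from Theorem~\ref{Uniformization1}, together with the equivalence in Proposition~\ref{PropQIsogLocalGlobal}.
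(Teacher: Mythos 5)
Your part (a) follows the paper's route: quasi-compactness of $T_j$ together with ind-properness of $\widetilde{\Theta}_{\FramingObject}$ gives that the image is a proper closed substack, and the monomorphism property of $\Theta_{\FramingObject}$ from Theorem~\ref{Uniformization1} combined with quasi-compactness of $T_j$ and local finite type gives the finite-intersection property. Two caveats, though. The paper takes the quasi-compactness of the irreducible components of $X_{\mcM}^{\leq\mu}(b)$ as a cited fact; your sketch misidentifies the obstruction --- $X_{\mcM}^{\leq\mu}(b)$ is already a bounded (ind-closed) subset of $\Fl_{\mcM,\infty}$, so no $J_{\ulLocalFramingObject}(\FcnFld_\infty)$-translation is needed to land inside a Schubert variety; the real issue is that $X_{\mcM}^{\leq\mu}(b)$ is only \emph{locally} of finite type and could a priori have components of infinite length. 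Also, your finite-intersection argument is phrased in terms of ``translates $\eta\cdot T_j$ contributing,'' which is not the quantity to count. The clean version: for each $j'$ with $\widetilde{\Theta}_{\FramingObject}(T_j)\cap\widetilde{\Theta}_{\FramingObject}(T_{j'})\neq\varnothing$, the monomorphism yields $\eta\in I_{\FramingObject}(\FcnFld)$ with $\eta^{-1}T_{j'}\cap T_j\neq\varnothing$; only finitely many irreducible components of the locally-finite-type ambient scheme can meet the quasi-compact $T_j$, and distinct $j'$ determine distinct $I_{\FramingObject}(\FcnFld)$-orbits, so there are finitely many such $j'$.

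In part (b) you diverge from the paper on separatedness, and there the gap is genuine. You claim $\mathcal{X}_\FramingObject$ is a \emph{locally finite} union of closed substacks and conclude it is closed hence separated. But part (a) only says each member of the family meets finitely many others, which is strictly weaker than local finiteness: one can easily have an infinite ``chain'' of closed quasi-compact irreducible sets, each meeting only its two neighbors, whose union is not closed in a locally noetherian space. Without an additional argument establishing local finiteness, closedness of $\mathcal{X}_\FramingObject$ is not yet proved. The paper sidesteps this by proving separatedness directly with the valuative criterion: two $R$-points of $\mathcal{X}_\FramingObject$ agreeing over the fraction field $K$ both factor through a single closed $\widetilde{\Theta}_{\FramingObject}(T_j)$, which inherits separatedness from $\Sht_{\gpSch,H,\infty\times\infty}^{\mcZ(\mu,\beta)}$. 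Finally, in the converse inclusion of the isogeny-class identification you build the preimage $(\underline{\mcL},\hat{\delta},\check{\mathcal{V}}_\delta\gamma H)$ of $(\underline{\mcE},\gamma H)$, but this point sits on \emph{some} irreducible component of $X_{\mcM}^{\leq\mu}(b)\times\Isom^{\otimes}(\omega,\check{\mathcal{V}}_{\FramingObject})/H$, not necessarily on a chosen representative $T_j$. You still need the $I_{\FramingObject}(\FcnFld)$-equivariance of $\widetilde{\Theta}_{\FramingObject}$ to translate the point onto the representative $T_j$ of its orbit before concluding that its image lands in $\mathcal{X}_\FramingObject=\bigcup_j\widetilde{\Theta}_{\FramingObject}(T_j)$; the paper spells out exactly this step.
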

\begin{proof}
\ref{LemmaImageOfTheta_A} Note that the $T_j$ correspond bijectively to the irreducible components of the Deligne-Mumford stack $I_{\FramingObject}(\FcnFld) \big{\backslash}X_{\mcM}^{\leq \mu}(b)\times \Isom^{\otimes}(\omega,\check{{\mathcal{V}}}_{\FramingObject})/H $ from \eqref{EqReducedSourceOfTheta}, which is locally of finite type over $\Spec\BaseFldInSectUnif$. Since $\Theta_{\FramingObject}$ is a monomorphism by Theorem~\ref{Uniformization1}, each $\widetilde{\Theta}_{\FramingObject}(T_j)$ intersects only finitely many $\widetilde{\Theta}_{\FramingObject}(T_{j'})$ for $j'\neq j$. Since $T_j$ is quasi-compact by \cite[Corollary~4.26]{AH_Local} and $\widetilde{\Theta}_{\FramingObject}$ is ind-proper by Theorem~\ref{Uniformization1}, the restriction of $\widetilde{\Theta}_{\FramingObject}$ to $T_j$ is proper. Thus $\widetilde{\Theta}_{\FramingObject}(T_j)$ are closed substacks.

\noindent
\ref{LemmaImageOfTheta_B} 
For a field $K$, every $K$-valued point $(\underline{\mcE},\gamma H)$ of $\mathcal{X}_\FramingObject$ lies in the image of $\widetilde{\Theta}_{\FramingObject}$, and hence is of the form $\underline{\mcE}=\hat{\delta}^*\FramingObject$ with an isogeny $\delta\colon\underline{\mcE}\to\FramingObject$. This shows that $\mathcal{X}_\FramingObject$ is contained in the (quasi-)isogeny class of $\FramingObject$. 

Conversely, let $(\underline{\mcE},\gamma H)$ be a $K$-valued point of $\Sht_{\gpSch,H,\infty\times\infty}^{\mcZ(\mu,\beta)}$ in the isogeny class of $\FramingObject$, and let $\delta\colon\underline{\mcE}\to\FramingObject_K$ be a quasi-isogeny. Let $\underline{\mcL}:=L_{\infty,\mcM_{\beta^{-1}}}(\underline{\mcE})$ and $\hat{\delta}:=L_{\infty,\mcM_{\beta^{-1}}}(\delta)\colon\underline{\mcL}\to\ulLocalFramingObject_K$ and $(\check{\mathcal{V}}_{\delta}\circ\gamma) H\in\Isom^{\otimes}(\omega,\check{\mathcal{V}}_{\FramingObject})/H$. Then $(\underline{\mcL},\hat{\delta}, (\check{\mathcal{V}}_{\delta}\circ\gamma) H)$ is a $K$-valued point of the source of $\widetilde{\Theta}_{\FramingObject}$ which is mapped under $\widetilde{\Theta}_{\FramingObject}$ to $(\underline{\mcE}',(\check{\mathcal{V}}_{\delta'}^{-1}\circ(\check{\mathcal{V}}_{\delta}\circ\gamma) H)$, where $\underline{\mcE}':=\hat{\delta}^*\,\FramingObject$ and $\delta'\colon\underline{\mcE}'\to\FramingObject$ is the isogeny with $L_{\infty,\mcM_{\beta^{-1}}}(\delta')=\hat{\delta}$ which is an isomorphism outside $\infty$. The isogeny $\delta^{-1}\circ\delta'\colon\underline{\mcE}'\to\underline{\mcE}$ satisfies $L_{\infty,\mcM_{\beta^{-1}}}(\delta^{-1}\delta')=\id$ and $\check{\mathcal{V}}_{\delta^{-1}\delta'}\circ \check{\mathcal{V}}_{\delta'}^{-1}\circ\check{\mathcal{V}}_{\delta}\circ\gamma H=\gamma H$, and so $(\underline{\mcE}',\check{\mathcal{V}}_{\delta'}^{-1}\circ\check{\mathcal{V}}_{\delta}\circ \gamma H)\cong(\underline{\mcE},\gamma H)$ in $\Sht_{\gpSch,H,\infty\times\infty}^{\mcZ(\mu,\beta)}(K)$. The point $(\underline{\mcL},\hat{\delta}, \gamma H)$ lies on an irreducible component of $X_{\mcM}^{\leq \mu}(b)\times \Isom^{\otimes}(\omega,\check{\mathcal{V}}_{\FramingObject})/H$ belonging to the $I_{\FramingObject}(\FcnFld)$-orbit of some irreducible component $T_j$. By the $I_{\FramingObject}(\FcnFld)$-equivariance of $\widetilde{\Theta}_{\FramingObject}$ we can move the point $(\underline{\mcL},\hat{\delta}, \gamma H)$ to $T_j$ and then its image $(\underline{\mcE},\gamma H)$ under $\widetilde{\Theta}_{\FramingObject}$ lies in $\widetilde{\Theta}_{\FramingObject}(T_j)\subset\mathcal{X}_\FramingObject$ as desired.

To prove that $\mathcal{X}_\FramingObject$ is separated over $\Spec\BaseFldInSectUnif$ we use the valuative criterion \cite[Proposition~7.8]{Laumon-Moret-Bailly}. Let $R$ be a valuation ring containing $\BaseFldInSectUnif$ with fraction field $K$. Consider two morphisms $f_1,f_2\colon\Spec R\to \mathcal{X}_\FramingObject$ whose restrictions $f_{i,K}$ to $K$ are isomorphic in $\mathcal{X}_\FramingObject(K)$. We must show that $f_1\cong f_2$ in $\mathcal{X}_\FramingObject(R)$. The $K$-valued point $f_{1,K}\cong f_{2,K}$ lies in $\widetilde{\Theta}_{\FramingObject}(T_j)(K)\subset\mathcal{X}_\FramingObject(K)$ for some $j$. Since $\widetilde{\Theta}_{\FramingObject}(T_j)$ is a closed substack of $\Sht_{\gpSch,H,\infty\times\infty}^{\mcZ(\mu,\beta)}$, the two morphisms $f_1,f_2$ factor through $\widetilde{\Theta}_{\FramingObject}(T_j)$. Since $\Sht_{\gpSch,H,\infty\times\infty}^{\mcZ(\mu,\beta)}$ is separated over $\BaseFldInSectUnif$, also $\widetilde{\Theta}_{\FramingObject}(T_j)$ is separated over $\BaseFldInSectUnif$, and so $f_1\cong f_2$ in $\widetilde{\Theta}_{\FramingObject}(T_j)(R)$. Thus $\mathcal{X}_\FramingObject$ is separated over $\BaseFldInSectUnif$.
\end{proof}

Reasoning as in \cite[6.22]{RZ}, we may form the formal completion $\Sht_{\gpSch,H,\widehat{\infty}\times\infty}^{\mcZ(\mu,\beta)}{}_{/\mathcal{X}}$ of $\Sht_{\gpSch,H,\widehat{\infty}\times\infty}^{\mcZ(\mu,\beta)}\widehattimes_{\OReflZMuBeta} \Spf\BreveOReflZMuBeta$ along the set $\mathcal{X}:=\mathcal{X}_\FramingObject$ From Lemma~\ref{LemmaImageOfTheta}\ref{LemmaImageOfTheta_B}. It is the category fiberd over $\Nilp_{\BreveOReflZMuBeta}$ whose  $S$-valued points give the full subcategory 
\[
\Sht_{\gpSch,H,\widehat{\infty}\times\infty}^{\mcZ(\mu,\beta)}{}_{/\mathcal{X}}(S):=\bigl\{\,f\colon S\to\Sht_{\gpSch,H,\widehat{\infty}\times\infty}^{\mcZ(\mu,\beta)}\widehattimes_{\OReflZMuBeta} \Spf\BreveOReflZMuBeta, \enspace\text{such that } f|_{S_\red}\text{ factors through }\mathcal{X}_\FramingObject\,\bigr\}\,,
\]
where $S_\red$ is the underlying reduced closed subscheme of $S$. Note that it follows immediately that the natural morphism
\begin{equation}\label{EqShtCompletion}
\Sht_{\gpSch,H,\widehat{\infty}\times\infty}^{\mcZ(\mu,\beta)}{}_{/\mathcal{X}}\;\longto\;\Sht_{\gpSch,H,\widehat{\infty}\times\infty}^{\mcZ(\mu,\beta)}\widehattimes_{\OReflZMuBeta} \Spf\BreveOReflZMuBeta
\end{equation}
is an ind-proper monomorphism and formally \'etale, because for an affine scheme $S=\Spec B\in\Nilp_{\BreveOReflZMuBeta}$ and an ideal $I\subset B$ with $I^2=(0)$ one has $S_\red=(\Spec B/I)_\red$.

\begin{thm}\label{Uniformization2}
Let $\Sht_{\gpSch,H,\widehat{\infty}\times\infty}^{\mcZ(\mu,\beta)}{}{}_{/\mathcal{X}}$ be the formal completion of $\Sht_{\gpSch,H,\widehat{\infty}\times\infty}^{\mcZ(\mu,\beta)}\widehattimes_{\OReflZMuBeta} \Spf\BreveOReflZMuBeta$ along the set $\mathcal{X}_\FramingObject$ from Lemma~\ref{LemmaImageOfTheta}\ref{LemmaImageOfTheta_B}. 
\begin{enumerate}
\item \label{Uniformization2_B} Then $\Theta_{\FramingObject}$ induces an isomorphism of locally noetherian, adic formal algebraic Deligne-Mumford stacks locally formally of finite type over $\Spf\BreveOReflZMuBeta$
$$
\Theta_{\FramingObject,\mathcal{X}}\colon  I_{\FramingObject}(\FcnFld) \big{\backslash}\bigl(\RZ_{\mcM,\ulLocalFramingObject}^{\leq \mu}\times \Isom^{\otimes}(\omega ,\check{{\mathcal{V}}}_{\FramingObject})/H\bigr)\;\isoto\; \Sht_{\gpSch,H,\widehat{\infty}\times\infty}^{\mcZ(\mu,\beta)}{}_{/\mathcal{X}}\,,
$$ 
and in particular of the underlying Deligne-Mumford stacks
$$
\Theta_{\FramingObject,\mathcal{X}}\colon  I_{\FramingObject}(\FcnFld) \big{\backslash}X_{\mcM}^{\leq \mu}(b)\times \Isom^{\otimes}(\omega ,\check{{\mathcal{V}}}_{\FramingObject})/H\;\isoto\; \mathcal{X}_\FramingObject
$$
which are locally of finite type and separated over $\Spec\BaseFldInSectUnif$.

\item \label{Uniformization2_C}
The morphisms $\widetilde{\Theta}_{\FramingObject}$, $\Theta_{\FramingObject}$ and $\Theta_{\FramingObject,\mathcal{X}}$ are compatible with the following actions on source and target: the action of $Z(\FcnFld_\infty)$ described in \ref{center-action-paragraph}, the action of $\gengpSch(\mathbb{A}^\infty)$ through Hecke-corres\-ponden\-ces described in Definition~\ref{Defn-Hecke-corr}, and the Weil descent data described in \ref{Weil-descent-paragraph}. 

For every multiple $m\in\N_0$ of $[\KappaReflZMuBeta\colon\F_q]$, the base changes of $\widetilde{\Theta}_{\FramingObject}$, $\Theta_{\FramingObject}$ and $\Theta_{\FramingObject,\mathcal{X}}$ to $\Spec\BaseFldInSectUnif$ are compatible with the $q^m$-Frobenius endomorphisms $\Phi_m$ from \eqref{EqFrobOnSource} and \eqref{EqFrobOnTarget}. 
\end{enumerate}
\end{thm}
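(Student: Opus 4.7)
The plan for part (a) is to combine the fact from Theorem~\ref{Uniformization1} that $\Theta_{\FramingObject}$ is a formally \'etale monomorphism with a bijectivity statement on underlying geometric points and a deformation-theoretic statement at each such point. By Lemma~\ref{LemmaImageOfTheta}\ref{LemmaImageOfTheta_B}, the image set $\mathcal{X}_{\FramingObject}$ is precisely the isogeny class of $\FramingObject$, and by definition every $\overline{\F}_\infty$-point of $\Sht_{\gpSch,H,\widehat{\infty}\times\infty}^{\mcZ(\mu,\beta)}{}_{/\mathcal{X}}$ is isogenous to $\FramingObject$. The monomorphism property of $\Theta_{\FramingObject}$ then makes the induced map on underlying reduced DM stacks bijective; combined with the computation in Proposition~\ref{PropQuotientByI} of the source as the reduced stack $\coprod_{\bar\gamma}\Gamma_{\bar\gamma}\backslash X_{\mcM}^{\leq\mu}(b)$, which is locally of finite type and separated over $\Spec\BaseFldInSectUnif$, I would obtain the isomorphism of underlying DM stacks.

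For the formal statement, given any geometric point $(\underline{\mcE}_0,\gamma_0 H)$ of $\mathcal{X}_{\FramingObject}$ with a preimage $(\underline{\mcL}_0,\hat{\delta}_0,\gamma_0 H)$ in the source, I would invoke the Serre--Tate equivalence (Proposition~\ref{PropSerre-Tate}) to identify $\Defo_S(\underline{\mcE}_0) \;\isoto\; \Defo_S(\underline{\mcL}_0)$ for every $S \in \Nilp_{\BreveOReflZMuBeta}$ with residue field containing $\overline{\F}_\infty$. The rational Tate-module $\check{\mathcal{V}}$ is invariant under infinitesimal deformations (the \'etale fundamental group of a formal neighborhood of a point is trivial), so the level structure $\gamma_0 H$ deforms uniquely, yielding an equivalence of formal deformation functors. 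Since $\Theta_{\FramingObject,\mathcal{X}}$ is already a formally \'etale monomorphism of locally noetherian adic formal algebraic DM stacks that is bijective on reduced substacks and induces isomorphisms between all formal neighborhoods, a standard gluing argument (as in \cite[Appendix~A]{HartlAbSh}) promotes this to the asserted global isomorphism.

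For part (b), the compatibilities are essentially built into the construction \eqref{EqTheta} of $\Theta_{\FramingObject}$. The equivariance for $Z(\FcnFld_\infty)$ is immediate: a central $c$ acts on both sides by post-composing $\hat{\delta}$ (respectively $\delta$) with $c_{\ulLocalFramingObject}$ (respectively $c_{\underline{\mcE}}$), and by construction $L_{\infty,\mcM_{\beta^{-1}}}(c_{\underline{\mcE}})=c_{\underline{\mcL}}$ via Proposition~\ref{PropQIsogLocalGlobal}\ref{PropQIsogLocalGlobal_B}. The Hecke equivariance is clear from the fact that the level-structure coordinate $\gamma H$ transforms identically on both sides. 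For the Weil descent data, one checks that under $\widetilde{\Theta}_{\FramingObject}$ the twist $\hat{\delta}\mapsto\theta^*(\widehat{\varphi}_{\LocalFramingObject}^{[\KappaReflZMuBeta:\F_q]})\circ\hat{\delta}$ from \eqref{EqDescentDatumSource} corresponds to the identity Weil descent datum \eqref{EqDescentDatumOnNablaH} on the target; this uses Corollary~\ref{CorFrobIsog} and the fact that $\widehat{\varphi}_{\LocalFramingObject}^{[\KappaReflZMuBeta:\F_q]}=L_{\infty,\mcM_{\beta^{-1}}}(\Phi_{\FramingObject}^{[\KappaReflZMuBeta:\F_q]})$ (Example~\ref{ExFrobIsogLocSht}). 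Finally, for the $q^m$-Frobenius: on the source one has $(\underline{\mcL},\hat{\delta})\mapsto({}^{\tau^m\!}\underline{\mcL},\hat{\delta}\circ\widehat{\varphi}_{\mcL}^{-m})$, and applying Proposition~\ref{PropQIsogLocalGlobal}\ref{PropQIsogLocalGlobal_B} one obtains a canonical isomorphism $(\hat{\delta}\circ\widehat{\varphi}_{\mcL}^{-m})^*\FramingObject \cong {}^{\tau^m\!}(\hat{\delta}^*\FramingObject)$ induced by the global Frobenius isogeny $\Phi_{\FramingObject}^m$ of Definition~\ref{DeFfrobIsog}, compatibly with the level structure by Corollary~\ref{CorFrobIsog}.

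The main obstacle I expect is the passage from the pointwise Serre--Tate equivalence to a global isomorphism of locally noetherian adic formal algebraic DM stacks. The ingredients are all in place (formal \'etaleness, ind-properness, monomorphism, bijection on reduced points, isomorphism of formal neighborhoods), but stitching them into a clean proof at the level of formal algebraic stacks requires invoking the formalism of \cite[Appendix~A]{HartlAbSh} carefully, in particular to verify that the resulting equivalence respects the largest ideal of definition and hence gives an isomorphism of adic formal algebraic DM stacks locally formally of finite type over $\Spf\BreveOReflZMuBeta$ rather than merely of their underlying reduced substacks together with compatible infinitesimal thickenings.
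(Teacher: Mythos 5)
Your treatment of part (b) is essentially the paper's argument — $Z(\FcnFld_\infty)$- and Hecke-equivariance are indeed immediate from the construction of $\widetilde{\Theta}_{\FramingObject}$ in \eqref{EqTheta}, and the Weil-descent and Frobenius compatibilities reduce to Corollary~\ref{CorFrobIsog} exactly as you describe. For part (a), your invocation of Proposition~\ref{PropSerre-Tate} to get formal \'etaleness is a reasonable packaging of what the paper proves directly from rigidity of quasi-isogenies in Lemma~\ref{LemmaThetaEtaleProper}; the two are interchangeable.

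However, the ``standard gluing argument'' you wave at in part (a) is not a formality, and you have correctly sensed that this is where the trouble lies but have not identified what is actually missing. A formally \'etale monomorphism of formal algebraic stacks that is bijective on geometric points and surjective onto a reduced target is \emph{not} automatically an isomorphism: without quasi-compactness you can have, for instance, a disjoint union of opens mapping to a connected target. The paper therefore needs two further lemmas that your sketch omits entirely. First, Lemma~\ref{LemmaThetaRedQC} establishes that each truncation $\SourceTheta_m\to\TargetTheta_m$ (for $\SourceTheta_m=\Var(\mathcal{I}_{\SourceTheta}^m)$, $\TargetTheta_m=\Var(\mathcal{I}_{\TargetTheta}^m)$) is quasi-compact and surjective; proving quasi-compactness is a delicate topological argument using the local finiteness of the irreducible components $\widetilde{\Theta}_{\FramingObject}(T_j)$ from Lemma~\ref{LemmaImageOfTheta}\ref{LemmaImageOfTheta_A} together with ind-properness from Lemma~\ref{LemmaThetaEtaleProper}. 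Second, Lemma~\ref{LemmaThetaisadic} proves $\Theta_{\FramingObject,\mathcal{X}}^*(\mathcal{I}_{\TargetTheta})=\mathcal{I}_{\SourceTheta}$ (adicity) — without this you cannot even assert that the truncations on source and target match up, which is exactly the concern you raise at the end without resolving. Only with both lemmas in hand does the paper deduce that each $\SourceTheta_m\to\TargetTheta_m$ is an \'etale surjective monomorphism, hence an open immersion by [EGA IV$_4$, 17.9.1], hence an isomorphism, and pass to the colimit over $m$. So the structure of your proof is right, but the hard content — quasi-compactness of the truncated morphisms and identification of the ideals of definition — is absent, and these are not things one can absorb into an appeal to \cite[Appendix~A]{HartlAbSh}.
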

The proof will be given in the next section. 
\begin{remark}
Isogeny classes on $\Sht_{\gpSch,H,\widehat{\infty}\times\infty}^{\mcZ(\mu,\beta)}\widehattimes_{\OReflZMuBeta} \Spf\BreveOReflZMuBeta$ have the structure of quotients of affine Deligne--Lusztig varieties by some $\FcnFld$-rational group $I_{\FramingObject}(\FcnFld)$. 
\end{remark}
Theorem \ref{Uniformization2} has consequences for point-counting in the Langlands-Rapoport conjectures. 

\begin{remark} \label{RemManyPairsOfLegs}
Conbining our techniques with the ones from \cite[Theorem~7.11]{AH_Unif}, one can extend Theorems~\ref{Uniformization1} and \ref{Uniformization2} to the case of $n$ disjoint pairs $(x_i,\infty_i)_{i=1\ldots n}$ of two colliding legs, such that in each pair the leg $x_i$ varies and the other leg is fixed at a place $\infty_i$ and bounded by some element $\beta_i\in L_{\infty_i}\gpSch(\overline{\F}_q)$ with $\beta_i\cdot L^+_{\infty_i}\gpSch \cdot \beta_i^{-1} = L^+_{\infty_i}\gpSch$ for all $i$. Here disjoint means that $\infty_i\ne\infty_j$ for $i\ne j$. For each $i$ one considers the $\beta_i^{-1}$-twisted global-local functor $L^+_{\infty_i,\mcM_i}$ from global $\gpSch$-shtukas to local $\mcM_i$-shtukas, where $\mcM_i$ is the inner form of $\gpSch_{\infty_i}$ given by $\beta_i^{-1}$. The target space of the uniformization will be the stack $\Sht_{\gpSch,H,(\widehat{\infty}_i\times\infty_i)_i}^{\mcZ((\mu_i,\beta_i)_i)}$ of global $\gpSch$-shtukas with $n$ varying legs $x_i\colon S\to\Spf\Oo_{\infty_i}$ (respectively $n$ fixed legs $\infty_i$) at which the modification is bounded by a cocharacter $\mu_i$ (respectively by $\beta_i$), and with a $H$-level structure for a compact open subgroups $H\subset \gengpSch(\mathbb{A}^{\underline{\infty}})$ for $\underline{\infty}=(\infty_1,\ldots,\infty_n)$. As a global framing object one fixes a global $\gpSch$-shtuka $\FramingObject\in \Sht_{\gpSch,\varnothing,(\widehat{\infty}_i\times\infty_i)_i}^{\mcZ((\mu_i,\beta_i)_i)}(\overline{\F}_q)$ over $\overline{\F}_q$. For every $i$ the associated local $\mcM_i$-shtuka is $\ulLocalFramingObject_i:=L^+_{\infty_i,\mcM_i}(\FramingObject)\cong\bigl((L^+\mcM_i)_{\overline{\F}_q},b_i)$ with $b_i\in L_{\infty_i}\mcM_i(\overline{\F}_q)$. One obtains the Rapoport-Zink space $\RZ_{\mcM_i,\ulLocalFramingObject_i}^{\leq \mu_i}$ over $\Breve{\Oo}_{\mu_i}=:\overline{\F}_q\dbl \xi_i\dbr$ with the affine Deligne-Lusztig varietie $X_{\mcM_i}^{\leq \mu_i}(b_i)$ as its underlying topological space. The uniformization is then given by an isomorphism
\begin{equation}\label{EqRemManyPairsOfLegs}
\Theta_{\FramingObject,\mathcal{X}} \colon I_\FramingObject(\FcnFld) \big\backslash \bigl( \prod_{i=1}^n X_{\mcM_i}^{\leq \mu_i}(b_i) \times \Isom^\otimes(\omega,\check{\mathcal{V}}_\FramingObject)/H \bigr) \; \isoto \; \mathcal{X}_\FramingObject
\end{equation}
of Deligne-Mumford stacks locally of finite type and separated over $\overline{\F}_q$, obtained as the restriction of an isomorphism
\[
\Theta_{\FramingObject,\mathcal{X}} \colon I_\FramingObject(\FcnFld) \big\backslash \bigl( \prod_{i=1}^n \RZ_{\mcM_i,\ulLocalFramingObject_i}^{\leq \mu_i} \times \Isom^\otimes(\omega,\check{\mathcal{V}}_\FramingObject)/H \bigr) \; \isoto \; \Sht_{\gpSch,H,(\widehat{\infty}_i\times\infty_i)_i}^{\mcZ((\mu_i,\beta_i)_i)}{}_{/\mathcal{X}}
\]
of locally noetherian, adic formal algebraic Deligne-Mumford stacks locally formally of finite type over $\Breve{\Oo}_{(\mu_i,\beta_i)_i}:=\overline{\F}_q\dbl \xi_1,\ldots,\xi_n\dbr$, where the target is the formal completion of $\Sht_{\gpSch,H,(\widehat{\infty}_i\times\infty_i)_i}^{\mcZ((\mu_i,\beta_i)_i)} \times \Spf\Breve{\Oo}_{(\mu_i,\beta_i)_i}$ along the underlying set $\mathcal{X}_\FramingObject$ which is the image of the morphism \eqref{EqRemManyPairsOfLegs} and the isogeny class of $\FramingObject$.
\end{remark}

\subsection{Proof of the Main Theorems}\label{subsec:ProofMainThms}
\begin{lem}\label{LemmaThetaEtaleProper}
The maps $\widetilde{\Theta}_{\FramingObject}$, $\Theta_{\FramingObject}$, and $\Theta_{\FramingObject,\mathcal{X}}$ are ind-proper and formally \'etale.
\end{lem}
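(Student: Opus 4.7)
The plan is to reduce all three claims to $\widetilde{\Theta}_{\FramingObject}$ and then prove formal \'etaleness via the Serre-Tate theorem (Proposition~\ref{PropSerre-Tate}) and ind-properness via the valuative criterion combined with rigidity of quasi-isogenies. Since the diagonal action of $I_{\FramingObject}(\FcnFld)$ on the source of $\widetilde{\Theta}_{\FramingObject}$ is adic and \'etale, with quotient identified as the source of $\Theta_{\FramingObject}$ (Proposition~\ref{PropQuotientByI}), and since the inclusion \eqref{EqShtCompletion} is an ind-proper formally \'etale monomorphism through which $\Theta_{\FramingObject}$ factors as $\Theta_{\FramingObject,\mathcal{X}}$, formal \'etaleness and ind-properness of the three morphisms can be deduced from those of $\widetilde{\Theta}_{\FramingObject}$.

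For formal \'etaleness, I fix a nilpotent thickening $\iota\colon \bar S \hookrightarrow S$ in $\Nilp_{\BreveOReflZMuBeta}$, a section $(\underline{\bar\mcL},\bar{\hat\delta},\gamma H)$ of the source over $\bar S$, and an arbitrary lift $(\underline\mcE,\gamma' H)$ of its image $(\bar{\hat\delta}{}^*\FramingObject_{\bar S},\check{\mathcal{V}}_{\bar\delta}^{-1}\gamma H)$ to $S$; I must produce a unique lift of the source section. By the Serre-Tate theorem for shtukas (Proposition~\ref{PropSerre-Tate}), deformations of the global $\gpSch$-shtuka $\bar{\hat\delta}{}^*\FramingObject_{\bar S}$ are equivalent to deformations of its associated local $\mcM$-shtuka (identified with $\underline{\bar\mcL}$ via $\bar{\hat\delta}$), so $\underline\mcL:=L^+_{\infty,\mcM_{\beta^{-1}}}(\underline\mcE)$ is the unique lift of $\underline{\bar\mcL}$ to $S$. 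Rigidity of quasi-isogenies (Proposition~\ref{PropRigidityLocal}) then lifts $\bar{\hat\delta}$ uniquely to a quasi-isogeny $\hat\delta\colon \underline\mcL \to \ulLocalFramingObject_S$, and the rational $H$-level structure lifts trivially since nilpotent thickenings preserve the \'etale fundamental group. This gives the required unique lift $(\underline\mcL,\hat\delta,\gamma H)$.

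For ind-properness, each quasi-compact open $U$ of the source is contained in a finite disjoint union of products $U'\times\{\bar\gamma\}$, with $U'\subset \RZ^{\leq\mu}_{\mcM,\ulLocalFramingObject}$ quasi-compact, so $\widetilde{\Theta}_{\FramingObject}|_U$ is quasi-compact; separatedness holds because both source and target are separated over $\Spf\BreveOReflZMuBeta$, and $\widetilde{\Theta}_{\FramingObject}|_U$ is a monomorphism by Theorem~\ref{Uniformization1}. For the existence part of the valuative criterion, given a complete discrete valuation ring $R\in \Nilp_{\BreveOReflZMuBeta}$ with fraction field $K$, an $R$-point $(\underline\mcE_R,\gamma_R H)$ of the target and a compatible $K$-point $(\underline\mcL_K,\hat\delta_K,\gamma_K H)$ of $U$, one sets $\underline\mcL_R:=L^+_{\infty,\mcM_{\beta^{-1}}}(\underline\mcE_R)$, whose $K$-fiber is canonically identified with $\underline\mcL_K$ via Proposition~\ref{PropQIsogLocalGlobal}(b). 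The main obstacle will then be extending the quasi-isogeny $\hat\delta_K\colon \underline\mcL_K \to \ulLocalFramingObject_K$ to an $R$-quasi-isogeny: after trivializing $\underline\mcL_R$ and $\ulLocalFramingObject_R$ \'etale-locally, the element of $L_\infty\mcM(K)$ representing $\hat\delta_K$ satisfies a Frobenius-equivariance equation whose coefficients already lie in $L_\infty\mcM(R)$; combined with the $\mu$-bound on both local $\mcM$-shtukas, this will force the element to lie in $L_\infty\mcM(R)$, yielding the extension $\hat\delta_R$ and completing the valuative criterion.
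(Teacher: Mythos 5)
Your reductions of $\Theta_{\FramingObject}$ and $\Theta_{\FramingObject,\mathcal{X}}$ to $\widetilde{\Theta}_{\FramingObject}$ match the paper, and your formal-\'etaleness argument is essentially the paper's: the paper lifts the source datum across a nilpotent thickening using rigidity of quasi-isogenies (Proposition~\ref{PropRigidityLocal} for local $\mcM$-shtukas and its global counterpart) together with Proposition~\ref{PropQIsogLocalGlobal}\ref{PropQIsogLocalGlobal_B}; routing this through the Serre--Tate equivalence (Proposition~\ref{PropSerre-Tate}) is a legitimate repackaging, since that proposition is itself proved from the same two ingredients. One point to keep straight: the identification of $\underline{\mcE}|_{\bar S}$ with $\bar{\hat\delta}{}^*\FramingObject_{\bar S}$ is an isomorphism in $\Sht_{\gpSch,H,\widehat{\infty}\times\infty}^{\mcZ(\mu,\beta)}$, i.e.\ a quasi-isogeny which is an isomorphism at $\infty$ compatible with level structures, not an equality of shtukas; the paper normalizes the representative before lifting, and you need the same step (and the final identification of $\widetilde{\Theta}_{\FramingObject}$ of your lift with the given $(\underline{\mcE},\gamma'H)$, via global rigidity) to close the argument.

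The genuine gap is in ind-properness. The paper's proof is structural: since $\mcM$ is parahoric, $\RZ_{\mcM,\ulLocalFramingObject}^{\leq\mu}$ is ind-proper over $\Spf\BreveOReflZMuBeta$ (Remark~\ref{RemRZIndProper}, Theorem~\ref{ThmRRZSp}), so the whole source is ind-proper over the base, and any morphism from an ind-proper source to the (ind-)separated target $\Sht_{\gpSch,H,\widehat{\infty}\times\infty}^{\mcZ(\mu,\beta)}\widehattimes\Spf\BreveOReflZMuBeta$ is ind-proper; $\Theta_{\FramingObject}$ and $\Theta_{\FramingObject,\mathcal{X}}$ inherit this via surjectivity of the quotient map and the ind-proper monomorphism \eqref{EqShtCompletion}. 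You instead try to verify the valuative criterion by hand, and the decisive step --- extending $\hat\delta_K$ over $R$, i.e.\ showing the representing element of $L_\infty\mcM(K)$ lies in $L_\infty\mcM(R)$ --- is only asserted (``this will force the element to lie in $L_\infty\mcM(R)$''). That is not a formal consequence of the Frobenius equation having coefficients over $R$ together with the $\mu$-bounds: what you are claiming is precisely the valuative criterion for the bounded Schubert locus in $\Fl_{\mcM,\infty}$, i.e.\ the ind-properness of $\RZ_{\mcM,\ulLocalFramingObject}^{\leq\mu}$ itself, which rests on parahoricity of $\mcM$ and the representability results behind Theorem~\ref{ThmRRZSp}; without importing that input the argument is circular at its key point, and once you import it the hand-made computation is unnecessary. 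A further slip: $\widetilde{\Theta}_{\FramingObject}$ is not a monomorphism, even on quasi-compact opens --- distinct points in an $I_{\FramingObject}(\FcnFld)$-orbit have isomorphic images; only the quotient $\Theta_{\FramingObject}$ is, and that is established in Lemma~\ref{LemmaThetaismono}, which is proved in parallel with the present lemma, so citing Theorem~\ref{Uniformization1} here also risks circularity (fortunately neither monomorphy nor that citation is needed, since separatedness of the restriction already follows from separatedness of source and target over the base).
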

\begin{proof}
The claim that $\widetilde{\Theta}_{\FramingObject}$ is formally \'etale follows from the rigidity of quasi-isogenies. We give more details. Let $S\in\Nilp_{\BreveOReflZMuBeta}$, and let $(\underline{\mcE},\gamma H)\in \Sht_{\gpSch,H,\widehat{\infty}\times\infty}^{\mcZ(\mu,\beta)}(S)$ with $\gamma\in\Isom^{\otimes}(\omega ,\check{{\mathcal{V}}}_{\underline{\mcE}})$. Let $S'$ be a closed subscheme of $S$ defined by a locally nilpotent sheaf of ideals. Suppose that the base change $(\underline{\mcE}_{S'},\gamma H)$ to $S'$ lies in the image of $\widetilde{\Theta}_{\FramingObject}$, i.e.~there is a tuple 
\begin{equation}\label{tuple-in-RZtimesIsom}
(\underline{\mcL}',\hat{\delta}'\colon \underline{\mcL}'\to \ulLocalFramingObject_{S'},\gamma'H) \in \RZ_{\mcM,\ulLocalFramingObject}^{\leq \mu}(S')\times \Isom^{\otimes}(\omega ,\check{{\mathcal{V}}}_{\FramingObject})/H
\end{equation}
such that $\widetilde{\Theta}_{\FramingObject}(\underline{\mcL}',\hat{\delta}',\gamma'H)=(\underline{\mcE}_{S'},\gamma H)$ in $\Sht_{\gpSch,H,\widehat{\infty}\times\infty}^{\mcZ(\mu,\beta)}(S')$. The last equality means that there is a quasi-isogeny $\tilde{\delta}'\colon \underline{\mcE}_{S'} \to (\hat{\delta}')^*\FramingObject_{S'}$ of global $\gpSch$-shtukas over $S'$ that is an isomorphism over $\infty$ with $\check{\mathcal{V}}_{\tilde{\delta}'}^{-1}\circ\check{\mathcal{V}}_{\delta'}^{-1}\circ\gamma' H=\gamma H$ in $\Isom^{\otimes}(\omega ,\check{{\mathcal{V}}}_{\underline{\mcE}})/H$, where $\delta'\colon(\hat{\delta}')^*\FramingObject_{S'}\to \FramingObject_{S'}$ is the quasi-isogeny from Proposition~\ref{PropQIsogLocalGlobal}\ref{PropQIsogLocalGlobal_B} with $L_{\infty,\mcM_{\beta^{-1}}}((\hat{\delta}')^*\FramingObject_{S'})=\underline{\mcL}'$ and $L_{\infty,\mcM_{\beta^{-1}}}(\delta')=\hat{\delta}'$. In particular, 
\begin{equation}\label{loop-isom-equation}
L_{\infty,\mcM_{\beta^{-1}}}(\tilde{\delta}')\colon L_{\infty,\mcM_{\beta^{-1}}}(\underline{\mcE})_{S'} \isoto L_{\infty,\mcM_{\beta^{-1}}}\bigl((\hat{\delta}')^*\FramingObject_{S'}\bigr) = \underline{\mcL}'
\end{equation}
is an isomorphism of local $\mcM$-shtukas. Thus $(\underline{\mcL}',\hat{\delta}')$ equals $(L_{\infty,\mcM_{\beta^{-1}}}(\underline{\mcE})_{S'},\hat{\delta}'\circ L_{\infty,\mcM_{\beta^{-1}}}(\tilde{\delta}'))$ in $\RZ_{\mcM,\ulLocalFramingObject}^{\leq \mu}(S')$. We may replace the former by the latter in \eqref{tuple-in-RZtimesIsom} and thus assume $L_{\infty,\mcM_{\beta^{-1}}}((\hat{\delta}')^*\FramingObject_{S'})=L_{\infty,\mcM_{\beta^{-1}}}(\underline{\mcE})_{S'}$ and $L_{\infty,\mcM_{\beta^{-1}}}(\tilde{\delta}')=\id_{L_{\infty,\mcM_{\beta^{-1}}}(\underline{\mcE})_{S'}}$ in \eqref{loop-isom-equation}

Now the quasi-isogeny $\hat{\delta}'=\hat{\delta}'\circ L_{\infty,\mcM_{\beta^{-1}}}(\tilde{\delta}')\colon L_{\infty,\mcM_{\beta^{-1}}}(\underline{\mcE})_{S'}\to\ulLocalFramingObject_{S'}$ 
lifts uniquely to a quasi-isogeny $\hat{\delta}\colon L_{\infty,\mcM_{\beta^{-1}}}(\underline{\mcE})\to \ulLocalFramingObject_S$ 
over $S$ by the rigidity of quasi-isogenies for local $\mcM$-shtukas; see Proposition~\ref{PropRigidityLocal}. 
Therefore, $(L_{\infty,\mcM_{\beta^{-1}}}(\underline{\mcE}), \hat{\delta},\gamma' H)$ is an $S$-valued point of $\RZ_{\mcM,\ulLocalFramingObject}^{\leq \mu}(S)\times \Isom^{\otimes}(\omega ,\check{{\mathcal{V}}}_{\FramingObject})/H$. Its image under $\widetilde{\Theta}_{\FramingObject}$ is $(\hat{\delta}^*\FramingObject_S,\check{\mathcal{V}}_\delta^{-1}\circ \gamma'H)$, where $\delta\colon\hat{\delta}^*\FramingObject_S\to \FramingObject_S$ is the quasi-isogeny from Proposition~\ref{PropQIsogLocalGlobal}\ref{PropQIsogLocalGlobal_B} with $L_{\infty,\mcM_{\beta^{-1}}}(\hat{\delta}^*\FramingObject_S)=L_{\infty,\mcM_{\beta^{-1}}}(\underline{\mcE})$ and $L_{\infty,\mcM_{\beta^{-1}}}(\delta)=\hat{\delta}$. Since $(\hat{\delta}^*\FramingObject_S)_{S'}=(\hat{\delta}')^*\FramingObject_{S'}$, the quasi-isogeny $\tilde{\delta'}$ over $S'$ lifts uniquely to a quasi-isogeny $\tilde{\delta}\colon\underline{\mcE}\to \hat{\delta}^*\FramingObject_S$ over $S$ by rigidity of quasi-isogenies for global $\gpSch$-shtukas; see \cite[Proposition~5.9]{AH_Local}. It satisfies $L_{\infty,\mcM_{\beta^{-1}}}(\tilde{\delta})=\id_{L_{\infty,\mcM_{\beta^{-1}}}(\underline{\mcE})}$ by the uniqueness of the quasi-isogeny lifting $L_{\infty,\mcM_{\beta^{-1}}}(\tilde{\delta}')=\id$ to $S$. This shows that $\tilde{\delta}$ is a quasi-isogeny which is an isomorphism over $\infty$ and identifies $\widetilde{\Theta}_{\FramingObject}(L_{\infty,\mcM_{\beta^{-1}}}(\underline{\mcE}), \hat{\delta},\gamma'H):=(\hat{\delta}^*\FramingObject_S,\check{\mathcal{V}}_\delta^{-1}\circ \gamma'H)$ with $(\underline{\mcE},\gamma H)$ in $\Sht_{\gpSch,H,\widehat{\infty}\times\infty}^{\mcZ(\mu,\beta)}(S')$. This finishes the proof that $\widetilde{\Theta}_{\FramingObject}$ is formally \'etale.

Since the quotient morphism
\[
\bigl(\RZ_{\mcM,\ulLocalFramingObject}^{\leq \mu}\times \Isom^{\otimes}(\omega ,\check{{\mathcal{V}}}_{\FramingObject})/H\bigr) \enspace \onto \enspace I_{\FramingObject}(\FcnFld) \big{\backslash}\bigl(\RZ_{\mcM,\ulLocalFramingObject}^{\leq \mu}\times \Isom^{\otimes}(\omega ,\check{{\mathcal{V}}}_{\FramingObject})/H\bigr) 
\]
is \'etale by Proposition~\ref{PropQuotientByI}(b) and $\widetilde{\Theta}_{\FramingObject}$ is formally \'etale, also $\Theta_{\FramingObject}$ is formally \'etale. And since the morphism \eqref{EqShtCompletion} is formally \'etale, also $\Theta_{\FramingObject,\mathcal{X}}$ is formally \'etale.

Since $\mcM$ is parahoric, $\RZ_{\mcM,\ulLocalFramingObject}^{\leq \mu}$ is ind-proper over $\BreveOReflZMuBeta$ by Remark~\ref{RemRZIndProper}. Therefore, $\widetilde{\Theta}_{\FramingObject}$ is ind-proper. 
The morphism $\Theta_{\FramingObject}$ is ind-proper, because $\widetilde{\Theta}_{\FramingObject}$ is ind-proper and 
\[
\bigl(\RZ_{\mcM,\ulLocalFramingObject}^{\leq \mu}\times \Isom^{\otimes}(\omega ,\check{{\mathcal{V}}}_{\FramingObject})/H\bigr) \onto I_{\FramingObject}(\FcnFld) \big{\backslash}\bigl(\RZ_{\mcM,\ulLocalFramingObject}^{\leq \mu}\times \Isom^{\otimes}(\omega ,\check{{\mathcal{V}}}_{\FramingObject})/H\bigr)
\]
is surjective. Finally $\Theta_{\FramingObject,\mathcal{X}}$ is ind-proper, because the morphism \eqref{EqShtCompletion} is ind-proper.
\end{proof}

\begin{lem}\label{LemmaThetaismono}
We use the abbreviations $Y_1:=\RZ_{\mcM,\ulLocalFramingObject}^{\leq\mu}\times \Isom^{\otimes}(\omega,\check{\mathcal{V}}_{\FramingObject})/H$ and $Y_2:=X_{\mcM}^{\leq \mu}(b)\times \Isom^{\otimes}(\omega,\check{\mathcal{V}}_{\FramingObject})/H$. Then for $j=1$ or $2$, the action of $I_{\FramingObject}(\FcnFld)$ on $Y_j$ induces an isomorphism of stacks 
\begin{equation}\label{EqDiagIsIsom}
I_{\FramingObject}(\FcnFld) \times Y_j \enspace := \enspace \coprod_{I_{\FramingObject}(\FcnFld)} Y_j \enspace \isoto \enspace Y_j \underset{\Sht_{\gpSch,H,\widehat{\infty}\times\infty}^{\mcZ(\mu,\beta)}\widehattimes_{\OReflZMuBeta} \BreveOReflZMuBeta}{\times} Y_j\,,
\end{equation}
where the map to the first copy of $Y_j$ is the identity and the map to the second copy is given by the action of $I_{\FramingObject}(\FcnFld)$ on $Y_j$. In particular, $\Theta_{\FramingObject}$ is a monomorphism in the sense stated in Theorem~\ref{Uniformization1}. 
\end{lem}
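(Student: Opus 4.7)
The plan is to give an explicit inverse to the natural morphism
\[
I_\FramingObject(\FcnFld)\times Y_j\;\longto\;Y_j\times_T Y_j,\quad (\eta,y)\;\longmapsto\;(y,\eta\cdot y,\alpha_\eta),
\]
where $T:=\Sht_{\gpSch,H,\widehat{\infty}\times\infty}^{\mcZ(\mu,\beta)}\widehattimes_{\OReflZMuBeta}\Spf\BreveOReflZMuBeta$ and $\alpha_\eta\colon \widetilde{\Theta}_\FramingObject(y)\isoto\widetilde{\Theta}_\FramingObject(\eta\cdot y)$ is the canonical $2$-isomorphism arising from the $I_\FramingObject(\FcnFld)$-invariance of $\widetilde{\Theta}_\FramingObject$ established in the proof of Theorem~\ref{Uniformization1}. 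The key input will be Proposition~\ref{Prop7.1}, which identifies $\QIsog_S(\FramingObject_S)$ with $\QIsog_\BaseFldInSectUnif(\FramingObject)=I_\FramingObject(\FcnFld)$ for every connected non-empty $S\in\Nilp_\BreveOReflZMuBeta$.

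For such an $S$ and a triple $(y_1,y_2,\alpha)\in(Y_j\times_T Y_j)(S)$ with $y_i=(\underline{\mcL}_i,\hat{\delta}_i,\gamma_iH)$, Proposition~\ref{PropQIsogLocalGlobal}\ref{PropQIsogLocalGlobal_B} supplies quasi-isogenies $\delta_i\colon\hat{\delta}_i^*\FramingObject_S\to\FramingObject_S$ that are isomorphisms outside $\infty$ and satisfy $L_{\infty,\mcM_{\beta^{-1}}}(\delta_i)=\hat{\delta}_i$, so that $\widetilde{\Theta}_\FramingObject(y_i)=(\hat{\delta}_i^*\FramingObject,\check{\mathcal{V}}_{\delta_i}^{-1}\gamma_iH)$. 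The datum $\alpha$ is a quasi-isogeny $\hat{\delta}_1^*\FramingObject\to\hat{\delta}_2^*\FramingObject$ that is an isomorphism at $\infty$ and satisfies $\check{\mathcal{V}}_\alpha\check{\mathcal{V}}_{\delta_1}^{-1}\gamma_1H=\check{\mathcal{V}}_{\delta_2}^{-1}\gamma_2H$. Setting $\tilde{\eta}:=\delta_2\circ\alpha\circ\delta_1^{-1}\in\QIsog_S(\FramingObject_S)$, Proposition~\ref{Prop7.1} produces a unique $\eta\in I_\FramingObject(\FcnFld)$ with $\eta_S=\tilde{\eta}$; the pair $(\eta,y_1)$ is the sought preimage.

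It then remains to verify that $\eta\cdot y_1\cong y_2$ in $Y_j(S)$ and that the induced $2$-isomorphism agrees with $\alpha$. On the local side, the identity $L_{\infty,\mcM_{\beta^{-1}}}(\tilde{\eta})=\hat{\delta}_2\circ L_{\infty,\mcM_{\beta^{-1}}}(\alpha)\circ\hat{\delta}_1^{-1}$ exhibits $L_{\infty,\mcM_{\beta^{-1}}}(\alpha)\colon\underline{\mcL}_1\isoto\underline{\mcL}_2$ as an isomorphism identifying $(\underline{\mcL}_1,L_{\infty,\mcM_{\beta^{-1}}}(\eta)\hat{\delta}_1)$ with $(\underline{\mcL}_2,\hat{\delta}_2)$ in $\RZ_{\mcM,\ulLocalFramingObject}^{\leq\mu}(S)$; on the level structures, applying $\check{\mathcal{V}}$ to $\tilde{\eta}\delta_1=\delta_2\alpha$ and using the level-compatibility of $\alpha$ yields $\check{\mathcal{V}}_\eta\gamma_1H=\gamma_2H$. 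Uniqueness of $\eta$ follows from the injectivity in Proposition~\ref{Prop7.1}, and for disconnected $S$ the same recipe applied on each connected component produces a locally constant $I_\FramingObject(\FcnFld)$-valued function on $S$, giving a well-defined morphism of stacks. The case $Y_2$ is obtained by restricting the whole discussion to the reduced underlying substacks.

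The monomorphism assertion for $\Theta_\FramingObject$ is then a formal consequence of \eqref{EqDiagIsIsom}: writing the diagonal of $\Theta_\FramingObject$ as $[Y_j/I_\FramingObject(\FcnFld)]\to[(Y_j\times_T Y_j)/(I_\FramingObject(\FcnFld)\times I_\FramingObject(\FcnFld))]$, the isomorphism \eqref{EqDiagIsIsom} reduces the target to $[Y_j/I_\FramingObject(\FcnFld)]$ and the map to the identity. The main obstacle is not conceptual but notational: one has to assemble four layers of data---the global quasi-isogenies $\delta_i$, the quasi-isogeny $\alpha$, the local quasi-isogenies $\hat{\delta}_i$, and the tensor isomorphisms $\gamma_i$---into the single element $\tilde{\eta}$ and simultaneously check all compatibilities; once Proposition~\ref{Prop7.1} has been invoked to convert $\tilde{\eta}$ into an element of $I_\FramingObject(\FcnFld)$, the remaining verifications are purely formal.
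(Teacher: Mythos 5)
Your proof is correct and follows essentially the same route as the paper: an inverse to \eqref{EqDiagIsIsom} is constructed by taking a pair of points $(y_1,y_2)$ mapping isomorphically to the shtuka stack, composing the connecting quasi-isogeny $\alpha$ with the canonical quasi-isogenies $\delta_1,\delta_2$ from Proposition~\ref{PropQIsogLocalGlobal}\ref{PropQIsogLocalGlobal_B} to get a self-quasi-isogeny $\tilde\eta=\delta_2\alpha\delta_1^{-1}$ of $\FramingObject_S$, and invoking Proposition~\ref{Prop7.1} to descend it to an element of $I_\FramingObject(\FcnFld)$; the local and level-structure compatibilities are then checked exactly as in the paper's proof. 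Your added remarks on disconnected $S$ and on passing to reduced substacks for $Y_2$ are sensible elaborations the paper leaves implicit, and the final stack-theoretic deduction of the monomorphism is a fair (if compressed) unpacking of the paper's appeal to \cite[Tag~04Z7]{stacks-project}.
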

\begin{proof}
By \cite[Tag~04Z7]{stacks-project}, the two definitions of a monomorphism given in Theorem~\ref{Uniformization1} are equivalent.
By the $I_{\FramingObject}(\FcnFld)$-equivariance of $\widetilde{\Theta}_{\FramingObject}$, the morphism \eqref{EqDiagIsIsom} is well defined. To describe its inverse, consider a connected scheme $S\in\Nilp_{\BreveOReflZMuBeta}$ and two $S$-valued points of $Y_j$ 
\[
y:=\bigl((\underline{\mcL},\hat{\delta}),\gamma H \bigr)~~~~ \text{and}~~~~ y':=\bigl((\underline{\mcL}',\hat{\delta}'),\gamma'H \bigr). 
\]
Under $\widetilde{\Theta}_{\FramingObject}$, they are mapped to global $\gpSch$-shtukas $(\underline{\mcE},\check{\mathcal{V}}_\delta^{-1} \gamma H)$ and $(\underline{\mcE}',\check{\mathcal{V}}_{\delta'}^{-1} \gamma'H)$ with $H$-level structures, where $\delta\colon\underline{\mcE} \to \FramingObject_S$ and $\delta'\colon  \underline{\mcE}'\to \FramingObject_S$ are the canonical quasi-isogenies which are isomorphisms outside $\infty$ with $L_{\infty,\mcM_{\beta^{-1}}}(\delta)=\hat{\delta}$ and $L_{\infty,\mcM_{\beta^{-1}}}(\delta')=\hat{\delta}'$. Suppose that $(\underline{\mcE},\check{\mathcal{V}}_\delta^{-1} \gamma H)$ and $(\underline{\mcE}',\check{\mathcal{V}}_{\delta'}^{-1} \gamma'H)$ are isomorphic in $\Sht_{\gpSch,H,\widehat{\infty}\times\infty}^{\mcZ(\mu,\beta)}(S)$ via a quasi-isogeny $\psi\colon\underline{\mcE}\to \underline{\mcE}'$ which is an isomorphism above $\infty$ and compatible with the $H$-level structures, i.e.~$\check{\mathcal{V}}_\psi\circ\check{\mathcal{V}}_\delta^{-1} \gamma H=\check{\mathcal{V}}_{\delta'}^{-1} \gamma'H$ (see Definition~\ref{DefRatLevelStr}). Consider the quasi-isogeny $\eta:=\delta'\psi \delta^{-1}$ from $\FramingObject_S$ to itself. By Proposition~\ref{Prop7.1} we may view $\eta$ as an element of $I_{\FramingObject}(\FcnFld)$.

Consider the corresponding quasi-isogenies between the associated local $\mcM$-shtukas
\begin{equation}
\xymatrix @C+2pc {
\underline{\mcL} \ar[r]^{L_{\infty,\mcM_{\beta^{-1}}}(\psi)} \ar[d]_{\hat{\delta}} & \underline{\mcL}' \ar[d]_{\hat{\delta}'} \\
\ulLocalFramingObject_S \ar[r]^{L_{\infty,\mcM_{\beta^{-1}}}(\eta)} & \ulLocalFramingObject_S
}
\end{equation}
Since $\psi\colon\underline{\mcE}\to \underline{\mcE}'$ is an isomorphism above $\infty$, the quasi-isogeny $L_{\infty,\mcM_{\beta^{-1}}}(\psi)$ is an isomorphism. Therefore, $\eta\cdot(\underline{\mcL},\hat{\delta}):=(\underline{\mcL},L_{\infty,\mcM_{\beta^{-1}}}(\eta)\circ\hat{\delta})\cong(\underline{\mcL}',\hat{\delta}')$ in $\RZ_{\mcM,\ulLocalFramingObject}^{\leq\mu}(S)$. Moreover, $\eta$ sends $\gamma H\in\Isom^{\otimes}(\omega,\check{\mathcal{V}}_{\FramingObject})/H$ to $\check{\mathcal{V}}_\eta\circ\gamma H=\check{\mathcal{V}}_{\delta'}\circ\check{\mathcal{V}}_\psi\circ \check{\mathcal{V}}_\delta^{-1}\circ\gamma H=\gamma'H$. This proves that $\eta\cdot y=y'$, i.e.~$(\eta,y)$ maps to $(y,y')$ under \eqref{EqDiagIsIsom}. Thus $\Theta_{\FramingObject}$ is a monomorphism.
\end{proof}

\newcommand{\SourceTheta}{\mathcal{R}}
\newcommand{\TargetTheta}{\mathcal{S}}

Next we turn to the proof of Theorem~\ref{Uniformization2}\ref{Uniformization2_B}. To shorten notations, we write
\begin{align*}
\SourceTheta & := I_{\FramingObject}(\FcnFld) \big{\backslash}\bigl(\RZ_{\mcM,\ulLocalFramingObject}^{\leq \mu}\times \Isom^{\otimes}(\omega ,\check{{\mathcal{V}}}_{\FramingObject})/H\bigr) \qquad\text{and} \\
\TargetTheta & := \Sht_{\gpSch,H,\widehat{\infty}\times\infty}^{\mcZ(\mu,\beta)}{}_{/\mathcal{X}}
\end{align*}
for the source and target of the morphism $\Theta_{\FramingObject,\mathcal{X}}$. These are locally noetherian, adic formal algebraic Deligne-Mumford stacks. For $\SourceTheta$, this was proven in Proposition~\ref{PropQuotientByI}, and for $\TargetTheta$, this follows from Proposition~\ref{PropLSGGsht1} and \cite[Proposition~A.14]{HartlAbSh}. Thus both $\SourceTheta$ and $\TargetTheta$ have unique largest ideals of definition $\mathcal{I}_{\SourceTheta}\subset\Oo_\SourceTheta$ and $\mathcal{I}_{\TargetTheta}\subset\Oo_\TargetTheta$ containing the maximal ideal $\breve{\mathfrak{m}}_{\mu,\beta}$ of $\BreveOReflZMuBeta$. For positive integers $m$, the closed substacks $\SourceTheta_m:=\Var(\mathcal{I}_{\SourceTheta}^m)\subset\SourceTheta$ and $\TargetTheta_m:=\Var(\mathcal{I}_{\TargetTheta}^m)\subset\TargetTheta$ are algebraic by \cite[Proposition~A.8]{HartlAbSh}. 
\begin{lem}\label{new-lemma}
 (a) $\SourceTheta_m$ and $\TargetTheta_m$ are Deligne-Mumford stacks locally of finite type over $\Spec\BreveOReflZMuBeta/\breve{\mathfrak{m}}_{\mu,\beta}^m$. We have $\SourceTheta=\varinjlim\limits_m\SourceTheta_m$ and $\TargetTheta=\varinjlim\limits_m\TargetTheta_m$. 
 Moreover, $\TargetTheta_1=\mathcal{X}_\FramingObject$ and $\SourceTheta_1=I_{\FramingObject}(\FcnFld) \big{\backslash}X_{\mcM}^{\leq \mu}(b)\times \Isom^{\otimes}(\omega ,\check{{\mathcal{V}}}_{\FramingObject})$.

 (b) The morphism $\Theta_{\FramingObject,\mathcal{X}}$ induces a morphism $\SourceTheta_m\to\TargetTheta_m$ for every $m$, which is locally of finite presentation as a morphism between Deligne-Mumford stacks locally of finite type over $\Spec\BreveOReflZMuBeta/\breve{\mathfrak{m}}_{\mu,\beta}^m$.
\end{lem}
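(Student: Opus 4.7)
For part (a), I will invoke the general structural facts for locally noetherian, adic formal algebraic Deligne–Mumford stacks. Both $\SourceTheta$ and $\TargetTheta$ are such stacks: $\SourceTheta$ by Proposition~\ref{PropQuotientByI}, and $\TargetTheta$ by Proposition~\ref{PropLSGGsht1}\ref{PropLSGGsht1_A} together with \cite[Proposition~A.14]{HartlAbSh} applied to the formal completion along $\mathcal{X}_\FramingObject$. Since the morphisms to $\Spf\BreveOReflZMuBeta$ are adic, the largest ideals of definition $\mathcal{I}_\SourceTheta$ and $\mathcal{I}_\TargetTheta$ contain $\breve{\mathfrak{m}}_{\mu,\beta}\Oo_\SourceTheta$ and $\breve{\mathfrak{m}}_{\mu,\beta}\Oo_\TargetTheta$ respectively. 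Then \cite[Proposition~A.8]{HartlAbSh} yields that $\SourceTheta_m=\Var(\mathcal{I}_\SourceTheta^m)$ and $\TargetTheta_m=\Var(\mathcal{I}_\TargetTheta^m)$ are (algebraic) Deligne–Mumford stacks locally of finite type over $\Spec\BreveOReflZMuBeta/\breve{\mathfrak{m}}_{\mu,\beta}^m$, with $\SourceTheta=\varinjlim_m\SourceTheta_m$ and $\TargetTheta=\varinjlim_m\TargetTheta_m$ in the category of fpqc sheaves.

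For the identification of the reduced substacks, $\TargetTheta_1=\mathcal{X}_\FramingObject$ is immediate from the definition of the formal completion along the reduced closed subset $\mathcal{X}_\FramingObject$ from Lemma~\ref{LemmaImageOfTheta}\ref{LemmaImageOfTheta_B}. For $\SourceTheta_1$, Theorem~\ref{ThmRRZSp} identifies the reduced subscheme of $\RZ_{\mcM,\ulLocalFramingObject}^{\leq\mu}$ with $X_{\mcM}^{\leq\mu}(b)$, and since the discrete set $\Isom^{\otimes}(\omega,\check{\mathcal{V}}_{\FramingObject})/H$ is already reduced, the same holds after product. Because the quotient map by $I_{\FramingObject}(\FcnFld)$ is adic and étale by Proposition~\ref{PropQuotientByI}(b), taking the largest ideal of definition commutes with the quotient, giving precisely the description in \eqref{EqReducedSourceOfTheta}; this is exactly $\SourceTheta_1$.

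For part (b), the key observation is that $\Theta_{\FramingObject,\mathcal{X}}$ is formally étale by Lemma~\ref{LemmaThetaEtaleProper}. A formally étale morphism between locally noetherian adic formal algebraic Deligne–Mumford stacks is adic, so the preimage $\Theta_{\FramingObject,\mathcal{X}}^{-1}(\mathcal{I}_\TargetTheta)\cdot\Oo_\SourceTheta$ is an ideal of definition of $\SourceTheta$, hence contained in the largest such ideal $\mathcal{I}_\SourceTheta$. Taking $m$-th powers, the morphism $\Theta_{\FramingObject,\mathcal{X}}$ carries $\SourceTheta_m$ into $\TargetTheta_m$, inducing the desired morphism of (algebraic) Deligne–Mumford stacks. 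Finite presentation is then automatic: both $\SourceTheta_m$ and $\TargetTheta_m$ are locally of finite type over the noetherian base $\Spec\BreveOReflZMuBeta/\breve{\mathfrak{m}}_{\mu,\beta}^m$, and any morphism of DM stacks locally of finite type over a locally noetherian base is locally of finite presentation.

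The one step that requires care is the adicness of $\Theta_{\FramingObject,\mathcal{X}}$: one must check that formal étaleness really does imply $\Theta_{\FramingObject,\mathcal{X}}^{-1}(\mathcal{I}_\TargetTheta)\cdot\Oo_\SourceTheta$ is an ideal of definition (and not merely some ideal containing $\breve{\mathfrak{m}}_{\mu,\beta}$). This reduces, after passing to an étale presentation of $\SourceTheta$ and $\TargetTheta$, to the analogous statement for a formally étale morphism of locally noetherian adic formal schemes, where it is standard (the induced map on underlying reduced schemes of the $\mathcal{I}_\TargetTheta$-adic completions is étale, and reducedness descends). Everything else reduces to routine bookkeeping with the colimit presentations and the étale quotient in Proposition~\ref{PropQuotientByI}.
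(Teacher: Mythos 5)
Your proof of part (a) is essentially the paper's argument, with minor cosmetic differences (the paper identifies $\TargetTheta_m$ as a closed substack of $\Sht_{\gpSch,H,\widehat{\infty}\times\infty}^{\mcZ(\mu,\beta)}\times_{\BreveOReflZMuBeta}\Spec\BreveOReflZMuBeta/\breve{\mathfrak{m}}_{\mu,\beta}^m$ rather than citing the abstract structure theory directly, but this is equivalent). The identifications $\TargetTheta_1=\mathcal{X}_\FramingObject$ and $\SourceTheta_1 = $ (\ref{EqReducedSourceOfTheta}) are fine as you describe.

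Part (b) is where you diverge, and there is a genuine gap. You try to deduce $\Theta_{\FramingObject,\mathcal{X}}^*(\mathcal{I}_\TargetTheta)\subset\mathcal{I}_\SourceTheta$ from the claim that a formally \'etale morphism of locally noetherian adic formal algebraic DM stacks is automatically adic. That claim is not standard and your sketched justification does not establish it: passing to an \'etale presentation and noting that the base change to $\TargetTheta_1$ is \'etale with reduced target (hence reduced) only addresses reducedness of $\SourceTheta\times_{\TargetTheta}\TargetTheta_1$ \emph{if one already knows it is an algebraic (rather than formal) stack}, which is precisely the adicness claim you are trying to prove — the argument is circular. Moreover, adicness is established later in the paper (Lemma~\ref{LemmaThetaisadic}) by a non-trivial argument using properness, surjectivity and the monomorphism property, and that proof takes Lemma~\ref{new-lemma} as input; so deducing adicness already here would be logically backwards and would render Lemma~\ref{LemmaThetaisadic} redundant, a strong sign that formal \'etaleness alone is not enough.

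The paper's route to (b) is much simpler and sidesteps adicness entirely. By construction $\TargetTheta = \Sht_{\gpSch,H,\widehat{\infty}\times\infty}^{\mcZ(\mu,\beta)}{}_{/\mathcal{X}}$ is the formal completion along the reduced closed substack $\mathcal{X}_\FramingObject=\TargetTheta_1$. Hence any morphism from a \emph{reduced} scheme into $\TargetTheta$ factors through $\TargetTheta_1$ by definition of the formal completion. Applying this to a presentation $\mathcal{P}\onto\SourceTheta_1=\SourceTheta_\red$ (which is reduced) shows that $\Theta_{\FramingObject,\mathcal{X}}|_{\SourceTheta_1}$ factors through $\TargetTheta_1$, i.e.\ $\Theta_{\FramingObject,\mathcal{X}}^*(\mathcal{I}_\TargetTheta)\subset\mathcal{I}_\SourceTheta$. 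Taking $m$-th powers gives the induced morphism $\SourceTheta_m\to\TargetTheta_m$. This is the observation you should use instead of the formal \'etaleness argument; the finite presentation claim then follows exactly as you say.
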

\begin{proof}
For $\SourceTheta_m$, this follows since $\SourceTheta$ is a formal algebraic Deligne-Mumford stack locally formally of finite type over $\Spec\BreveOReflZMuBeta$. For $\TargetTheta_m$, it follows because $\TargetTheta_m$ is a closed substack of the Deligne-Mumford stack $\Sht_{\gpSch,H,\widehat{\infty}\times\infty}^{\mcZ(\mu,\beta)}\times_{\BreveOReflZMuBeta}\Spec\BreveOReflZMuBeta/\breve{\mathfrak{m}}_{\mu,\beta}^m$ which is locally of finite type over $\Spec\BreveOReflZMuBeta/\breve{\mathfrak{m}}_{\mu,\beta}^m$ by Proposition~\ref{PropLSGGsht1}. 

By definitions of $\mathcal{I}_{\TargetTheta}$ and $\mathcal{I}_{\SourceTheta}$, for $m=1$, the stacks $\TargetTheta_1=\TargetTheta_{\red}$ and $\SourceTheta_1=\SourceTheta_{\red}$ are reduced. Recall that $\mathcal{X}_\FramingObject$ is reduced by Lemma \ref{LemmaImageOfTheta}\ref{LemmaImageOfTheta_B}, we have $\mathcal{X}_\FramingObject=\TargetTheta_1$. Moreover, it is clear that $\SourceTheta_1=I_{\FramingObject}(\FcnFld) \big{\backslash}X_{\mcM}^{\leq \mu}(b)\times \Isom^{\otimes}(\omega ,\check{{\mathcal{V}}}_{\FramingObject})$.

Moreover, $\Theta_{\FramingObject,\mathcal{X}}$ induces a morphism $\Theta_{\FramingObject,\mathcal{X}}\colon \SourceTheta_1 \to \TargetTheta_1$, because if $\mathcal{P}\onto\SourceTheta_1$ is a presentation, then $\mathcal{P}$ is a reduced scheme, and hence $\Theta_{\FramingObject,\mathcal{X}}$ induces a morphism $\mathcal{P}\to \mathcal{X}_\FramingObject=\TargetTheta_1$ which descends to a morphism $\Theta_{\FramingObject,\mathcal{X}}\colon\SourceTheta_1\to\TargetTheta_1$. In particular, $\Theta_{\FramingObject,\mathcal{X}}^*(\mathcal{I}_{\TargetTheta})\subset\mathcal{I}_{\SourceTheta}$. 
\end{proof}

\begin{lem}\label{LemmaThetaRedQC}
For every $m$, the induced morphism $\Theta_{\FramingObject,\mathcal{X}}\colon \SourceTheta_m \to \TargetTheta_m$ is quasi-compact and surjective.
\end{lem}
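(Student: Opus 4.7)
The plan is to separate the two assertions and reduce both to the case $m=1$, then treat $m=1$ using the structure of $\mathcal{X}_\FramingObject$ worked out in Lemma~\ref{LemmaImageOfTheta}.

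\textbf{Reduction to $m=1$.} Surjectivity and quasi-compactness are topological properties of the underlying map between ringed sites, so they are insensitive to nilpotent thickenings. Because $\SourceTheta_m$ (respectively $\TargetTheta_m$) is defined by the $m$-th power of the largest ideal of definition, the underlying reduced closed substacks of $\SourceTheta_m$ and $\TargetTheta_m$ are exactly $\SourceTheta_1$ and $\TargetTheta_1=\mathcal{X}_\FramingObject$ (by Lemma~\ref{new-lemma}(a)). It therefore suffices to establish the two properties for the morphism $\Theta_{\FramingObject,\mathcal{X}}\colon \SourceTheta_1\to \TargetTheta_1$.

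\textbf{Surjectivity for $m=1$.} By Lemma~\ref{LemmaImageOfTheta}\ref{LemmaImageOfTheta_B}, $\mathcal{X}_\FramingObject$ is exactly the image of $\widetilde{\Theta}_{\FramingObject}$ restricted to $X_{\mcM}^{\leq \mu}(b)\times \Isom^{\otimes}(\omega ,\check{{\mathcal{V}}}_{\FramingObject})/H$, so that $\widetilde\Theta_\FramingObject\colon X_{\mcM}^{\leq \mu}(b)\times \Isom^{\otimes}(\omega ,\check{{\mathcal{V}}}_{\FramingObject})/H\to \mathcal{X}_\FramingObject$ is surjective on geometric points. Composing with the quotient by $I_\FramingObject(\FcnFld)$ yields surjectivity of $\Theta_{\FramingObject,\mathcal{X}}\colon\SourceTheta_1\twoheadrightarrow\TargetTheta_1$ on geometric points, and hence as a morphism of DM stacks locally of finite type over $\BaseFldInSectUnif$.

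\textbf{Quasi-compactness for $m=1$.} Let $\mathcal{U}\subset \TargetTheta_1=\mathcal{X}_\FramingObject$ be a quasi-compact open substack. Since $\mathcal{X}_\FramingObject$ is a separated Deligne--Mumford stack locally of finite type over $\BaseFldInSectUnif$, $\mathcal{U}$ is of finite type over $\BaseFldInSectUnif$, hence Noetherian, and therefore meets only finitely many irreducible components of $\mathcal{X}_\FramingObject$. By Lemma~\ref{LemmaImageOfTheta}\ref{LemmaImageOfTheta_A} these components are of the form $\widetilde{\Theta}_\FramingObject(T_j)$, say indexed by $j\in\{j_1,\ldots,j_n\}$. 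Using that $\Theta_{\FramingObject,\mathcal{X}}$ is a monomorphism (Lemma~\ref{LemmaThetaismono}), the preimage $\Theta_{\FramingObject,\mathcal{X}}^{-1}(\mathcal{U})$ is contained in the union of the finitely many irreducible components $\bar T_{j_i}$ of $\SourceTheta_1$ that are the images of $T_{j_i}$ under the quotient map $Y_2\twoheadrightarrow \SourceTheta_1$ of Proposition~\ref{PropQuotientByI}. Each $T_{j_i}$ is quasi-compact by \cite[Corollary~4.26]{AH_Local}, so each $\bar T_{j_i}$ is quasi-compact, and as an open substack of the Noetherian quasi-compact substack $\bar T_{j_1}\cup\cdots\cup \bar T_{j_n}$, the preimage $\Theta_{\FramingObject,\mathcal{X}}^{-1}(\mathcal{U})$ is quasi-compact.

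The main obstacle is verifying, for $m=1$, that the irreducible components of $\mathcal{X}_\FramingObject$ are exactly the $\widetilde{\Theta}_\FramingObject(T_j)$ and not obtained by some further decomposition; this uses irreducibility of the $T_j$ together with the monomorphism property of $\Theta_\FramingObject$. Everything else is a routine reduction from nilpotent thickenings to the reduced subschemes and an application of Noetherianity of finite-type DM stacks over a field.
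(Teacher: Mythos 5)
Your proof is correct and takes a genuinely different route from the paper's. Both proofs agree on the reduction to $m=1$ (topological) and on surjectivity. For quasi-compactness, the paper tests against an arbitrary quasi-compact \emph{scheme} $S\to\TargetTheta_1$ (the standard definition) and constructs, using the open neighborhoods $U_z := \bigcup_{j\in N(z)}\widetilde{\Theta}_{\FramingObject}(T_j) \setminus \bigcup_{j\notin N(z)}\widetilde{\Theta}_{\FramingObject}(T_j)$ and properness of $\widetilde{\Theta}_\FramingObject|_{T_j}$, an explicit quasi-compact scheme $S'$ surjecting onto $S\times_{\TargetTheta_1}\SourceTheta_1$; this never needs to identify the irreducible components of $\mathcal{X}_\FramingObject$. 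You instead test against a quasi-compact \emph{open} $\mathcal{U}\subset\mathcal{X}_\FramingObject$ (legitimate here since $\mathcal{X}_\FramingObject$ is a separated, hence quasi-separated, DM stack locally of finite type) and argue via Noetherianity. This is shorter, but it requires the additional input you flag at the end: that each irreducible component of $\mathcal{X}_\FramingObject$ is of the form $\widetilde{\Theta}_\FramingObject(T_j)$. That fact is indeed true, but note it follows from irreducibility of $T_j$ together with continuity and closedness of $\widetilde{\Theta}_\FramingObject$ (image of an irreducible set is irreducible; the images cover $\mathcal{X}_\FramingObject$; Lemma~\ref{LemmaImageOfTheta}\ref{LemmaImageOfTheta_A} rules out infinite increasing chains) --- not from the monomorphism property as your final paragraph suggests. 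The monomorphism property of Lemma~\ref{LemmaThetaismono} is instead what justifies your step ``the preimage $\Theta_{\FramingObject,\mathcal{X}}^{-1}(\mathcal{U})$ is contained in $\bar T_{j_1}\cup\cdots\cup \bar T_{j_n}$'': for $y$ in the preimage with $z=\Theta_{\FramingObject,\mathcal{X}}(y)$ lying in a component $\widetilde{\Theta}_\FramingObject(T_{j_i})=\Theta_{\FramingObject,\mathcal{X}}(\bar T_{j_i})$ meeting $\mathcal{U}$, injectivity on points forces $y\in\bar T_{j_i}$. With that attribution corrected, your argument is complete: each $\bar T_{j_i}$ is quasi-compact (image of the quasi-compact $T_{j_i}$), their finite union is a Noetherian subspace of the locally Noetherian $\SourceTheta_1$, and the open subset $\Theta_{\FramingObject,\mathcal{X}}^{-1}(\mathcal{U})$ of that Noetherian space is quasi-compact.
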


\begin{proof}
The assertion only depends on the underlying topological spaces $|\TargetTheta_m|=|\TargetTheta_1|$ and $|\SourceTheta_m|=|\SourceTheta_1|$ (see \cite[Chapitre~5]{Laumon-Moret-Bailly}), so we may assume that $m=1$. The morphism $\Theta_{\FramingObject,\mathcal{X}}\colon \SourceTheta_1 \to \TargetTheta_1$ is surjective by the definition of $\mathcal{X}_\FramingObject$ as the image of $\widetilde{\Theta}_{\FramingObject}$.

We show that the morphism is quasi-compact. Let $S$ be a quasi-compact scheme and let $f\colon S\to\TargetTheta_1$ be a morphism. We must show that $S\times_{\TargetTheta_1}\SourceTheta_1$ is quasi-compact. Consider the topological space $|\mathcal{X}_\FramingObject|$ underlying $\mathcal{X}_\FramingObject$, and the set $\{T_j\}_{j\in N}$ of representatives of $I_{\FramingObject}(\FcnFld)$-orbits of the irreducible components of the scheme $X_{\mcM}^{\leq \mu}(b)\times \Isom^{\otimes}(\omega,\check{\mathcal{V}}_{\FramingObject})/H$ from Lemma~\ref{LemmaImageOfTheta}. By  Lemma~\ref{LemmaImageOfTheta}\ref{LemmaImageOfTheta_A}, every point $z\in|\mathcal{X}_\FramingObject|$ lies only in finitely many $\widetilde{\Theta}_{\FramingObject}(T_j)$. Let $N(z)\subset N$ be the set of $j\in N(z)$ such that $z\in\widetilde{\Theta}_{\FramingObject}(T_j)$. The open substack
\renewcommand{\ell}{l}
\[
U_z\;:=\;\bigcup_{j\in N(z)}\widetilde{\Theta}_{\FramingObject}(T_j)\;\setminus\bigcup_{j\notin N(z)}\widetilde{\Theta}_{\FramingObject}(T_j)\;\subset\;\mathcal{X}_\FramingObject
\]
contains $z$, and hence $\mathcal{X}_\FramingObject$ is covered by the $U_z$ for all $z\in|\mathcal{X}_\FramingObject|$. Let $s\in S$ be a point and set $z=f(s)\in|\mathcal{X}_\FramingObject|$. The preimage $f^{-1}(U_{f(s)})$ of $U_{f(s)}$ in $S$ contains $s$. We choose an affine open neighborhood $S_s$ of $s$ in $S$ which is contained in $f^{-1}(U_{f(s)})$. Then the $S_s$ cover $S$, and since $S$ is quasi-compact, we have $S=S_{s_1}\cup\ldots\cup S_{s_r}$ for finitely many points $s_k\in S$, $k=1,\ldots,r$. Since $\widetilde{\Theta}_{\FramingObject}\colon T_j\to\mathcal{X}_\FramingObject$ is proper by Lemma~\ref{LemmaThetaEtaleProper}, $S_{s_k} \times_{\mathcal{X}_\FramingObject} T_j$ is proper over the affine scheme $S_{s_k}$. The scheme $S'$ defined as the finite disjoint union
\[
S'\;:=\;\coprod_{k=1}^r\;\coprod_{j\in N(f(s_k))} S_{s_k} \underset{f,\mathcal{X}_\FramingObject,\widetilde{\Theta}_{\FramingObject}}{\times} T_j
\]
is quasi-compact. Since every point $s\in S$ lies in one $S_{s_k}$, and then $f(s)\in U_{f(s_k)}\subset\bigcup_{j\in N(f(s_k))}\widetilde{\Theta}_{\FramingObject}(T_j)$ has a preimage in one of the $T_j$, we conclude that the projection $S'\to S$ is surjective. Therefore, the projection $S'\to \coprod\limits_{k,j} T_j$ defines the upper horizontal morphism in the following commutative diagram
\iffalse
\begin{equation}
\xymatrix @C+1pc {
S' \ar@{->>}[d] \ar[rr] & & X_{\mcM}^{\leq \mu}(b)\times \Isom^{\otimes}(\omega,\check{\mathcal{V}}_{\FramingObject})/H \ar[d]^{\textstyle\widetilde{\Theta}_{\FramingObject}} \\
S \ar[r]^{\textstyle f} & \TargetTheta_1\ar[r]^{\textstyle \iota_{\mathcal{X}}} & \Sht_{\gpSch,H,\widehat{\infty}\times\infty}^{\mcZ(\mu,\beta)}\widehattimes_{\OReflZMuBeta} \Spf\BreveOReflZMuBeta\;
}
\end{equation}
\fi 
\begin{equation}
\xymatrix {
S' \ar@{->>}[d] \ar[rr] & & X_{\mcM}^{\leq \mu}(b)\times \Isom^{\otimes}(\omega,\check{\mathcal{V}}_{\FramingObject})/H \ar[d]{\widetilde{\Theta}_{\FramingObject}}\ar@{-->}[ld] \\
S \ar[r]_-{f} & \TargetTheta_1 \ar[r]_-{\iota_{\mathcal{X}}} & \Sht_{\gpSch,H,\widehat{\infty}\times\infty}^{\mcZ(\mu,\beta)}\widehattimes_{\OReflZMuBeta} \Spf\BreveOReflZMuBeta
}
\end{equation}
where $\iota_{\mathcal{X}}$ is as defined in Lemma \ref{LemmaImageOfTheta}\ref{LemmaImageOfTheta_B}, through which $\widetilde{\Theta}_{\FramingObject}$ factors.

Then we can consider the surjective morphisms 
\begin{equation}
\xymatrix @C=+8pc @R+1pc {
**{!L(0.45) !U(0.5)} \objectbox{\; S' \underset{\Sht_{\gpSch,H,\widehat{\infty}\times\infty}^{\mcZ(\mu,\beta)}\widehattimes_{\OReflZMuBeta} \Spf\BreveOReflZMuBeta}{\times} X_{\mcM}^{\leq \mu}(b)\times \Isom^{\otimes}(\omega,\check{\mathcal{V}}_{\FramingObject})/H} \ar@{->>}[d] & I_{\FramingObject}(\FcnFld) \times S' \ar[l]_-{\textstyle\sim} \ar@{-->}[d] \\
\quad\; S'\times_{\TargetTheta_1}\SourceTheta_1 \ar@{->>}[r] & S\times_{\TargetTheta_1}\SourceTheta_1 
}
\end{equation}
\iffalse
\begin{equation}
    \begin{tikzcd}
     {S' \underset{\Sht_{\gpSch,H,\widehat{\infty}\times\infty}^{\mcZ(\mu,\beta)}\widehattimes_{\OReflZMuBeta} \Spf\BreveOReflZMuBeta}{\times} X_{\mcM}^{\leq \mu}(b)\times \Isom^{\otimes}(\omega,\check{\mathcal{V}}_{\FramingObject})/H} \arrow[two heads]{d}{} & I_{\FramingObject}(\FcnFld) \times S' \ar[l]_-{\textstyle\sim} \ar@{-->}[d] \\
\quad\; S'\times_{\TargetTheta_1}\SourceTheta_1 \ar@{->>}[r] & S\times_{\TargetTheta_1}\SourceTheta_1    
    \end{tikzcd}
\end{equation}
\fi 
in which the isomorphism in the upper row comes from Lemma~\ref{LemmaThetaismono}. The left downward map is defined by taking the identity on $S'$, multiplied by the surjective quotient map by $I_{\FramingObject}(\FcnFld)$, and observing that $\Id_{S'}$ and the quotient map form a fiber product over $\TargetTheta_1$. Hence the left downward map is surjective. 
By the $I_{\FramingObject}(\FcnFld)$-equivariance of $\widetilde{\Theta}_{\FramingObject}$, the composite surjective map $I_{\FramingObject}(\FcnFld)\times S'\twoheadrightarrow S\times_{\TargetTheta_1}\SourceTheta_1$ gives 
a surjective map $S'\twoheadrightarrow S\times_{\TargetTheta_1}\SourceTheta_1$, and hence $S\times_{\TargetTheta_1}\SourceTheta_1$ is quasi-compact by \cite[Tag~04YC]{stacks-project}. 
\end{proof}

\begin{lem}\label{LemmaThetaisadic}
We have $\Theta_{\FramingObject,\mathcal{X}}^*(\mathcal{I}_{\TargetTheta})=\mathcal{I}_{\SourceTheta}$. In particular, the morphism $\Theta_{\FramingObject,\mathcal{X}}\colon\SourceTheta\to \TargetTheta$ of formal algebraic stacks is adic. 
\end{lem}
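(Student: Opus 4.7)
The plan is to establish $\Theta_{\FramingObject,\mathcal{X}}^*(\mathcal{I}_\TargetTheta) = \mathcal{I}_\SourceTheta$ by comparing formal completions at closed geometric points; formal \'etaleness together with the monomorphism property will force completions, and their largest ideals of definition, to match, and the adic property will follow immediately since $\mathcal{I}_\SourceTheta$ is by construction an ideal of definition of $\SourceTheta$.

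One inclusion $\Theta_{\FramingObject,\mathcal{X}}^*(\mathcal{I}_\TargetTheta)\subset\mathcal{I}_\SourceTheta$ is recorded at the end of Lemma~\ref{new-lemma}. For the reverse inclusion, fix a closed geometric point $x\in\SourceTheta$ over $\overline{\F}_\infty$ (necessarily lying in $\SourceTheta_1$ since $\SourceTheta$ and $\SourceTheta_1$ share the same underlying topological space) and set $y := \Theta_{\FramingObject,\mathcal{X}}(x)\in\TargetTheta_1 = \mathcal{X}_\FramingObject$, so that both residue fields equal $\overline{\F}_\infty$. By Lemmas~\ref{LemmaThetaEtaleProper} and \ref{LemmaThetaismono}, $\Theta_{\FramingObject,\mathcal{X}}$ is a formally \'etale monomorphism of locally noetherian adic formal algebraic Deligne-Mumford stacks. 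Passing to an \'etale presentation to reduce to Noetherian complete local rings, a standard deformation-theoretic argument (formal \'etaleness plus monomorphism plus trivial residue extension) yields an isomorphism of topological local rings
\[
\widehat{\mathcal{O}}_{\TargetTheta,y}\;\isoto\;\widehat{\mathcal{O}}_{\SourceTheta,x}
\]
which identifies $\mathcal{I}_\TargetTheta\widehat{\mathcal{O}}_{\TargetTheta,y}$ with $\Theta_{\FramingObject,\mathcal{X}}^*(\mathcal{I}_\TargetTheta)\widehat{\mathcal{O}}_{\SourceTheta,x}$.

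Under this identification, the quotient $\widehat{\mathcal{O}}_{\SourceTheta,x}/\Theta_{\FramingObject,\mathcal{X}}^*(\mathcal{I}_\TargetTheta)\widehat{\mathcal{O}}_{\SourceTheta,x}$ is the completion at $y$ of the local ring of the reduced Deligne-Mumford stack $\TargetTheta_1 = \mathcal{X}_\FramingObject$, which is locally of finite type over $\overline{\F}_\infty$ and hence excellent. Excellence guarantees that completions of reduced local rings remain reduced, so $\Theta_{\FramingObject,\mathcal{X}}^*(\mathcal{I}_\TargetTheta)\widehat{\mathcal{O}}_{\SourceTheta,x}$ is an ideal of definition with reduced quotient. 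Since the largest ideal of definition of a locally Noetherian adic local ring is uniquely characterized by the reduced quotient property and equals $\mathcal{I}_\SourceTheta\widehat{\mathcal{O}}_{\SourceTheta,x}$ by construction, we obtain $\Theta_{\FramingObject,\mathcal{X}}^*(\mathcal{I}_\TargetTheta)\widehat{\mathcal{O}}_{\SourceTheta,x} = \mathcal{I}_\SourceTheta\widehat{\mathcal{O}}_{\SourceTheta,x}$ at every closed geometric point of $\SourceTheta$.

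To globalize, I consider the quasi-coherent quotient $\mathcal{I}_\SourceTheta/\Theta_{\FramingObject,\mathcal{X}}^*(\mathcal{I}_\TargetTheta)$ on $\SourceTheta$. Its formal completion at every closed geometric point vanishes by the preceding step, and faithful flatness of the completion map for Noetherian local rings then forces every stalk, hence the sheaf itself, to vanish. Thus $\Theta_{\FramingObject,\mathcal{X}}^*(\mathcal{I}_\TargetTheta) = \mathcal{I}_\SourceTheta$; since $\mathcal{I}_\SourceTheta$ is an ideal of definition of $\SourceTheta$, the pullback of the ideal of definition $\mathcal{I}_\TargetTheta$ is an ideal of definition of $\SourceTheta$, which is precisely the assertion that $\Theta_{\FramingObject,\mathcal{X}}$ is adic. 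The main technical delicacy I anticipate is rigorously upgrading the formally \'etale monomorphism property to an isomorphism of complete local rings in the formal algebraic stack setting, which requires carefully combining formal \'etaleness, the monomorphism property, and the coincidence of residue fields after reduction to the classical scheme-theoretic statement via an \'etale presentation.
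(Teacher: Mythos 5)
Your proposal has a genuine gap, concentrated in the pointwise step. Formal \'etaleness plus the monomorphism property (Lemmas~\ref{LemmaThetaEtaleProper} and \ref{LemmaThetaismono}) does give, via the standard deformation argument on Artinian local test rings, an isomorphism of the $\mathfrak{m}$-adic completions $\widehat{\mathcal{O}}_{\TargetTheta,y}\isoto\widehat{\mathcal{O}}_{\SourceTheta,x}$ as abstract local rings; but it does \emph{not} identify the formal topologies, i.e.\ it does not make this an isomorphism of \emph{topological} local rings, and that is exactly where the conclusion is smuggled in. Consequently, your final step fails: the largest ideal of definition is characterized as the unique \emph{ideal of definition} with reduced quotient, and to apply this to $\Theta_{\FramingObject,\mathcal{X}}^*(\mathcal{I}_{\TargetTheta})\widehat{\mathcal{O}}_{\SourceTheta,x}$ you would need to know it is open for the $\mathcal{I}_{\SourceTheta}$-adic topology, i.e.\ contains a power of $\mathcal{I}_{\SourceTheta}\widehat{\mathcal{O}}_{\SourceTheta,x}$ --- which is precisely the adicness being proven. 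Two radical ideals $J_1\subset J_2$ with both quotients reduced need not be equal (e.g.\ $(uv)\subset(u,v)$ in $k\dbl u,v\dbr$). That the hypotheses you use cannot suffice is shown inside the paper itself: the completion morphism \eqref{EqShtCompletion} is an ind-proper, formally \'etale monomorphism which is in general \emph{not} adic. A minimal model is $\Spf k\dbl\xi,u\dbr\to\Spf A$, where $A$ is the $\xi$-adic completion of $k\dbl\xi\dbr[u]$ and the source is the further completion at the origin of the special fiber: the map on complete local rings at that point is the identity, the pullback of the largest ideal of definition is $(\xi)\subsetneq(\xi,u)$, and $k\dbl\xi,u\dbr/(\xi)=k\dbl u\dbr$ is reduced --- so your argument would ``prove'' this morphism adic, which is false.

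The missing input is global, not infinitesimal: one must use that $\Theta_{\FramingObject,\mathcal{X}}$ is quasi-compact and \emph{surjective} on the level of the algebraic stacks $\SourceTheta_m\to\TargetTheta_m$ (Lemma~\ref{LemmaThetaRedQC}, which your proof never invokes), in combination with ind-properness. The paper's proof base-changes to an atlas $\mathcal{P}'\onto\TargetTheta_1$ and shows $\SourceTheta_m\times_{\TargetTheta_m}\mathcal{P}'\to\mathcal{P}'$ is a proper monomorphism, hence a closed immersion, and then surjectivity onto the \emph{reduced} scheme $\mathcal{P}'$ forces it to be an isomorphism; this yields $\Var(\Theta_{\FramingObject,\mathcal{X}}^*\mathcal{I}_{\TargetTheta})\subset\SourceTheta_1=\Var(\mathcal{I}_{\SourceTheta})$, i.e.\ the reverse inclusion $\mathcal{I}_{\SourceTheta}\subset\Theta_{\FramingObject,\mathcal{X}}^*(\mathcal{I}_{\TargetTheta})$, which together with the inclusion from Lemma~\ref{new-lemma} (the part of your argument that is fine, as is your globalization via faithful flatness of completion, had the pointwise statement held) gives the equality. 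To repair your approach you would have to prove at each point that $V(\Theta_{\FramingObject,\mathcal{X}}^*\mathcal{I}_{\TargetTheta})$ and $V(\mathcal{I}_{\SourceTheta})$ have the same underlying closed subset of $\Spec\widehat{\mathcal{O}}_{\SourceTheta,x}$, and that is where surjectivity and properness of the reduced-level map must enter, essentially reproducing the paper's argument.
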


\begin{proof} 
Let $\mathcal{P}' \twoheadrightarrow \TargetTheta_1=\Var(\mathcal{I}_{\TargetTheta})$ be an atlas. By Lemmas~\ref{LemmaThetaEtaleProper}, \ref{LemmaThetaismono} and \ref{new-lemma}, we see that $\Theta_{\mathcal{X},m}\colon\SourceTheta_m\times_{\TargetTheta}\mathcal{P}'=\SourceTheta_m\times_{\TargetTheta_m}\mathcal{P}' \to \mathcal{P}'$ is a monomorphism locally of finite presentation satisfying the valuative criterion for properness. Since $\Theta_{\mathcal{X},m}$ is quasi-compact by Lemma~\ref{LemmaThetaRedQC}, it is a proper monomorphism, hence a closed immersion of schemes by \cite[Corollaire~A.2.2]{Laumon-Moret-Bailly}. Since $\Theta_{\mathcal{X},m}$ is surjective by Lemma~\ref{LemmaThetaRedQC} and $\mathcal{P}'$ is reduced, it must be an isomorphism for all $m$. Therefore $\SourceTheta_m\times_{\TargetTheta} \mathcal{P}'=\mathcal{P}'=\SourceTheta_1\times_{\TargetTheta} \mathcal{P}'$, and thus $\SourceTheta\times_{\TargetTheta} \mathcal{P}'=\dirlim\SourceTheta_m\times_{\TargetTheta} \mathcal{P}'=\mathcal{P}'=\SourceTheta_1\times_{\TargetTheta} \mathcal{P}'$. This shows that $\Var(\Theta_{\FramingObject,\mathcal{X}}^*\mathcal{I}_{\TargetTheta})=\SourceTheta\times_{\TargetTheta}\TargetTheta_1=\SourceTheta_1\times_{\TargetTheta}\TargetTheta_1\subset\SourceTheta_1=\Var(\mathcal{I}_{\SourceTheta})$ as closed substacks of $\SourceTheta$. Therefore $\mathcal{I}_{\SourceTheta}\subset\Theta_{\FramingObject,\mathcal{X}}^*(\mathcal{I}_{\TargetTheta})$ as the corresponding ideals. With the opposite inclusion established in Lemma \ref{new-lemma}, we have $\mathcal{I}_{\SourceTheta}=\Theta_{\FramingObject,\mathcal{X}}^*(\mathcal{I}_{\TargetTheta})$, and hence $\Theta_{\FramingObject,\mathcal{X}}$ is adic. 
\end{proof}

\begin{proof}[Proof of Theorem~\ref{Uniformization2}\ref{Uniformization2_B}]
By Lemma~\ref{LemmaThetaisadic}, we have $\Theta_{\FramingObject,\mathcal{X}}^*(\mathcal{I}_{\TargetTheta})=\mathcal{I}_{\SourceTheta}$. Then $\SourceTheta_m\to\TargetTheta_m$ is obtained from $\SourceTheta\to\TargetTheta$ by base change via $\TargetTheta_m\to \TargetTheta$, and hence is a formally \'etale morphism locally of finite presentation of algebraic stacks by Lemma~\ref{LemmaThetaEtaleProper}. Since $\Theta_{\FramingObject}$ is a monomorphism by Lemma~\ref{LemmaThetaismono} and \eqref{EqShtCompletion} is a monomorphism, thus  $\SourceTheta_m\to\TargetTheta_m$ is a monomorphism. In particular, $\SourceTheta_m\to\TargetTheta_m$ is relatively representable by an \'etale monomorphism of schemes; see \cite[Corollaire~8.1.3 and Th\'eor\`eme~A.2]{Laumon-Moret-Bailly}. In addition, $\SourceTheta_m\to\TargetTheta_m$ is surjective by Lemma~\ref{LemmaThetaRedQC}, hence an isomorphism by 
[EGA IV$_4$, Th\'eor\`eme~17.9.1]
. As this holds for all $m$, we conclude that $\Theta_{\FramingObject,\mathcal{X}}:\SourceTheta\to\TargetTheta$ is an isomorphism of stacks.
\end{proof}

\begin{proof}[Proof of  Theorem~\ref{Uniformization2}\ref{Uniformization2_C}]
Recall from earlier that $\widetilde{\Theta}_{\FramingObject}$ is equivariant for the action of the center $Z(\FcnFld_\infty)$ given in \eqref{EqActionCenter3} and \eqref{EqActionCenter2}, and the action of $\gengpSch(\mathbb{A}^\infty)$ through Hecke correspondences given in \eqref{EqHeckeSource} and \eqref{EqHeckeTarget}. Thus it suffices to check that $\widetilde{\Theta}_{\FramingObject}$ is compatible with Weil descent data and Frobenius endomorphism structure.

First we show that the morphism $\widetilde{\Theta}_{\FramingObject}$ is compatible with the Weil descent data \eqref{EqDescentDatumOnNablaH} and \eqref{EqDescentDatumSource}. Let $(S,\theta)\in\Nilp_{\BreveOReflZMuBeta}$ and $S_{[\lambda]}=(S,\lambda\circ\theta)\in\Nilp_{\BreveOReflZMuBeta}$ where $\lambda$ is defined in \eqref{EqWeilDescent_lambda}. The $S$-valued point $(\underline{\mcL},\hat{\delta}, \gamma H)$, respectively the $S_{[\lambda]}$-valued point $(\underline{\mcL},\theta^*(\widehat{\varphi}_{\LocalFramingObject}^{[\KappaReflZMuBeta\colon\F_q]})\circ\hat{\delta}, \gamma H)$, of the source of $\widetilde{\Theta}_{\FramingObject}$ are sent to $(\underline{\mcE},\check{\mathcal{V}}_\delta^{-1}\theta^*(\gamma) H)$ and $(\underline{\mcE}',\check{\mathcal{V}}_{\delta'}^{-1}(\lambda\circ\theta)^*(\gamma) H)$, respectively, where $\underline{\mcE}:=\hat{\delta}^*\FramingObject_S$ and $\underline{\mcE}':=(\theta^*\widehat{\varphi}_{\LocalFramingObject}^{[\KappaReflZMuBeta\colon\F_q]}\circ\hat{\delta})^*\FramingObject_{S_{[\lambda]}}$, and $\delta\colon\underline{\mcE}\to\FramingObject_S:=\theta^*\FramingObject$ and $\delta'\colon\underline{\mcE}'\to\FramingObject_{S_{[\lambda]}}:=(\lambda\circ\theta)^*\FramingObject=\theta^*\lambda^*\FramingObject=\theta^*({}^{\tau^{[\KappaReflZMuBeta\colon\F_q]}\!}\FramingObject)$ are the quasi-isogenies with $L_{\infty,\mcM_{\beta^{-1}}}(\delta)=\hat{\delta}$ and $L_{\infty,\mcM_{\beta^{-1}}}(\delta')=\theta^*(\widehat{\varphi}_{\LocalFramingObject}^{[\KappaReflZMuBeta\colon\F_q]})\circ\hat{\delta}$, which are isomorphisms outside $\infty$. Then $\psi:=\delta^{-1}\circ\theta^*(\Phi_{\FramingObject}^{-[\KappaReflZMuBeta\colon\F_q]})\circ\delta'\colon\underline{\mcE}'\to\underline{\mcE}$ is a quasi-isogeny by Definition~\ref{DeFfrobIsog}, which by Corollary~\ref{CorFrobIsog} satisfies $L_{\infty,\mcM_{\beta^{-1}}}(\psi)=\id_{\underline{\mcL}}$ and $\check{\mathcal{V}}_\psi\circ\check{\mathcal{V}}_{\delta'}^{-1}\circ\theta^*\lambda^*(\gamma) H=\check{\mathcal{V}}_\delta^{-1}\theta^*(\gamma) H$, because $\lambda^*(\gamma)={}^{\tau^{[\KappaReflZMuBeta\colon\F_q]}\!}\gamma=\check{\mathcal{V}}_{\Phi_{\FramingObject}^{[\KappaReflZMuBeta\colon\F_q]}}\circ\gamma$. Therefore, $\psi$ identifies $(\underline{\mcE}',\check{\mathcal{V}}_{\delta'}^{-1}(\lambda\circ\theta)^*(\gamma) H)$ with $(\underline{\mcE},\check{\mathcal{V}}_\delta^{-1}\theta^*(\gamma) H)$, and this establishes the compatibility of $\widetilde{\Theta}_{\FramingObject}$ with the Weil descent data \eqref{EqDescentDatumOnNablaH} and \eqref{EqDescentDatumSource}. 

Since the Weil descent datum on the source of $\widetilde{\Theta}_{\FramingObject}$ commutes with the action of $I_{\FramingObject}(\FcnFld)$, this also proves the compatibility of $\Theta_{\FramingObject}$ with the Weil descent data \eqref{EqDescentDatumOnNablaH} and \eqref{EqDescentDatumSourceModI}. 

Finally, the target $\Sht_{\gpSch,H,\widehat{\infty}\times\infty}^{\mcZ(\mu,\beta)}{}_{/\mathcal{X}}$ carries the Weil descent datum \eqref{EqDescentDatumOnNablaH} induced from $\Sht_{\gpSch,H,\widehat{\infty}\times\infty}^{\mcZ(\mu,\beta)} \widehattimes_{\OReflZMuBeta} \Spf\BreveOReflZMuBeta$. To see this: if a morphism $S_\red\to\Sht_{\gpSch,H,\widehat{\infty}\times\infty}^{\mcZ(\mu,\beta)} \widehattimes_{\OReflZMuBeta} \Spf\BreveOReflZMuBeta$ given by $(\underline{\mcE},\gamma H)$ factors through $\mathcal{X}_\FramingObject=\Image(\widetilde{\Theta}_{\FramingObject})$, then the morphism $(S_{[\lambda]})_\red\to\Sht_{\gpSch,H,\widehat{\infty}\times\infty}^{\mcZ(\mu,\beta)} \widehattimes_{\OReflZMuBeta} \Spf\BreveOReflZMuBeta$ given by $(\underline{\mcE},\gamma H)$ also factors through $\mathcal{X}_\FramingObject=\Image(\widetilde{\Theta}_{\FramingObject})$, because $\widetilde{\Theta}_{\FramingObject}$ commutes with the Weil descent data. This shows that $\Theta_{\FramingObject,\mathcal{X}}$ is also compatible with the Weil descent data \eqref{EqDescentDatumOnNablaH} and \eqref{EqDescentDatumSourceModI}.

\bigskip

We also prove that $\widetilde{\Theta}_{\FramingObject}$ commutes with the $q^m$-Frobenius endomorphisms $\Phi_m$ from \eqref{EqFrobOnSource} and \eqref{EqFrobOnTarget}. Let $y:=(\underline{\mcL},\hat{\delta}, \gamma H)$ be an $S$-valued point of 
\[
\bigl(\RZ_{\mcM,\ulLocalFramingObject}^{\leq \mu}\widehattimes_{\BreveOReflZMuBeta}\Spec\BaseFldInSectUnif\bigr) \times \Isom^{\otimes}(\omega,\check{\mathcal{V}}_{\FramingObject})/H.
\]
The images of this point and of $\Phi_m(y)=({}^{\tau^m\!}\underline{\mcL}, \widehat{\varphi}_{\LocalFramingObject}^{\;-m}\circ{}^{\tau^m\!}\hat{\delta},\gamma H)$ in $\Sht_{\gpSch,H,\widehat{\infty}\times\infty}^{\mcZ(\mu,\beta)}$ are given by $\widetilde{\Theta}_{\FramingObject}(y)=(\underline{\mcE},\check{\mathcal{V}}_\delta^{-1}\gamma H)$ and $\widetilde{\Theta}_{\FramingObject}\circ\Phi_m(y)=(\underline{\mcE}',\check{\mathcal{V}}_{\delta'}^{-1}\gamma H)$, respectively, where $\underline{\mcE}:=\hat{\delta}^*\FramingObject_S$ and $\underline{\mcE}':=(\widehat{\varphi}_{\LocalFramingObject}^{\;-m}\circ {}^{\tau^m\!}\hat{\delta})^*\FramingObject_S$, and $\delta\colon\underline{\mcE}\to\FramingObject_S$ and $\delta'\colon\underline{\mcE}'\to\FramingObject_S$ are the quasi-isogenies with $L_{\infty,\mcM_{\beta^{-1}}}(\delta)=\hat{\delta}$ and $L_{\infty,\mcM_{\beta^{-1}}}(\delta')=\widehat{\varphi}_{\LocalFramingObject}^{\;-m}\circ{}^{\tau^m\!}\hat{\delta}$, which are isomorphisms outside $\infty$. We obtain the image $\Phi_m\circ\widetilde{\Theta}_{\FramingObject}(y)=({}^{\tau^m\!}\underline{\mcE},{}^{\tau^m\!}(\check{\mathcal{V}}_\delta^{-1}\gamma) H)$, which comes with the quasi-isogeny ${}^{\tau^m\!}\delta\colon{}^{\tau^m\!}\underline{\mcE}\to{}^{\tau^m\!}\FramingObject_S$. Then $\psi:={}^{\tau^m\!}\delta^{-1}\circ\Phi_{\FramingObject}^{m}\circ\delta'\colon\underline{\mcE}'\to{}^{\tau^m\!}\underline{\mcE}$ is a quasi-isogeny by Definition~\ref{DeFfrobIsog} which by Corollary~\ref{CorFrobIsog} satisfies $L_{\infty,\mcM_{\beta^{-1}}}(\psi)=\id_{{}^{\tau^m\!}\underline{\mcL}}$ and $\check{\mathcal{V}}_\psi\circ\check{\mathcal{V}}_{\delta'}^{-1}\gamma H=\check{\mathcal{V}}_{{}^{\tau^m\!}\delta}^{-1}\circ\check{\mathcal{V}}_{\Phi_{\FramingObject}^m}\circ\gamma H={}^{\tau^m\!}(\check{\mathcal{V}}_\delta^{-1}\gamma) H$, because $\check{\mathcal{V}}_{\Phi_{\FramingObject}^{m}}\circ\gamma={}^{\tau^m\!}(\gamma)$. Therefore, $\psi$ identifies $(\underline{\mcE}',\check{\mathcal{V}}_{\delta'}^{-1}\gamma H)$ with $({}^{\tau^m\!}\underline{\mcE},{}^{\tau^m\!}(\check{\mathcal{V}}_\delta^{-1}\gamma) H)$, and this proves $\widetilde{\Theta}_{\FramingObject}\circ\Phi_m=\Phi_m\circ\widetilde{\Theta}_{\FramingObject}$. 

Since $\Phi_m$ commutes with the action of $I_{\FramingObject}(\FcnFld)$, this also proves that $\Theta_{\FramingObject}$ commutes with the $q^m$-Frobenius endomorphisms $\Phi_m$. Finally, the target $\Sht_{\gpSch,H,\widehat{\infty}\times\infty}^{\mcZ(\mu,\beta)}{}_{/\mathcal{X}}$ carries the $q^m$-Frobenius endomorphism $\Phi_{\underline{\mcE}}^m$ induced from \eqref{EqFrobOnTarget} on $\Sht_{\gpSch,H,\widehat{\infty}\times\infty}^{\mcZ(\mu,\beta)} \widehattimes_{\OReflZMuBeta} \Spf\BreveOReflZMuBeta$. The reason is that if a morphism $S_\red\to\Sht_{\gpSch,H,\widehat{\infty}\times\infty}^{\mcZ(\mu,\beta)} \widehattimes_{\OReflZMuBeta} \Spf\BreveOReflZMuBeta$ given by $(\underline{\mcE},\gamma H)$ factors through $\mathcal{X}_\FramingObject=\Image(\widetilde{\Theta}_{\FramingObject})$, then the morphism $S_\red\to\Sht_{\gpSch,H,\widehat{\infty}\times\infty}^{\mcZ(\mu,\beta)} \widehattimes_{\OReflZMuBeta} \Spf\BreveOReflZMuBeta$ given by $({}^{\tau^m\!}\underline{\mcE},{}^{\tau^m\!}(\lambda)H)$ also factors through $\mathcal{X}_\FramingObject=\Image(\widetilde{\Theta}_{\FramingObject})$, because $\widetilde{\Theta}_{\FramingObject}$ commutes with the $\Phi_m$. This shows that also $\Theta_{\FramingObject,\mathcal{X}}$ is compatible with the $q^m$-Frobenius endomorphisms $\Phi_m$ from \eqref{EqFrobOnTarget} and \eqref{EqFrobOnSource}. This completes the proof of Theorem~\ref{Uniformization2}\ref{Uniformization2_C}.
\end{proof}

\subsection{Application to the Langlands-Rapoport Conjecture}
We finish this section with an application of our main theorem \ref{Uniformization2} to the Langlands-Rapoport Conjecture over function fields. Consider the following category $\mathcal{M}ot_X^{\infty}(\overline{\F}_q)$, which generalizes Anderson's $t$-motives \cite{Anderson-tmotives}. Write $\Breve{\FcnFld}:=\FcnFld\otimes_{\F_q}\overline{\F}_q$.
\begin{defn}\label{defn-category-Xmotives}
The category of $X$-motives $\mathcal{M}ot_X^{\infty}$ is defined as 
\begin{equation}
\begin{split}
    \mathcal{M}ot_X^{\infty}(\overline{\F}_q):=\{\, & (\mcF,\varphi)\colon \mcF\text{ is a vector bundle on $X_{\overline{\F}_q}$, and } \\
    & \varphi: \mcF|_{(X\setminus\{\infty\})_{\overline{\F}_q}}\xrightarrow{\sim}{}^{\tau\!}\mcF|_{(X\setminus\{\infty\})_{\overline{\F}_q}} \text{ is an isomorphism of vector bundles} \},
\end{split}
\end{equation}
with morphisms in this category given by 
\begin{equation}
    \mathrm{Hom}_{\mathcal{M}ot_X^{\infty}(\overline{\F}_q)}((\mcF,\varphi),(\widetilde{\mcF},\widetilde{\varphi})):=\{f\in \mathrm{Hom}_{\Breve{\FcnFld}}(\mcF\otimes_{\mathcal{O}_X}\FcnFld,\widetilde{\mcF}\otimes_{\mathcal{O}_X}\FcnFld) \colon \widetilde{\varphi}\circ f={}^{\tau}f\circ\varphi\}.
\end{equation}
The \emph{motivic Galois gerbe} is the Tannakian fundamental group 
\begin{equation}
    \mathfrak{P}:=\Aut^{\otimes}(\underline{\omega}|\mathcal{M}ot_X^{\infty}(\overline{\F}_q))
\end{equation}
corresponding to the fiber functor $\underline{\omega}$. 
\end{defn}

\begin{remark}\label{RemXMotives} A $X$-motive $\underline{\mathcal{F}}=(\mcF,\varphi)$ of rank $r$ is just the same as a global $\GL_r$-shtuka in $\Sht_{\GL_r,\varnothing,\infty\times\infty}(\overline{\F}_\infty)$ over $\overline{\F}_\infty$. But note that the category $\Sht_{\GL_r,\varnothing,\infty\times\infty}(\overline{\F}_\infty)$ is a groupoid with all its morphisms being isomorphisms, while $\mathcal{M}ot_X^{\infty}(\overline{\F}_q)$ is abelian and contains morphisms which may fail to be isomorphisms, and even morphisms between $X$-motives of different rank are allowed.
\end{remark}

In \cite[Theorem~1.5]{AH_CMotives}, Arasteh Rad and the first author proved the following result.

\begin{lem}
$\mathcal{M}ot_X^{\infty}(\overline{\F}_q)$ is a semi-simple $\FcnFld$-linear Tannakian category, with a canonical fiber functor $\underline{\omega}$ over $\Breve{\FcnFld}$ given by $\underline{\omega}: (\mcF,\varphi)\mapsto \mcF\otimes_{\Oo_X}\FcnFld=\mcF\otimes_{\Oo_{X_{\overline{\F}_q}}}\Breve{\FcnFld}$. The motivic Galois gerbe is an extension
\[
1\to \mathfrak{P}^{\Delta}\to\mathfrak{P}\to \Gal(\Breve{\FcnFld}/\FcnFld)\to 1,
\]
where the kernel group $\mathfrak{P}^{\Delta}$ is a pro-reductive group over $\Breve{\FcnFld}$.
\end{lem}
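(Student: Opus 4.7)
The plan is to verify the Tannakian axioms for $\mathcal{M}ot_X^{\infty}(\overline{\F}_q)$ one by one, identify the fiber functor, establish semi-simplicity (the crux of the argument), and then read off the gerbe structure from Tannakian duality. I will closely follow the strategy of \cite{AH_CMotives} but indicate what needs to be checked in the present setup.

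First, I would equip $\mathcal{M}ot_X^{\infty}(\overline{\F}_q)$ with its rigid $\otimes$-structure: the tensor product is $(\mcF,\varphi)\otimes(\mcF',\varphi'):=(\mcF\otimes_{\Oo_{X_{\overline{\F}_q}}}\mcF',\varphi\otimes\varphi')$, the unit object is $\underline{\mathbf{1}}:=(\Oo_{X_{\overline{\F}_q}},\id)$, and the dual of $(\mcF,\varphi)$ is $(\mcF^\vee,(\varphi^\vee)^{-1})$. A direct computation shows $\End(\underline{\mathbf{1}})=\{c\in\Breve{\FcnFld}:{}^\tau c=c\}=\FcnFld$, so the category is $\FcnFld$-linear. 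The candidate fiber functor $\underline{\omega}$ sends $(\mcF,\varphi)$ to $\mcF\otimes_{\Oo_X}\FcnFld=\mcF\otimes_{\Oo_{X_{\overline{\F}_q}}}\Breve{\FcnFld}$, viewed as a finite-dimensional $\Breve{\FcnFld}$-vector space; it is manifestly $\otimes$-compatible, and faithfulness follows from the definition of $\Hom$-sets in the category (morphisms are by definition given on the generic fiber).

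Next, I would establish that $\mathcal{M}ot_X^{\infty}(\overline{\F}_q)$ is abelian, with kernels and cokernels formed on the generic fiber and then extended by scheme-theoretic closure to $X_{\overline{\F}_q}$, using that $X$ is a smooth curve (so torsion-free coherent sheaves are locally free away from a finite set). The subtle point is that objects are (locally free) sheaves on all of $X_{\overline{\F}_q}$, but morphisms are only specified generically; the standard remedy is that every generic morphism extends uniquely to a morphism of the underlying reflexive hulls, and the resulting kernel/cokernel pair again lies in the category. This makes $\underline{\omega}$ into an exact $\otimes$-functor, and upon choosing such, $\mathcal{M}ot_X^{\infty}(\overline{\F}_q)$ becomes a neutral-over-$\Breve{\FcnFld}$ Tannakian category over $\FcnFld$.

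The heart of the proof is semi-simplicity, and I expect this to be the main obstacle. The key input is a Dieudonné--Manin-style isoclinic decomposition for $\tau$-sheaves over $X\smallsetminus\{\infty\}$: after restricting $(\mcF,\varphi)$ to the generic point, $\varphi$ becomes a $\tau$-linear automorphism of $\mcF\otimes\Breve{\FcnFld}$, and by the slope filtration theorem (cf.\ \cite[Theorem~1.5]{AH_CMotives}, going back to Laumon and to Hartl--Pink over function fields) this carries a canonical, functorial splitting into isoclinic pieces, each of which is a successive extension of simple objects whose endomorphism ring is a central division algebra over $\FcnFld$. One then checks that any extension between two simples of the same slope splits, using that higher $\Ext$ groups in this category vanish when computed via the $\tau$-fixed points of an appropriate $\Hom$-bundle (a standard Frobenius-linearization argument with Lang's theorem over $\overline{\F}_q$). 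This gives $\mathcal{M}ot_X^{\infty}(\overline{\F}_q)$ the structure of a semi-simple $\FcnFld$-linear Tannakian category.

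Finally, Tannakian duality applied to $\underline{\omega}$ identifies $\mathfrak{P}=\Aut^\otimes(\underline{\omega}\,|\,\mathcal{M}ot_X^{\infty}(\overline{\F}_q))$ with an affine gerbe over $\Spec\FcnFld$ banded by $\mathfrak{P}^\Delta:=\underline{\Aut}^\otimes(\underline{\omega})$, the latter being the $\Breve{\FcnFld}$-pro-algebraic group of $\otimes$-automorphisms of $\underline{\omega}$ viewed as a functor to $\Breve{\FcnFld}$-modules. The exact sequence
\[
1\to\mathfrak{P}^\Delta\to\mathfrak{P}\to\Gal(\Breve{\FcnFld}/\FcnFld)\to 1
\]
is the structural sequence of this gerbe over $\Spec\FcnFld$; here one uses that $\Breve{\FcnFld}=\FcnFld\otimes_{\F_q}\overline{\F}_q$ and hence $\Gal(\Breve{\FcnFld}/\FcnFld)=\Gal(\overline{\F}_q/\F_q)$. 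Pro-reductivity of $\mathfrak{P}^\Delta$ is then immediate from the semi-simplicity established in the previous paragraph, by the standard Tannakian criterion that the fundamental group of a semi-simple rigid abelian $\otimes$-category is pro-reductive.
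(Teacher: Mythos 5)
The paper does not prove this lemma: it is stated as a direct citation to \cite[Theorem~1.5]{AH_CMotives}, with the sentence ``In \cite[Theorem~1.5]{AH_CMotives}, Arasteh Rad and the first author proved the following result'' preceding the statement. So there is no in-paper argument to compare against; your proposal is necessarily a reconstruction of the argument in the cited reference, not of a proof given here.

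On the merits of your reconstruction, the overall skeleton is right (rigid $\otimes$-structure, $\End(\underline{\mathbf 1})=\FcnFld$, faithful $\Breve{\FcnFld}$-valued fiber functor, semi-simplicity, Tannakian duality for the gerbe), and you correctly identify semi-simplicity as the crux. There are two concrete gaps in the semi-simplicity step. First, you invoke a Dieudonné--Manin isoclinic decomposition ``after restricting $(\mcF,\varphi)$ to the generic point,'' i.e.\ over $\Breve{\FcnFld}$. But $\Breve{\FcnFld}=\FcnFld\otimes_{\F_q}\overline{\F}_q$ is a global function field (constant extension of $\FcnFld$), not a complete discretely valued field, so the Dieudonné--Manin / slope-filtration machinery does not apply there. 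The slope decomposition lives at the leg $\infty$ (the one place where $\varphi$ is not an isomorphism), i.e.\ on the local isoshtuka over $\Breve{\FcnFld}_\infty$, and one must then argue that this \emph{local} decomposition lifts uniquely to a decomposition of the global object $(\mcF,\varphi)$ up to isogeny --- a rigidity statement that your sketch asserts but does not supply. Second, the claim that extensions between two simples of the same slope split ``by Lang's theorem'' is stated too loosely to be checked: Lang's theorem applies to connected algebraic groups of finite type over $\overline{\F}_q$, and the $\Ext^1$ here is a priori computed from $\tau$-semilinear data on an infinite-dimensional $\overline{\F}_q$-space (the function-field coefficients $\Breve{\FcnFld}$), so you would need to reduce to a finite-type situation (e.g.\ by exhibiting the extension group as $H^1$ of a Frobenius-linearized bundle over $X_{\overline{\F}_q}$ and then using a finiteness/vanishing argument) before Lang can be applied. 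Until those two steps are supplied, the semi-simplicity --- and hence the pro-reductivity of $\mathfrak{P}^\Delta$ --- is not established. One small inaccuracy elsewhere: on the smooth curve $X_{\overline{\F}_q}$, torsion-free coherent sheaves are locally free everywhere, not merely away from a finite set; this makes the abelian-category step simpler than you suggest (saturate the kernel sub-sheaf, take the quotient bundle for the cokernel), so there is no need to invoke ``reflexive hulls.''
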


Note that $\underline{\omega}$ is not the only fiber functor for 
$\mathcal{M}ot_X^{\infty}(\overline{\F}_q)$.

\begin{defn}
For any closed point $v\in X\setminus\{\infty\}$, there is a fiber functor given by the $v$-adic dual Tate module (also called $v$-adic \'etale cohomology) of $\underline{\mathcal{F}}$
\[
{\rm H}^1_{v,\et}:\mathcal{M}ot_X^{\infty}(\overline{\F}_q)\longrightarrow Q_v\text{-Vect},\qquad \underline{\mathcal{F}} \mapsto {\rm H}^1_{v,\et}(\underline{\mathcal{F}},\Oo_v):= \{m\in \mathcal{F}\otimes_{\Oo_{X_{\overline{\F}_q}}} \Breve{\Oo}_v\colon \varphi(m) = {}^{\tau\!}m\}\otimes_{\Oo_v} \FcnFld_v.
\]
The fiber functor ${\rm H}^1_{v,\et}$ corresponds to a homomorphism of Galois gerbs $\xi_v\colon \mathfrak{H}_v\to\mathfrak{P}$, where $\mathfrak{H}_v=\Gal(\Breve{\FcnFld}/\FcnFld)$ is the trivial Galois gerbe, i.e.~the Tannakian fundamental group of the category of $\FcnFld_v$-vector spaces. 
At $\infty$ there is a fiber functor given by the ``crystalline cohomology'' of $\underline{\mathcal{F}}$
\[
{\rm H}^1_{\infty,\rm crys}:\mathcal{M}ot_X^{\infty}(\overline{\F}_q)\longrightarrow \Breve{Q}_{\infty}\text{-Vect},\qquad \underline{\mathcal{F}} \mapsto \underline{\mathcal{F}} \otimes_{\Oo_{X_{\overline{\F}_q}}} \Breve{\FcnFld}_\infty\,.
\]
The fiber functor ${\rm H}^1_{\infty,\rm crys}$ corresponds to a homomorphism of Galois gerbs $\xi_\infty\colon \mathfrak{H}_\infty\to\mathfrak{P}$, where $\mathfrak{H}_\infty$ is the ``Dieudonne gerbe'', which is the Tannakian fundamental group of the category of $F$-crystals for $\FcnFld_\infty$. 
\end{defn}

\begin{defn}\label{Def_GMotives}
For a smooth affine group scheme $\gpSch$ with connected fibers over $X$ and reductive generic fiber $\gengpSch$, a \emph{$\gengpSch$-motive} is a tensor functor $\underline{\mathcal{M}}_\gengpSch:\Rep_\FcnFld(\gengpSch)\to \mathcal{M}ot_X^{\infty}(\overline{\F}_q)$. Equivalently, a $\gengpSch$-motive is a homomorphism of Galois gerbes $h\colon \mathfrak{P}\to\mathfrak{G}_\gengpSch$, where $\mathfrak{G}_\gengpSch:=\gengpSch(\Breve{\FcnFld})\rtimes \Gal(\Breve{\FcnFld}/\FcnFld)$ is the neutral Galois gerbe of $\gengpSch$. We denote by $\gengpSch\mathcal{M}ot_X^{\infty}(\overline{\F}_q)$ the category of $\gengpSch$-motives. 
\end{defn}

For any morphism $h: \mathfrak{P}\to \mathfrak{G}_\gengpSch$ of Galois gerbes, we let $h_v:=h\circ\xi_v: \mathfrak{H}_v\to \mathfrak{G}_\gengpSch$ and $h_\infty:=h\circ\xi_\infty: \mathfrak{H}_{\infty}\to \mathfrak{G}_\gengpSch$ be the compositum. The morphism $h_\infty$ defines a ``local $\gengpSch$-isoshtuka'' $\bigl((L_\infty\mcM)_{\overline{\F}_q},b)\bigr)$ over $\overline{\F}_q$; see \cite{AH_LRConj}. We define
\[
X^{\infty}(h):=\{(g_v)_{v\neq \infty}\in \gengpSch(\A^{\infty}):\Int(g_v)\circ\xi_v=\xi_v \}.
\]
\[
X_{\infty}(h):=\{\overline{g}\in (L_\infty\mcM/L^+_\infty\mcM)(\overline{\F}_\infty)\colon \tau_M(g)^{-1}b g\text{ is bounded by }\mu\}.
\]

There is a functor $\Sht_{\gpSch,\infty\times\infty}(\overline{\F}_q)\to \gengpSch\mathcal{M}ot_X^{\infty}(\overline{\F}_q)$ given by sending a global $\gpSch$-shtuka $\underline{\mcE}=(\mcE,\mcE',\varphi,\varphi')$ to the tensor functor 
\begin{equation}
\underline{\mathcal{M}}_\gengpSch^{\underline{\mcE}}:(V,\rho)\mapsto \bigl(\mcE\overset{\gengpSch}{\times}V,(\varphi'\circ\varphi)\times 1\bigr),
\end{equation}
where $\rho:\gengpSch\to \GL(V)$. We denote its associated homomorphism from Definition~\ref{Def_GMotives} of Galois gerbes $\mathfrak{P}\to\mathfrak{G}_\gengpSch$ by $h_{\underline{\mcE}}$.

\begin{lem}\label{lemma-ADLV-identied-Xphi}
We have an equality of the isogeny groups $I_{\underline{\mcE}}=I_{h_{\underline{\mcE}}}$. We have 
    $X_{\infty}(h_{\underline{\mcE}})=X_{\gpSch}^{\leq\mu}(b)$ and $X^{\infty}(h_{\underline{\mcE}})=\Isom^{\otimes}(\omega,\breve{\mathcal{V}}_{\underline{\mcE}})$.
\end{lem}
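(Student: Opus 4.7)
The plan is to verify each of the three equalities by unpacking the Tannakian definitions and matching them to the shtuka side.

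First I would handle the isogeny groups. The assignment $\underline{\mcE} \mapsto \underline{\mathcal{M}}_\gengpSch^{\underline{\mcE}}$ is functorial in quasi-isogenies: any $\eta \in \QIsog_{\overline{\F}_\infty}(\underline{\mcE})$ induces a tensor automorphism of the $\gengpSch$-motive $\underline{\mathcal{M}}_\gengpSch^{\underline{\mcE}}$. Post-composing with the fiber functor $\underline{\omega}$ converts this into a self-conjugation of the homomorphism $h_{\underline{\mcE}} \colon \mathfrak{P} \to \mathfrak{G}_\gengpSch$, giving an injection $I_{\underline{\mcE}}(R) \hookrightarrow I_{h_{\underline{\mcE}}}(R)$ for every $\FcnFld$-algebra $R$. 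For surjectivity I would use the faithful-representation argument from the proof of Lemma~\ref{injective-L-and-zeta}: a tensor automorphism $g$ of $h_{\underline{\mcE}}$ acts on the vector bundle underlying $\rho_*(\underline{\mcE})$ for a faithful $\rho \colon \gpSch \hookrightarrow \GL(\mathcal{V})$, and the tensor compatibility of $g$ makes this action $\gpSch$-equivariant, hence it descends to a quasi-isogeny of $\underline{\mcE}$.

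Next, for the equality $X_\infty(h_{\underline{\mcE}}) = X_\mcM^{\leq \mu}(b)$, I would observe that the pullback $h_{\underline{\mcE},\infty} := h_{\underline{\mcE}} \circ \xi_\infty$ corresponds, under the $\beta^{-1}$-twisted global-local functor $L^+_{\infty,\mcM_{\beta^{-1}}}$ (Definition~\ref{DefGlobLocM}), to the local $\mcM$-shtuka $\ulLocalFramingObject \cong (L^+_\infty\mcM_{\overline{\F}_\infty}, b)$ associated to $\underline{\mcE}$ via Corollary~\ref{CorGlobalLocalFunctorWithChains}. With this identification, the definition of $X_\infty(h)$ as the set of at-$\infty$ quasi-isogenies of type $\mu$ becomes the condition on $\overline{g} \in (L_\infty\mcM / L^+_\infty\mcM)(\overline{\F}_\infty)$ that $\tau_M(g)^{-1} b g$ be bounded by $\mu$, which is Definition~\ref{DefRZforM} of $X_\mcM^{\leq \mu}(b)$ verbatim.

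For $X^\infty(h_{\underline{\mcE}}) = \Isom^\otimes(\omega, \check{\mathcal{V}}_{\underline{\mcE}})$, the key point is that the fiber functor $\xi_v = h_{\underline{\mcE}} \circ {\rm H}^1_{v,\et}$ is canonically the $v$-adic component of $\check{\mathcal{V}}_{\underline{\mcE}}$, so a tensor isomorphism $\omega \isoto \check{\mathcal{V}}_{\underline{\mcE}}$ corresponds to a tuple $(g_v)_{v \neq \infty}$ intertwining $\xi_v$ with the standard fiber functor; fixing a basepoint $\gamma_0$ as in \S\ref{Point7.6} converts ``intertwining'' into the required condition $\Int(g_v) \circ \xi_v = \xi_v$. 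Both sides are $\gengpSch(\mathbb{A}^\infty)$-torsors (the right-hand side by Lemma~\ref{LSisnonempty}, the left by the Tannakian formalism applied place-by-place), so matching one basepoint forces the bijection globally. The main obstacle will be the first equality: producing an actual quasi-isogeny of $\underline{\mcE}$ from an abstract tensor automorphism of $h_{\underline{\mcE}}$ requires a fully faithful comparison between $\gpSch$-bundles and vector bundles with $\gpSch$-structure, which goes through a faithful representation and careful tracking of $\gpSch$-equivariance, following the pattern of \cite{AH_CMotives, AH_LRConj}.
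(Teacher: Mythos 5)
Your proof is correct and matches the paper's approach — the paper's own proof is simply ``This follows directly from the definitions,'' and you have filled in exactly the definitional unwinding that the authors left implicit (the middle identity is verbatim once the $b$'s are matched via the twisted global-local functor, and the first and third identities are the standard Tannakian comparisons going through a faithful representation). The only small caveats are notational: the composition you write as $\xi_v = h_{\underline{\mcE}} \circ {\rm H}^1_{v,\et}$ should really be phrased as ``$\xi_v$ is the gerbe homomorphism Tannaka-dual to ${\rm H}^1_{v,\et}$, so $h_{\underline{\mcE}}\circ\xi_v$ corresponds to the $v$-adic Tate module,'' and you correctly write $X_{\mcM}^{\leq\mu}(b)$ where the lemma statement has the (apparent) typo $X_{\gpSch}^{\leq\mu}(b)$.
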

\begin{proof}
This follows directly from the definitions.
\end{proof}

\begin{cor}
The $\overline{\F}_{\infty}$-points of the Shtuka space $\Sht_{\gpSch,H,\widehat{\infty}\times\infty}^{\mathcal{Z}(\mu,\beta)}$ has the form predicted by the Langlands-Rapoport conjecture, i.e. $\Sht_{\gpSch,H,\widehat{\infty}\times\infty}^{\mathcal{Z}(\mu,\beta)}(\overline{\F}_{\infty})=\coprod\limits_hI_h(\FcnFld)\backslash X_{\infty}(h)\times X^{\infty}(h)/H$ compatible with Hecke correspondences, Frobenius, action of the center. 
\end{cor}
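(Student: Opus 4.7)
The plan is to decompose the set $\Sht_{\gpSch,H,\widehat{\infty}\times\infty}^{\mathcal{Z}(\mu,\beta)}(\overline{\F}_{\infty})$ into isogeny classes and uniformize each one via Theorem~\ref{Uniformization2}, translating the answer into the motivic language through Lemma~\ref{lemma-ADLV-identied-Xphi}. First I would observe that the equivalence relation ``isogenous'' partitions the underlying set of $\overline{\F}_\infty$-points into a disjoint union
\[
\Sht_{\gpSch,H,\widehat{\infty}\times\infty}^{\mathcal{Z}(\mu,\beta)}(\overline{\F}_{\infty}) \;=\; \coprod_{[\FramingObject]} \mathcal{X}_\FramingObject(\overline{\F}_\infty),
\]
where $\FramingObject$ runs through a set of representatives of isogeny classes of framing objects in $\Sht_{\gpSch,\varnothing,\widehat{\infty}\times\infty}^{\mathcal{Z}(\mu,\beta)}(\overline{\F}_{\infty})$ and $\mathcal{X}_\FramingObject$ is the isogeny class defined in Lemma~\ref{LemmaImageOfTheta}\ref{LemmaImageOfTheta_B}. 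Since the points of $\Sht_{\gpSch,H,\widehat{\infty}\times\infty}^{\mathcal{Z}(\mu,\beta)}$ differ from those of $\Sht_{\gpSch,\varnothing,\widehat{\infty}\times\infty}^{\mathcal{Z}(\mu,\beta)}$ only by the additional datum of an $H$-orbit, this partition lifts to the $H$-level version as well.

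Next I would apply Theorem~\ref{Uniformization2}\ref{Uniformization2_B} to each isogeny class. That theorem furnishes an isomorphism of Deligne-Mumford stacks
\[
\Theta_{\FramingObject,\mathcal{X}}\colon I_{\FramingObject}(\FcnFld)\big\backslash X_{\mcM}^{\leq\mu}(b)\times \Isom^\otimes(\omega,\check{\mathcal{V}}_{\FramingObject})/H \;\isoto\; \mathcal{X}_\FramingObject,
\]
and taking $\overline{\F}_\infty$-points I obtain a bijection on the underlying sets. Combining with the disjoint union above,
\[
\Sht_{\gpSch,H,\widehat{\infty}\times\infty}^{\mathcal{Z}(\mu,\beta)}(\overline{\F}_{\infty}) \;=\; \coprod_{[\FramingObject]} I_{\FramingObject}(\FcnFld)\big\backslash X_{\mcM}^{\leq\mu}(b)\times \Isom^\otimes(\omega,\check{\mathcal{V}}_{\FramingObject})/H.
\]

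I would then translate each factor into Langlands-Rapoport notation using Lemma~\ref{lemma-ADLV-identied-Xphi}. Associating to the framing object $\FramingObject$ its $\gengpSch$-motive $h_\FramingObject\colon \mathfrak{P}\to\mathfrak{G}_\gengpSch$ as in Definition~\ref{Def_GMotives}, that lemma identifies
\[
I_{\FramingObject}(\FcnFld)=I_{h_\FramingObject}(\FcnFld),\qquad X_\infty(h_\FramingObject)=X_{\mcM}^{\leq\mu}(b),\qquad X^\infty(h_\FramingObject)=\Isom^\otimes(\omega,\check{\mathcal{V}}_\FramingObject).
\]
Indexing the disjoint union over the resulting homomorphisms $h$ of Galois gerbes (in place of isogeny classes of framing objects $\FramingObject$, which correspond bijectively to such $h$ up to $\gengpSch(\Breve{\FcnFld})$-conjugacy), I arrive at the desired formula
\[
\Sht_{\gpSch,H,\widehat{\infty}\times\infty}^{\mathcal{Z}(\mu,\beta)}(\overline{\F}_{\infty}) \;=\; \coprod_{h}\; I_h(\FcnFld)\backslash X_{\infty}(h)\times X^{\infty}(h)/H.
\]

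Finally, the compatibility with Hecke correspondences, Frobenius, and the action of the center is not an extra calculation but a direct consequence of Theorem~\ref{Uniformization2}\ref{Uniformization2_C}, which asserts precisely that $\Theta_{\FramingObject,\mathcal{X}}$ intertwines the $\gengpSch(\mathbb{A}^\infty)$-action by Hecke correspondences on $X^\infty(h)$ with the one on $\Sht_{\gpSch,H,\widehat{\infty}\times\infty}^{\mathcal{Z}(\mu,\beta)}$, the $Z(\FcnFld_\infty)$-action on $X_\infty(h)$ with the one on the Shtuka space, and the Frobenius endomorphism $\Phi_m$ on $X_\infty(h)$ with that on the special fiber of the Shtuka space; the compatibility statements transport through the identifications of Lemma~\ref{lemma-ADLV-identied-Xphi} without modification. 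The only point requiring care is to verify that every $\overline{\F}_\infty$-point of the Shtuka space actually lies in the image of some $\Theta_{\FramingObject,\mathcal{X}}$, i.e.~in \emph{some} isogeny class; this is tautological once one takes $\FramingObject$ itself to be (a representative of the isogeny class of) the given point, the existence of which follows from the description of $\mathcal{X}_\FramingObject$ as the full isogeny class in Lemma~\ref{LemmaImageOfTheta}\ref{LemmaImageOfTheta_B}. Thus the main conceptual content of the proof is bookkeeping: every step is already available, and the only subtlety is identifying the indexing sets of ``framing objects modulo isogeny'' and ``homomorphisms $h$ of Galois gerbes up to conjugacy'', which is precisely the content of the $\gengpSch$-motive formalism recalled above.
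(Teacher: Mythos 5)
Your proposal is correct and takes essentially the same route as the paper, whose proof is the single sentence ``This follows by combining Lemma~\ref{lemma-ADLV-identied-Xphi} and Theorem~\ref{Uniformization2}.'' You have merely expanded that sentence into the decomposition into isogeny classes, the application of the uniformization isomorphism $\Theta_{\FramingObject,\mathcal{X}}$ on each class, the dictionary from Lemma~\ref{lemma-ADLV-identied-Xphi}, and the compatibilities from Theorem~\ref{Uniformization2}\ref{Uniformization2_C}; note in passing that in identifying $X_\infty(h_\FramingObject)$ with $X^{\leq\mu}_{\mcM}(b)$ you silently (and correctly) read the paper's ``$X_{\gpSch}^{\leq\mu}(b)$'' in the lemma as ``$X_{\mcM}^{\leq\mu}(b)$''.
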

\begin{proof}
    This follows by combing Lemma \ref{lemma-ADLV-identied-Xphi} and Theorem \ref{Uniformization2}. 
\end{proof}

\bibliographystyle{amsalpha}
\bibliography{bibfile}

\end{document}